\theoremstyle{plain}
\newtheorem{theorem}{Theorem}[section]
\newtheorem{corollary}[theorem]{Corollary}
\newtheorem{proposition}[theorem]{Proposition}
\newtheorem{lemma}[theorem]{Lemma}
\newtheorem*{remark*}{Remark*}
\theoremstyle{definition}
\newtheorem{definition}[theorem]{Definition}
\newtheorem*{notation}{Notation}
\newtheorem{example}[theorem]{Example} 
\newtheorem{remark}[theorem]{Remark}
\newtheoremstyle{claim}
{0pt}
{-2pt}
{\itshape}
{}
{\bfseries\slshape}
{:}
{.5em }
{ }
\theoremstyle{claim}
\newtheoremstyle{note}
{0pt}
{0pt}
{}
{\parindent}
{\bfseries\slshape}
{:}
{.5em }
{ }
\theoremstyle{note}
\newcommand{\NN}{\mathbb{N}}
\newcommand{\ZZ}{\mathbb{Z}}
\newcommand{\RR}{\mathbb{R}}
\newcommand{\KK}{\mathbb{K}} 
\renewcommand{\AA}{\mathbb{A}} 
\newcommand{\p}{\mathfrak{p}} 
\newcommand{\n}{\mathfrak{n}} 
\newcommand{\F}{\mathscr{F}} 
\newcommand{\G}{\mathscr{G}} 
\renewcommand{\O}{\mathcal{O}} 
\newcommand{\C}{\mathcal{C}} 
\renewcommand{\H}{\mathrm{H}} 
\newcommand{\vC}{\check{\mathcal{C}}} 
\newcommand{\vH}{\check{\mathrm{H}}} 
\newcommand{\U}{\mathscr{U}} 
\newcommand{\midd}{\;\middle\vert\;} 
\newcommand{\ver}{v}
\renewcommand{\restriction}{ | }
\DeclareMathOperator{\Spec}{Spec}
\DeclareMathOperator{\Sym}{Sym}
\DeclareMathOperator{\im}{im}
\DeclareMathOperator{\supp}{supp}
\DeclareMathOperator{\rk}{rk}
\DeclareMathOperator{\Pic}{Pic}
\DeclareMathOperator{\lk}{lk}
\DeclareMathOperator{\red}{red}
\DeclareMathOperator{\tf}{tf}
\DeclareMathOperator{\nil}{nil}
\DeclareMathOperator{\N}{\mathcal{N}}
\DeclareMathOperator{\loc}{loc}
\DeclareMathOperator{\nerve}{nerve}
\DeclareMathOperator{\comb}{comb}
\DeclareMathOperator{\Top}{Top}
\definecolor{grey1}{rgb}{0.3, 0.3, 0.3}
\definecolor{grey2}{rgb}{0.5, 0.5, 0.5}
\definecolor{grey3}{rgb}{0.7, 0.7, 0.7}
    \newcommand{\cupdot}{\mathop{\ensurestackMath{\stackinset{c}{}{c}{+.25ex}{\cdot}{\cup}}}}
\begin{document}
\title{Local Picard Group of Pointed Monoids and Their Algebras}

\author{Davide Alberelli, Holger Brenner}


\maketitle

\begin{abstract}
The main goal of this paper is to give an explicit formula for
\[\H^j_{\mathrm{Zar}}(\Spec^\bullet\KK[\triangle], \O^*_{\KK[\triangle]})\]
in terms of simplicial and reduced simplicial cohomology, where $\KK[\triangle]$ is the Stanley-Reisner ring of the simplicial complex $\triangle$. In particular we compute the local Picard group of $\KK[\triangle]$. To achieve this we study the corresponding purely combinatorial problem on the punctured spectrum of the pointed monoid defined by $\triangle$. The cohomology of the sheaf of units on $\Spec^\bullet \KK[\triangle ]$ is then the direct sum of this combinatorial cohomology, which has a decomposition along the vertices, and another part depending on the field $\KK$.
\end{abstract}

\noindent Mathematical Subject Classification (2010): 13F55, 13C20, 14C22.

\section*{Introduction}

An important invariant of a singularity represented by a local commutative ring $(R, {\mathfrak m})$ is the Picard group of the punctured spectrum $\Spec R \setminus \{ {\mathfrak m} \}$, called the \emph{local Picard group} and denoted by $\Pic^{\loc}(R)$. If $R$ is a normal isolated singularity, this group coincides with the divisor class group, which measures the deviation from factoriality. In this paper we want to compute the local Picard group and its higher cohomological variants in a broad combinatorial setting. We work with a \emph{binoid}, a pointed additively written monoid $(M, +, 0, \infty)$, and the corresponding binoid ring $\KK[M]$ (where $0$ becomes $1$ and $\infty$ becomes $0$). This setting allows to describe zero divisors on the combinatorial level and gives a common framework for monoid rings, Stanley-Reisner rings, toric face rings and nonreduced variants. We will asume that $M$ is finitely generated, commutative, torsion free and cancellative. See \cite{Boettger} for basic properties of this framework, for related concepts see \cite{flores2015homological}, \cite{flores2014picard}, \cite{pena2009mapping}, \cite{lorscheid2012geometry}.

A central question in approaching the local Picard group of $\KK[M]$ is whether it can be computed purely combinatorially and to what extent it depends on the base field $\KK$. On the combinatorial side, we have the finite combinatorial spectrum $\Spec M$, and its punctured variant $\Spec^\bullet M = \Spec M \setminus M_+$, where $M_+ = M \setminus M^*$ is the unique maximal ideal of $M$, which gives rise to the combinatorial local Picard group. A ``line bundle'' on $\Spec^\bullet M$ defines a line bundle on $\Spec^\bullet \KK[ M] $ and so we get a group homomorphism
\[\Pic^{\loc}(M) \longrightarrow \Pic^{\loc}(\KK[M]) \, .  \]
It is known that in the (normal) toric setting this is an isomorphism, see \cite{demeyer1992cohomological}, but it is not true for other combinatorial structures represented by a binoid. To a large extent, we will concentrate on the case of simplicial complexes, simplicial binoids and Stanley-Reisner rings.

The main strategy is based on the fact that binoid generators $x_1, \ldots, x_n$ of the maximal ideal $M_+$ yield an open affine covering $\bigcup D(x_i)$ of  $\Spec^\bullet M$ and also of  $\Spec^\bullet \KK[ M]$, and we want to compute the cohomology of the sheaf of units $\O^*$ in both cases with this covering, via \v{C}ech cohomology. In the first case, this covering is trivially acyclic and so it can be used for computation, provided, we have a good understanding of the units in the localizations of our binoid.

In the algebra case, we have to know that the Picard group and the higher cohomology of the affine pieces $D(x_i)$ are trivial. This is not always true, not even in the nonnormal toric case, but there are many positive results. For example M.\ Pavaman Murthy in 1969~\cite{murthy1969vector} showed that for a positively graded normal ring the Picard group is trivial, Carlo Traverso in 1970~\cite{traverso1970seminormality}, Richard G.\ Swan in 1980~\cite{swan1980seminormality} and David F.\ Anderson in 1981~\cite{anderson1981seminormal} covered the seminormal case, showing that $\Pic(A)=\Pic(A[X_1, \dots, X_n])$ in this case. We will prove that for Stanley-Reisner rings and their localizations, the cohomology of the sheaf of units vanishes.

The next task is to determine the units of $D(x_i)$, where the combinatorial units and the base field have to be considered. Already the affine line shows that there is not a direct splitting of the sheaf of units into combinatorial units and field units. However, in many favorable situations there is such a splitting on the combinatorial topology, the topology generated by the 
$D(x_i)$s, and so the two aspects can be studied separately. In such situations, the first part is determined completely by the combinatorial situation, whereas the second part depends on the constant sheaf given by the units of the field. In the nonintegral case, this part contributes to the local Picard group.

\subsection*{Main results}

We give an overview of our main results, in particular for binoids $M_\triangle$ and their algebra $\KK[\triangle]$ that arise from a simplicial complex $\triangle$. In Lemma~\ref{lemma:intersection-open-subsets} we observe that the intersection pattern of the open subsets $D(x_i)$ of $\Spec M$ is given directly by the faces of the simplicial complex, thus leading us to prove, in Theorem~\ref{thm:simplicial-cohomology}, that the cohomology of a constant sheaf can be computed entirely in terms of simplicial cohomology. In Theorem~\ref{theorem:localization-simplicial-binoid-multiple}, we show that the localization of a simplicial binoid at a face is isomorphic to the smash product of the simplicial binoid of the link of that face and a free group on that face. This opens the door to Theorem~\ref{theorem:semifree-decomposition-of-sheaf}, where we show that we can rewrite the sheaf $\O^*_{M_\triangle}$ as a direct sum of smaller sheaves, indexed by the vertices. These sheaves are actually defined as extensions by zeros of the constant sheaf $\ZZ$ on $D(x_i)$, which is homeomorphic to the spectrum of the link of the corresponding vertex. This brings us to Theorem~\ref{theorem:cohomology-simplicial-complex}, which shows that we can compute sheaf cohomology (and thus the local Picard group) by means of reduced simplicial cohomology, via the formula
\[{\H^j\left(\Spec^\bullet M_\triangle, \O^*_{M_\triangle}\right)}\cong\bigoplus_{\ver \in V}\widetilde{\H}^{j-1}\left(\lk_\triangle( \ver), \ZZ\right)\, .\]
We then use these results to understand the sheaf of units on the binoid algebra and their cohomology. In order to do so, we introduce in Definition~\ref{definition:combinatorial-topology} the combinatorial topology  on $\Spec\KK[M]$, which builds a bridge between the topology on $\Spec M$ and the Zariski topology on $\Spec \KK[M]$. In Proposition~\ref{proposition:split-sheaves-comb-top}, we show that if $M$ is a reduced, torsion-free and cancellative binoid, then we can decompose, in this topology, the sheaf of units of the algebra as a direct sum
\[(\O_{\KK[M]}^*)^{\comb} \cong (i_*\O^*_M)^{\comb}   \oplus    (\KK^*)^{\comb}  \,    ,\]
where $\KK^*$ is the constant sheaf and where $i: \Spec M \rightarrow \Spec K[M]$ is a continuous map which exists under these assumptions. In Theorem~\ref{theorem:push-forward-exact} we prove that the pushforward along $i$ (with respect to the Zariski topology) is exact, thanks to the combinatorial topology. This leads us to prove in Proposition~\ref{proposition:cohomology-pushforward-affine} that the Zariski cohomology of any pushforwarded sheaf vanishes on the affine spectrum $\Spec\KK[M]$.

We then specialize to the case of Stanley-Reisner rings, for which we prove a vanishing result for the affine case in Theorem~\ref{Thm:vanishingspecstanleyreisner}, namely that
\[\H^j(\KK[\triangle], \O^*) = 0\, \]
for all $j\geq 1$, and also for their localizations. This allows us in Corollary~\ref{corollary:covering-cohomology-stanley-reisner}, to compute the cohomomology of the sheaf of units on the punctured spectrum of $K[\triangle]$ with \v{C}ech cohomology of the combinatorial cover. We deduce in Theorem~\ref{theorem:SR-splits-comb-top} from Proposition~\ref{proposition:split-sheaves-comb-top} that for a Stanley-Reisner ring the sheaf of units splits in the combinatorial topology in a constant part and in the pushforward of the combinatorial units.
This leads us to prove Theorem~\ref{theorem:cohomology-stanley-reisner}, that states that we can compute the Zariski cohomology of the sheaf of units entirely in terms of simplicial cohomology, both usual and reduced, as
\[
\H^j(\Spec^\bullet(\KK[\triangle]), \O^*_{\KK[\triangle]}) = \bigoplus_{\ver \in V}\widetilde{\H}^{j-1}(\lk_\triangle(\ver), \ZZ) \oplus   \H^j(\triangle, \KK^*) \, ,\]
for $j\geq0$. Finally, we present in Theorem~\ref{theorem:split-non-reduced-monomial} a generalization to algebras given by any monomial ideal, showing that
\[\H^j(X, \O^*_X) =  \bigoplus_{\ver \in V}\widetilde{\H}^{j-1}(\lk_\triangle(\ver), \ZZ) \oplus    \H^j(\triangle, \KK^*) \oplus \vH^j(\{D(X_i)\}, 1+\N) \, ,\]
where $\triangle$ is the simplicial complex describing the reduction and $\N$ is the coherent sheaf of nilpotent elements.

This work is based on the first author's PhD thesis \cite{alberelli}, which was funded by the DFG-Graduierten\-kolleg Combinatorial Structures in Geometry at the University of Osnabr\"uck. The interested reader will find there all the proofs that are not given here, together with more details and examples. We thank Ilia Pirashvili for his interest and many suggestions.

\section{Binoids and sheaves on their spectra}

\subsection{Definitions}

    For an extended treatment of binoids and their properties, we refer the reader to \cite[Chapter 4]{Boettger}. The spectrum of a binoid $M$, consisting of the prime ideals of $M$ and denoted by $\Spec M$, is naturally a poset ordered by inclusion, with a unique closed point $M_+=M \setminus M^* $, where $M^*$ denotes the group of units. A binoid is called \emph{positive}, if $M^*=\{ 0 \}$. We endow the spectrum of a binoid with the usual Zariski topology, where $S\subseteq \Spec M$ is closed (respectively open) if and only if it is superset closed (respectively subset closed) (see \cite[Remark 4.1.6]{Boettger}). Every prime ideal is generated by a subset of the generators of the binoid. The only open subset that contains the maximal ideal $M_+$ is $\Spec M$.
                
                A binoid is called \emph{integral}, if $M \setminus \{\infty\}$ is a monoid. It is called \emph{cancellative}, if $a+x=a +y \neq \infty$ implies $x=y$. It is called \emph{torsion free}, if $nx=ny\neq \infty$ for some $n \in {\mathbb N}_+$ implies $x=y$. An element $f\in M$ is called \emph{nilpotent} if $nf=\infty$ for some $n\geq 1$. The set of all nilpotent elements will be denoted by $\nil(M)$ and it is easy to show that this is an ideal. We say that $M$ is \emph{reduced} if $\nil(M)=\{\infty\}$.
                 
    A nonzero commutative binoid is called \emph{semifree} with \emph{semibasis} $(a_i)_{i\in I}$ if $M $ is generated by $\{a_i\mid i\in I\}$ and every element $f\in M^\bullet=M \setminus \{ \infty\}$ can be written uniquely as $f=\sum_{i\in I} n_ia_i$, with $n_i=0$ for almost all $i\in I$. The set of $a_i$ such that $n_i\neq 0$ is called the \emph{support} of $f$, $\supp(f)=\{a_i\mid n_i\neq 0\}$.
                
         We assume throughout that $M$ is finitely generated. The following Proposition will be important in describing the stalks of sheaves for schemes of binoids.
                \begin{proposition}\label{proposition:minimal-open-set-prime-ideal}
                    Let $M$ be a binoid. For any prime ideal $\p\in\Spec M$, there exists a unique minimal open set that contains it.
                \end{proposition}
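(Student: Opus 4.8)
The plan is to exhibit the minimal open set explicitly as a principal down-set and then read off its minimality directly from the description of the topology. Recall from the preceding paragraph that a subset $U \subseteq \Spec M$ is open exactly when it is subset closed, i.e. whenever $\mathfrak{q} \in U$ and $\mathfrak{q}' \subseteq \mathfrak{q}$ one has $\mathfrak{q}' \in U$. Given a prime $\p \in \Spec M$, I would therefore take as candidate
\[
U_\p \ebd \{ \mathfrak{q} \in \Spec M \mid \mathfrak{q} \subseteq \p \},
\]
the set of all primes contained in $\p$, and claim this is the desired minimal open neighbourhood.

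First I would verify that $U_\p$ is open. If $\mathfrak{q} \in U_\p$ and $\mathfrak{q}' \subseteq \mathfrak{q}$, then $\mathfrak{q}' \subseteq \mathfrak{q} \subseteq \p$, so $\mathfrak{q}' \in U_\p$; hence $U_\p$ is subset closed and therefore open, and it contains $\p$ since trivially $\p \subseteq \p$. For minimality, let $U$ be any open set with $\p \in U$. Since $U$ is subset closed and $\p \in U$, every prime $\mathfrak{q} \subseteq \p$ must also lie in $U$, so $U_\p \subseteq U$. Thus $U_\p$ is contained in every open set containing $\p$, which proves at once that it is the smallest such set and that it is unique.

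I expect no genuine obstacle here: the only point needing care is the correct reading of the topology (open $=$ subset closed), after which the result is forced by the down-set being stable under passing to smaller primes. Equivalently, one may observe that subset-closed sets are stable under arbitrary intersection, so that $\Spec M$ carries an Alexandrov topology and $U_\p$ can be described intrinsically as $\bigcap \{ U \text{ open} \mid \p \in U \}$; as $M$ is finitely generated this is even a finite intersection. I would nonetheless favour the explicit description by the principal down-set, since it is this concrete identification of the minimal open set that feeds directly into the stalk computations for sheaves on $\Spec M$ announced just before the statement.
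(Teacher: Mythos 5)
Your proof is correct, but it takes a genuinely different route from the paper's. You argue purely order-theoretically: reading off that the topology on $\Spec M$ is the Alexandrov topology in which open means subset closed, you exhibit the minimal neighbourhood as the principal down-set $U_\p=\{\mathfrak{q}\mid \mathfrak{q}\subseteq\p\}$ and observe that any open set containing $\p$ must contain it. The paper instead localizes: writing $\p=\langle x_1,\dots,x_k\rangle$ for generators $x_1,\dots,x_n$ of $M_+$, it identifies the minimal neighbourhood as the fundamental affine open $D(x_{k+1}+\dots+x_n)=\Spec M_\p$. The two descriptions agree, since every prime is generated by a subset of the generators, so $\mathfrak{q}\subseteq\p$ holds exactly when $\mathfrak{q}$ contains none of $x_{k+1},\dots,x_n$. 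What your argument buys is economy and generality (it needs neither finite generation nor any choice of generators); what the paper's argument buys is the extra information that the minimal open set is a fundamental affine open equal to $\Spec M_\p$, which is precisely what is used immediately afterwards in Proposition~\ref{proposition:stalk-binoid-scheme} to compute stalks as $\O_{M,\p}=M_\p$. If you intend your version to feed into that stalk computation, as you say, you would still need to add the short observation identifying your $U_\p$ with $D(x_{k+1}+\dots+x_n)$, or equivalently that $\Gamma(U_\p,\O_M)=M_\p$.
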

                \begin{proof}
                    Let $x_1, \dots, x_k, x_{k+1}, \dots, x_n$ be the generators of $M_+$. Without loss of generality, we can assume that $\p=\langle x_1, \dots, x_k \rangle$ and that $\p$ does not contain any other generator of $M_+$. Then, $M_\p = M_{x_{k+1}+ \dots+ x_n} $ and                                   
                    \[ \Spec M_\p = \Spec M_{x_{k+1}+ \dots+ x_n} = D(x_{k+1}+ \dots+ x_n)=\{\mathfrak{q}\in \Spec M\mid \{x_{k+1}, \dots, x_n\} \nsubseteq \mathfrak{q}\} 
                    \]
                    is the smallest neighborhood of $\p$.
                \end{proof}

\subsection{Binoid schemes}

                    A \emph{presheaf of binoids} on a topological space $X$ is a contravariant functor from the topology of $X$ to the category of binoids
                    \begin{equation*}
                    \F:\Top_{X}\longrightarrow \mathrm{Bin} \, ,  U \longmapsto  \F(U) \, .
                    \end{equation*}
                           
                    A \emph{sheaf of binoids on $X$} is a presheaf of binoids on $X$ that is a sheaf.

                \begin{definition}\label{definition:binoided-space}
                  A \emph{binoided space} is a pair $(X, \O_X)$ where $X$ is a topological space and $\O_X$ is a sheaf of binoids on $X$, called the \emph{structure sheaf} of the space.
                \end{definition}
                
                Like with rings, we have very special binoided spaces, namely the binoid schemes.
                
                \begin{definition}\label{definition:affine-scheme-binoid}
                    Let $M$ be a binoid. The \emph{affine binoid scheme defined by $M$} is the binoided space $(\Spec M, \O_{\Spec M})$, where $\O_{\Spec M}$ is the sheafification of the presheaf defined on the basis $\{D(f)\}$ as
                    \[
                    \O_{\Spec M}(D(f))=\Gamma(D(f), \O_{\Spec M})=M_f,
                    \]
                    called the \emph{structure sheaf} of $\Spec M$. We usually denote it by $\O_M$.
                \end{definition}
                
                \begin{remark}\label{remark:structure-presheaf}
                    Like for rings, we can explicitly describe the presheaf $\O_{\Spec M}$ as
                    \[
                    \begin{tikzcd}[cramped, cramped, row sep = 0ex,
                    /tikz/column 1/.append style={anchor=base east},
                    /tikz/column 2/.append style={anchor=base west}]
                    D(f_1, \dots, f_r)\rar[mapsto]& \displaystyle\Gamma\left(\bigcup D(f_i), \O_M\right)\\
                    &=\left\{(s_1, \dots, s_r)\in M_{f_1}\times\dots\times M_{f_r}\midd s_i=s_j \text{ in } M_{f_i+f_j}\right\},
                    \end{tikzcd}
                    \]
                    so the image is a subbinoid of $M_{f_1}\times\dots\times M_{f_r}$.
                \end{remark}

                \begin{definition}\label{definition:scheme-binoid}
                    The binoided space $(X, \O_X)$ is a \emph{binoid scheme}, or \emph{scheme of binoids}, if there exists an open cover $\{U_i\}_{i\in I}$ of $X$ and a collection of binoids $\{M_i\}_{i\in I}$, such that
                    \[ U_i \cong \Spec M_i\qquad \text{and} \qquad \O_X \restriction_{U_i} \cong \O_{M_i}\, .                    \]
                \end{definition}
                
               For every open subset $U \subseteq \Spec M$, we get the scheme of binoids $(U, \O_M \restriction_U)$. We are mainly interested in the following scheme of binoids.
               
                \begin{definition}\label{definition:punctured-spectrum-scheme}
                Let $M$ be a binoid. Its \emph{punctured spectrum} is the scheme
                    \[ (\Spec^\bullet M, \O_M\restriction_{\Spec^\bullet M}) \, ,  \]
                    where $\Spec^\bullet M = \Spec M \setminus M_+$.
                \end{definition}

If the $\{x_i\}_{i\in I}$ are the generators of the maximal ideal $M_+$, then there is always the standard covering $\Spec^\bullet M=\cup_{i\in I} D(x_i)$ with the affine binoid schemes $D(x_i)$.
                
                \begin{example}
                    Let $M=(x, y, z \mid x+y=2z)$. Here $M_+=\langle x, y, z\rangle$. Since $\langle x, y\rangle$ is not in $\Spec M$, we can cover the punctured spectrum also by $D(x)$ and $D(y)$ alone. So the standard cover is in general not minimal.
                \end{example}
                
               \begin{proposition}\label{proposition:stalk-binoid-scheme}
                Let $(\Spec M, \O_M)$ be the affine scheme of binoids defined by $M$, let $M_+$ be generated by $x_1, \dots, x_k, x_{k+1}, \dots, x_n$ and let $\p=\langle x_1, \dots, x_k\rangle$ be a prime ideal such that $\p$ does not contain any other generator of $M_+$. The stalk of $\O_M$ at the point $\p$ is
               \[ \O_{M, \p}=M_{\p}=\O_M(D(x_{k+1}+ \dots+ x_n)) = M_{ x_{k+1}+ \dots+ x_n  } \, .  \]
                \end{proposition}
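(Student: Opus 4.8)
The plan is to reduce everything to the existence of a smallest open neighbourhood of $\p$, which was already established in Proposition~\ref{proposition:minimal-open-set-prime-ideal}. By definition the stalk is the filtered colimit
\[ \O_{M,\p} = \varinjlim_{U \ni \p} \O_M(U) \, , \]
taken over the open neighbourhoods $U$ of $\p$ directed by reverse inclusion (any two such $U$ intersect in a smaller neighbourhood, so the system is indeed directed). First I would invoke Proposition~\ref{proposition:minimal-open-set-prime-ideal}: under the hypothesis that $\p = \langle x_1, \dots, x_k \rangle$ contains none of $x_{k+1}, \dots, x_n$, the distinguished open $U_0 = D(x_{k+1} + \dots + x_n)$ is the smallest open set containing $\p$, hence is contained in every open neighbourhood of $\p$.

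Since $U_0 \subseteq U$ for every neighbourhood $U$, the object $U_0$ is terminal in the directed system indexing the colimit, so the colimit collapses and $\O_{M,\p} = \O_M(U_0)$. It then remains to identify $\O_M(U_0)$ with $M_{x_{k+1} + \dots + x_n}$. As $U_0 = D(f)$ for $f = x_{k+1} + \dots + x_n$ is a single basis element, this is exactly the value prescribed by Definition~\ref{definition:affine-scheme-binoid}, namely $\O_M(D(f)) = M_f$. Combining this with the identification $M_\p = M_{x_{k+1} + \dots + x_n}$ recorded in the proof of Proposition~\ref{proposition:minimal-open-set-prime-ideal} yields the full chain of equalities claimed in the statement.

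The one point that deserves care is the sheafification in Definition~\ref{definition:affine-scheme-binoid}: a priori $\O_M$ is the sheafification of the presheaf $P$ with $P(D(f)) = M_f$, so one must check that the sheaf value at the basis element $U_0$ still equals $M_f$. The cleanest route, and the one I would take, is to avoid reasoning about the sheaf value on $U_0$ directly and instead use that sheafification preserves stalks, giving $(\O_M)_\p \cong P_\p$, together with the elementary fact that for \emph{any} presheaf the existence of a minimal neighbourhood $U_0$ forces the stalk to be $P(U_0) = M_f$. This sidesteps having to verify separately that $P$ is already a sheaf on the basis of distinguished opens. That verification (separatedness and gluing on the basis $\{D(f)\}$, in analogy with the affine ring case) is the only genuinely technical step one could get stuck on, and the stalk-preservation argument bypasses it entirely; everything else is a formal consequence of Proposition~\ref{proposition:minimal-open-set-prime-ideal}.
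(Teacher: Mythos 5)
Your argument is correct and is essentially the paper's own proof: the paper simply cites Proposition~\ref{proposition:minimal-open-set-prime-ideal} to note that $D(x_{k+1}+\dots+x_n)$ is the unique minimal open neighbourhood of $\p$, so the stalk colimit collapses to the value there. Your additional care about sheafification is sound (and the paper leaves it implicit); note that the same minimality argument also shows directly that every open cover of $U_0$ must contain $U_0$ itself, so the sheafified value on $U_0$ agrees with the presheaf value $M_f$, which gives the middle equality $\O_M(U_0)=M_f$ that your stalk-preservation route alone does not quite state.
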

                \begin{proof}
                    This is clear because, thanks to Proposition~\ref{proposition:minimal-open-set-prime-ideal}, we know that $D(x_{k+1}+ \dots+ x_n)$ is the unique minimal open subset of $X$ that contains $\p$.
                \end{proof}

                \begin{proposition}
                    An open subset $W\subseteq \Spec M$ is affine if and only if there exists $f\in M$ such that $W=D(f)$.
                \end{proposition}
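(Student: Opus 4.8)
The plan is to prove the two implications separately, with the nontrivial content concentrated in the ``only if'' direction, which rests on the observation that an affine binoid scheme has a unique closed point.

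First I would dispatch the ``if'' direction. If $W=D(f)$ for some $f\in M$, then by the standard identification of a basic open subset with a localization we have an isomorphism of binoided spaces $(D(f),\O_M\restriction_{D(f)})\cong(\Spec M_f,\O_{M_f})$: indeed $\O_M(D(f))=M_f$ by Definition~\ref{definition:affine-scheme-binoid}, and the subspace topology induced on $D(f)\subseteq\Spec M$ agrees with the Zariski topology on $\Spec M_f$. Hence $W$ is affine, and this direction requires nothing beyond the definitions.

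For the converse, suppose $(W,\O_M\restriction_W)\cong(\Spec N,\O_N)$ for some binoid $N$. The structural fact I would use is that $\Spec N$ has a \emph{unique} closed point, namely its maximal ideal $N_+$. This holds because closed sets are superset-closed: the maximum $N_+$ of the poset $\Spec N$ forms a closed singleton, whereas the closure of any other prime is superset-closed and hence contains $N_+$, so no other point is closed. Transporting this along the homeomorphism underlying $W\cong\Spec N$, I conclude that the poset $W$, ordered by inclusion, has a unique maximal element $\p_0$. Here one must take minor care only to note that an isomorphism of binoided spaces is in particular a homeomorphism, so that closedness of a point — equivalently maximality in the specialisation order — is preserved.

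I would then exploit finiteness of $\Spec M$ together with the fact that open subsets are subset-closed. Since $W$ is finite and $\p_0$ is its only maximal element, every $\mathfrak{q}\in W$ lies below some maximal element, which must be $\p_0$, so $\mathfrak{q}\subseteq\p_0$ and therefore $W\subseteq\{\mathfrak{q}\in\Spec M\mid\mathfrak{q}\subseteq\p_0\}$. Writing $\p_0=\langle x_1,\dots,x_k\rangle$ with $x_{k+1},\dots,x_n$ the remaining generators of $M_+$, Proposition~\ref{proposition:minimal-open-set-prime-ideal} identifies this containing set with the minimal open neighbourhood $D(f)$ of $\p_0$, where $f=x_{k+1}+\dots+x_n$. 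Since $W$ is itself an open set containing $\p_0$, minimality forces $D(f)\subseteq W$, and the two inclusions yield $W=D(f)$. The main obstacle is thus the converse — specifically, extracting from ``affine'' the existence of a unique closed point and matching it to the top of the poset $W$; once the unique maximal element is in hand, the sandwich $D(f)\subseteq W\subseteq D(f)$ is forced by the minimal-neighbourhood property and costs nothing further.
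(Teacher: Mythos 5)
Your proof is correct and follows essentially the same route as the paper's: the affine hypothesis gives a unique closed point, hence a unique maximal prime $\p_0$ in $W$, and $W$ is then identified with the minimal open neighbourhood $D(f)$ of $\p_0$ from Proposition~\ref{proposition:minimal-open-set-prime-ideal}. You merely spell out the two inclusions (and the easy ``if'' direction) that the paper leaves implicit.
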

                \begin{proof}
                    If $W$ is affine, there exists a binoid $N$ such that $W\cong \Spec N$. In particular, there exists a unique closed point in $W$. This point corresponds to a prime ideal $\p$. Then $W= \{ {\mathfrak q} | {\mathfrak q} \subseteq \p \}$ and $W$ is the open affine subset from Proposition~\ref{proposition:minimal-open-set-prime-ideal}.
                \end{proof}
                
                \begin{remark}
                    An affine open subset $D( {\mathfrak a})$ of $\Spec M$ defines an open affine subset $D( {\mathfrak a}\KK[M])$ of $ \Spec \KK[M]$, but the converse is not true. For example, let $M=(x, y\mid x+y=\infty)$. Then $\Spec^\bullet M=\{\langle x\rangle, \langle y\rangle\}$ is not affine as a binoid scheme, because it does not have a unique closed point. But $ \Spec^\bullet M= \Spec M \setminus \langle x, y\rangle$ is defined by $D(X+Y)\subseteq \Spec^\bullet\KK[M]$. The point is that the element $X+Y$ is not combinatorial, since it involves explicitly the operation $+$ of the ring.
                \end{remark}

\subsection{Sheaves}
                
                From now on, we concentrate on schemes of the type $(U, \O_M\restriction _U)$, where $U$ is an open subset of the affine scheme $\Spec M$, where $M$ is a finitely generated binoid. We refer to such schemes as \emph{quasi-affine} schemes.
                
                An $M$-set is a pointed set $(S, p)$, together with an action of $M$, that satisfies the usual properties (see \cite[Section 1.10]{Boettger}). For any $f\in M$, $S_f$ is an $M_f$-set.

                \begin{definition}\label{definition:sheaf-of-M-sets}
                    Let $(X, \O_X)$ be a binoid scheme. A \emph{sheaf of $\O_X$-sets on $X$}, or \emph{$\O_X$-sheaf}, is a sheaf $\F$ on $X$ such that $\F(U)$ is a $\O_X(U)$-set for any open subset $U$ of $X$, and such that the restrictions are compatible with the actions.
                \end{definition}
                
                \begin{definition}
                    Let $S$ be an $M$-set. The \emph{sheafification of $S$} is the $\O_{\Spec M}$-sheaf $\widetilde{S}$, associated to the presheaf defined on the fundamental open subsets by
                    \[ \widetilde{S}(D(f))=\Gamma(D(f), \widetilde{S})=S_f \, .  \]
                \end{definition}
                
                \begin{remark}
                    Like for binoids, that we saw in Remark~\ref{remark:structure-presheaf}, we can explicitly describe the presheaf $\widetilde{S}$ as
                    \[
                    \begin{tikzcd}[cramped, cramped, row sep = 0ex,
                    /tikz/column 1/.append style={anchor=base east},
                    /tikz/column 2/.append style={anchor=base west}]
                    D(f_1, \dots, f_r)\rar[mapsto]& \displaystyle\Gamma\left(\bigcup D(f_i), \widetilde{S}\right)\subseteq S_{f_1}\times\dots\times S_{f_r},
                    \end{tikzcd}
                    \]    
                    where $M_{f_1}\times \dots \times M_{f_r}$ acts on $S_{f_1}\times\dots\times S_{f_r}$ and we have once again the compatibility conditions on the intersections.
                    Similarly, we can look at the stalk $\widetilde{S}_\p$ at a point $\p\in\Spec M$, and it is easy to see that $\widetilde{S}_\p=S_\p $.
                \end{remark}
                
                \begin{remark}
                 The sheafification of the maximal ideal $M_+$ of $M$ and $\O_M$ are isomorphic as sheaves on the punctured spectrum, i.e. 
                 \[ \widetilde{M_+}\restriction_{\Spec^\bullet M}\cong \O_M\restriction_{\Spec^\bullet M} \, . \]
                 It is not true in general that $\widetilde{M_+}\cong \O_M$ on the whole spectrum, since their global sections are different.
                \end{remark}
                
                \begin{definition}\label{definition:invertible-sheaf}
                Let $(X, \O_X)$ be a binoid scheme and $\F$ a sheaf of $\O_X$-sets on $X$. We say that $\F$ is \emph{locally free of rank $n$} if there exists an $n\in \NN$ and a cover $\{U_i\}_{i\in I}$ of $X$, such that for every $i$
                 \[\F \restriction_{U_i}\cong \left(\O_X\restriction_{U_i}\right)^{\cupdot n} \, ,  \]
                 see \cite[Definition 1.9.2]{Boettger} for the definition of the pointed union of $M$-sets.
                 If $n=1$, we say that the sheaf is \emph{invertible}. We denote by $\mathrm{Loc}_n(X)$ the isomorphism classes of locally free $\O_X$-sheaves of rank $n$.
                \end{definition}

                \begin{remark}
               	Like in the context of schemes (see \cite[Excerise III.5.18]{hartshorne1977algebraic}), there is a correspondence between locally free sheaves and geometric vector bundles on binoid schemes. Starting (locally) with an $M$-set $S$, we get a binoid $\Sym(S)= M \cupdot S \cupdot \Sym^2(S) \cupdot \Sym^3(S) \ldots $, where $\Sym^n(S)$ is $S^{\wedge n}$ modulo the action of the symmetric group $\sum_n$. The spectrum of this binoid is then a geometric realization of $S$.                	
                \end{remark}

                \begin{definition}
                The set of isomorphism classes of locally free sheaves of rank $1$ on $X$, equipped with the operation $\wedge_{\O_X}$ is a group, called the \emph{Picard group} of $X$ and it is denoted by $\Pic(X)$.
                \end{definition}

                \begin{example}\label{example:vector-bundles-of-x+y=2z}
                 Let $M=(x, y, z\mid x+y=2z)$. The sheafification of the ideal $\langle x, z\rangle$, $\widetilde{\langle x, z\rangle}$, is a line bundle on $X=\Spec^\bullet M$. Its order in the group $\Pic(X)$ is 2.
                \end{example}

We will later work mainly  with the cohomological description of the Picard group. For this, we also look at sheaves of abelian groups on a binoid scheme.
                
                \begin{example} Important examples of constant sheaves of groups are $\ZZ$, $(\KK, +)$, $(\KK^*, \cdot)$ for a field $\KK$ and the difference group of a binoid $M$, that we denote by $\Gamma^\bullet$.
                \end{example}
                
                \begin{definition}\label{definition:gamma}
                    Let $M$ be an integral binoid. In this case, the subset $M^\bullet$ is a monoid. The \emph{difference binoid of $M$} is $\Gamma=(-M^\bullet+M)$. The \emph{difference group of $M$} is the group $\Gamma^\bullet$.
                \end{definition}
                
               If $M$ is integral then the constant presheaf is already a sheaf, but if $M$ is not integral, then the sheafification is not trivial.

                \begin{remark}
                    If $M$ is an integral and cancellative binoid, then the map to the difference group is injective and the structure sheaf of Remark~\ref{remark:structure-presheaf} can be defined as
                    \[
                    D(f_1, \dots, f_r)\longmapsto \bigcap_{i=1}^r M_{f_i}\subseteq \Gamma\, .
                    \]
                \end{remark}
                
                \begin{example}
                    Let $M=(x, y\mid x+y=\infty)$. Then $\Spec^\bullet M$ can be covered by the two disjoint open subsets $D(x)$ and $D(y)$. Let $G$ be an abelian group. Then $\Gamma(D(x)\cup D(y), G)=\Gamma(D(x), G) \oplus \Gamma(D(y), G) = G\oplus G$ because $ D(x)\cap D(y) = \emptyset $.
                \end{example}
                
                \begin{remark}
                    Let $U$ be an open subset of $\Spec M$ and let $G$ be a constant sheaf, so the sheafification of a constant presheaf. Then $G(U)=G^k$ where $k$ is the number of connected components of $U$. In the previous example, $U=\Spec^\bullet M$ and $k=2$ because $V(\langle x\rangle)\cap U$ and $V(\langle y\rangle)\cap U$ are the two components.
                \end{remark}
                
                The sheaf we are most interested in is the sheaf of units of a binoid scheme, which is not constant.
                
                \begin{definition}
               Let $(X, \O_X)$ be a binoid scheme. Its \emph{sheaf of units} is the sheaf of abelian groups on $X$
                    \begin{equation*}
                      \O^*_X:\Top_{X} \longrightarrow  \mathrm{Ab} \, , U \longmapsto \left(\O_X(U)\right)^* \, .
                      \end{equation*}
                  Here, given a binoid $M$, $M^*$ denotes the group of its units. If $X=\Spec M$, we denote $\O^*_X$ with $\O^*_M$.
                \end{definition}
                
                The following easy observations were also stated in \cite {PirashviliCohomology} for monoid schemes, see in particular \cite[Proposition 2.2.ii, Lemma 2.4, Proposition 3.1]{PirashviliCohomology}.
                                                
                \begin{theorem}\label{theorem:vanishing-combinatorial-cohomology-affine}
                    Let $M$ be a binoid and $\F$ a sheaf of abelian groups on $\Spec M$. Then
                    \[                    \H^i(\Spec M, \F)=0     \]
                    for any $i\geq 1$.
                \end{theorem}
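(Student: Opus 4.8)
The plan is to exploit the one structural feature of $\Spec M$ that trivializes everything: the maximal ideal $M_+$ is a point whose only open neighborhood is the whole space $\Spec M$, as recorded above. Indeed, open subsets of $\Spec M$ are exactly the subset-closed collections of primes, and every prime is contained in $M_+$, so any open set meeting $M_+$ must already be all of $\Spec M$. My strategy is to use this to identify the global sections functor with a stalk functor, and then to invoke the exactness of stalks.

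First I would establish, for an arbitrary sheaf $\F$ of abelian groups, the natural isomorphism
\[\Gamma(\Spec M, \F)\;\cong\;\F_{M_+}\,.\]
By definition the stalk is the filtered colimit $\F_{M_+}=\varinjlim_{U\ni M_+}\F(U)$ over the open neighborhoods of $M_+$. Since $\Spec M$ is the only such neighborhood, this colimit collapses to $\F(\Spec M)=\Gamma(\Spec M,\F)$, and the identification is manifestly functorial in $\F$.

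Then I would conclude by exactness. The stalk functor $\F\mapsto\F_{M_+}$ on sheaves of abelian groups is exact, being a filtered colimit (here in fact nothing more than evaluation at the whole space); combined with the isomorphism above this makes $\Gamma(\Spec M,-)$ an exact functor. Since $\H^i(\Spec M,-)$ is by definition the $i$-th right derived functor of $\Gamma(\Spec M,-)$, an exact global sections functor forces all higher derived functors to vanish: applying the exact functor $\Gamma(\Spec M,-)$ to an injective resolution $\F\to I^\bullet$ keeps the complex exact, so $\Gamma(\Spec M,I^\bullet)$ is acyclic in positive degrees and hence $\H^i(\Spec M,\F)=0$ for all $i\geq 1$. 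There is essentially no obstacle beyond the opening observation; the entire content of the theorem is that the closed point $M_+$ sees only one open set, so that taking global sections coincides with taking a stalk.
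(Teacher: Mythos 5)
Your proof is correct and is exactly the intended argument: the paper itself gives no proof but cites \cite{PirashviliCohomology}, having already recorded the key fact that the only open subset containing $M_+$ is $\Spec M$, so that $\Gamma(\Spec M,-)$ coincides with the (exact) stalk functor at $M_+$ and all higher derived functors vanish. Your care in routing the exactness through stalks (rather than naively evaluating a sheaf sequence on the whole space) is precisely what makes the argument complete.
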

                                                
                \begin{corollary}
                    Let $(X, \O_X)$ be a separated binoid scheme, $\U$ an affine cover of $X$ and $\F$ a sheaf of abelian groups on $X$. Then $\H^\bullet(X, \F)=\vH^\bullet(\U, \F)$.
                \end{corollary}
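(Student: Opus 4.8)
The plan is to exhibit $\U$ as an acyclic (Leray) cover for $\F$ and then invoke the comparison theorem between \v{C}ech and derived-functor cohomology. Recall the general principle: if $\U=\{U_i\}_{i\in I}$ is an open cover of $X$ such that every finite intersection $U_{i_0}\cap\cdots\cap U_{i_p}$ satisfies $\H^q(U_{i_0}\cap\cdots\cap U_{i_p},\F)=0$ for all $q\geq1$, then the canonical maps $\vH^p(\U,\F)\to\H^p(X,\F)$ are isomorphisms for every $p$. This is Leray's theorem on acyclic covers, and it follows from the first-quadrant spectral sequence $E_2^{p,q}=\vH^p(\U,\underline{\H}^q(\F))\Rightarrow\H^{p+q}(X,\F)$: once the presheaves $\underline{\H}^q(\F)$ with $q\geq1$ restrict to zero on each member of the cover, the $E_2$-page is concentrated in the row $q=0$ and collapses, identifying $\H^\bullet(X,\F)$ with $\vH^\bullet(\U,\F)$.

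It therefore remains to check the acyclicity hypothesis, which I would do in two steps. First, because $X$ is separated and each $U_i$ is affine, every finite intersection $U_{i_0}\cap\cdots\cap U_{i_p}$ is again an affine binoid scheme. Second, Theorem~\ref{theorem:vanishing-combinatorial-cohomology-affine} asserts that on an affine binoid scheme $\Spec M$ one has $\H^q(\Spec M,\F)=0$ for all $q\geq1$ and \emph{every} sheaf of abelian groups $\F$ — not merely the quasi-coherent ones. Combining these two facts, each finite intersection is affine and hence acyclic for $\F$, so $\U$ is an acyclic cover and Leray's theorem yields the asserted equality $\H^\bullet(X,\F)=\vH^\bullet(\U,\F)$.

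The main obstacle is the first step, namely deducing affineness of finite intersections from separatedness. For ordinary schemes this is classical: the intersection $U_i\cap U_j$ is identified with the preimage of the diagonal inside the affine $U_i\times U_j$, and separatedness makes this diagonal closed, so the intersection is affine. The argument transports to binoid schemes once one has the product and closed-diagonal description of separatedness in this category, and an induction extends it from pairwise to arbitrary finite intersections. If that formalism is not yet available, one may instead restrict to the standard covering $\Spec^\bullet M=\bigcup_i D(x_i)$ by generators of $M_+$, where affineness of intersections is automatic from $D(x_{i_0})\cap\cdots\cap D(x_{i_p})=D(x_{i_0}+\cdots+x_{i_p})$ together with the characterization of affine open subsets proved above; this special case already suffices for the \v{C}ech computations on punctured spectra that follow.
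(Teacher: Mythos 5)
Your argument is correct and is precisely the intended one: the paper leaves this corollary as an ``easy observation'' following Theorem~\ref{theorem:vanishing-combinatorial-cohomology-affine}, and your route --- separatedness gives affineness of finite intersections, the vanishing theorem makes the cover acyclic, and the Cartan--Leray spectral sequence then collapses --- is the standard derivation. Your closing caveat is also well placed, since in all applications in the paper the cover is the standard one by the $D(x_i)$, where affineness of intersections is immediate from $D(x_{i_0})\cap\cdots\cap D(x_{i_p})=D(x_{i_0}+\cdots+x_{i_p})$.
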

                
                \begin{proposition}\label{proposition:constant-sheaf-trivial-cohomology}
                    Let $M$ be an integral binoid. For any open subscheme $U$ of $\Spec M$ and any constant sheaf of abelian groups $\G$ on $U$, we have
                    \[
                    \H^i(U, \G)=0
                    \]
                    for all $i\geq 1$.
                \end{proposition}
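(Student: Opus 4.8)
The plan is to reduce the statement to the standard fact that a constant sheaf on an \emph{irreducible} space is flasque, hence acyclic; the real work is to show that integrality forces every nonempty $U$ to be irreducible. First I would recall that the open subsets of $\Spec M$ are exactly the subset-closed families of primes, while the closed subsets are the superset-closed ones. Integrality means that $M^\bullet = M \setminus \{\infty\}$ is closed under addition, i.e. $a+b=\infty$ forces $a=\infty$ or $b=\infty$; this says precisely that $\{\infty\}$ is a prime ideal, and since every ideal contains $\infty$, it is the unique minimal prime, contained in every $\p \in \Spec M$. Consequently, any nonempty subset-closed $U$ must contain $\{\infty\}$: if $\p \in U$, then every $\mathfrak{q} \subseteq \p$ lies in $U$, in particular $\{\infty\}$.

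Next I would check that this makes $U$ irreducible with generic point $\{\infty\}$. Since the closed sets are the superset-closed families, the closure of $\{\infty\}$ in $\Spec M$ is $V(\{\infty\}) = \Spec M$, so its closure inside $U$ is all of $U$; equivalently, any two nonempty opens of $U$ both contain $\{\infty\}$ and therefore meet, which is the defining property of irreducibility. In particular, every nonempty open subset of $U$ is itself irreducible, hence connected.

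Finally I would conclude via flasqueness. For a connected open $W \subseteq U$ the constant sheaf satisfies $\G(W) = G$, and for all nonempty opens $W' \subseteq W$ the restriction map is the identity of $G$, hence surjective; the restriction to the empty set is surjective onto $0$. Thus $\G$ is flasque on $U$, and flasque sheaves are acyclic, giving $\H^i(U, \G) = 0$ for all $i \geq 1$ (the case $U = \emptyset$ being trivial). The one point requiring care is that $U$ need not be affine — it may have several maximal elements, hence several closed points — so Theorem~\ref{theorem:vanishing-combinatorial-cohomology-affine} does not apply directly; the essential input is integrality, which plants the generic point $\{\infty\}$ in every nonempty open and thereby supplies the irreducibility on which the whole argument rests.
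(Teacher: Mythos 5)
Your proof is correct: integrality makes $\{\infty\}$ a prime ideal contained in every other prime, so every nonempty open subset of $\Spec M$ contains it, $U$ is irreducible, the constant sheaf is flasque on $U$, and flasque sheaves are acyclic. The paper states this proposition without proof (deferring to \cite{PirashviliCohomology}), and your argument is exactly the standard one used there for monoid schemes, including the correct observation that Theorem~\ref{theorem:vanishing-combinatorial-cohomology-affine} does not apply since $U$ need not be affine.
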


                \begin{proposition}\label{proposition:cohomology-vector-bundles}
                    There is an isomorphism of groups
                    \[
                    \Pic(X)\cong \H^1(X, \O^*_X) \, .
                    \]
                \end{proposition}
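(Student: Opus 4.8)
The plan is to follow the classical identification of invertible sheaves with the first \v{C}ech cohomology of the sheaf of units, adapted to the category of $\O_X$-sets, and then to pass from \v{C}ech to derived cohomology. The cornerstone is the sheaf of automorphisms of the structure sheaf. First I would show that for every open $U\subseteq X$ the group $\Aut_{\O_X}(\O_X\restriction_U)$ of $\O_X\restriction_U$-set automorphisms is naturally isomorphic to $\O_X^*(U)$. The point is that an $\O_X(U)$-set endomorphism $\varphi$ of $\O_X\restriction_U$ is determined by the image of the neutral element $0\in\O_X(U)$, since $\varphi(f)=f+\varphi(0)$ by $\O_X(U)$-equivariance; and such a $\varphi$ is bijective exactly when $\varphi(0)$ is a unit, with inverse ``add $-\varphi(0)$''. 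This yields an isomorphism of sheaves of groups $\underline{\Aut}_{\O_X}(\O_X)\cong\O_X^*$, identifying the automorphism ``add $u$'' with the unit $u$.

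Next I would construct a map $\Pic(X)\to\vH^1(X,\O_X^*)$. Given an invertible sheaf $\F$, I choose a cover $\{U_i\}$ on which $\F$ trivialises, with isomorphisms $\phi_i\colon\F\restriction_{U_i}\xrightarrow{\sim}\O_X\restriction_{U_i}$. On each overlap the composite $g_{ij}=\phi_i\circ\phi_j^{-1}$ lies in $\Aut_{\O_X}(\O_X\restriction_{U_i\cap U_j})\cong\O_X^*(U_i\cap U_j)$ by the first step, and the relation $g_{ij}+g_{jk}=g_{ik}$ on triple overlaps (written additively) shows that $(g_{ij})$ is a \v{C}ech $1$-cocycle. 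Replacing the $\phi_i$ by other trivialisations alters $(g_{ij})$ by a coboundary, and passing to a refinement is compatible with the restriction maps, so the class of $(g_{ij})$ in $\vH^1(X,\O_X^*)=\varinjlim_\U\vH^1(\U,\O_X^*)$ depends only on the isomorphism class of $\F$. For the inverse, from a cocycle $(g_{ij})$ on a cover $\{U_i\}$ I glue the trivial sheaves $\O_X\restriction_{U_i}$ along the automorphisms $g_{ij}$ on overlaps; the cocycle condition is exactly what makes this gluing datum descend to a well-defined $\O_X$-sheaf that is locally isomorphic to $\O_X$, hence invertible. These two assignments are mutually inverse by construction.

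It remains to check compatibility with the group structures and to pass to derived cohomology. If $\F$ and $\F'$ have transition cocycles $(g_{ij})$ and $(g'_{ij})$ with respect to a common trivialising cover, then $\F\wedge_{\O_X}\F'$ is trivialised by the smash products of the chosen trivialisations and has transition cocycle $(g_{ij}+g'_{ij})$; thus the bijection $\Pic(X)\to\vH^1(X,\O_X^*)$ is a group homomorphism and hence an isomorphism. Finally, since $\vH^1(X,\F)\cong\H^1(X,\F)$ for every sheaf of abelian groups $\F$ on an arbitrary space --- and in the separated, affine-coverable case this also follows from the Corollary to Theorem~\ref{theorem:vanishing-combinatorial-cohomology-affine} --- we conclude $\Pic(X)\cong\H^1(X,\O_X^*)$.

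The main obstacle is not the formal bookkeeping of cocycles but the two places where the pointed-set nature of $\O_X$-sheaves departs from the module-theoretic picture: verifying in the first step that every $\O_X$-set automorphism of $\O_X$ really is of the form ``add a unit'', and checking in the gluing construction that the glued object is again locally free of rank $1$ rather than some larger $\O_X$-set. Both reduce to the single uniqueness statement that an $\O_X$-set map out of $\O_X$ is determined by the image of $0$, which makes the smash-product and automorphism computations behave exactly as in the classical case.
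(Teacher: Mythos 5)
Your argument is correct. The paper itself offers no proof of this proposition; it is one of the ``easy observations'' cited from Pirashvili's work on monoid schemes, so there is nothing to compare against line by line. Your route is the standard one and it is the right one here: the only points where the binoid setting could diverge from the classical module-theoretic picture are exactly the two you isolate, and both are settled by the observation that an $\O_X$-set map out of $\O_X$ satisfies $\varphi(f)=f+\varphi(0)$ (and automatically fixes the basepoint $\infty$), so that $\underline{\Aut}_{\O_X}(\O_X)\cong\O_X^*$ and the smash product of two trivialised copies of $\O_X$ is again $\O_X$ with transition data adding. The final passage from \v{C}ech to derived cohomology in degree one is valid on an arbitrary space, and on the binoid schemes of this paper it also follows directly from the acyclicity of affine covers stated just above the proposition. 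No gaps.
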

                
                In our computations, we will mainly work with this characterization of the Picard group, but in some examples we will also present invertible sets or line bundles explicitly.
                
                \begin{definition}\label{definition:local-picard-group}
                    Let $M$ be a binoid. Its \emph{local Picard group} is the Picard group of its punctured spectrum, $\Pic^{\loc}(M)=\Pic(\Spec^\bullet M)$.
                \end{definition}
                
                \begin{remark}
                    If $M$ is torsion-free, cancellative and reduced, the sheaf $\O^*_M$ can be embedded in a flasque sheaf
                    \[
                    \begin{tikzcd}[cramped]
                    \O^*\rar[hook]&\displaystyle\bigoplus_{\begin{subarray}{c}
                        \p \text{ minimal}\\
                        \text{prime ideal of $M$}\end{subarray}} M^*_{\p}\, .
                    \end{tikzcd}
                    \]
                \end{remark}
                
                If $M$ is integral, the sheaf above on the right is just the constant sheaf $\Gamma$.

\subsection{\texorpdfstring{\v{C}}{C}ech-Picard complex}

                In this Section, we are going to study the \v{C}ech complex for the sheaf $\O^*_X$ on the covering of $\Spec^\bullet M$, given by $\{D(x_i)\}$.
                
                \begin{definition}\label{definition:cech-picard-complex}
                    Let $(X, \O_X)$ be a binoid scheme. Let $\U=\{U_i\}_{i\in [n]}$ be a finite affine covering of $X$. The \emph{\v{C}ech-Picard complex of $X$} is the \v{C}ech co-chain complex of $\O_X^*$ with respect to $\U$
                    \begin{equation*}
                    \begin{tikzcd}[cramped, row sep = 0pt]
                    \C(\U, \O_X^*): \C^0(\U, \O_X^*)\rar["\partial^0"] &\C^1(\U, \O_X^*)\rar["\partial^1"] &\dots \rar["\partial^{p-1}"] & \C^p(\U, \O_X^*)\rar["\partial^p"]&\dots \, ,
                    \end{tikzcd}
                    \end{equation*}
                    where the groups are
                    \[
                    \C^p(\U, \O^*_X)=\bigoplus_{1\leq i_0< i_1<\dots<i_p\leq n} \O^*_X\left(U_{i_0}\cap U_{i_1}\cap\dots\cap U_{i_p}\right)
                    \]
                    and the coboundary maps are defined as (see \cite[Section III.4]{hartshorne1977algebraic})
                    \[
                    \left(\partial^{p-1}(\sigma)\right)_{i_0, \dots, i_p}=\sum_{k=0}^{p}(-1)^k\sigma_{i_0, \dots, \widehat{i_k}, \dots, i_p}\restriction_{U_{i_0, \dots, i_p}} \, .
                    \]
                \end{definition}

                \begin{remark}
                	Since $\Spec^\bullet M$ can be covered by $\{D(x_i)\}$ and we know that $\O^*_M$ is acyclic on these affine open subsets, we can compute the local Picard group of $M$ as the first cohomology group of the \v{C}ech-Picard complex on $\{D(x_i)\}$,
                	\[                	\Pic^{\loc} M=\vH^1(\{D(x_i)\}, \O^*) \, .       	\]
                \end{remark}
                
                \begin{example}\label{example:pic-x+y=2z}
                    Let $M=(x, y, z\mid x+y=2z)$ as in Example~\ref{example:vector-bundles-of-x+y=2z} above and let $X=\Spec^\bullet M$. We know that there exists at least an invertible sheaf in $\Pic(X)$, and it has order 2.                    
                    In details, $M_x = (x, -x, y, z \mid x+(-x)=0, y=2z+(-x))$ and so $M_x^*=(x, -x\mid x+(-x)=0)\cong \ZZ$, where this integer represents the coefficient of $x$. Similarly for $y$. On the intersection, when we invert both $x$ and $y$, also $z$ gets inverted. We have
                    \begin{align*}
                    M_{x+y}^*& \cong \ZZ^2 \, ,
                    \end{align*}
                    where $x$ and $z$ are the generators on the right and $y=2z-x$. The maps in the \v{C}ech complex come from the localizations $M_x\stackrel{\iota_y}{\longrightarrow} M_{x+y}$ and $M_y\stackrel{\iota_x}{\longrightarrow} M_{x+y}$ when restricted to the units. So the complex looks like
                    \begin{equation*}
                    \begin{tikzcd}[baseline=(current  bounding  box.center), cramped, row sep = 0,
                    /tikz/column 1/.append style={anchor=base east},
                    /tikz/column 2/.append style={anchor=base west}]
                    M_x^* \oplus M_y^*\rar["\partial^0"]& M_{x+y}^*\rar["\partial^1"] &0 \\
                    \hspace{.4em}\rotatebox[origin=c]{-90}{$\cong$}\hspace{1.8em}\rotatebox[origin=c]{-90}{$\cong$} \hspace{.4em}& \hspace{1.2em}\rotatebox[origin=c]{-90}{$\cong$}\\
                    \hspace{.4em}\ZZ\hspace{.2em}\oplus\hspace{.3em}\ZZ\hspace{.3em} \rar & \ZZ\oplus\ZZ\rar &0 \\
                    (\hspace{.2em}\alpha\hspace{.6em}, \hspace{.8em}\beta\hspace{.2em})\rar[mapsto]&(\alpha-\beta,2\beta)  \, .
                    \end{tikzcd}
                    \end{equation*}
                    To compute the first cohomology, we need to compute the quotient $ \ker(\partial^1) / \im(\partial^0)   \cong M_{x+y}^* / \im(\partial^0) $. The image of $\partial^0$ is generated by $(1,0)$ and $(1, 2)$ as a subgroup of $\ZZ^2$, so the quotient is $ {\ZZ}/{\ZZ}    \oplus     {\ZZ}/ {2\ZZ}   \cong     {\ZZ}/ {2\ZZ}$. So $\Pic(X)\cong  {\ZZ}/ {2\ZZ}$, and we already found a representative of the only non-trivial class in this group, in Example~\ref{example:vector-bundles-of-x+y=2z}.
                \end{example}
                
                \begin{example}
                	Let us consider the binoid $M=(x, y, z, w\mid x+y=z+w)$ and compute its local Picard group. Its punctured spectrum can be covered by the four open subsets $\{D(x), D(y), D(z), D(w)\}$ and the \v{C}ech complex of $\O^*_X$ with respect to this covering looks like
                	\begin{equation*}
                	\begin{tikzcd}[baseline=(current  bounding  box.center), cramped, row sep = 0ex,
                	/tikz/column 1/.append style={anchor=base east},
                	/tikz/column 2/.append style={anchor=base west}]
                	\ZZ^4\rar["\partial^0"] & \ZZ^{14}\rar["\partial^1"]&\ZZ^{12}\rar["\partial^2"] & \ZZ^{3}\rar["\partial^2"] & 0.
                	\end{tikzcd}
                	\end{equation*}
                	One shows directly that $H^2=0$. Since $-4+14-12+3=1$ we know that the rank of $\H^1$ will be 1. It is not hard to examine the relations between elements in the kernel of $\partial^1$ and in the image of $\partial^0$ to conclude that this group has to be free, so, in particular,
                	$\Pic^{\loc}(M)=\ZZ$.
                	A generator of this group is represented by the sheafification of the ideal $\langle x, z\rangle$.
                \end{example}

                \begin{lemma}\label{lemma:M*=M*red}
                    Let $M$ be a binoid. Then $M^*\cong M^*_{\red}$.
                \end{lemma}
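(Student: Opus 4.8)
The plan is to realise $M_{\red}$ as the quotient binoid $M/\nil(M)$ and to analyse the reduction homomorphism $\pi\colon M\to M_{\red}$ directly on units. Recall that quotienting by the ideal $\nil(M)$ amounts to collapsing every nilpotent element to $\infty$ while leaving the remaining elements untouched; concretely, $\pi$ is injective on $M\setminus\nil(M)$ and sends precisely the elements of $\nil(M)$ to $\infty$. Since $\pi$ is a binoid homomorphism it carries units to units, so it restricts to a group homomorphism $\pi^*\colon M^*\to M^*_{\red}$, and the whole statement reduces to showing that $\pi^*$ is bijective.

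The first key step is the observation that a unit can never be nilpotent (unless $M$ is the degenerate binoid with $0=\infty$, which we exclude). Indeed, if $u\in M^*$ with inverse $-u$ satisfied $nu=\infty$ for some $n\geq 1$, then, using that $\infty$ is absorbing together with $n u + n(-u)=n(u+(-u))=n\cdot 0 = 0$, we would obtain $\infty = nu + n(-u) = 0$, a contradiction. Hence $M^*\subseteq M\setminus\nil(M)$, and injectivity of $\pi^*$ follows immediately from the injectivity of $\pi$ on $M\setminus\nil(M)$.

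For surjectivity I would lift a unit together with its inverse. Given $\bar v\in M^*_{\red}$ with inverse $\bar w$, choose preimages $v,w\in M$ under $\pi$; since $\bar v,\bar w\neq\infty$ we may take $v,w\notin\nil(M)$. Then $\pi(v+w)=\bar v+\bar w=0=\pi(0)$, and because $0$ is not nilpotent we have $0\notin\nil(M)$, so $v+w\notin\nil(M)$; injectivity of $\pi$ on $M\setminus\nil(M)$ then forces $v+w=0$. Thus $v\in M^*$ with inverse $w$, and $\pi^*(v)=\bar v$, which proves surjectivity.

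The only genuinely delicate point is this surjectivity argument, i.e.\ that a lift of a unit is again a unit: a priori a preimage of $\bar v$ is merely some element whose image happens to be invertible, and one has to exploit the non-nilpotency of $0$ to promote the relation $\pi(v+w)=0$, which holds in the reduction, to the honest equality $v+w=0$ in $M$ itself. Everything else is a formal consequence of the explicit description of the reduction map.
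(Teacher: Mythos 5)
Your proof is correct and follows essentially the same route as the paper: both analyse the reduction morphism $\varphi_{\red}\colon M\to M_{\red}$ restricted to units, using that $\varphi_{\red}$ is a bijection outside $\nil(M)$ and that $\nil(M)\cap M^*=\emptyset$. You are in fact somewhat more careful than the paper, which only notes that $\varphi_{\red}(M^*)\subseteq M^*_{\red}$, whereas you explicitly verify surjectivity by lifting a unit together with its inverse and using that $0\notin\nil(M)$.
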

                \begin{proof}
                    Let $\varphi_{\red}:M\longrightarrow M_{\red}$ be the reduction morphism. We will prove that it is an isomorphism when restricted to the group of units. We have that $\ker(\varphi_{\red})=\nil(M)$, $\varphi_{\red}$ is a bijection outside this ideal and $\nil(M)\cap M^*=\emptyset$. So we only have to prove that $\varphi_{\red}(M^*)\subseteq M^*_{\red}$, but this is true for any binoid homomorphism.
                \end{proof}
                
                This Lemma proves that, unlike for rings, the nilpotent elements do not play any role in the computation of units of a binoid. We will use this fact later in Section~\ref{section:injections}.
                
                \begin{corollary}
                	Let $(X, \O_X)$ be a binoid scheme. Then $\O^*_X\cong \O^*_{X_{\red}}$.
                \end{corollary}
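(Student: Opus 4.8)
The plan is to deduce the statement sheaf-theoretically from Lemma~\ref{lemma:M*=M*red}, exploiting that reduction is a natural operation which does not alter the underlying topological space. First I would record that for any binoid the nilradical $\nil(M)$ is contained in every prime ideal: if $nf=\infty\in\p$ then primality of $\p$ forces $f\in\p$. Consequently the reduction morphism $\varphi_{\red}\colon M\to M_{\red}$ induces a bijection $\Spec M_{\red}\to\Spec M$ which respects inclusions in both directions, hence a homeomorphism of spectra. Globalising over an affine cover, $X$ and $X_{\red}$ carry the same underlying topological space, so the claim becomes the comparison of two sheaves of abelian groups living on one and the same space.

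Next I would treat the affine case $X=\Spec M$ on the basis of fundamental open subsets $\{D(f)\}$. By definition of the sheaf of units and of $\O_M$ we have $\O^*_M(D(f))=(M_f)^*$ and, under the identification of spectra above, $\O^*_{M_{\red}}(D(f))=((M_{\red})_f)^*$. The key observation is that reduction commutes with localization: the map $M_f\to (M_{\red})_f$ induced by $\varphi_{\red}$ is again a reduction morphism, since it is surjective, its target is reduced, and its kernel consists of nilpotents; thus $(M_{\red})_f$ is canonically the reduction of $M_f$. Applying Lemma~\ref{lemma:M*=M*red} to the binoid $M_f$ then yields an isomorphism $(M_f)^*\xrightarrow{\ \sim\ }((M_{\red})_f)^*$, realised by $\varphi_{\red}$ itself.

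Because every one of these isomorphisms is induced by the single natural transformation $\varphi_{\red}$, they are automatically compatible with the restriction maps $M_f\to M_g$ attached to inclusions $D(g)\subseteq D(f)$: the relevant square commutes by functoriality of reduction and of taking units. Hence the assignment defines an isomorphism of sheaves on the basis $\{D(f)\}$, and therefore an isomorphism $\O^*_M\cong\O^*_{M_{\red}}$ of sheaves on $\Spec M$. For a general binoid scheme $(X,\O_X)$ I would choose an affine cover $\{U_i\cong\Spec M_i\}$; the affine isomorphisms just constructed are canonical, so they agree on overlaps and glue to the desired global isomorphism $\O^*_X\cong\O^*_{X_{\red}}$.

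The main obstacle, and the only point requiring genuine care, is the compatibility with restrictions---equivalently, that the isomorphism of Lemma~\ref{lemma:M*=M*red} is natural in $M$ and that reduction genuinely commutes with localization. Once one phrases the whole argument through the single morphism $\varphi_{\red}$ rather than through an abstract isomorphism, this naturality is built in, and the remaining work is the routine verification that $(M_{\red})_f$ is the reduction of $M_f$.
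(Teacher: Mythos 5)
Your proof is correct and follows the route the paper intends: the corollary is stated as an immediate consequence of Lemma~\ref{lemma:M*=M*red}, applied on each basic open set $D(f)$ via the localizations $M_f$. You supply exactly the two details the paper leaves implicit --- that reduction commutes with localization (so $(M_{\red})_{f}$ is the reduction of $M_f$, and the spectra are canonically homeomorphic since $\nil(M)$ lies in every prime) and that the unit-group isomorphism is induced by the single natural map $\varphi_{\red}$, hence compatible with restrictions and with gluing over an affine cover.
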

                
                \begin{example}
                    The above is not true for the torsionfreeification $M_{\tf}$. Let $n\in \NN$, $n\geq 2$ and let $M=(x\mid nx=0)$. Then $M^*\cong \ZZ_n$, but $M_{\tf}\cong\{0, \infty\}$, so $M^*_{\tf}\cong 0$.
                \end{example}

                \begin{remark}
                    Since $\O^*_X\cong \O^*_{X_{\red}}$, also their cohomologies will be the same. So we can concentrate on the reduced case to study the line bundles and the higher cohomology of the sheaf of units. This is not true for $\KK[M ]$
                \end{remark}

                \begin{remark}\label{degreezerounit}
                A binoid is called $\ZZ$-graded if there exists a map $M \setminus \{ \infty \} \rightarrow \ZZ$ compatible with the addition as long as $x+y \neq \infty$. Simplicial binoids are graded. If $M$ is graded, then all its localizations are graded and if $M$ is generated in degree $1$, then we get an exact sequence of sheaves on the punctured spectrum
                \[  0 \longrightarrow \O_0^* \longrightarrow   \O^*   \longrightarrow \ZZ \longrightarrow 0 \, , \]
                where on the right we have the constant sheaf $\ZZ$ and on the left we have the units of degree $0$.
               This last sheaf is the sheaf of units on the corresponding \emph{projective binoid scheme}, which is homeomorphic to the punctured spectrum $U$ but has as its structure sheaf only the degree zero part. If $M$ is positive and $U$ connected, then the corresponding cohomology sequence is
               \[  0 \longrightarrow \ZZ \longrightarrow   \Pic^{\operatorname{proj} } M   \longrightarrow \Pic^{\loc} M \longrightarrow H^1(U, \ZZ)  \longrightarrow \ldots  \, . \]
                \end{remark}

\section{Simplicial Binoids}\label{section:simplcial-binoids}
                
                In this section, we concentrate on the case of binoids arising from simplicial complexes, namely simplicial binoids. We look at the sheaf of groups $\O^*_{M_\triangle}$ restricted to the quasi-affine case and we relate properties of the  \v{C}ech-Picard complex introduced in Definition~\ref{definition:cech-picard-complex} to the simplicial complex. With that, we will provide explicit formulas for the computation of $\H^i(\Spec^\bullet M_\triangle, \O^*_{M_\triangle})$.

                \subsection{The Spectrum of a Simplicial Binoid}
                
                Recall that a \emph{simplicial complex} is a subset $\triangle$ of the power set of the finite \emph{vertex set} $V$ that is closed under taking subsets, i.e.\ $G\in\triangle$ and $F\subseteq G$ implies $F\in\triangle$. Its elements are called \emph{faces} and the maximal faces (under inclusion) are called \emph{facets}. The \emph{dimension} of a face is the number of vertices in it minus 1 and the dimension of $\triangle$ is the maximal dimension of its faces. A \emph{simplicial subcomplex} $\triangle'$ of $\triangle$ is a subset of $\triangle$ that is again a simplicial complex. If $W\subseteq V$ is a subset of the vertices, the \emph{restriction of $\triangle$ to $W$} is $\triangle_W=\{F\in\triangle\mid F\subseteq W\}$. When we say that $\triangle$ is a simplicial complex on $V$, we assume, unless otherwise specified, that the singletons are faces, so $\{v\}\in\triangle$, for every $v\in V$.
                
                \begin{definition}
                    Let $\triangle$ be a simplicial complex on $V$. Its \emph{simplicial binoid} is the binoid with presentation
                    \[                    M_\triangle=\left(x_v, \, v \in V \midd x_H= \sum_{v \in H} x_v =\infty \text{ for }  H \notin \triangle\right) \, .                    \]
                \end{definition}
                
                Of course, the definition of a simplicial binoid is made in such a way that its binoid algebra gives the Stanley-Reisner ring $\KK[\triangle]$. There exists an order-reversing correspondence between faces of the simplicial complex and prime ideals of the binoid (see \cite[Corollary 6.5.13]{Boettger}). In particular, the minimal prime ideals correspond to the (complements of) facets.

            We want to study open subsets of $\Spec M$ in the simplicial case. There is the following correspondence between simplicial subcomplexes of $\triangle$ and closed subsets of $\Spec M_\triangle$.
            \begin{equation*}
            \begin{tikzcd}[baseline=(current  bounding  box.center), row sep = -1em,
            /tikz/column 1/.append style={anchor=base east},
            /tikz/column 2/.append style={anchor=base west},
            /tikz/column 3/.append style={anchor=base west},
            cramped]
            \begin{array}{c}
            \text{Closed subsets}\\
            \text{of }\Spec M_\triangle
            \end{array}\rar[leftrightarrow] &\begin{array}{c}
            \text{Radical ideals of }M_\triangle
            \end{array}\rar[leftrightarrow]  &
            \begin{array}{c}
            \text{Subcomplexes of }\triangle
            \end{array}\\
            V\rar[leftrightarrow] & \begin{array}{c}
            I = \p_1\cap\dots\cap\p_r\\
            \p_i \text{ minimal prime in } V
            \end{array}\rar[leftrightarrow]  &\{F_1, \dots, F_r\} \, .
            \end{tikzcd}
            \end{equation*}
            Here, $\{F_1, \dots, F_r\}$ are the facets of the corresponding simplicial subcomplex. In particular, $V(x_i)$ corresponds to the subsimplicial complex $\triangle'=\triangle_{[n] \setminus \{i\}}$, the restriction of $\triangle$ to $[n] \setminus \{i\}$. Since our goal is to discuss sheaves and, in particular, to build the \v{C}ech complex of the sheaf of units on the punctured spectrum, we are interested in studying open subsets of $\Spec M_\triangle$. We recall that a simplicial binoid is semifree and reduced.
            
            \begin{theorem}[{\cite[Theorem 6.5.8]{Boettger}}]Let $\triangle$ be a simplicial complex on $V$. The binoid $M_\triangle$ is finitely generated by $\#V=n$ elements, semifree and reduced. Conversely, every commutative binoid $M$ satisfying these properties is a simplicial binoid.
            More precisely, $M$ is isomorphic to $M_\triangle$, with $\triangle=\{F\subseteq W\mid\sum_{w\in F}w\neq\infty\}$ for a minimal generating set $W$.
            \end{theorem}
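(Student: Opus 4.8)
The plan is to treat the two directions separately, using an explicit normal-form model of $M_\triangle$ for the forward implication and the uniqueness of semibasis representations together with reducedness for the converse.

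For the first statement I would realise $M_\triangle$ concretely. Writing the free commutative binoid on $\{x_v : v\in V\}$ as $\NN^{(V)}\cupdot\{\infty\}$ (finitely supported functions with an absorbing point), the binoid $M_\triangle$ is its quotient by the smallest congruence identifying $x_H$ with $\infty$ for every non-face $H$. Since every defining relation has $\infty$ on one side, this congruence merely collapses an ideal to $\infty$, and closure of $\triangle$ under subsets shows that the collapsed ideal is exactly $\{a : \supp(a)\notin\triangle\}$ (if $\supp(a)$ contains a non-face then $\supp(a)$ is itself a non-face). Hence $M_\triangle\cong\{a\in\NN^{(V)} : \supp(a)\in\triangle\}\cupdot\{\infty\}$. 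From this description finite generation by the $n=\#V$ elements $x_v$ is immediate; the displayed bijection shows each $f\in M_\triangle^\bullet$ has a unique expression $\sum_v n_v x_v$, so $(x_v)_{v\in V}$ is a semibasis and $M_\triangle$ is semifree; and since multiplying by a positive integer preserves the support, $mf=\infty$ forces $\supp(f)\notin\triangle$, i.e. $f=\infty$, so $\nil(M_\triangle)=\{\infty\}$ and $M_\triangle$ is reduced.

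For the converse, I would fix a semibasis $W$ of $M$, necessarily a minimal generating set, and set $\triangle=\{F\subseteq W : \sum_{w\in F} w\neq\infty\}$. This is a simplicial complex: if $F\in\triangle$ and $F'\subseteq F$ then $\sum_{w\in F'} w$ cannot be $\infty$, for otherwise $\sum_{w\in F} w$ would be $\infty$ by absorption. Mapping $x_w\mapsto w$ defines a binoid homomorphism $\phi:M_\triangle\to M$, well defined because every relation $x_H=\infty$ of $M_\triangle$ holds in $M$ by the very definition of $\triangle$, and surjective because $W$ generates $M$.

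It remains to prove $\phi$ is injective, and this is the heart of the argument. Both $M$ and $M_\triangle$ are semifree on $W$, so on nonzero elements $\phi$ reads $\sum_w n_w x_w\mapsto\sum_w n_w w$, and injectivity on $M_\triangle^\bullet$ is immediate from uniqueness of the representation in $M$. The only genuine point is that $\phi$ maps $M_\triangle^\bullet$ bijectively onto $M^\bullet$, i.e. that for a support $S$ with positive coefficients one has $\sum_{w\in S} n_w w\neq\infty$ if and only if $S\in\triangle$. One direction is pure absorption: if $\sum_{w\in S} w=\infty$ then every $\sum_{w\in S} n_w w$ equals $\infty$. The reverse direction is exactly where reducedness enters: if $S\in\triangle$, so $g=\sum_{w\in S} w\neq\infty$, yet some $\sum_{w\in S} n_w w=\infty$, then with $N=\max_w n_w$ we get $Ng=\infty$ while $g\neq\infty$, making $g$ a nonzero nilpotent and contradicting $\nil(M)=\{\infty\}$. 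Thus the supports occurring in $M^\bullet$ are precisely the faces of $\triangle$, $\phi$ is a bijection, and hence an isomorphism. I expect this last equivalence to be the main obstacle: without reducedness the coefficients $n_w$ could matter, the face condition on supports would fail to capture $M$, and the stated description of $\triangle$ would be incorrect.
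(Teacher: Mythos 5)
Your proof is correct. Note that the paper itself gives no argument for this statement --- it is quoted verbatim from B\"ottger's thesis [Theorem 6.5.8] --- so there is no internal proof to compare against; your two ingredients (the explicit normal-form model $M_\triangle^\bullet \cong \{a \in \NN^{(V)} : \supp(a)\in\triangle\}$, obtained because the defining relations only collapse an ideal, and the reducedness step in the converse, where $\sum_{w\in S} n_w w = \infty$ with $S\in\triangle$ would make $g=\sum_{w\in S} w$ a nonzero nilpotent via $Ng=\infty$) are exactly where the content lies, and you have identified correctly that reducedness is what forces the face structure to be detected by supports alone rather than by coefficients. The only point worth adding for completeness is the observation that each $w$ in the semibasis $W$ is itself nonzero, so that the singletons are faces of $\triangle$ as the paper's convention requires.
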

                                              
            In the following Lemma, we denote by $\langle x_1, \dots, \widehat{x_i}, \dots, x_n\rangle$ the prime ideal generated by all the variables except $x_i$.
            
            \begin{lemma}\label{simplicialaxisprimeideal}
                Let $\triangle$ be a simplicial complex on $V=[n]$. Then $\langle x_1, \dots, \widehat{x_i}, \dots, x_n\rangle\in\Spec^\bullet M_\triangle$ for every $i\in V$.
            \end{lemma}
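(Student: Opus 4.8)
The plan is to show two things about $\p_i := \langle x_1, \dots, \widehat{x_i}, \dots, x_n\rangle$: that it is a prime ideal of $M_\triangle$, so it lies in $\Spec M_\triangle$, and that it differs from the maximal ideal $M_+$, so that it actually lies in the punctured spectrum $\Spec^\bullet M_\triangle = \Spec M_\triangle \setminus M_+$. The cleanest route is to describe the complement $M_\triangle \setminus \p_i$ explicitly and read off primality from it, rather than to manipulate the generators directly.

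First I would use semifreeness to pin down membership in $\p_i$. Since $M_\triangle$ is semifree with semibasis $(x_v)_{v\in V}$, every $f\in M^\bullet$ has a unique expression $f=\sum_v n_v x_v$ with $\supp(f)=\{x_v\mid n_v\neq 0\}$, and from the description of the generated ideal one checks that $f\in\p_i$ exactly when $f=\infty$ or $\supp(f)$ meets $\{x_j\mid j\neq i\}$. Consequently the complement consists of $0$ together with the elements whose support lies in $\{x_i\}$, i.e. $M_\triangle\setminus\p_i=\{n x_i\mid n\in\NN\}$. Here I rely on the standing convention that the singleton $\{i\}$ is a face, which (via the presentation, since the only relations are $x_H=\infty$ for $H\notin\triangle$) guarantees $n x_i\neq\infty$ for all $n$; thus these elements are genuinely distinct and finite, and the set really is all of the complement.

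Primality is then immediate: the complement $\{n x_i\mid n\in\NN\}$ contains $0$ and is closed under addition, hence is a submonoid of $M_\triangle$, which is precisely the condition for $\p_i$ to be prime, and $0\notin\p_i$ makes it proper. Equivalently, one may simply invoke the face–prime correspondence recalled above: $\p_i$ is the prime attached to the face $\{i\}\in\triangle$, generated by the variables outside that face. To place $\p_i$ in the punctured spectrum I would then note that $x_i\in M_+$ but $x_i\notin\p_i$ (its support is $\{x_i\}$, which misses $\{x_j\mid j\neq i\}$), so $\p_i\subsetneq M_+$ and in particular $\p_i\neq M_+$.

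This is a short argument, and the only place where care is genuinely required is the hypothesis that $\{i\}$ is a face: were it not, we would have $x_i=\infty$, the complement would collapse to $\{0\}$, and $\p_i$ would coincide with $M_+$, so the statement would fail. Thus the convention that singletons are faces is exactly what the proof consumes, and keeping track of it is the one substantive point; everything else is routine bookkeeping with the semifree structure.
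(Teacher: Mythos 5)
Your argument is correct. The paper itself gives no proof of this lemma, implicitly relying on the order-reversing correspondence between faces of $\triangle$ and prime ideals of $M_\triangle$ cited from B\"ottger (under which $\langle x_1,\dots,\widehat{x_i},\dots,x_n\rangle$ is the prime attached to the face $\{i\}$) — an alternative you yourself note. Your explicit verification via semifreeness — identifying the complement as the submonoid $\{nx_i\mid n\in\NN\}$, which is nonempty and closed under addition precisely because the singleton $\{i\}$ is a face, and then observing $x_i\in M_+\setminus\p_i$ to land in the punctured spectrum — is a sound, self-contained substitute, and you correctly isolate the singleton-face convention as the one hypothesis the statement actually consumes.
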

            
            \begin{proposition}\label{proposition:minimal-covering}
                $\{D(x_i)\}$ is a covering by affine subsets of $\Spec^\bullet M_\triangle$ that is minimal among all the possible affine coverings.
            \end{proposition}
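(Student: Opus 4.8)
The plan is to establish the two assertions separately: that $\{D(x_i)\}$ is an affine covering of $\Spec^\bullet M_\triangle$, and that no affine covering uses fewer open sets. The first assertion is immediate. Each $D(x_i)$ is affine, since the affine open subsets of $\Spec M_\triangle$ are exactly the sets of the form $D(f)$; and the $D(x_i)$ cover the punctured spectrum because any $\mathfrak{q}\in\Spec^\bullet M_\triangle$ is a proper subset of $M_+$, hence omits at least one generator $x_i$, giving $\mathfrak{q}\in D(x_i)$. This is just the standard covering recorded earlier.

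The substance is the minimality. First I would single out the $n$ primes $\p_i=\langle x_1,\dots,\widehat{x_i},\dots,x_n\rangle$, which lie in $\Spec^\bullet M_\triangle$ by Lemma~\ref{simplicialaxisprimeideal}. They are pairwise distinct, since $\p_i$ is the unique one among them omitting $x_i$, and each is a maximal element of the poset $\Spec^\bullet M_\triangle$: the only prime strictly containing $\p_i$ is $M_+$, which has been removed. One also checks directly that $\p_i\in D(x_j)$ precisely when $x_j\notin\p_i$, i.e.\ precisely when $j=i$, so $\p_i$ is covered by exactly one member of the standard covering.

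The key step is the claim that any affine open subset $W\subseteq\Spec^\bullet M_\triangle$ contains at most one of the $\p_i$. Indeed $W=D(f)$ for some non-unit $f$, and since an affine binoid scheme has a unique closed point (as used in the characterization of affine opens, together with Proposition~\ref{proposition:minimal-open-set-prime-ideal}), the set $W$ has a unique maximal element $\p$ and satisfies $W=\{\mathfrak{q}\in\Spec M_\triangle\mid \mathfrak{q}\subseteq\p\}$. If $W$ contained two of the $\p_i$, say $\p_i$ and $\p_j$, then $\p_i\subseteq\p$ and $\p_j\subseteq\p$ with $\p\in\Spec^\bullet M_\triangle$; maximality of $\p_i$ and $\p_j$ in $\Spec^\bullet M_\triangle$ then forces $\p_i=\p=\p_j$. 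A counting argument finishes the proof: any affine covering must cover the $n$ distinct points $\p_1,\dots,\p_n$, and by the claim each of its members accounts for at most one of them, so every affine covering has at least $n$ members. As $\{D(x_i)\}$ has exactly $n$, it is minimal.

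The main obstacle is the key step, and in particular its dependence on Lemma~\ref{simplicialaxisprimeideal}: it is exactly the fact that each coatom $\p_i$ is \emph{genuinely prime} in the simplicial setting that supplies the $n$ distinct maximal points forcing the count. For a general binoid this fails, as the earlier example $M=(x,y,z\mid x+y=2z)$ shows, where $\langle x,y\rangle$ is not prime and two generators already cover the punctured spectrum; any correct argument must invoke the simplicial hypothesis precisely here.
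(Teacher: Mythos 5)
Your proof is correct, but the minimality argument runs along a different track than the paper's. The paper's proof also starts from the coatoms $\p_i=\langle x_1,\dots,\widehat{x_i},\dots,x_n\rangle$ supplied by Lemma~\ref{simplicialaxisprimeideal}, but then uses semifreeness of $M_\triangle$ directly: if $\p_i\in D(f)$ with $D(f)\subseteq\Spec^\bullet M_\triangle$, then $f\notin\p_i$ forces $\supp(f)=\{x_i\}$, i.e.\ $f=mx_i$ with $m\geq 1$, so $D(f)=D(x_i)$. This shows that $D(x_i)$ is the \emph{unique} affine open of the punctured spectrum containing $\p_i$, and hence that every affine covering must literally contain each $D(x_i)$ among its members --- a slightly stronger conclusion than minimal cardinality. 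You instead argue via the poset structure of affine opens (unique closed point, so $W=\{\mathfrak q\mid \mathfrak q\subseteq \p\}$) that each affine open contains at most one of the maximal points $\p_i$, and then count; this yields that every affine covering has at least $n$ members, which suffices for the statement as phrased but does not by itself pin down the members as the $D(x_i)$. Your route has the advantage of not invoking semifreeness beyond what is needed to know the $\p_i$ are prime, and it transfers verbatim to the generalization in Lemma~\ref{lemma:minimal-covering} (replace the $\p_i$ by the maximal primes of $U$); the paper's route buys the uniqueness of the minimal covering. Your closing remark correctly identifies Lemma~\ref{simplicialaxisprimeideal} as the point where the simplicial hypothesis enters, matching the paper's example $(x,y,z\mid x+y=2z)$.
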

            \begin{proof}
                We already know that this is a covering. To prove that it is minimal, it is enough to observe three things. First, thanks to Lemma~\ref{simplicialaxisprimeideal}, we need to cover $\langle x_1, \dots, \widehat{x_i}, \dots, x_n\rangle$.
                Second, $D(x_i)$ is the only affine open subset here that covers $\langle x_1, \ldots, \widehat{x_i}, \ldots, x_n\rangle$.
                Third, any other affine open subset that covers $\langle x_1, \ldots, \widehat{x_i}, \ldots, x_n\rangle$, needs to come from an element $f$ that has the same support of $x_i$, i.e.\ $f=mx_i$, for some $m\in\NN$, and $D(f)=D(x_i)$.
            \end{proof}

            \begin{definition}
            	For $H \subseteq V$, we set
            	\[    D(H):=D(x_H) =D( \sum_{v \in H} x_{v} ) = \bigcap_{v \in H } D(x_v) \, . \]
            \end{definition}

            The given simplicial complex has a direct effect on the intersection pattern in $\Spec M_\triangle$.
            
            \begin{lemma}\label{lemma:intersection-open-subsets}
                Let $\{D(x_i)\}$ be the covering of $\Spec^\bullet M_\triangle$ as above. Then $D(H)\neq \emptyset $ if and only if $H$ is a face of $\triangle$.
            \end{lemma}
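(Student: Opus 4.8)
The plan is to reduce the claim to two elementary equivalences: that $x_H=\infty$ exactly when $H$ is not a face, and that a fundamental open set $D(f)$ on a reduced binoid is empty exactly when $f=\infty$. Chaining these gives $D(H)\neq\emptyset \iff x_H\neq\infty \iff H\in\triangle$.

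For the first equivalence I would use that $M_\triangle$ is semifree on the semibasis $\{x_v\}_{v\in V}$ and reduced. Every element $f\neq\infty$ is then a unique sum $\sum_{v}n_v x_v$ with well-defined support $\supp(f)$, and the defining relations impose $x_H=\sum_{v\in H}x_v=\infty$ precisely for $H\notin\triangle$. Because $\triangle$ is closed under passing to subsets, the sum $\sum_{v\in H}x_v$ equals $\infty$ if and only if its support $H$ is itself a non-face; hence $x_H=\infty \iff H\notin\triangle$.

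For the second equivalence, $D(x_H)=\{\mathfrak{p}\in\Spec M_\triangle : x_H\notin\mathfrak{p}\}$ is empty iff $x_H$ lies in every prime ideal, i.e.\ in $\nil(M_\triangle)=\bigcap_{\mathfrak{p}}\mathfrak{p}$; since $M_\triangle$ is reduced this forces $x_H=\infty$. If a self-contained witness is preferred to invoking that the nilradical is the intersection of the primes, then for a face $H$ I would choose a facet $F\supseteq H$ and take the associated minimal prime $\mathfrak{p}_F=\langle x_v : v\notin F\rangle$; the inclusion $\supp(x_H)=H\subseteq F$ shows $x_H\notin\mathfrak{p}_F$, so $\mathfrak{p}_F\in D(H)$ and $D(H)\neq\emptyset$. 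Note also that, for $H\neq\emptyset$, every $x_v$ is a non-unit, so $x_H\in M_+$ and $M_+\notin D(H)$; thus $D(H)$ automatically sits inside $\Spec^\bullet M_\triangle$ and the statement is consistent with working on the punctured spectrum.

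I expect no serious obstacle here, as the argument is essentially formal once the presentation of $M_\triangle$ is in hand. The only point requiring care is the equivalence $x_H=\infty\iff H\notin\triangle$: one must use semifreeness to make $\supp$ well defined and the downward closure of $\triangle$ to identify ``$\supp(x_H)$ is a non-face'' with ``$H\notin\triangle$''.
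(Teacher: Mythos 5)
Your proposal is correct and, in substance, coincides with the paper's proof: the paper simply identifies $D(H)$ with $\{F\in\triangle \mid H\subseteq F\}$ via the face--prime-ideal correspondence, which is exactly your explicit witness (a facet $F\supseteq H$ gives the minimal prime $\mathfrak{p}_F=\langle x_v : v\notin F\rangle$ lying in $D(H)$). Your primary route through $x_H=\infty\iff H\notin\triangle$ together with reducedness and the nilradical is only a mild repackaging of the same facts, so there is nothing to flag.
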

            \begin{proof}
            	In terms of faces, which are in correspondence to the prime ideals of $M_\triangle$, we have
            	$D(H) = \{F \in \triangle| H \subseteq F \}$, from where the result follows.
            \end{proof}

            \begin{corollary}\label{corollary:cech-covering-nerve}
                The nerve of the covering of $\Spec^\bullet M_\triangle$ given by $\{D(x_i)\}$ is the simplicial complex itself.
            \end{corollary}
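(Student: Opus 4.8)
The plan is to unwind the definition of the nerve of an open covering and then identify its faces with those of $\triangle$ by a direct appeal to Lemma~\ref{lemma:intersection-open-subsets}. Recall that, given a covering $\{U_i\}_{i\in I}$ of a topological space, its nerve is the abstract simplicial complex on the index set $I$ whose faces are exactly the finite subsets $J\subseteq I$ for which $\bigcap_{i\in J}U_i\neq\emptyset$. In our situation the covering $\{D(x_i)\}$ of $\Spec^\bullet M_\triangle$ is indexed by the vertex set $V$, so its nerve is naturally a simplicial complex on $V$, just as $\triangle$ is; the first thing to record is therefore that the two complexes are built on the same vertex set.

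The heart of the argument is then a single equivalence. A subset $H\subseteq V$ is a face of the nerve if and only if $\bigcap_{v\in H}D(x_v)\neq\emptyset$. By the definition $D(H)=\bigcap_{v\in H}D(x_v)$, this intersection is precisely $D(H)$, and Lemma~\ref{lemma:intersection-open-subsets} asserts that $D(H)\neq\emptyset$ exactly when $H$ is a face of $\triangle$. Chaining these equivalences gives that $H$ is a face of the nerve if and only if $H$ is a face of $\triangle$, so the nerve and $\triangle$ have identical face sets on the same vertex set $V$ and hence coincide as simplicial complexes.

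I would also check the two boundary conventions to make the identification completely clean: the empty subset corresponds to the empty intersection, taken to be all of $\Spec^\bullet M_\triangle$ (nonempty), matching $\emptyset\in\triangle$; and each singleton $\{v\}$ is a face of the nerve because $D(x_v)\neq\emptyset$, matching our standing convention that every singleton lies in $\triangle$. There is essentially no obstacle here, as the statement is an immediate corollary of Lemma~\ref{lemma:intersection-open-subsets}; the only point requiring any care is verifying that the vertex sets of the two complexes are correctly identified, so that the equality of face sets genuinely upgrades to an equality of simplicial complexes rather than merely a combinatorial isomorphism.
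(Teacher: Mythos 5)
Your proof is correct and is precisely the argument the paper intends: the corollary is stated as an immediate consequence of Lemma~\ref{lemma:intersection-open-subsets}, and your unwinding of the nerve definition together with the identification $\bigcap_{v\in H}D(x_v)=D(H)$ is exactly that deduction. The extra care you take with the vertex sets and the singleton/empty-set conventions is sensible but does not change the substance.
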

                        
            The following Lemma generalizes Proposition~\ref{proposition:minimal-covering}.
            
            \begin{lemma}\label{lemma:minimal-covering}
                Let $U\subseteq\Spec^\bullet(M_\triangle)$ be an open subset. Then $U$ can be minimally covered by $\mathscr{V}=\{D(F_1), \dots, D(F_j)\}$, for some $F_1, \dots, F_j\in\triangle$.
            \end{lemma}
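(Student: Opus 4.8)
The plan is to translate the statement into finite poset combinatorics via the order-reversing correspondence between faces of $\triangle$ and prime ideals of $M_\triangle$, and then to argue exactly as in Proposition~\ref{proposition:minimal-covering}. First I would recall from Lemma~\ref{lemma:intersection-open-subsets} that a point of $\Spec^\bullet M_\triangle$ is a nonempty face $F\in\triangle$, and that $D(H)=\{F\in\triangle\mid H\subseteq F\}$. Since the open subsets of $\Spec M_\triangle$ are precisely the subset-closed subsets of the poset of prime ideals, and the face-prime correspondence reverses order, an open $U\subseteq\Spec^\bullet M_\triangle$ is exactly a \emph{superset-closed} collection of faces: if $F\in U$ and $F\subseteq F'\in\triangle$, then $F'\in U$. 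In particular $D(F)\subseteq U$ for every $F\in U$. I would also note that, combining the characterization of affine opens ($W$ is affine iff $W=D(f)$) with Lemma~\ref{lemma:intersection-open-subsets}, the nonempty affine open subsets are precisely the $D(F)$ with $F\in\triangle$.

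Next, because $\triangle$ is finite, the set $U$ has finitely many inclusion-minimal faces $F_1,\dots,F_j$, and I claim $U=\bigcup_{i=1}^j D(F_i)$. The inclusion $\bigcup_i D(F_i)\subseteq U$ follows from superset-closedness, since each $F_i\in U$. Conversely, every $F\in U$ contains one of the minimal faces $F_i$ (finiteness of the face poset), whence $F\in D(F_i)$; this gives $U\subseteq\bigcup_i D(F_i)$. Thus $\mathscr{V}=\{D(F_1),\dots,D(F_j)\}$ is an affine covering of $U$ of the required form.

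Finally, I would establish minimality by the same device used in Proposition~\ref{proposition:minimal-covering}. A nonempty affine open subset contained in $U$ is of the form $D(G)$ with $G\in U$ (as $G\in D(G)\subseteq U$), and such a $D(G)$ contains the point $F_i$ if and only if $G\subseteq F_i$. By minimality of $F_i$ in $U$ this forces $G=F_i$, so $D(F_i)$ is the unique affine open subset of $U$ that covers the point $F_i$. Consequently every affine covering of $U$ must contain each $D(F_i)$; since the $D(F_i)$ already cover $U$, the collection $\mathscr{V}$ is contained in every affine covering and is therefore the minimal one (indeed unique). The only step requiring real care is this last one—identifying $D(F_i)$ as the unique affine open through the minimal point $F_i$—but it is exactly the three-part argument of Proposition~\ref{proposition:minimal-covering} applied to the face $F_i$ in place of the vertex $x_i$; everything else is routine bookkeeping in the finite face poset.
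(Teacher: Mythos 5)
Your proof is correct and follows essentially the same route as the paper: the paper takes the maximal prime ideals in $U$, which under the order-reversing face--prime correspondence are exactly your inclusion-minimal faces $F_1,\dots,F_j$, and likewise reduces minimality to the argument of Proposition~\ref{proposition:minimal-covering}. You have merely written out in full the details that the paper leaves implicit (openness as superset-closedness of the face collection, and the identification of $D(F_i)$ as the unique affine open in $U$ through the point $F_i$).
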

            \begin{proof}
                Let $\p_1, \dots, \p_k$ be the maximal prime ideals in $U$. Then $F_1, \dots, F_j$ are the faces corresponding to these prime ideals. Minimality can be proved as in Proposition~\ref{proposition:minimal-covering}.
            \end{proof}

            We explore now the relation between \v{C}ech cohomology on the covering $\{D(x_i)\}$ of a constant sheaf of abelian groups on $\Spec^\bullet M_\triangle$ and simplicial cohomology of $\triangle$ with coefficients in that group.
            
            \begin{remark}\label{remarkcomponent}
                Let $G$ be a constant sheaf of abelian groups on $\Spec M$. We already know that
                \[    \Gamma(U, G)=  G^{\#\{\text{connected components of }U\}} \, .    \]
                Moreover, if $\{D(x_i)\}$ is the usual covering of the punctured spectrum given by the combinatorial open subsets, we can use it to compute the cohomology via \v{C}ech cohomology, and we can explicitly write the groups in the \v{C}ech complex as
                \[ \vC\left(D(X_{i_0})\cap \dots\cap D(x_{i_k}), G\right)= \left\{\begin{aligned}
                &G,    & \text{ if } D(X_{i_0})\cap \dots\cap D(x_{i_k})\neq \emptyset ,\\
                &0,                        & \text{ otherwise},
                \end{aligned}\right. \,  \]
                because this intersection is either empty or connected.
            \end{remark}
                            
            \begin{theorem}\label{thm:simplicial-cohomology}
                Let $\triangle$ be a simplicial complex on $V=[n]$. Let $\{D(x_i), 1\leq i\leq n\}$ be the usual acyclic covering of $\Spec^\bullet M_\triangle$ and let $G$ be the constant group sheaf on this space. Then \v{C}ech cohomology and simplicial cohomology are described by the same chain complexes
                \[ \C^\bullet(\triangle, G)=\vC^\bullet(\{D(x_i)\}, G )\, . \]               
                In particular, the cohomology groups are the same
                \[   \H^i(\triangle, G)=\vH^i(\{D(x_i)\}, G)      \]
                for all $i\geq 0$.
            \end{theorem}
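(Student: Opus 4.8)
The plan is to show that the two cochain complexes coincide term by term, both on the level of groups and of differentials, after which the equality of cohomology groups is immediate. So there is really nothing to do beyond assembling results already established above.

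First I would identify the degree-$p$ groups. By definition the simplicial cochain group is $\C^p(\triangle, G)=\bigoplus_{\{i_0<\dots<i_p\}\in\triangle}G$, the sum running over the $p$-dimensional faces of $\triangle$. On the \v{C}ech side, $\vC^p(\{D(x_i)\}, G)=\bigoplus_{1\le i_0<\dots<i_p\le n}\Gamma(D(x_{i_0})\cap\dots\cap D(x_{i_p}), G)$. By Remark~\ref{remarkcomponent} each summand equals $G$ when the intersection is nonempty and $0$ otherwise, and by Lemma~\ref{lemma:intersection-open-subsets} the intersection $D(x_{i_0})\cap\dots\cap D(x_{i_p})=D(\{i_0,\dots,i_p\})$ is nonempty precisely when $\{i_0,\dots,i_p\}$ is a face of $\triangle$. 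Hence the nonzero summands of $\vC^p$ are indexed by exactly the $p$-faces of $\triangle$, and there is a canonical degreewise identification $\C^p(\triangle, G)=\vC^p(\{D(x_i)\}, G)$.

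Second I would check that the differentials agree under this identification. Both coboundary maps are given by the same combinatorial formula, the alternating sum $\sum_{k=0}^{p}(-1)^k(\cdot)_{i_0,\dots,\widehat{i_k},\dots,i_p}$ over the face-deletion maps; compare the simplicial coboundary with the \v{C}ech coboundary of Definition~\ref{definition:cech-picard-complex}. The only nontrivial thing is that the restriction maps appearing in the \v{C}ech differential act as the identity on each $G$-summand. This is where the connectedness statement of Remark~\ref{remarkcomponent} is essential: whenever both $\{i_0,\dots,\widehat{i_k},\dots,i_p\}$ and $\{i_0,\dots,i_p\}$ are faces, the corresponding intersections are nonempty and connected, so $\Gamma(\cdot, G)=G$ on both, and the restriction $G\to G$ of the constant sheaf is the identity. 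Thus the signs and the underlying maps match exactly, and the two differentials are equal.

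Having matched groups and differentials in every degree, the two cochain complexes are literally the same, whence $\H^i(\triangle, G)=\vH^i(\{D(x_i)\}, G)$ for all $i\geq0$. The main (and essentially only) subtle point is the behaviour of the restriction maps: one must know that the relevant nonempty intersections are connected so that the constant sheaf has sections exactly $G$ with identity restrictions. Since Remark~\ref{remarkcomponent} guarantees precisely this, no genuine obstacle remains and the rest is bookkeeping of indices and signs.
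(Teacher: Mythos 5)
Your proposal is correct and takes essentially the same route as the paper: identify the nonzero \v{C}ech summands with the $p$-faces via Lemma~\ref{lemma:intersection-open-subsets} and Remark~\ref{remarkcomponent}, then match the differentials. You are somewhat more explicit than the paper about why the restriction maps of the constant sheaf act as the identity (connectedness of the nonempty intersections), which is a worthwhile point to spell out, but the argument is the same.
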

            \begin{proof}
                We understand what we have on the right. Thanks to Lemma~\ref{lemma:intersection-open-subsets} and Remark~\ref{remarkcomponent}, $\vC^j(\{D(x_i)\}, G)=G^{\triangle_j}$, i.e. we have a $G$ for any face in $\triangle$ of dimension $j$. The maps are the usual maps of the \v{C}ech complex.
                
                On the left hand side, we can follow a reasoning similar to \cite[Section 1.3]{miller2005combinatorial}.                
                The first thing to note is that the only difference between the complex for simplicial cohomology stated here and the one stated there is the degree $-1$, since they are considering reduced simplicial cohomology.                
                We can now easily see that the complex $\C^\bullet(\triangle, G)$, dual to the homology complex $\C_\bullet(\triangle, G)$, has the same groups as $\vC^\bullet(\{D(x_i)\}, G)$: in every degree $j\geq0$ this group is $G^{\triangle_j}$.                
                As for the maps, it is again easy to see that the map for vector spaces that Miller and Sturmfels describe in their book, can be written instead for $G$ and, when we restrict their complex to the non negative degrees, it is exactly the map of the \v{C}ech complex described above.
            \end{proof}

            \begin{corollary}\label{corollary:sheaf-cohomology}
                Since $\{D(x_i)\}$ is an acyclic covering of $\Spec^\bullet M_\triangle$, for every sheaf of abelian groups, the cohomology in Theorem~\ref{thm:simplicial-cohomology} is also equal to the sheaf cohomology $\H^i(\Spec^\bullet M_\triangle, G)$.
            \end{corollary}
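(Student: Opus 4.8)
The plan is to deduce this identity of cohomology groups from the general comparison theorem for acyclic covers (the Leray-type Corollary stated immediately after Theorem~\ref{theorem:vanishing-combinatorial-cohomology-affine}), and then to chain the resulting identification with Theorem~\ref{thm:simplicial-cohomology}. Concretely, I would first verify that the standard cover $\{D(x_i)\}$ of $\Spec^\bullet M_\triangle$ is acyclic for the constant sheaf $G$ (in fact for any sheaf of abelian groups), and then invoke the comparison theorem to replace \v{C}ech cohomology by genuine sheaf cohomology.

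\medskip

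The acyclicity check is the technical heart of the argument, and it is short. A typical $p$-fold intersection of members of the cover is
\[
D(x_{i_0})\cap\dots\cap D(x_{i_p}) = D(H), \qquad H=\{x_{i_0},\dots,x_{i_p}\},
\]
by the definition of $D(H)$. By Lemma~\ref{lemma:intersection-open-subsets} this is empty unless $H$ is a face of $\triangle$; when it is nonempty it is an \emph{affine} binoid scheme, since $D(x_H)\cong\Spec M_{x_H}$. On such an affine scheme, Theorem~\ref{theorem:vanishing-combinatorial-cohomology-affine} guarantees that $\H^i(D(H),\F)=0$ for all $i\geq 1$ and \emph{every} sheaf of abelian groups $\F$, in particular for the restriction of the constant sheaf $G$. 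The empty intersections contribute the zero group to the \v{C}ech complex, which is exactly consistent with the absence of the corresponding face in the simplicial complex. Hence $\{D(x_i)\}$ is an acyclic cover for $G$.

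\medskip

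With acyclicity in hand, I would note that $\Spec^\bullet M_\triangle$ is separated, since any intersection $D(x_i)\cap D(x_j)=D(x_i+x_j)$ of two affine opens is again affine, so the comparison Corollary applies and yields
\[
\vH^i(\{D(x_i)\}, G)\;\cong\;\H^i(\Spec^\bullet M_\triangle, G)
\]
for all $i\geq 0$. Combining this with the equality $\H^i(\triangle, G)=\vH^i(\{D(x_i)\}, G)$ of Theorem~\ref{thm:simplicial-cohomology} gives the full chain
\[
\H^i(\triangle, G)\;=\;\vH^i(\{D(x_i)\}, G)\;\cong\;\H^i(\Spec^\bullet M_\triangle, G),
\]
which is precisely the assertion. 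I do not expect a genuine obstacle here: the only point requiring care is confirming that the acyclicity hypothesis of the comparison theorem is met on \emph{all} finite intersections (including the bookkeeping of the empty ones), and this follows directly from Lemma~\ref{lemma:intersection-open-subsets} together with the affine vanishing of Theorem~\ref{theorem:vanishing-combinatorial-cohomology-affine}.
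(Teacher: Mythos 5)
Your proposal is correct and follows essentially the same route the paper intends: the corollary is stated as an immediate consequence of acyclicity of the cover, and you have simply filled in the standard verification (all nonempty intersections are affine of the form $D(x_H)$, hence acyclic for every abelian sheaf by Theorem~\ref{theorem:vanishing-combinatorial-cohomology-affine}, and separatedness lets the Leray-type comparison corollary apply). No gaps.
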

            
            The previous Corollary relates sheaf cohomology, \v{C}ech cohomology and simplicial cohomology in the case of the punctured spectrum. The next one, extends these results to any open subset of the spectrum.
            
            \begin{corollary}\label{corollary:cech-covering-nerve-simplicial-cohomology}
                Let $U\subseteq\Spec^\bullet(M_\triangle)$ be an open subset minimally covered by the covering $\mathscr{V}=\{D(F_1), \dots, D(F_j)\}$ for some $F_1, \dots, F_j\in\triangle$. We have
                \[
                \H^i(U, G)\cong{\vH^i({\mathscr{V}}, G)}\cong{\H^i(\nerve(\mathscr{V}), G)}.
                \]
            \end{corollary}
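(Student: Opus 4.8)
The plan is to prove the two isomorphisms separately, treating the statement as the relativisation of Corollary~\ref{corollary:sheaf-cohomology} and Theorem~\ref{thm:simplicial-cohomology} from the standard cover $\{D(x_i)\}$ of the whole punctured spectrum to the minimal cover $\mathscr{V}$ of an arbitrary open $U$. Throughout I would use the key identity $D(F_{l_0}) \cap \dots \cap D(F_{l_k}) = D\left( F_{l_0}\cup\dots\cup F_{l_k}\right)$, which follows from $D(f)\cap D(g)=D(f+g)$ together with $x_{F}+x_{F'}=x_{F\cup F'}$, so that every finite intersection of members of $\mathscr{V}$ is again of the form $D(H)$ for a subset $H\subseteq V$.

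For the first isomorphism $\H^i(U,G)\cong\vH^i(\mathscr{V},G)$, I would first note that each $D(F_l)=\Spec (M_\triangle)_{x_{F_l}}$ is an affine binoid scheme, so $\mathscr{V}$ is an affine cover of $U$; by the identity above the same holds for all finite intersections. Since $U$ is an open subscheme of $\Spec M_\triangle$ it is quasi-affine and hence separated, and by Theorem~\ref{theorem:vanishing-combinatorial-cohomology-affine} every abelian sheaf is acyclic on an affine binoid scheme. The Leray-type comparison (the Corollary to Theorem~\ref{theorem:vanishing-combinatorial-cohomology-affine}) then applies verbatim and yields $\H^\bullet(U,G)=\vH^\bullet(\mathscr{V},G)$.

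For the second isomorphism $\vH^i(\mathscr{V},G)\cong\H^i(\nerve(\mathscr{V}),G)$, the argument is exactly that of Theorem~\ref{thm:simplicial-cohomology}, now carried out for the cover $\mathscr{V}$ in place of $\{D(x_i)\}$. By definition a set of indices $\{l_0,\dots,l_k\}$ spans a face of $\nerve(\mathscr{V})$ precisely when $D(F_{l_0})\cap\dots\cap D(F_{l_k})=D(F_{l_0}\cup\dots\cup F_{l_k})$ is nonempty, which by Lemma~\ref{lemma:intersection-open-subsets} happens iff $F_{l_0}\cup\dots\cup F_{l_k}$ is a face of $\triangle$. Each such nonempty intersection is an affine binoid scheme and therefore connected, so by Remark~\ref{remarkcomponent} the constant sheaf $G$ has global sections $G$ on it and $0$ on the empty intersections. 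Hence $\vC^k(\mathscr{V},G)=\bigoplus_{k\text{-faces of }\nerve(\mathscr{V})}G=\C^k(\nerve(\mathscr{V}),G)$, and the two coboundary maps coincide since both are the alternating-sum restriction maps. This identifies the two cochain complexes and hence their cohomologies.

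I expect no serious obstacle here, since the content is already concentrated in Theorem~\ref{theorem:vanishing-combinatorial-cohomology-affine} and Theorem~\ref{thm:simplicial-cohomology}. The only points that need care are the bookkeeping that $\nerve(\mathscr{V})$ is taken on the index set $\{1,\dots,j\}$ rather than on $V$, and the observation that minimality of $\mathscr{V}$ (from Lemma~\ref{lemma:minimal-covering}) is what makes $\nerve(\mathscr{V})$ canonical. The empty-or-connected property of the intersections, which makes the constant-coefficient \v{C}ech complex agree with a simplicial cochain complex, is the one genuinely combinatorial input, and it is supplied by Remark~\ref{remarkcomponent}.
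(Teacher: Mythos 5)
Your proposal is correct, and the first isomorphism is handled exactly as in the paper: $\mathscr{V}$ and all its finite intersections are affine (each being some $D(H)$), every abelian sheaf is acyclic on affine binoid schemes by Theorem~\ref{theorem:vanishing-combinatorial-cohomology-affine}, so \v{C}ech cohomology on $\mathscr{V}$ computes $\H^i(U,G)$. For the second isomorphism you diverge from the paper in a way worth noting. The paper cites Theorem~\ref{thm:simplicial-cohomology} applied to the complex $\nerve(\mathscr{V})$ and then claims $\Spec^\bullet M_{\nerve(\mathscr{V})}\cong U$ as topological spaces, deducing the result from that homeomorphism. You instead re-run the proof of Theorem~\ref{thm:simplicial-cohomology} for the cover $\mathscr{V}$ itself: $D(F_{l_0})\cap\dots\cap D(F_{l_k})=D(F_{l_0}\cup\dots\cup F_{l_k})$ is nonempty iff the union is a face of $\triangle$ (Lemma~\ref{lemma:intersection-open-subsets}), i.e.\ iff $\{l_0,\dots,l_k\}$ is a simplex of $\nerve(\mathscr{V})$, and each nonempty intersection is affine hence connected, so the \v{C}ech complex of $G$ on $\mathscr{V}$ is literally the simplicial cochain complex of the nerve. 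Your route is in fact the more robust one: the paper's homeomorphism claim is delicate, since points of $U$ are faces $G\supseteq F_l$ for some $l$, whereas points of $\Spec^\bullet M_{\nerve(\mathscr{V})}$ are subsets $S$ with $\bigcup_{l\in S}F_l\in\triangle$, and the natural map $S\mapsto\bigcup_{l\in S}F_l$ need not be a bijection (already for $U=D(F_1)$ with $F_1$ a non-facet the two spaces have different cardinalities). Your direct chain-level identification does not need this and only uses what is actually true, namely that the constant-sheaf \v{C}ech complex sees only the nerve. The bookkeeping points you flag (the nerve lives on the index set $\{1,\dots,j\}$; minimality from Lemma~\ref{lemma:minimal-covering} pins down $\mathscr{V}$) are exactly the right ones.
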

            \begin{proof}
                The first isomorphism is easy because $D(F)$ is affine and $G$ is a sheaf of abelian groups, hence acyclic on the covering $\mathscr{V}$. Moreover, thanks to Theorem~\ref{thm:simplicial-cohomology}, we know that $\H^i(\nerve(\mathscr{V}),G) = \H^i(\Spec^\bullet M_{\nerve(\mathscr{V})}, G)$. It is enough to show that $\Spec^\bullet M_{\nerve(\mathscr{V})}\cong U$ as topological spaces. This is easily done thanks to the correspondences above between prime ideals and faces of the simplicial complex.
            \end{proof}
                        
        We want to understand now the localization of a simplicial binoid $M_\triangle$ at a face $F$ with the help of the \emph{link complex} of $\triangle$ at $F$. The link complex is the simplicial complex on $V \setminus F$ consisting of all faces $G \subseteq V \setminus F$ with the property that $F \cup G$ is a face of $\triangle$. For a face $F\in\triangle$, we once again denote by $x_F$ the sum $\sum_{v \in F} x_v $ and by $(M_\triangle)_{x_F} = \left(M_\triangle\right)_{\sum_{i\in F} x_v}$ the binoid localized at the variables corresponding to the elements of $F$. Our goal is to prove that $M_{\lk_\triangle(F)}\wedge {(\ZZ^F)}^\infty \cong {(M_\triangle)}_{x_F}$.
        
        \begin{lemma}\label{lemma:map_Zface-localization}
            Let $F$ be a face of $\triangle$. There exists an injective binoid homomorphism
            \[  {(\ZZ^F)}^\infty \stackrel{\psi}{\longrightarrow} {(M_\triangle)}_{x_F} \, ,
             \sum_{v\in F} n_v v \longmapsto  \sum_{v\in F} n_v x_v \, .  \]
        \end{lemma}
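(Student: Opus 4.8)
The plan is to define the map $\psi$ explicitly on the free pointed group ${(\ZZ^F)}^\infty$ and verify it lands in ${(M_\triangle)}_{x_F}$, is a binoid homomorphism, and is injective. The element $\sum_{v\in F}n_v v$ with arbitrary integer coefficients $n_v$ must map to something well-defined in the localization, so the first point is that each $x_v$ for $v\in F$ becomes a unit after localizing at $x_F=\sum_{v\in F}x_v$: indeed inverting the sum $x_F$ inverts each summand, because in a cancellative binoid a sum of elements is a unit precisely when each summand is. Hence $-x_v\in{(M_\triangle)}_{x_F}$ makes sense and $\psi\left(\sum_{v\in F}n_v v\right)=\sum_{v\in F}n_v x_v$ is a legitimate element for every integer vector $(n_v)$, with the base point $\infty$ of ${(\ZZ^F)}^\infty$ sent to $\infty$.

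Next I would check that $\psi$ is a binoid homomorphism. This is essentially formal: $\psi(0)=0$, $\psi(\infty)=\infty$, and additivity $\psi(a+b)=\psi(a)+\psi(b)$ follows coordinatewise from the defining formula, using that addition in ${(\ZZ^F)}^\infty$ is componentwise on the nonzero part. The only subtlety is confirming that no finite element of ${(\ZZ^F)}^\infty$ maps to $\infty$, i.e.\ that $\sum_{v\in F}n_v x_v\neq\infty$ in the localization for finite $(n_v)$; since $F\in\triangle$, the positive combination $x_F$ is nonnilpotent, and after inverting it the elements $x_v$ ($v\in F$) generate a genuine subgroup, so no relation forces a finite combination to $\infty$. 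This uses that $M_\triangle$ is reduced and cancellative.

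For injectivity, suppose $\sum_{v\in F}n_v x_v=\sum_{v\in F}m_v x_v$ in ${(M_\triangle)}_{x_F}$ with both sides finite. By cancellativity in the localization we may transpose to get $\sum_{v\in F}(n_v-m_v)x_v=0$, and the goal is to deduce $n_v=m_v$ for all $v\in F$. The key is that the elements $x_v$, $v\in F$, remain $\ZZ$-linearly independent after localization: because $M_\triangle$ is semifree with the $x_v$ as semibasis, distinct supports give distinct elements, and localizing at $x_F$ only adjoins formal inverses of the $x_v$ ($v\in F$) without introducing new relations among them — this is exactly the content that the localized units in the $F$-directions form a free abelian group on $F$. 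I expect this independence step to be the main obstacle: one must argue carefully that passing to $(M_\triangle)_{x_F}$ does not collapse the $\ZZ$-span of $\{x_v\mid v\in F\}$, which is best handled by exhibiting the grading (simplicial binoids are $\ZZ^V$-graded, or at least $\ZZ$-graded as noted in Remark~\ref{degreezerounit}) and reading off the $F$-components, or equivalently by composing with the projection ${(M_\triangle)}_{x_F}\to{(M_\triangle)}_{x_F}/\text{(link part)}$ that recovers the coefficients $n_v$. Once the coefficients are recovered faithfully, injectivity is immediate and the lemma follows.
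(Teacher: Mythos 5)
Your proposal is correct and follows essentially the same route as the paper, whose entire proof is the observation that each $x_v$, $v\in F$, becomes a unit in $(M_\triangle)_{x_F}$ (so the map is well defined on negative coefficients) and that injectivity follows from the semifreeness of $M_\triangle$. Your extra elaboration on why localizing at $x_F$ does not collapse the $\ZZ$-span of $\{x_v\mid v\in F\}$ is exactly the content the paper compresses into the word ``semifree,'' so there is nothing to add.
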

        \begin{proof}
            This is clear since $x_v$ is a unit on the right for any $v\in F$ and since $M_\triangle$ is semifree.
        \end{proof}

        \begin{lemma}\label{lemma:map_link-localization}
            Let $F$ be a face of $\triangle$. There exists an injective binoid homomorphism
            \[     M_{\lk_\triangle(F)} \longrightarrow  {(M_\triangle)}_{x_F} \, ,
            \sum_{w\in G} n_w x_w \longmapsto \sum_{w\in G} n_w x_w \, .    \]
        \end{lemma}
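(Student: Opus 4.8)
The plan is to build the map from the presentation of $M_{\lk_\triangle(F)}$: send each generator $x_w$, $w \in V \setminus F$, to its image in $(M_\triangle)_{x_F}$, extend additively, and then check separately that this respects the defining relations (well-definedness) and that it is injective. Throughout I would use that $M_\triangle$ is semifree with semibasis $(x_v)_{v\in V}$ and hence cancellative, and that every element of the localization $(M_\triangle)_{x_F}$ is represented by $g - m x_F$ with $g\in M_\triangle$ and $m\in\NN$. The single structural fact that both halves rest on is the following criterion: for $g\in M_\triangle^\bullet$ with $\supp(g)=S$, the image of $g$ in $(M_\triangle)_{x_F}$ is $\infty$ if and only if $S\cup F\notin\triangle$. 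This holds because $g$ becomes $\infty$ in the localization exactly when $g + m x_F=\infty$ in $M_\triangle$ for some $m$, and since $\supp(g + m x_F)=S\cup F$ for every $m\geq 1$, this occurs precisely when $S\cup F$ is a non-face.

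With the criterion in hand, well-definedness reduces to checking the defining relations of $M_{\lk_\triangle(F)}$, which are $x_H=\infty$ for the non-faces $H$ of the link. If $H\notin\lk_\triangle(F)$, then $H\cup F\notin\triangle$ by the very definition of the link, so the criterion forces $\sum_{w\in H} x_w=\infty$ in $(M_\triangle)_{x_F}$; hence every relation is preserved and the homomorphism exists. (The degenerate case is automatic: if $\{w\}\notin\lk_\triangle(F)$ then $x_w$ is already $\infty$ on both sides.) For injectivity I would read off coefficients. Since localizing at $x_F$ inverts only the generators $x_v$ with $v\in F$, cancellativity and semifreeness guarantee that a non-$\infty$ element of $(M_\triangle)_{x_F}$ has a well-defined coefficient in $\NN$ on each $x_w$ with $w\notin F$, and these coefficients are unchanged under $g\mapsto g - m x_F$. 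A face $G\in\lk_\triangle(F)$ satisfies $G\cup F\in\triangle$, so by the criterion the image of $\sum_{w\in G} n_w x_w$ is a non-$\infty$ element whose $w$-coefficients for $w\notin F$ are exactly the $n_w$. Thus distinct elements of $M_{\lk_\triangle(F)}^\bullet$ have distinct images and no non-$\infty$ element maps to $\infty$, giving injectivity.

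The only genuine obstacle is pinning down the localization cleanly — the $\infty$-criterion together with the preservation of the non-$F$ coefficients — since once these facts about $(M_\triangle)_{x_F}$ are established, both well-definedness and injectivity are immediate and purely bookkeeping. This is precisely the input that, combined with Lemma~\ref{lemma:map_Zface-localization}, feeds into the smash-product decomposition $(M_\triangle)_{x_F}\cong M_{\lk_\triangle(F)}\wedge {(\ZZ^F)}^\infty$ of Theorem~\ref{theorem:localization-simplicial-binoid-multiple}.
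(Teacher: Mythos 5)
Your proof is correct, and it follows exactly the route the paper intends: the paper omits the proof of this lemma (deferring to the thesis), but its one-line justification of Lemma~\ref{lemma:map_Zface-localization} and the surjectivity argument in Theorem~\ref{theorem:localization-simplicial-binoid-multiple} both rest on the same semifreeness bookkeeping you use. Your $\infty$-criterion ($g$ dies in $(M_\triangle)_{x_F}$ iff $\supp(g)\cup F\notin\triangle$) together with reading off the coefficients on $x_w$, $w\notin F$, is precisely the expected argument, and both halves check out.
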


\begin{theorem}\label{theorem:localization-simplicial-binoid-multiple}
    For any face $F\in \triangle$ there is an isomorphism
    \begin{equation}\label{equation:localization-simpicial-binoid-wedge}
    (M_\triangle)_{x_F}\cong M_{\triangle'}\wedge(\ZZ^F)^\infty \, ,
    \end{equation}
    where $\triangle'=\lk_\triangle(F)$.
\end{theorem}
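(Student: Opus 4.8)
The plan is to assemble the two injective homomorphisms from Lemmas~\ref{lemma:map_Zface-localization} and~\ref{lemma:map_link-localization} into a single map out of the smash product and then to verify that it is a bijection. Since the smash product is the coproduct in the category of commutative binoids, the map $\psi\colon(\ZZ^F)^\infty\to(M_\triangle)_{x_F}$ together with the map $\phi\colon M_{\lk_\triangle(F)}\to(M_\triangle)_{x_F}$ induces a unique binoid homomorphism
\[
\Phi\colon M_{\triangle'}\wedge(\ZZ^F)^\infty\longrightarrow (M_\triangle)_{x_F},
\]
sending the basepoint to $\infty$ and sending the smash $\big(\sum_{w\in G}n_w x_w\big)\wedge\big(\sum_{v\in F}m_v v\big)$ to $\sum_{w\in G}n_w x_w+\sum_{v\in F}m_v x_v$. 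It then suffices to show that $\Phi$ is bijective, since a bijective binoid homomorphism is an isomorphism.

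The heart of the argument is an explicit normal form for the finite elements of the localization. Because $M_\triangle$ is semifree on $\{x_v\mid v\in V\}$ and each $x_v$ with $v\in F$ becomes a unit after localizing at $x_F$, every element of $(M_\triangle)_{x_F}$ can be written as $\sum_{v\in V}n_v x_v$ with $n_v\in\ZZ$ for $v\in F$ and $n_v\in\NN$ for $v\notin F$. First I would show that such an element is different from $\infty$ precisely when $G:=\{v\notin F\mid n_v>0\}$ lies in $\lk_\triangle(F)$. To see this, one adds a large multiple $Nx_F$, which is a unit and hence does not affect whether the element is $\infty$, so that all coefficients at the vertices of $F$ become strictly positive; the resulting honest monomial of $M_\triangle$ has support exactly $F\cup G$, and by definition of $M_\triangle$ it is finite if and only if $F\cup G\in\triangle$, that is, if and only if $G\in\lk_\triangle(F)$. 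The same addition of $Nx_F$ together with semifreeness of $M_\triangle$ shows that the coefficient tuple $(n_v)_{v\in F}\in\ZZ^F$ and the monomial $\sum_{v\in G}n_v x_v$ are uniquely determined by the element.

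With this description in hand both surjectivity and injectivity of $\Phi$ follow. Surjectivity is immediate: any finite element $\sum_{v}n_v x_v$ as above has $G\in\lk_\triangle(F)$, so $\sum_{v\notin F}n_v x_v$ is a finite element of $M_{\triangle'}$ and $\sum_{v\in F}n_v v$ is an element of $(\ZZ^F)^\infty$, and $\Phi$ maps their smash to $\sum_{v}n_v x_v$. For injectivity, a finite element of $M_{\triangle'}\wedge(\ZZ^F)^\infty$ is a pair consisting of a finite monomial of $M_{\triangle'}$ supported on a face $G$ of $\lk_\triangle(F)$ and a tuple in $\ZZ^F$; the uniqueness of the normal form just established shows that distinct pairs have distinct images, while the basepoint is the only preimage of $\infty$. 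The step I expect to be the main obstacle is exactly this finiteness criterion, namely controlling when $\sum_{v}n_v x_v=\infty$ in $(M_\triangle)_{x_F}$ and translating it into membership in $\lk_\triangle(F)$, since it is here that the combinatorics of the link, the semifreeness of $M_\triangle$, and the fact that inverting the non-nilpotent element $x_F$ introduces no new relations among the remaining variables must all be used together; once the criterion is in place, matching the normal forms on the two sides is routine.
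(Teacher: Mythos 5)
Your proposal is correct and follows essentially the same route as the paper: both construct the map via the universal property of the smash product applied to the two injections of Lemmas~\ref{lemma:map_Zface-localization} and~\ref{lemma:map_link-localization}, and both establish bijectivity from the semifree normal form of elements of $(M_\triangle)_{x_F}$ together with the observation that finiteness forces the support outside $F$ to be a face of $\lk_\triangle(F)$. Your treatment of the finiteness criterion (adding a large multiple of $x_F$ to reduce to an honest monomial of $M_\triangle$) spells out a step the paper leaves implicit, but it is the same argument in substance.
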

\begin{proof}
	By Lemma~\ref{lemma:map_Zface-localization}, Lemma~\ref{lemma:map_link-localization} and the universal property of the smash product (see \cite[Proposition 1.8.10]{Boettger}), we get a commutative diagram
    \[
    \begin{tikzcd}[baseline=(current  bounding  box.center), cramped]
    M_{\lk_\triangle(F)}\wedge {(\ZZ^F)}^\infty\drar["\zeta"] & {(\ZZ^F)}^\infty \lar["i", swap] \dar["\psi"]\\
    M_{\lk_\triangle(F)} \uar["j"] \rar["\varphi", swap] & {(M_\triangle)}_{x_F} \, .
    \end{tikzcd}
    \]
    We can then explicitly describe $\zeta$ in general as
    \[
    \zeta\left(\sum_{w\in G} n_w x_w\wedge \sum_{v\in F} m_vx_v\right)=\sum_{w\in G} n_w x_w + \sum_{v\in F} m_vx_v \, .
    \]
   This map is injective because the maps $\psi$ and $\varphi$ are injective themselves and $G$ and $F$ are disjoint.
    
    Moreover, it is surjective because every element $f\in (M_\triangle)_{x_F}$, $f \neq \infty$, has a unique description (the binoid is semifree) with respect to the semibasis,
    \[    f=\sum_{w\in V \setminus F} n_w x_w + \sum_{v\in F} n_v x_v \, .  \]
    Here, the indices $w$ with $n_w \neq 0$ belong to some $G \in \lk_\triangle(F) $, else $f$ were $\infty$. Hence $f=\zeta(\sum_{w\in G} n_w x_w \wedge \sum_{v\in F} n_v x_v)$ for $G=\{x_w\mid n_w\neq 0\}\in \lk_\triangle(F)$.
\end{proof}

\subsection{The punctured \texorpdfstring{\v{C}}{C}ech-Picard Complex}

Our goal is to compute the cohomology of the sheaf of units $\O^*$ on the punctured spectrum of a binoid $M$. A special feature in the simplicial case is that one can decompose this sheaf into easier sheaves depending only on one vertex.   

\begin{definition}\label{definition:extension-by-zeros}
    Let $\ver$ be a vertex of a simplicial complex $\triangle$, let $M=M_\triangle$ be the corresponding simplicial binoid and let $j_\ver :D(\ver)\longrightarrow \Spec^\bullet M$ be the  open embedding. Let  $\ZZ$ be the constant sheaf on $D(\ver)$. We denote by $\O^*_{\ver }$ the extension of $\ZZ$ by zero along $j_\ver $, that is the sheafification of the presheaf on $\Spec^\bullet M$
    \[
    \begin{tikzcd}[baseline=(current  bounding  box.center), cramped, row sep = 0ex,
    /tikz/column 1/.append style={anchor=base east},
    /tikz/column 2/.append style={anchor=base west},
    ampersand replacement=\&]
    \G:U \rar[mapsto] \& \left\{\begin{aligned}
    &\ZZ, && \text{ if } U\subseteq D( \ver),\\
    &0, && \text{ otherwise.}
    \end{aligned}\right.
    \end{tikzcd}
    \]
\end{definition}

    Since $\ver$ is a unit of $\O_M$ on $D(\ver)$, we think of $\ZZ$ on $D( \ver)$ as multiples of $\ver$.

\begin{remark}\label{remarkvertexunitstalk}
    We can easily describe the stalk of $\O^*_{\ver}$ at $\p$ as
    \[
    \left(\O^*_{\ver}\right)_\p=\varinjlim_{\p\in U}\O^*_{\ver}(U)=\left\{\begin{aligned}
    & \ZZ , && \text{ if } \p\in D(\ver ),\\
    &0, &&\text{ otherwise.}
    \end{aligned}\right.
    \]
\end{remark}

\begin{example}
	Consider the binoid $M=(x, y\mid x+y=\infty)$. Its punctured spectrum is $U=\{\langle x\rangle, \langle y\rangle\}$, that we can cover with $D(x)$ and $D(y)$, that have empty intersection. Indeed, $\O^*_x(D(x))=\ZZ$ and $\O^*_x(D(y))=0$, so $\O^*_x(U)=\ZZ$. This shows that the sheafification is needed and that $\O^*_\ver(U) = \ZZ$ does not mean that $\ver$ is a unit on $U$, but that there is a nonempty component of $U$ where $\ver$ is a unit.
\end{example}

\begin{lemma}\label{theorem:value-sheaf}
	Let $M=M_\triangle $ be the simplicial binoid associated to the simplicial complex $\triangle$ on the vertex set $V $ and let $F \subseteq V $. Then
	\begin{equation}
	\O_M^* \left(\displaystyle\bigcap_{i \in F}D(x_i)\right)\cong \left\{\begin{aligned}
	\ZZ^F && \text{ if } F\in\triangle, \\
	0 && \text{ otherwise.}
	\end{aligned} \right.
	\end{equation}
	and
	\begin{equation}
	\O^*_{\ver}\left(\displaystyle\bigcap_{i\in F} D(x_i)\right)\cong\left\{\begin{aligned}
	\ZZ    && \text{ if } F\in\triangle \text{ and } \ver \in F ,\\
	0 && \text{ otherwise\,.}
	\end{aligned}\right. 
	\end{equation}
\end{lemma}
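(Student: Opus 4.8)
The plan is to compute the two sheaf values by combining the localization result of Theorem~\ref{theorem:localization-simplicial-binoid-multiple} with Lemma~\ref{lemma:M*=M*red} and the elementary description of units in a semifree binoid. The key observation is that $\bigcap_{i\in F}D(x_i)=D(x_F)$, so that the global sections over this intersection are exactly the units of the localization $(M_\triangle)_{x_F}$, reducing both statements to understanding $\left((M_\triangle)_{x_F}\right)^*$.

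**First statement.**

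First I would treat the case $F\notin\triangle$: by Lemma~\ref{lemma:intersection-open-subsets} the intersection $D(x_F)$ is empty, so $\O_M^*(\emptyset)=0$, giving the second branch. For $F\in\triangle$, I would apply Theorem~\ref{theorem:localization-simplicial-binoid-multiple} to write $(M_\triangle)_{x_F}\cong M_{\triangle'}\wedge(\ZZ^F)^\infty$ with $\triangle'=\lk_\triangle(F)$. The units of a smash product of binoids split as the direct sum of the units of the factors, so
\[
\left((M_\triangle)_{x_F}\right)^*\cong \left(M_{\triangle'}\right)^*\oplus\left((\ZZ^F)^\infty\right)^*\, .
\]
Since $M_{\triangle'}$ is a simplicial binoid it is positive, hence $\left(M_{\triangle'}\right)^*=\{0\}$, while $\left((\ZZ^F)^\infty\right)^*=\ZZ^F$ because every nonzero element of the free group is invertible. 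This yields $\O_M^*(D(x_F))\cong\ZZ^F$, matching the claim; concretely $\ZZ^F$ records the coefficients $n_v$ of the invertible generators $x_v$ for $v\in F$.

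**Second statement and main obstacle.**

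For the sheaf $\O^*_\ver$, I would compute directly from Definition~\ref{definition:extension-by-zeros} together with Remark~\ref{remarkvertexunitstalk}. The extension by zero has stalk $\ZZ$ exactly on $D(\ver)$, so its sections over $D(x_F)$ see whether the connected intersection $D(x_F)$ meets $D(\ver)$. When $F\in\triangle$ and $\ver\in F$, we have $D(x_F)\subseteq D(x_\ver)=D(\ver)$, so the presheaf already returns $\ZZ$; when $F\notin\triangle$ the intersection is empty and the value is $0$; and when $F\in\triangle$ but $\ver\notin F$, the value is $0$ because $D(x_F)$ is not contained in $D(\ver)$ (the generic point of $D(x_F)$, the face $F$ itself, does not lie in $D(\ver)$ since $\ver\notin F$). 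The one point needing genuine care is this last inclusion criterion: I must verify that $D(x_F)\subseteq D(\ver)$ holds \emph{only} when $\ver\in F$, which follows from the face correspondence $D(x_F)=\{G\in\triangle\mid F\subseteq G\}$ of Lemma~\ref{lemma:intersection-open-subsets}, since $F\in D(x_F)$ but $F\notin D(\ver)$ precisely when $\ver\notin F$. This is the main (though mild) obstacle; once the inclusion pattern is settled, the three branches of the formula follow immediately from the definition of the extension by zero.
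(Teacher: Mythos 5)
Your proposal is correct and follows essentially the same route as the paper: the non-face case via Lemma~\ref{lemma:intersection-open-subsets}, the face case via Theorem~\ref{theorem:localization-simplicial-binoid-multiple} together with the positivity of $M_{\lk_\triangle(F)}$, and the second formula from the definition of the extension by zero plus the fact that sheafification does not change values on nonempty affine opens. The only slip is terminological: the point $\p_F$ corresponding to $F$ is the unique \emph{closed} point of $D(x_F)$ (whose minimal open neighbourhood is $D(x_F)$ itself), not its generic point, but this is exactly the point your containment argument needs, so the proof stands.
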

\begin{proof} Thanks to Lemma~\ref{lemma:intersection-open-subsets} the statement is clear if $F$ is a non-face, so assume that $F$ is a face. We have
	\[    \O_{M}\left(\displaystyle\bigcap_{i\in F} D(x_i)\right) \cong 
	(M_\triangle)_{x_F} \cong   M_{\lk_\triangle(F)}\wedge (\ZZ^F)^\infty     \]
	by  Theorem~\ref{theorem:localization-simplicial-binoid-multiple}. 
	Since any simplicial binoid is positive, we obtain our result.
	
	The second statement follows from the definition and since the sheafification does not affect the affine non-empty subsets.
\end{proof}

\begin{lemma}\label{unitvertexembedding}
	There exists a morphism of sheaves
	\[
	\begin{tikzcd}[baseline=(current  bounding  box.center), cramped, row sep = 0ex,
	/tikz/column 1/.append style={anchor=base east},
	/tikz/column 2/.append style={anchor=base west}]
	\O^*_{\ver} \rar & \O^*_{M}\, .
	\end{tikzcd}
	\]
\end{lemma}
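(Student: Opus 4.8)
The plan is to exploit the universal property of the extension by zero. By Definition~\ref{definition:extension-by-zeros}, $\O^*_\ver = (j_\ver)_! \ZZ$ is the extension by zero along the open embedding $j_\ver : D(\ver) \to \Spec^\bullet M$ of the constant sheaf $\ZZ$ on $D(\ver)$. For sheaves of abelian groups the functor $(j_\ver)_!$ is left adjoint to the restriction functor $j_\ver^{-1} = (-)\restriction_{D(\ver)}$, so that
\[
\Hom_{\Spec^\bullet M}\bigl(\O^*_\ver,\, \O^*_M\bigr) \;\cong\; \Hom_{D(\ver)}\bigl(\ZZ,\, \O^*_M\restriction_{D(\ver)}\bigr).
\]
Hence it suffices to produce a single morphism of sheaves $\ZZ \to \O^*_M\restriction_{D(\ver)}$ on $D(\ver)$.

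To construct the latter I would use that $x_\ver$ becomes a unit precisely on $D(\ver)$: indeed $\O_M(D(\ver))\cong (M_\triangle)_{x_\ver}$ by Lemma~\ref{theorem:value-sheaf}, so for every open $U \subseteq D(\ver)$ the restriction of $x_\ver$ lies in $\O_M(U)^* = \O^*_M(U)$. Therefore the assignment $n \mapsto n\,x_\ver$ (the $n$-fold sum with respect to the group law on units, using $-x_\ver$ when $n<0$) defines a group homomorphism $\ZZ \to \O^*_M(U)$. These homomorphisms are visibly compatible with the restriction maps of $\O^*_M$ and with the identity restriction maps of the constant sheaf $\ZZ$, so they assemble into a morphism of sheaves $\ZZ \to \O^*_M\restriction_{D(\ver)}$, and the adjunction above then yields the desired morphism $\O^*_\ver \to \O^*_M$.

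Alternatively, and avoiding the adjunction entirely, I would define the morphism directly on the presheaf $\G$ of Definition~\ref{definition:extension-by-zeros}: send $n \in \G(U) = \ZZ$ to $n\,x_\ver \in \O^*_M(U)$ when $U \subseteq D(\ver)$, and take the zero map when $U \not\subseteq D(\ver)$. Compatibility with restrictions is immediate on opens contained in $D(\ver)$ and trivial elsewhere, the source then being $0$; this gives a morphism of presheaves into the \emph{sheaf} $\O^*_M$, which factors uniquely through the sheafification $\G^{\mathrm{sh}} = \O^*_\ver$.

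I do not anticipate a genuine obstacle here. The only point requiring care is that $n \mapsto n\,x_\ver$ really lands in the units, which is exactly the content of $x_\ver$ being invertible on $D(\ver)$, together with the bookkeeping that the morphism vanishes off $D(\ver)$ so that it remains well defined after sheafification; both are formal once Lemma~\ref{theorem:value-sheaf} is in hand.
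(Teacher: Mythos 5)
Your proposal is correct and its second, ``direct'' route is exactly the paper's proof: define the presheaf morphism $\G \to \O^*_M$ by $n \mapsto n\,x_\ver$ on opens contained in $D(\ver)$ and zero elsewhere, then invoke the universal property of sheafification. The adjunction $(j_\ver)_! \dashv j_\ver^{-1}$ in your first route is just a repackaging of the same construction, so there is nothing substantively different to compare.
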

\begin{proof}
	There is a morphism of presheaves $
	\begin{tikzcd}[baseline=(current  bounding  box.center), cramped, row sep = 0ex,
	/tikz/column 1/.append style={anchor=base east},
	/tikz/column 2/.append style={anchor=base west}]
	\G \rar & \O^*_{M}
	\end{tikzcd}
	$
	because if $U \subseteq D(\ver)$ then $\G(U)=\ZZ$ and $\ver$ is a unit of $\O_M(U)$, so we just send $1\mapsto \ver $. If $U \nsubseteq D(\ver)$ then the value of the presheaf is $0$, so we send it to $0$ in $\O_M^*(U)$.
	Thanks to the universal property of the sheafification, we have the following diagram
	\[
	\begin{tikzcd}[baseline=(current  bounding  box.center), cramped]
	\G \rar\dar & \O^*_{M} \\
	\O^*_{\ver}\urar
	\end{tikzcd}
	\]
	that yields the desired morphism $\O^*_{\ver} \longrightarrow \O^*_{M}$.
\end{proof}

\begin{theorem}\label{theorem:semifree-decomposition-of-sheaf}
    For a simplicial complex $\triangle$ on a vertex set $V$ there exists an isomorphism of sheaves
    \[
    \begin{tikzcd}[baseline=(current  bounding  box.center), cramped, row sep = 0ex,
    /tikz/column 1/.append style={anchor=base east},
    /tikz/column 2/.append style={anchor=base west}]
    \displaystyle\bigoplus_{\ver \in V }\O^*_{\ver } \rar & \O^*_{M}
    \end{tikzcd}
    \]
    on the spectrum of $M=M_\triangle$.
\end{theorem}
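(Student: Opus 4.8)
The plan is to assemble the global morphism $\bigoplus_{\ver \in V}\O^*_\ver \to \O^*_M$ out of the individual maps $\O^*_\ver \to \O^*_M$ produced in Lemma~\ref{unitvertexembedding}, and then check that it is an isomorphism of sheaves by verifying the isomorphism on stalks, using Remark~\ref{remarkvertexunitstalk} and Proposition~\ref{proposition:stalk-binoid-scheme}. The reason stalks are the right thing to compare is that a morphism of sheaves is an isomorphism if and only if it induces an isomorphism on every stalk, and the stalks in this setting are completely controlled by the combinatorics: by Proposition~\ref{proposition:minimal-open-set-prime-ideal} every point $\p$ has a unique minimal open neighbourhood, so the stalk is just the value of the sheaf on that neighbourhood.

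First I would construct the map. For each vertex $\ver \in V$, Lemma~\ref{unitvertexembedding} gives $\O^*_\ver \to \O^*_M$ sending (locally, on $D(\ver)$) the generator $1 \in \ZZ$ to the unit $\ver$. Since $\O^*_M$ is a sheaf of abelian groups, the universal property of the direct sum assembles these into a single morphism $\Phi: \bigoplus_{\ver \in V}\O^*_\ver \to \O^*_M$. It remains to show $\Phi$ is an isomorphism, which I do stalkwise.

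Next I would compute both stalks at an arbitrary point $\p \in \Spec^\bullet M$. By Proposition~\ref{proposition:minimal-open-set-prime-ideal} we may write $\p = \langle x_i \mid i \notin F\rangle$ for the unique face $F \in \triangle$ with $\p \in D(x_v)$ exactly for $v \in F$; concretely $F$ is the set of generators \emph{not} in $\p$. On the right, Proposition~\ref{proposition:stalk-binoid-scheme} together with Theorem~\ref{theorem:localization-simplicial-binoid-multiple} gives $(\O^*_M)_\p \cong \bigl((M_\triangle)_{x_F}\bigr)^* \cong \bigl(M_{\lk_\triangle(F)}\wedge(\ZZ^F)^\infty\bigr)^*$, and since $M_{\lk_\triangle(F)}$ is positive (being simplicial) its only unit is $0$, so this group is exactly $\ZZ^F$, a free abelian group with one $\ZZ$ for each vertex of $F$ recording the coefficient of $x_v$. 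On the left, Remark~\ref{remarkvertexunitstalk} gives $(\O^*_\ver)_\p \cong \ZZ$ precisely when $\ver \in F$ and $0$ otherwise, so
\[
\Bigl(\bigoplus_{\ver \in V}\O^*_\ver\Bigr)_\p \cong \bigoplus_{\ver \in F}\ZZ \cong \ZZ^F \, .
\]
Finally I would identify $\Phi_\p$ with the natural matching of these two copies of $\ZZ^F$: the generator of the $\ver$-summand on the left maps to the class of the unit $\ver = x_\ver$, which is exactly the basis vector recording the coefficient of $x_\ver$ on the right. Hence $\Phi_\p$ is an isomorphism at every $\p$, and $\Phi$ is an isomorphism of sheaves.

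The main obstacle I anticipate is bookkeeping rather than any deep difficulty: one must be careful that the basis of $\ZZ^F$ produced by the smash-product description in Theorem~\ref{theorem:localization-simplicial-binoid-multiple} genuinely corresponds, vertex by vertex, to the generators $\ver$ used in Lemma~\ref{unitvertexembedding}, and that the extension-by-zero stalks vanish for exactly the vertices outside $F$. Both of these are guaranteed by the explicit descriptions already established, so once the stalks are lined up the isomorphism is immediate.
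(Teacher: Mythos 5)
Your proposal is correct and takes essentially the same approach as the paper: both construct the map componentwise from Lemma~\ref{unitvertexembedding} and verify the isomorphism on stalks via the unique minimal open neighbourhood of $\p$, identifying both sides with $\ZZ^F$ for the face $F$ of generators outside $\p$. The only cosmetic difference is that you unpack Lemma~\ref{theorem:value-sheaf} into Theorem~\ref{theorem:localization-simplicial-binoid-multiple} plus positivity of the link binoid, which is exactly how the paper proves that lemma; just note that the theorem is asserted on all of $\Spec M$, so the closed point $M_+$ (where $F=\emptyset$ and both stalks are trivial) should also be included.
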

\begin{proof}
    This map exists because it is induced component-wise by the maps obtained in Lemma~\ref{unitvertexembedding}, applied to the different vertices. In order to show that it is an isomorphism, recall from \cite[Exercise II.1.2]{hartshorne1977algebraic} that a morphism $\varphi:\F\longrightarrow \G$ between two sheaves on a topological space $X$ is an isomorphism if and only if it is an isomorphism at the stalks.    
    Let $\p\in\Spec M$. Thanks to Lemma~\ref{proposition:minimal-open-set-prime-ideal}, there exists a unique minimal open subset that contains $\p$, namely the fundamental open subset $D\left(\sum_{\ver \notin \p}  \ver \right)$.    
    On the right hand side, we have by Lemma~\ref{theorem:value-sheaf}
    \[ \O^*_{M, \p} = \O^*_{M}\left(D\left(\sum_{\ver \notin \p} \ver \right)\right)\cong\ZZ^r\, ,  \]
    where $r$ is the cardinality of $\{ \ver \mid \ver \notin \p\}$. On the other hand
    \[    \left(\displaystyle\bigoplus_{\ver \in V} \O^*_{\ver}\right)_\p\cong \displaystyle\bigoplus_{\ver \in V} \left(\O^*_{\ver}\right)_\p\cong \displaystyle\bigoplus_{\ver \in V } \O^*_{ \ver }\left(D\left(\sum_{ \ver  \notin \p}  \ver \right)\right)\cong\ZZ^r \, ,    \]
    and the morphism between them is the identity.
\end{proof}

Thanks to this Theorem, we know that we can decompose the \v{C}ech complex of $\O^*$ associated to the covering $\{D(x_i)\}$ as the direct sum of the \v{C}ech subcomplexes of this decomposition.

\begin{corollary}\label{theorem:split-cech-picard-groups}
  The \v{C}ech complex for the sheaf of units on the combinatorial affine covering of a simplical binoid as defined in Definition~\ref{definition:cech-picard-complex} is given by 
	\begin{equation}
 \vC^j=\bigoplus_{\begin{subarray}{c}F\in\triangle \\ \left|{F}\right|=j+1\end{subarray}}\ZZ^{F}=\bigoplus_{F\in\triangle_j}\ZZ^{F} \cong \bigoplus_{F\in\triangle_j}\ZZ^{j+1}.
	\end{equation}
	There exists the decomposition
	\begin{equation}
	\vC^j =\bigoplus_{\ver \in V}\vC^j_{\ver},
	\end{equation}
	where
	\begin{equation}
	 \vC^j_{\ver} =  \bigoplus_{F \in \triangle_j , \ver \in F }\ZZ \, . 	\end{equation}   
\end{corollary}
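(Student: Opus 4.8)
The plan is to read off both displayed identities directly from the two results that compute the values of the relevant sheaves on intersections of the $D(x_i)$, so the argument is essentially bookkeeping once Lemma~\ref{theorem:value-sheaf} and Theorem~\ref{theorem:semifree-decomposition-of-sheaf} are in hand. First I would unwind Definition~\ref{definition:cech-picard-complex} for the covering $\{D(x_i)\}$: the cochain group $\vC^j$ is the direct sum, over all $(j+1)$-element index sets $F=\{i_0<\dots<i_j\}\subseteq[n]$, of $\O^*_M(\bigcap_{i\in F}D(x_i))$. By the first part of Lemma~\ref{theorem:value-sheaf}, each such summand is $\ZZ^F$ when $F\in\triangle$ and $0$ otherwise; since $|F|=j+1$, the nonzero contributions are exactly the faces of dimension $j$, each contributing $\ZZ^F\cong\ZZ^{j+1}$. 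This yields the first chain of identities
\[
\vC^j=\bigoplus_{\substack{F\in\triangle\\|F|=j+1}}\ZZ^F=\bigoplus_{F\in\triangle_j}\ZZ^F\cong\bigoplus_{F\in\triangle_j}\ZZ^{j+1}.
\]

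For the vertex decomposition I would invoke the sheaf isomorphism $\O^*_M\cong\bigoplus_{\ver\in V}\O^*_\ver$ of Theorem~\ref{theorem:semifree-decomposition-of-sheaf}. Because the vertex set $V$ is finite, a finite direct sum of sheaves of abelian groups coincides with their finite product, and taking sections over an open subset commutes with finite products; hence for every intersection $U=\bigcap_{i\in F}D(x_i)$ one has $\O^*_M(U)=\bigoplus_{\ver\in V}\O^*_\ver(U)$. Applying this summand-wise across the defining direct sum of $\vC^j$ produces the splitting $\vC^j=\bigoplus_{\ver\in V}\vC^j_\ver$, where $\vC^j_\ver$ is the \v{C}ech cochain group of the same covering but for the sheaf $\O^*_\ver$ in place of $\O^*_M$. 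Evaluating this group by the second part of Lemma~\ref{theorem:value-sheaf}, which gives $\O^*_\ver(\bigcap_{i\in F}D(x_i))\cong\ZZ$ precisely when $F\in\triangle$ and $\ver\in F$, and $0$ otherwise, I obtain $\vC^j_\ver=\bigoplus_{F\in\triangle_j,\,\ver\in F}\ZZ$, as claimed.

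There is no serious obstacle here; the only points demanding a line of care are the interchange of sections with the finite direct sum of sheaves (which is what lets the sheaf-level decomposition of Theorem~\ref{theorem:semifree-decomposition-of-sheaf} descend to the cochain level) and the observation that this splitting is compatible with the \v{C}ech coboundary maps, so that the $\vC^\bullet_\ver$ are genuine subcomplexes. The latter is immediate because the isomorphism of Theorem~\ref{theorem:semifree-decomposition-of-sheaf} is an isomorphism of sheaves and therefore commutes with the restriction maps out of which the coboundary $\partial^{j}$ in Definition~\ref{definition:cech-picard-complex} is built; consequently $\partial^j$ is the direct sum of its restrictions $\partial^j_\ver\colon\vC^j_\ver\to\vC^{j+1}_\ver$, which is exactly the statement anticipated in the sentence preceding the Corollary.
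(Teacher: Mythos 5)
Your argument is correct and follows exactly the paper's route: the paper's proof is the one-line observation that the statement follows from Lemma~\ref{theorem:value-sheaf} and Theorem~\ref{theorem:semifree-decomposition-of-sheaf}, and your write-up simply spells out that bookkeeping (including the harmless points about finite direct sums commuting with sections and compatibility with the coboundary). Nothing is missing.
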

\begin{proof}
   This follows from Lemma~\ref{theorem:value-sheaf} and Theorem~\ref{theorem:semifree-decomposition-of-sheaf}.
\end{proof}

\begin{remark}
	An element $ \alpha= \alpha_{(\ver,F )} \in \vC^j_{\ver} $ is just a collection of integers indexed by $(\ver,F)$, where $\ver \in F$ and $F$ contains $j+1$ elements. Suppose that $V=[n]$ is ordered. Under the map in the \v{C}ech complex, it is sent to $\beta_{(\ver,G)}$, where for $G$ a face containing $\ver$ with $j+2$ elements. We have
	\[ \beta_{(\ver,G)} = \sum_{k = 0}^{j+1} (-1)^k \alpha_{(\ver, G \setminus \{k\} ) } \, . \]
	For $k=\ver$, the entry is zero. For computing the cohomology we can always reorder and assume that $\ver = n$ is the last vertex.
\end{remark}

\begin{lemma}
	\label{vertexunitslink}
	For $j \geq 1$, we have
\[ \vC^j_\ver = \bigoplus_{v \in F,\, \left|{F}\right|=j+1 } \ZZ    = \bigoplus_{H \in \lk_{\triangle}(\ver ),\, \left|{H}\right| = j } \ZZ = C^{j-1} ( \lk_{\triangle}(\ver ) , \ZZ)= \operatorname{Hom} (C_{j-1} ( \lk_{\triangle}(\ver ) ) , \ZZ )  , \, \]
where $C_{j-1} $ denotes the group of $j-1$-chains of the simplicial complex $ \lk_{\triangle}(\ver ) $.
For $j=0$, this statement is also true if we interpret $C_{-1} ( \lk_{\triangle}(\ver ) , \ZZ)  $ as $\ZZ$ (given by the empty set).

If $\ver =n$, then this identification respects also the maps in the \v{C}ech complex and the maps for computing simplicial cohomology.
\end{lemma}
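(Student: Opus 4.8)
The plan is to exhibit all the identifications in the first displayed line as coming from a single combinatorial bijection between indexing sets, and then to verify separately that this bijection is compatible with the two differentials, which is where the hypothesis $\ver = n$ enters.

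First I would establish the bijection underlying every equality in the first line. By the definition of the link, a subset $H \subseteq V \setminus \{\ver\}$ lies in $\lk_\triangle(\ver)$ exactly when $H \cup \{\ver\} \in \triangle$. Hence $F \mapsto F \setminus \{\ver\}$ is a bijection from $\{F \in \triangle \mid \ver \in F,\ |F| = j+1\}$ onto $\{H \in \lk_\triangle(\ver) \mid |H| = j\}$, with inverse $H \mapsto H \cup \{\ver\}$; it drops cardinality by one, carrying a face with $j+1$ vertices to one with $j$ vertices. Placing a copy of $\ZZ$ over each index then yields the first two isomorphisms. Since the group of simplicial $(j-1)$-cochains $C^{j-1}(\lk_\triangle(\ver), \ZZ)$ is by definition the direct sum of $\ZZ$ over the $(j-1)$-dimensional faces of the link, i.e.\ those with $j$ vertices, this is precisely $\bigoplus_H \ZZ$, and the last equality $C^{j-1} = \Hom(C_{j-1}, \ZZ)$ is just the definition of cochains as the dual of chains.

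Next I would dispose of the boundary case $j = 0$. The only $F$ with $\ver \in F$ and $|F| = 1$ is $F = \{\ver\}$, so $\vC^0_\ver = \ZZ$; under the bijection this corresponds to the empty face $\emptyset \in \lk_\triangle(\ver)$, which is a face because $\{\ver\} \in \triangle$. With the stated convention $C_{-1}(\lk_\triangle(\ver)) = \ZZ$ we get $C^{-1} = \Hom(C_{-1}, \ZZ) = \ZZ$, matching $\vC^0_\ver$.

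The substantive point, and the step I expect to be the main obstacle, is compatibility of the differentials when $\ver = n$. I would use the explicit \v{C}ech coboundary recalled in the preceding Remark, under which $\alpha \in \vC^j_\ver$ is sent to $\beta_{(\ver, G)} = \sum_{k=0}^{j+1} (-1)^k \alpha_{(\ver, G \setminus \{k\})}$, summed over faces $G \ni \ver$ with $j+2$ vertices, the term removing $\ver$ contributing zero since $G \setminus \{\ver\}$ is not in the index set of $\vC^j_\ver$. I want to show that the identification $\alpha_{(\ver, F)} \leftrightarrow \alpha_{F \setminus \{\ver\}}$ carrying $\vC^\bullet_\ver$ to $C^{\bullet-1}(\lk_\triangle(\ver), \ZZ)$ intertwines this $\partial$ with the simplicial coboundary $\delta$. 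The entire difficulty is a matter of signs: because $\ver = n$ is the largest vertex, in any ordered face $G$ containing it $\ver$ sits in the last slot $k = j+1$, so the vanishing summand is exactly the last one, and for the surviving indices $k = 0, \ldots, j$ deleting the $k$-th vertex of $G$ agrees, with the identical sign $(-1)^k$, with deleting the $k$-th vertex of $G \setminus \{\ver\}$. This is precisely $\delta$ applied to the link cochain, so the square commutes. The whole content is checking that no sign shift intervenes, and this is exactly what forces $\ver = n$: for an interior vertex its position would vary across faces and the alternating signs would fail to line up.
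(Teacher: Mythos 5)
Your proposal is correct and is exactly the argument the paper has in mind: the paper's own proof is the one-line ``All statements are clear from the definitions,'' and you have simply written out the underlying bijection $F \mapsto F \setminus \{\ver\}$, the $j=0$ convention, and the sign check that makes $\ver = n$ necessary for the differentials to match. No discrepancy with the paper's approach.
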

\begin{proof}
All statements are clear from the definitions.
\end{proof}

\subsection{Cohomology}

Summing up what we did until now, we can produce the following Theorem that allows us to compute sheaf cohomology in terms of the reduced simplicial cohomology.

\begin{theorem}\label{theorem:cohomology-simplicial-complex}
    Let $\triangle$ be a simplicial complex on the finite vertex set $V$. We have the following explicit formula for the computation of the cohomology groups of its \v{C}ech-Picard complex
    \begin{equation}\label{formula:cohomology-simplicial-binoid}
    {\H^j\left(\Spec^\bullet M_\triangle, \O^*_{M_\triangle}\right)}\cong\bigoplus_{\ver \in V}\widetilde{\H}^{j-1}\left(\lk_\triangle( \ver ), \ZZ\right)
    \end{equation}
    for $j \geq 0$, where $\widetilde{\H}$ is the reduced simplicial cohomology.
\end{theorem}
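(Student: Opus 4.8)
The plan is to assemble the formula directly from the three structural results already established: acyclicity of the combinatorial covering, the vertex-wise splitting of the sheaf of units, and the identification of each vertex-summand with the reduced simplicial cochain complex of a link. Essentially no new computation is needed; the task is to glue these together and to check that the degree bookkeeping produces \emph{reduced} cohomology with the stated shift.

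First I would reduce sheaf cohomology to \v{C}ech cohomology. The punctured spectrum $\Spec^\bullet M_\triangle$ is covered by $\{D(x_i)\}$, and every finite intersection $D(F)=\bigcap_{i\in F}D(x_i)$ is either empty or an affine binoid scheme (Lemma~\ref{lemma:intersection-open-subsets}). By Theorem~\ref{theorem:vanishing-combinatorial-cohomology-affine} every sheaf of abelian groups is acyclic on an affine binoid scheme, so $\{D(x_i)\}$ is an acyclic covering for $\O^*_{M_\triangle}$; the Corollary following that Theorem then gives
\[ \H^j\left(\Spec^\bullet M_\triangle, \O^*_{M_\triangle}\right) \cong \vH^j\left(\{D(x_i)\}, \O^*_{M_\triangle}\right) \]
for all $j \geq 0$, so it suffices to compute the cohomology of the \v{C}ech--Picard complex $\vC^\bullet$.

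Next I would split this complex vertex by vertex. By Theorem~\ref{theorem:semifree-decomposition-of-sheaf} we have $\O^*_{M_\triangle}\cong\bigoplus_{\ver\in V}\O^*_\ver$, and Corollary~\ref{theorem:split-cech-picard-groups} promotes this to a decomposition $\vC^\bullet=\bigoplus_{\ver\in V}\vC^\bullet_\ver$ of cochain complexes. Since $V$ is finite and cohomology commutes with finite direct sums, this yields $\H^j(\vC^\bullet)\cong\bigoplus_{\ver\in V}\H^j(\vC^\bullet_\ver)$, reducing the problem to identifying the cohomology of each individual $\vC^\bullet_\ver$. The heart of the argument is then Lemma~\ref{vertexunitslink}: for each vertex $\ver$ the complex $\vC^\bullet_\ver$ is, up to a shift of one in degree, the reduced simplicial cochain complex of $\lk_\triangle(\ver)$. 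Concretely $\vC^j_\ver\cong C^{j-1}(\lk_\triangle(\ver),\ZZ)$ for $j\geq 1$, while for $j=0$ the term is $\ZZ$, which is exactly the augmentation term $C^{-1}$ of the reduced complex associated to the empty face. Crucially, the same Lemma records that after reordering the vertices so that $\ver$ is last, the \v{C}ech differentials coincide with the simplicial coboundary maps. Hence $\vC^\bullet_\ver$ computes the reduced simplicial cohomology of the link with a degree shift, i.e. $\H^j(\vC^\bullet_\ver)\cong\widetilde{\H}^{j-1}(\lk_\triangle(\ver),\ZZ)$, and combining this with the splitting gives the claimed formula for every $j\geq 0$.

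The routine points here are genuinely routine once the earlier lemmas are in hand; the one place that requires care is exactly this last identification, where two things must be checked in tandem: that the degree-$0$ term together with the empty-face convention upgrades ordinary to \emph{reduced} cohomology (producing the correct shift $\widetilde{\H}^{j-1}$ rather than $\H^{j-1}$), and that the sign conventions of the \v{C}ech coboundary really match those of the simplicial coboundary on the link. I expect this sign-and-shift bookkeeping---already isolated in Lemma~\ref{vertexunitslink} through the device of placing $\ver$ last---to be the only genuine obstacle, the remainder being a formal assembly of the cited results.
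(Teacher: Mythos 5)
Your proposal is correct and follows essentially the same route as the paper's own proof: reduce to \v{C}ech cohomology on the acyclic covering $\{D(x_i)\}$, split the complex vertex-wise via Theorem~\ref{theorem:semifree-decomposition-of-sheaf}, and identify each summand with the shifted reduced cochain complex of the link via Lemma~\ref{vertexunitslink}, reordering so the chosen vertex is last. The points you flag as needing care (the empty-face term at $j=0$ and the compatibility of the differentials) are exactly the ones the paper delegates to Lemma~\ref{vertexunitslink}.
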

\begin{proof}
	We set $V=\{1, \ldots ,n \}$ and denote the corrsponding elements in $M_\triangle$ by $x_i$.
    We can use the open subsets defined by the variables $\{D(x_i)\}$ as a \v{C}ech covering for $\Spec^\bullet M_\triangle$ (Proposition~\ref{proposition:minimal-covering}) to compute the sheaf cohomology of the sheaf of units. By Theorem~\ref{theorem:semifree-decomposition-of-sheaf}, there exists an isomorphism of sheaves
     \[ \O^*_{M_\triangle}=\O^*_{x_1}\oplus \O^*_{x_2}\oplus \dots \oplus \O^*_{x_n} \, .  \]
    The cohomology of $\O^*_{x_i}$ can also be computed with this covering. In Lemma~\ref{vertexunitslink}, we observed that
    \[    \vC^\bullet\left(\Spec^\bullet M, \O^*_{x_i}\right) = \widetilde{\C}^{\bullet-1}\left(\lk_\triangle(i), \ZZ\right) \, ,    \]
    where, for $i=n$, the identifications also respect the mappings. Since for the computation of cohomology we can always reorder $V$, we obtain that
    \[
    \begin{aligned}
    \H^j\left(\Spec^\bullet M_\triangle, \O^*_{M_\triangle}\right)&=\H^j\left(\Spec^\bullet M_\triangle, \bigoplus_{i\in V}\O^*_{x_i}\right)\\
    &=\bigoplus_{i\in V}\H^j\left(\Spec^\bullet M_\triangle, \O^*_{x_i}\right)\\
    &=\bigoplus_{i\in V}\widetilde{\H}^{j-1}\left(\lk_\triangle(i), \ZZ\right)\,  .
    \end{aligned}
    \]  
\end{proof}

\begin{corollary}\label{corollary:cohomology-simplicial-binoid}
    The $0$-th and the first cohomology groups are always free and they have the form
    \begin{align*}
    \H^0(\Spec^\bullet M_\triangle, \O^*)&=\ZZ^{\#\{0-\dim\text{ facets of $\triangle$}\}},\qquad
    \H^1(\Spec^\bullet M_\triangle, \O^*)=\ZZ^r,
    \end{align*}
    where
    \begin{align*}
    r&=\sum_{v\in V}\rk({\widetilde{\H}^0(\lk_\triangle(v), \ZZ)})
    =\sum_{v\in V} \rk({\H^0(\lk_\triangle(v), \ZZ)}) -\#\{\text{$0$-$\dim$ non-facets of $\triangle$}\}\, .
    \end{align*}
\end{corollary}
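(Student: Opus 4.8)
The plan is to deduce everything directly from Theorem~\ref{theorem:cohomology-simplicial-complex}, which gives
\[\H^j(\Spec^\bullet M_\triangle, \O^*) \cong \bigoplus_{v\in V}\widetilde{\H}^{j-1}(\lk_\triangle(v), \ZZ),\]
by specializing to $j=0$ and $j=1$ and then evaluating the reduced simplicial cohomology of each link in degrees $-1$ and $0$. The whole content is thus a computation of $\widetilde{\H}^{-1}$ and $\widetilde{\H}^0$ of the links $\lk_\triangle(v)$, together with the translation of the answer into the facet/non-facet data of $\triangle$. Freeness is then immediate, since the reduced cohomology of any simplicial complex in degrees $-1$ and $0$ is free abelian, being in each case either trivial or a finite power of $\ZZ$.

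For $j=0$ I would use the following combinatorial observation. Because singletons are faces, $\{v\}\cup\emptyset=\{v\}\in\triangle$, so $\emptyset\in\lk_\triangle(v)$ and every link is nonempty as a complex. Moreover $\lk_\triangle(v)$ contains a vertex $w$ precisely when $\{v,w\}\in\triangle$, i.e.\ precisely when $\{v\}$ is \emph{not} a facet. Hence $\lk_\triangle(v)=\{\emptyset\}$ exactly when $\{v\}$ is a $0$-dimensional facet, and otherwise the link has at least one vertex. Since $\widetilde{\H}^{-1}(\triangle',\ZZ)=\ZZ$ for $\triangle'=\{\emptyset\}$ and $\widetilde{\H}^{-1}(\triangle',\ZZ)=0$ once $\triangle'$ has a vertex, the sum $\bigoplus_v\widetilde{\H}^{-1}(\lk_\triangle(v),\ZZ)$ collapses to one copy of $\ZZ$ for each $0$-dimensional facet, giving $\H^0\cong\ZZ^{\#\{0\text{-dim facets}\}}$.

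For $j=1$ I would evaluate $\widetilde{\H}^0(\lk_\triangle(v),\ZZ)$ through connected components: if $\lk_\triangle(v)$ has $c_v$ connected components, then $\widetilde{\H}^0(\lk_\triangle(v),\ZZ)\cong\ZZ^{c_v-1}$ when the link has a vertex, while $\widetilde{\H}^0=0$ when $\lk_\triangle(v)=\{\emptyset\}$. Summing over $v$ yields $\H^1\cong\ZZ^r$ with $r=\sum_v\rk\widetilde{\H}^0(\lk_\triangle(v),\ZZ)$, which is the first expression for $r$. To reach the second expression I would invoke the standard relation $\rk\H^0(\lk_\triangle(v),\ZZ)=\rk\widetilde{\H}^0(\lk_\triangle(v),\ZZ)+1$, valid exactly for those $v$ whose link contains a vertex, i.e.\ for the $0$-dimensional non-facets; the $0$-dimensional facets contribute $\rk\H^0=\rk\widetilde{\H}^0=0$ and can be harmlessly included in the unreduced sum. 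Subtracting one for each $0$-dimensional non-facet then converts $\sum_v\rk\widetilde{\H}^0$ into $\sum_v\rk\H^0-\#\{0\text{-dim non-facets}\}$.

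The computation is essentially routine once Theorem~\ref{theorem:cohomology-simplicial-complex} is available; the only point that needs care — and what I would treat as the main obstacle — is the bookkeeping around the degree $-1$ and degree $0$ conventions for reduced cohomology and the edge case $\lk_\triangle(v)=\{\emptyset\}$, where $\widetilde{\H}^{-1}$ is $\ZZ$ but both $\H^0$ and $\widetilde{\H}^0$ vanish, so that the usual splitting $\H^0\cong\widetilde{\H}^0\oplus\ZZ$ fails. Keeping straight that this breakdown occurs precisely at the $0$-dimensional facets is exactly what makes the two displayed formulas for $r$ agree and what separates the facet from the non-facet contributions in the two cohomology groups.
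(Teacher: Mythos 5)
Your proposal is correct and is exactly the intended deduction: the paper states this as an immediate corollary of Theorem~\ref{theorem:cohomology-simplicial-complex}, and your evaluation of $\widetilde{\H}^{-1}$ and $\widetilde{\H}^{0}$ of the links, including the careful treatment of the case $\lk_\triangle(v)=\{\emptyset\}$ at the $0$-dimensional facets, is precisely the bookkeeping that justifies both displayed formulas for $r$.
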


It follows that the local combinatorial Picard group of a simplicial complex $\triangle$ is $0$ if and only if all links $\lk_\triangle(v)$ are connected. This is true for the simplices, but also for many other examples, see the next Section. We also mention that in cohomological degree $\geq 2$ torsion can occur.

\begin{corollary}
    $\H^j\left(\Spec^\bullet M_\triangle, \O^*_{M_\triangle}\right)=0$, for $j\geq \dim\triangle+1$.
\end{corollary}

\subsection{Examples}

\begin{corollary}\label{proposition:graphs-general}
 Let $\triangle = (V, E)$ be a simple graph. For an isolated vertex $v$ we have
 \[ \H^0 \left(\Spec^\bullet M_\triangle, \O^*_{v}\right) = \ZZ \text{ and } \H^1 \left(\Spec^\bullet M_\triangle, \O^*_{v}\right) = 0 \, , \]
 and for a nonisolated vertex we have
     \[ \H^0 \left(\Spec^\bullet M_\triangle, \O^*_{v}\right) = 0 \text{ and } \H^1 \left(\Spec^\bullet M_\triangle, \O^*_{v}\right) = \ZZ^{ \operatorname{deg} (v) -1}  \, , \] 
     where $ \operatorname{deg} (v) $ denotes the degree of the vertex, i.e. the number of adjacent edges. Moreover,
     \[ \H^0\left(\Spec^\bullet M_\triangle, \O^*_{M_\triangle}\right) =\ZZ^s \, ,\] where $s$ is the number of isolated vertices of the graph and
     \[ \Pic^{\loc} (M_\triangle) = \H^1\left(\Spec^\bullet M_\triangle, \O^*_{M_\triangle}\right)= \ZZ^r \, , \]
     where $r = \displaystyle \sum_{\begin{subarray}{c}
     	v\in V \\
     	v\text{ not isolated}
     	\end{subarray} } \left( \operatorname{deg} (v)  - 1 \right) $. All higher cohomologies vanish.

    \end{corollary}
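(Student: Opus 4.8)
The plan is to invoke the master formula of Theorem~\ref{theorem:cohomology-simplicial-complex} and then reduce everything to an elementary computation of the reduced simplicial cohomology of the links, which are especially simple for a graph. Since $\triangle = (V, E)$ is one-dimensional, its only faces are the vertices and the edges, so for a vertex $\ver$ the link $\lk_\triangle(\ver)$ consists of the empty face together with the singletons $\{w\}$ for which $\{\ver, w\} \in E$. In other words, $\lk_\triangle(\ver)$ is the zero-dimensional complex whose vertices are precisely the $\deg(\ver)$ neighbours of $\ver$; this is the geometric input that drives the whole computation.

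First I would treat the individual summands $\O^*_{\ver}$. By the proof of Theorem~\ref{theorem:cohomology-simplicial-complex}, which splits the cohomology vertex by vertex via the decomposition $\O^*_{M_\triangle} \cong \bigoplus_{\ver \in V} \O^*_{\ver}$ of Theorem~\ref{theorem:semifree-decomposition-of-sheaf}, we have $\H^j(\Spec^\bullet M_\triangle, \O^*_{\ver}) \cong \widetilde{\H}^{j-1}(\lk_\triangle(\ver), \ZZ)$. It then remains to read off the reduced cohomology of a zero-dimensional complex. If $\ver$ is isolated then $\lk_\triangle(\ver) = \{\emptyset\}$ contains only the empty face, so $\widetilde{\H}^{-1} = \ZZ$ and all other reduced cohomology vanishes, giving $\H^0(\Spec^\bullet M_\triangle, \O^*_{\ver}) = \ZZ$ and $\H^1 = 0$. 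If $\ver$ is nonisolated then $\lk_\triangle(\ver)$ is a nonempty discrete set of $\deg(\ver)$ points, so $\widetilde{\H}^{-1} = 0$ while $\widetilde{\H}^0 = \ZZ^{\deg(\ver)-1}$ (the reduced zeroth cohomology counts connected components minus one), and everything in degree $\geq 1$ again vanishes; this yields $\H^0(\Spec^\bullet M_\triangle, \O^*_{\ver}) = 0$ and $\H^1 = \ZZ^{\deg(\ver)-1}$.

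Then I would assemble the global statements by summing the vertex contributions through the same direct-sum decomposition. Only isolated vertices contribute to degree $0$, giving $\H^0(\Spec^\bullet M_\triangle, \O^*_{M_\triangle}) = \ZZ^s$, and only nonisolated vertices contribute to degree $1$, giving $\Pic^{\loc}(M_\triangle) = \H^1(\Spec^\bullet M_\triangle, \O^*_{M_\triangle}) = \bigoplus_{\ver \text{ not isolated}} \ZZ^{\deg(\ver)-1} = \ZZ^r$ with $r = \sum_{\ver \text{ not isolated}} (\deg(\ver) - 1)$. The vanishing of all higher cohomology follows either from the degree bound $\H^j = 0$ for $j \geq \dim\triangle + 1 = 2$, or directly from the fact that a zero-dimensional link has no reduced cohomology above degree $0$.

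The only point requiring care -- and the one I would state explicitly -- is the reduced cohomology convention in degree $-1$: the link of an isolated vertex is not the void complex but the complex $\{\emptyset\}$ consisting of the empty face alone, for which $\widetilde{\H}^{-1}(\{\emptyset\}, \ZZ) = \ZZ$. This is exactly what produces the generator of $\H^0$ attached to each isolated vertex, and conflating it with the void complex (whose reduced cohomology vanishes in all degrees) would lose the $\ZZ^s$ term. Everything else is a routine bookkeeping of connected components.
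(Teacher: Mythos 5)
Your proposal is correct and follows exactly the route the paper intends: the statement is presented as a corollary of Theorem~\ref{theorem:cohomology-simplicial-complex} (with the vertex-wise splitting from Theorem~\ref{theorem:semifree-decomposition-of-sheaf}), and the whole content reduces to the reduced cohomology of the zero-dimensional link complexes, including the degree $-1$ convention for the link $\{\emptyset\}$ of an isolated vertex that the paper itself flags in Lemma~\ref{vertexunitslink}. The computation of $\widetilde{\H}^{-1}$ and $\widetilde{\H}^{0}$ for discrete complexes, the summation over vertices, and the vanishing above degree $\dim\triangle+1=2$ are all exactly as required.
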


\begin{example}
	We describe explicitely the line bundles given on the punctured spectrum of a simplicial complex in the situation where $\{u,v\}$ and  $\{v,w\}$ are faces and $\{u,v,w\}$ is a non-face.
	We look at $\O_v^* $, and the \v{C}ech cohomology class $ av \in D(u+v), bv \in D(v+w)$ and $0$ on all other intersections. The cocycle condition is fulfilled, as $D(u+v+w)= \emptyset$. Using $\O_v^*(D(v))=\ZZ$, we can normalize to $b=0$. For $a \neq 0$, this gives a nontrivial line bundle.  
	
	Let $ c,d \in \NN_+$ be such that $a=c-d$ and consider the $M$-set $S$ given by (we set $v=x_1$, $u=x_2$, $w=x_3$, other variables are allowed, but irrelevant)
	\[
	S=\left(e_i, 1 \leq i \leq n \midd \begin{aligned}
	e_1+x_2&=e_2+cx_1,\\
	e_1+x_3&=e_3+dx_1,\\
	e_1+x_j&=e_j + x_1\text{ for } j \geq 3 \\
	e_i+x_j&=e_j+x_i \text{ for } i,j \geq 2
	\end{aligned}\right) \, .
	\]
	This is invertible, since after localizing at any $x_j$, we can eliminate the $e_i$, $i \neq j$, and we see that $e_j$  is a generator of $S_{x_j}$ over $M_{x_j}$. The emptyness of $D(u+v+w)$ ensures that there are no further relations. If we work with the generators $f_1=e_1-dx_1$ and $f_j=e_j-x_j$, we get the transition functions $f_1-f_2 = e_1 -dx_1 - e_2 +x_2 = (c-d) x_1 = ax_1   $ on $D(x_1+x_2)$ and $0$ everywhere else.	
\end{example}

\begin{remark}
  	 From Remark~\ref{degreezerounit}, we get the following short exact cohomology sequence  in the case of a connected graph,
  	\[  0 \longrightarrow \ZZ \longrightarrow  \Pic^{\operatorname{proj} } M   \longrightarrow \Pic^{\loc} M \longrightarrow H^1(U,\ZZ)  \longrightarrow 0 \, . \]
  	Here, $H^1(U,\ZZ) = H^1(\triangle,\ZZ)$ and $ \Pic^{\operatorname{proj} } M \cong \ZZ^{ E } $.  	
  	One should think of $\operatorname{Proj} M$ as a union of combinatorial projective lines whose intersection pattern is a copy of the graph. The second identity is given by sending an edge $e=\{u,v\} $ to the cohomology class given by $u-v$ on $D(u+v)$ and $0$ on the other intersections. These classes are non-trivial in $\Pic^{\operatorname{proj} } M$ though they might be trivial in $\Pic^{\loc} M$. Since $\Pic^{\loc} M \cong \ZZ^{2 |E|- |V| } $, we have an exact sequence
  	\[  0 \longrightarrow \ZZ \longrightarrow  \ZZ^{|E|}   \longrightarrow \ZZ^{2 |E|- |V| } \rightarrow H^1( \triangle ,\ZZ)  \longrightarrow 0 \,  \]
  	from which we can deduce that the cyclomatic number (the rank of $H^1(\triangle,\ZZ)$) equals
  	$ -|E| +1 + (  2 |E|- |V|  ) =|E|- |V| +1  $. \end{remark}
  
\begin{example}
The simplical complex $\triangle$ given by $u,v,w,z$ with facets $\{x,y,z\}$ and $\{y,z,w\} $ corresponds to the binoid $\NN^4/z+w$. All links are connected, hence the local combinatorial Picard group is trivial by Corollary~\ref{corollary:cohomology-simplicial-binoid}, but it is not a simplex. Since $H^1(\triangle, \KK^*) = 0$,  the local Picard group of $\KK[\triangle]$ is also trivial by Theorem~\ref{theorem:cohomology-stanley-reisner} below. 
\end{example}

\begin{example}
We consider the following pictured simplicial complex $\triangle$:

\setlength{\unitlength}{0.75mm}
 
\begin{picture}(60,40)

\put(30,00){\line(1,0){56}}
\put(30,10){\line(1,0){56}}

\put(30,00){\line(1,1){28}}

\put(30,10){\line(1,1){28}}

\put(58,28){\line(1,-1){28}}

\put(58,38){\line(1,-1){28}}

\put(30,00){\line(0,1){10}}

\put(86,00){\line(0,1){10}}

\put(58,28){\line(0,1){10}}

\put(30,10){\line(6,-1){56}}

\put(30,00){\line(3,4){28}}

\put(58,28){\line(3,-2){28}}

\end{picture}

The triangles on the three rectangles belong to the complex, but the triangle on the bottom and on the top not. The link for each vertex consists of four points which are connected by a chain of edges. Hence the local combinatorial Picard group is trivial by Corollary~\ref{corollary:cohomology-simplicial-binoid}. This simplicial complex can be contracted to a circle. Hence $H^1(\triangle, \KK^*) = \KK^*$ and it follows from Theorem~ \ref{theorem:cohomology-stanley-reisner} below that the local Picard group of $\KK[\triangle]$ is not trivial.
\end{example}

    \section{From Combinatorics to Algebra}\label{section:injections}
    
    In this section, we investigate the relations between the local Picard group of binoids and the local Picard group of binoid $\KK$-algebras, where $\KK$ denotes a fixed base field. It will turn out that in many cases the algebraic local Picard group decomposes into a combinatorial part and a part depending on the base field.      
   
   \subsection{Units in $K[M]$}\label{units}
   
    The faithful functor
    \begin{equation}\label{functor:binoids-rings}
    \begin{tikzcd}[baseline=(current  bounding  box.center),
    /tikz/column 1/.append style={anchor=base east},
    /tikz/column 2/.append style={anchor=base west}, 
    row sep = 0pt]
    \KK[\hspace{1em}]: \mathrm{Binoids}\rar & \KK\text{-}\mathrm{Algebras}  \, ,\\
    M\rar[mapsto] & \KK[M] \, ,
    \end{tikzcd}
    \end{equation}
    induces other functors of spectra, sheaves and cohomology groups, that we are going to exploit in what follows. For a fixed binoid $M$, we get a functor from (finitely generated) $M$-sets to (finitely generated) $\KK[M]$-modules:
    \begin{equation}\label{functor:mset-kmmodules}
    \begin{tikzcd}[baseline=(current  bounding  box.center),
    /tikz/column 1/.append style={anchor=base east},
    /tikz/column 2/.append style={anchor=base west}, 
    row sep = 0pt]
    \KK[\hspace{1em}]: M\text{-}\mathrm{Sets}\rar & \KK[M]\text{-}\mathrm{Modules} \, , \\
    S \hspace{1em\rar[mapsto]}& \KK[S] \, ,
    \end{tikzcd}
    \end{equation}
    where $\KK[S]$ is the free $\KK$-module on $S \setminus \{p\}$, and $p$ is the special point of $S$, together with the natural action of $\KK[M]$. This functor is again faithful and it respects localizations (see \cite[Corollary  3.2.8]{Boettger}).
    
    From now on, assume that $M$ is \emph{torsion-free up to nilpotence} and \emph{cancellative}.
    
    \begin{lemma}\label{binoidintegralalgebraintegral}
        If $M$ is a torsion-free, integral and cancellative binoid, then $\KK[M]$ is an integral domain.
    \end{lemma}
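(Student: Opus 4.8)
The plan is to show that $\KK[M]$ has no zero divisors by exploiting the semifree-type structure that the three hypotheses force on $M$. The key observation is that a torsion-free, integral, cancellative binoid $M^\bullet = M \setminus \{\infty\}$ is a cancellative, torsion-free commutative monoid, and such monoids embed into their difference (Grothendieck) group $\Gamma^\bullet$, which is a finitely generated torsion-free abelian group, hence $\Gamma^\bullet \cong \ZZ^d$ for some $d$. This embedding $M^\bullet \hookrightarrow \ZZ^d$ is the structural fact doing all the work, and it is available precisely because of cancellativity (injectivity into the difference group, cf.\ the Remark following Definition~\ref{definition:gamma}) together with torsion-freeness (no torsion in $\Gamma^\bullet$).

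First I would translate the ring structure of $\KK[M]$ into this language. By the construction of the binoid algebra, $\KK[M]$ is the free $\KK$-module on $M^\bullet = M \setminus \{\infty\}$, with multiplication given by $e_a \cdot e_b = e_{a+b}$ whenever $a + b \neq \infty$ and $e_a \cdot e_b = 0$ otherwise. Because $M$ is \emph{integral}, $a + b = \infty$ can only happen if $a = \infty$ or $b = \infty$; so on all of $M^\bullet$ the product $e_a \cdot e_b = e_{a+b}$ never vanishes. This means the embedding $M^\bullet \hookrightarrow \ZZ^d$ of monoids induces an injective $\KK$-algebra homomorphism $\KK[M] \hookrightarrow \KK[\ZZ^d]$, where the target is the Laurent polynomial ring $\KK[t_1^{\pm 1}, \dots, t_d^{\pm 1}]$.

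The conclusion then follows from a standard fact: the Laurent polynomial ring $\KK[t_1^{\pm1}, \dots, t_d^{\pm1}]$ is an integral domain, being a localization of the polynomial ring $\KK[t_1, \dots, t_d]$ at the multiplicative set generated by the variables, and a subring of an integral domain is again an integral domain. Hence $\KK[M]$ is a domain. I expect the main obstacle to be purely bookkeeping: making the embedding $M^\bullet \hookrightarrow \Gamma^\bullet \cong \ZZ^d$ fully rigorous and checking that it respects the (partial) addition in a way that gives a genuine ring monomorphism rather than merely an additive or set-theoretic injection. In particular one must verify that distinct elements of $M^\bullet$ map to distinct monomials (so the images of the basis $\{e_a\}$ remain $\KK$-linearly independent in $\KK[\ZZ^d]$) and that the multiplication is preserved on the nose; both are consequences of cancellativity and integrality, but they are the steps where the hypotheses are genuinely used and so deserve explicit mention.
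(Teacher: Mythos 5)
Your argument is correct: embedding $M^\bullet$ into its difference group $\Gamma^\bullet\cong\ZZ^d$ (cancellativity gives injectivity, torsion-freeness and finite generation give freeness) and hence $\KK[M]$ into the Laurent polynomial ring $\KK[\ZZ^d]$ is precisely the standard toric argument, and integrality is correctly used to ensure the monomial basis multiplies without vanishing. The paper offers no proof beyond the remark that this is ``a standard result in the toric setting,'' and what you have written is exactly the proof behind that citation, so there is nothing to compare beyond noting that you have supplied the details the authors omitted.
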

    \begin{proof}
        This is a standard result in the toric setting.
    \end{proof}
    
    \begin{lemma}\label{lemma:prime-in-M-iff-prime-in-KM}
      An ideal $\p$ is a prime ideal of $M$ if and only if $\mathfrak{P}=\KK[\p]$ is a prime ideal of $\KK[M]$.
    \end{lemma}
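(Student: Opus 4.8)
The plan is to prove both directions of the equivalence by translating between the multiplicative/additive structure of $M$ and the ring structure of $\KK[M]$, using the faithful functor $\KK[\hspace{0.3em}]$ and the fact that it sends an $M$-set decomposition $M = \p \cupdot (M \setminus \p)$ to the $\KK$-vector space decomposition $\KK[M] = \KK[\p] \oplus \KK[M \setminus \p]$. First I would recall that $\p \subseteq M$ is an ideal if and only if $\KK[\p]$ is an ideal of $\KK[M]$: the $\KK$-submodule generated by $\{X^a \mid a \in \p\}$ is stable under multiplication by $X^b$ for all $b \in M$ precisely when $a + b \in \p$ whenever $a \in \p$, which is the definition of an ideal in $M$. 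This reduces the problem to the primality condition.

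For the forward direction, I would assume $\p$ is a prime ideal of $M$ and show $\mathfrak{P} = \KK[\p]$ is prime in $\KK[M]$, i.e. that $\KK[M]/\KK[\p] \cong \KK[M \setminus \p]_{\mathrm{something}}$ is an integral domain. The key observation is that $M \setminus \p$ is closed under addition (this is exactly the primality of $\p$ on the combinatorial side: $a + b \in \p$ forces $a \in \p$ or $b \in \p$, equivalently the complement is a submonoid), so $S := (M \setminus \p) \cup \{\infty\}$ is an integral subbinoid of $M$, and the quotient ring $\KK[M]/\KK[\p]$ is naturally isomorphic to $\KK[S]$ as a $\KK$-algebra (monomials in $\p$ die, monomials outside $\p$ survive and multiply as in $M$, with any product landing in $\p$ becoming zero, which matches the binoid structure of $S$ where such products equal $\infty$). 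Since $M$ is torsion-free and cancellative by standing hypothesis, so is the integral binoid $S$, and then Lemma~\ref{binoidintegralalgebraintegral} gives that $\KK[S]$ is an integral domain, hence $\mathfrak{P}$ is prime.

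For the converse, I would argue contrapositively: if $\p$ is not prime in $M$, then either $\p = M$ (but the maximal ideal still yields the prime $\mathfrak{P}$ corresponding to the maximal ideal, so the relevant failure is the other one) or there exist $a, b \in M \setminus \p$ with $a + b \in \p$. Then $X^a, X^b \notin \KK[\p]$ but $X^a \cdot X^b = X^{a+b} \in \KK[\p]$, exhibiting zero-divisors in $\KK[M]/\KK[\p]$, so $\mathfrak{P}$ is not prime. One must also confirm $\mathfrak{P} \neq \KK[M]$, which holds because $\p \neq M$ for a prime ideal (a prime ideal is proper), so the monomial $1 = X^0$ with $0 \notin \p$ does not lie in $\KK[\p]$.

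The main obstacle, and the point requiring the most care, is the identification $\KK[M]/\KK[\p] \cong \KK[M \setminus \p]$ as binoid algebras, specifically checking that the ring-theoretic quotient reproduces exactly the combinatorial multiplication of the complement binoid, including the subtle collapsing of products that fall back into $\p$ onto the zero element rather than onto a spurious nonzero monomial. This is where the additive-closure-of-the-complement characterization of primality does the real work, and where the torsion-free and cancellative hypotheses must be invoked so that Lemma~\ref{binoidintegralalgebraintegral} applies to the resulting integral binoid.
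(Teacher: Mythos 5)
Your proof is correct and follows essentially the same route as the paper: the forward direction reduces to the integrality of $\KK[M]/\KK[\p]\cong\KK[M/\p]$ (you describe $M/\p$ equivalently as the integral subbinoid $(M\setminus\p)\cup\{\infty\}$) and invokes Lemma~\ref{binoidintegralalgebraintegral}, while the converse is the same elementary observation on monomials that the paper dismisses as trivial via $\KK[\p]\cap M=\p$. You merely spell out the details that the paper leaves implicit.
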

    \begin{proof}
        $\Longleftarrow$ trivial, since $\KK[\p]\cap M=\p$. $\Longrightarrow$ $\mathfrak{P} $ is prime if and only if ${\KK[M]}/{\mathfrak{P}}$ is an integral domain. We know that $\KK\left[{M}/ {I}\right] \cong {\KK[M]}/{\KK[I]}$ for any ideal, and ${M}/{\p}$ is integral because $\p$ is a prime ideal. We can apply Lemma~\ref{binoidintegralalgebraintegral} and get the result.
    \end{proof}

    We come now to the splitting behavior of the sheaf of units.        
    
    \begin{lemma}\label{lemma:units-modulo-prime}
    	If $\p$ is a prime ideal of a  cancellative binoid $M$ that is torsion-free up to nilpotence , then
    	\[        \KK \left[M /  \p \right]^* \cong \KK \left[ (\ZZ^l)^\infty \right ]^* =  \ZZ^l    \times  K^*       \]
    	for some $l$.
    \end{lemma}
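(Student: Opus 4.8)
The plan is to put $N := M/\p$ and to reduce the computation of $\KK[N]^{*}$ to the well-understood case of a Laurent polynomial ring over $\KK$. The first task is to check that $N$ inherits enough structure from $M$. Since $\p$ is prime, $M\setminus\p$ is a submonoid of $M^{\bullet}$, so $N^{\bullet}=M\setminus\p$ and $N$ is integral; in particular $N$ is reduced, because an integral binoid has no nilpotents other than $\infty$ (an element $a\neq\infty$ satisfies $na\neq\infty$ for all $n$). Cancellativity of $N$ is immediate: a relation $a+x=a+y\neq\infty$ in $N$ is literally the same relation in $M$, which is cancellative, so $x=y$. For torsion-freeness I would observe that every element of $N^{\bullet}$ is non-nilpotent already in $M$ (a nilpotent element of $M$ becomes $\infty$ in $N$), and then invoke that $M$ is torsion-free up to nilpotence, i.e.\ that reduction $M\to M_{\red}$ is a torsion-free bijection off the nilpotent ideal (as used in the proof of Lemma~\ref{lemma:M*=M*red}); a relation $nx=ny\neq\infty$ with $x,y\in N^{\bullet}$ then forces $x=y$. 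Thus $N$ is integral, cancellative and torsion-free, and Lemma~\ref{binoidintegralalgebraintegral} gives that $\KK[N]$ is an integral domain.

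Next I would pass to the difference group. Writing $G$ for the difference group $\Gamma^{\bullet}$ of $N$ (Definition~\ref{definition:gamma}), cancellativity yields an injection $N^{\bullet}\hookrightarrow G$ and hence a ring inclusion $\KK[N]\hookrightarrow\KK[G]$. Since $N$, like $M$, is finitely generated and torsion-free, $G$ is a finitely generated torsion-free abelian group, so $G\cong\ZZ^{d}$ and $\KK[G]$ is the Laurent polynomial ring $\KK[t_{1}^{\pm1},\dots,t_{d}^{\pm1}]$.

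The heart of the argument is the description of the units. Any $u\in\KK[N]^{*}$ is in particular a unit of $\KK[G]$, and over a field the units of a Laurent polynomial ring are exactly the trivial units $c\,t^{g}$ with $c\in\KK^{*}$ and $g\in G$ (the standard fact, proved by comparing leading terms with respect to a monomial order). For $u=c\,t^{g}$ to lie in $\KK[N]$ forces $g\in N^{\bullet}$, and applying this to $u^{-1}=c^{-1}t^{-g}$ forces $-g\in N^{\bullet}$; hence $g\in N^{\bullet}\cap(-N^{\bullet})=N^{*}$, the unit group of the binoid $N$. Conversely each $c\,t^{g}$ with $c\in\KK^{*}$ and $g\in N^{*}$ is visibly a unit of $\KK[N]$, so $\KK[N]^{*}=\KK^{*}\times N^{*}$.

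Finally, $N^{*}$ is a subgroup of $G\cong\ZZ^{d}$, hence free abelian, say $N^{*}\cong\ZZ^{l}$ (it is torsion-free directly as well, since $ng=n\cdot 0=0$ forces $g=0$ by torsion-freeness of $N$). Identifying $N^{*}$ with $\ZZ^{l}$ realizes $\KK[N]^{*}\cong\KK^{*}\times\ZZ^{l}=\KK[(\ZZ^{l})^{\infty}]^{*}$, which is the claim. I expect the main obstacle to be the bookkeeping in transporting the hypotheses \emph{cancellative} and \emph{torsion-free up to nilpotence} across the quotient $M\to M/\p$ so as to produce a genuinely torsion-free integral $N$; once $\KK[N]$ is known to be a domain embedding into a Laurent polynomial ring, the unit computation is the routine trivial-units argument.
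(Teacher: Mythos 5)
Your proposal is correct and follows essentially the same route as the paper, which simply observes that modulo the prime ideal one is in a toric (integral, cancellative, torsion-free) setting and cites the triviality of units of an affine monoid domain as known. You have merely filled in the two details the paper leaves implicit: that integrality, cancellativity and torsion-freeness descend from $M$ to $M/\p$, and the standard leading/trailing-term argument identifying the units of $\KK[M/\p]$ inside the Laurent ring $\KK[\Gamma^\bullet]$ as $\KK^*\times (M/\p)^*$.
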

    \begin{proof}
    	Modulo the prime ideal we are in a toric setting, where this is known.
    \end{proof}

    \begin{lemma}\label{lemma:torsion-free-cancellative-reduced-algebra}
    	Let $M$ be a reduced, torsion-free, cancellative binoid. Then $\KK[M]$ is reduced.
    \end{lemma}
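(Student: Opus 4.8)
The plan is to exhibit $\KK[M]$ as a subring of a finite product of integral domains; since a product of domains is reduced and any subring of a reduced ring is reduced, this will finish the proof. The whole argument is the binoid analogue of the fact that a Noetherian ring with zero nilradical embeds into the product of its integral quotients by minimal primes.

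First I would record that, as $M$ is finitely generated, $\Spec M$ is finite, so $M$ has only finitely many minimal prime ideals $\p_1, \dots, \p_r$. Exactly as in commutative ring theory, the nilradical $\nil(M)$ is the intersection of all prime ideals of $M$, hence of the minimal ones; since $M$ is reduced this yields $\bigcap_{i=1}^r \p_i = \{\infty\}$. This combinatorial identity is the only input from the reducedness hypothesis, and it is what ultimately forces the nilradical of $\KK[M]$ to vanish.

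Next, using Lemma~\ref{lemma:prime-in-M-iff-prime-in-KM}, each $\KK[\p_i]$ is a prime ideal of $\KK[M]$, and via the identification $\KK[M]/\KK[\p_i]\cong\KK[M/\p_i]$ recorded in the proof of that Lemma, the quotient is the binoid algebra of $M/\p_i$. The binoid $M/\p_i$ is integral because $\p_i$ is prime, and it inherits torsion-freeness and cancellativity from $M$: any equation in $M/\p_i$ between elements different from $\infty$ lifts to the same equation in $M$ among elements outside $\p_i$, where the hypotheses on $M$ apply directly. Hence Lemma~\ref{binoidintegralalgebraintegral} shows that each $\KK[M]/\KK[\p_i]\cong\KK[M/\p_i]$ is an integral domain.

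Finally I would verify that the product map $\KK[M]\longrightarrow\prod_{i=1}^r \KK[M]/\KK[\p_i]$ is injective, equivalently that $\bigcap_{i=1}^r\KK[\p_i]=0$. This is the one genuine point, and it is where the combinatorics enters: since $\KK[M]$ is free on the basis $M^\bullet$ and $\KK[\p_i]$ is precisely the $\KK$-span of the basis elements lying in $\p_i$, any element of the intersection $\bigcap_i\KK[\p_i]$ is supported on $\bigcap_i\p_i=\{\infty\}$, which corresponds to $0$ in the algebra, so the element is zero. Thus $\KK[M]$ embeds into a finite product of integral domains and is therefore reduced. The main obstacle is bookkeeping rather than conceptual: one must be careful that $\KK[\p_i]$ really is the coordinate-span of $\p_i^\bullet$, so that the intersection of the ideals is computed by intersecting supports, and that the three binoid properties descend to $M/\p_i$; both are routine once the basis description is made explicit.
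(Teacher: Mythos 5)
Your proof is correct. The paper itself prints no proof of this lemma (it is one of the statements deferred to the first author's thesis), but your argument uses exactly the ingredients the paper deploys around it: Lemma~\ref{lemma:prime-in-M-iff-prime-in-KM} together with $\KK[M/\p]\cong\KK[M]/\KK[\p]$ and Lemma~\ref{binoidintegralalgebraintegral} to see that each $\KK[M]/\KK[\p_i]$ is a domain, and the fact that $\KK[\p_i]$ is the $\KK$-span of the basis monomials lying in $\p_i$, so that $\bigcap_i\KK[\p_i]$ is supported on $\bigcap_i\p_i=\nil(M)=\{\infty\}$ and hence vanishes. This is precisely the identity $\bigcap_{\p\text{ minimal}}\KK[\p]=\nil(\KK[M])$ that the paper invokes in the proof of Theorem~\ref{theorem:split-algebra}, so your route is the intended one; all the descent claims (integrality, cancellativity, torsion-freeness of the Rees quotient $M/\p_i$) check out as you state them.
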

    
    \begin{theorem}\label{theorem:split-algebra}
    	Let $M$ be a reduced, torsion-free, cancellative binoid and let $\KK[M]$ be its binoid algebra. Then
    	\[        \left(\KK[M]\right)^* = M^* \oplus  \KK^*   \, .        \]
    \end{theorem}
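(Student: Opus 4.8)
The plan is to show that any unit of $\KK[M]$ is a scalar multiple of a combinatorial unit, and that this decomposition is an internal direct sum. The key structural input is that $\KK[M]$ is reduced (Lemma~\ref{lemma:torsion-free-cancellative-reduced-algebra}) and that, since $M$ is torsion-free and cancellative, we understand its minimal primes well. First I would recall that a unit $u \in \KK[M]^*$ must have its support concentrated in a single ``component'': because $M$ is reduced, the zero ideal is the intersection of the minimal primes $\p_1, \dots, \p_r$, and each quotient $M/\p_i$ is integral and torsion-free. The map $\KK[M] \hookrightarrow \prod_i \KK[M/\p_i]$ is injective (reducedness), and each factor $\KK[M/\p_i]$ is an integral domain by Lemma~\ref{binoidintegralalgebraintegral}. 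So the image of $u$ in each factor is a unit of that domain.

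Next I would apply Lemma~\ref{lemma:units-modulo-prime}, which computes the units of each $\KK[M/\p_i]$ as $\ZZ^{l_i} \times \KK^*$; concretely, every unit of the domain $\KK[M/\p_i]$ is of the form $c \cdot x^a$ for a scalar $c \in \KK^*$ and a monomial $x^a$ coming from the difference group. The heart of the argument is to promote this ``local'' monomial description to a genuine monomial in $M$ itself. The idea is that a unit $u = \sum_{m} c_m x^m$ of $\KK[M]$, having a single-monomial image in every $\KK[M/\p_i]$, must already be a single monomial $c \cdot x^{m_0}$ with $c \in \KK^*$ and $x^{m_0}$ a unit of $\KK[M]$ supported on $M$; and one checks that $x^{m_0}$ being invertible in $\KK[M]$ forces $m_0 \in M^*$. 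This is where one uses cancellativity crucially: the element $m_0$ together with its inverse must satisfy $m_0 + m_0' = 0$ in $M$, so $m_0 \in M^*$.

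I would then assemble the splitting. The natural maps are the inclusions $M^* \hookrightarrow \KK[M]^*$ sending $m \mapsto x^m$ and $\KK^* \hookrightarrow \KK[M]^*$ sending $c \mapsto c$; together they give a homomorphism $M^* \oplus \KK^* \to \KK[M]^*$. Surjectivity is exactly the statement just proved, that every unit is $c \cdot x^{m_0}$. For injectivity, and to see this is an internal direct sum, one notes $M^* \cap \KK^* = \{1\}$ inside $\KK[M]^*$: a scalar $c \in \KK^*$ that equals a monomial $x^m$ forces $m = 0$ (comparing supports) and then $c = 1$. Both subgroups are central (everything is commutative), so the sum is direct.

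The main obstacle I anticipate is the promotion step in the second paragraph: deducing from the collection of single-monomial images in the $\KK[M/\p_i]$ that $u$ is globally a single monomial. A priori $u$ could have several monomials in its support that happen to collapse to one monomial in each quotient. The clean way to handle this is to invert $u$: writing $u v = 1$ in $\KK[M]$ and examining supports, a product of two elements with nonempty support can only be a single monomial (namely $1 = x^0$) if each factor is itself a single monomial, which is a standard support/leading-term argument valid because each $M/\p_i$ is an integral cancellative monoid. One must take a little care to run this argument simultaneously across all minimal primes so as to pin down a single global monomial, but this is precisely what reducedness (the injection into the product) makes possible.
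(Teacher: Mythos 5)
Your proposal is correct and follows essentially the same route as the paper: pass to the integral quotients $\KK[M/\p]$ for the minimal primes $\p$, invoke the toric description of their units (Lemma~\ref{lemma:units-modulo-prime}), and use reducedness to conclude that a unit of $\KK[M]$ is a scalar times a monomial in $M^*$. The ``promotion step'' you worry about is resolved exactly as you suggest and as the paper does it: the non-monomial part of a unit lies in $\KK[\p]$ for every minimal prime, hence in $\bigcap_\p \KK[\p]=\nil(\KK[M])=0$ by Lemma~\ref{lemma:torsion-free-cancellative-reduced-algebra}; the paper merely streamlines the bookkeeping by first isolating the $M^*$-monomial via the retraction $M\to M/M_+\to M$.
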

    \begin{proof}
    	What we have to prove is that, under these hypothesis, any unit is a product of a monomial and a unit in the field. On the binoid side, since by definition $M_+=M \setminus M^*$, there is an isomorphism
    	\[        \left(M^*\right)^\infty \cong {M}/{M_+}\, .        \]
    	Let $\p$ be a prime ideal of $M$. Since $\p\subseteq M_+$, there are maps
    	\[
    	\begin{tikzcd}[cramped]
    	M\rar["\pi_\p"]&  {M}/{\p}\rar["\pi_{M_+}"]& {M}/{M_+}=(M^*)^\infty \, ,
    	\end{tikzcd}
    	\]
    	and since $(M^*)^\infty\subseteq M$, we have a map $\sigma$ going the other way
    	\[
    	\begin{tikzcd}[cramped]
    	M \rar["\pi_\p"]& {M}/{\p}\rar["\pi_{M_+}"]&(M^*)^\infty\arrow[ll, out=-10, in=190, overlay, pos=0.07, "\sigma"]
    	\end{tikzcd}
    	\]
    	such that the composition $\pi_{M_+}\circ \pi_\p\circ \sigma$ is the identity on $(M^*)^\infty$. Thanks to the functor from binoids to algebras, we get maps for the rings
    	\[
    	\begin{tikzcd}[cramped]
    	\KK[M]\rar& \KK\left[ {M}/{\p}\right]\rar&\KK\left[(M^*)^\infty\right]\arrow[ll, out=-10, in=190, overlay, pos=0.07]
    	\end{tikzcd}
    	\]
    	that induce maps of groups
    	\[
    	\begin{tikzcd}[cramped]
    	\KK[M]^*\rar& \KK\left[{M}/{\p}\right]^*\rar&\KK\left[(M^*)^\infty\right]^*\arrow[ll, out=-10, in=190, overlay, pos=0.07]
    	\end{tikzcd}
    	\]
    	that again compose to the identity on the right. Let $P$ be a unit in $\KK[M]$. Then
    	\[        P= \lambda_\nu T^\nu +\sum_{\mu\in M_+}\lambda_{\mu}T^\mu        \]
    	with $\nu\in M^*$, since $\KK[M^*]\cong {\KK[M]}/{\KK[M_+]}$ and the statement is true for $\KK[M^*]$, thanks to Lemma~\ref{lemma:units-modulo-prime}. If we apply the first map $\pi_\p$ to $P$, we get
    	\[       \lambda_\nu T^\nu + \sum_{\mu \in M_+, \, \mu \notin \p } \lambda_{\mu} T^\mu \in \KK\left[ {M}/{\p} \right]^* \, .  \]
    	We can apply Lemma~\ref{lemma:units-modulo-prime} to obtain that this has to be a monomial. So, in particular, $\sum_{\mu\in M_+}\lambda_{\mu}T^\mu\in\KK[\p]$ for all minimal prime ideals $\p$. This means that
    	\[
    	\sum_{\mu\in M_+}\lambda_{\mu}T^\mu \in \bigcap_{\begin{subarray}{c}
    		\p \text{ minimal}\\
    		\text{prime of } M
    		\end{subarray}} \KK[\p]=\nil(\KK[M])\, .
    	\]
    	Since $\KK[M]$ is reduced, thanks to Lemma~\ref{lemma:torsion-free-cancellative-reduced-algebra}, its nilradical is trivial, so
    	\[
    	\sum_{\mu\in M_+}\lambda_{\mu}T^\mu=0
    	\]
    	and
    	\[
    	P = \lambda_\nu T^\nu \, .\qedhere
    	\]
    \end{proof}
    
    \begin{remark}\label{remark:units-non-reduced}
    	If $M$ is torsion-free and cancellative but not reduced, then the algebra is not reduced. Still, we can split its units as
    	\[
    	\left(\KK[M]\right)^*= M^*  \oplus       \KK^*\oplus   (1+\n)
    	\]
    	where $\n$ is the nilradical of $\KK[M]$. Indeed, in the above proof, we get that
    	\[
    	N=\sum_{\mu\in M_+}\lambda_{\mu}T^\mu
    	\]
    	is nilpotent. Then $ \lambda_\mu T^\mu + N = \lambda_\mu T^\mu \left( 1 + \frac{N}{ \lambda_\mu T^\mu} \right) \in 1 + \n$.
    \end{remark}

    \begin{example}
    	Consider the non-cancellative and torsion binoid $M=(x, y \mid 2x=x+y, 2y=x+y)$, whose ring is $R= {\KK[X, Y]}/{(X^2-XY, Y^2-XY)}$. The element $X-Y$ is nilpotent in $R$, since $(X-Y)^2=X^2-2XY+Y^2=0$, but does not come from a nilpotent element in $M$, since $M$ is reduced. So $1+X-Y$ is algebraically invertible but it is not the product of a combinatorially invertible element and a field unit, and this shows that the units of a binoid ring are not always the direct sum of combinatorial units and the units of the field.
    \end{example}

    \subsection{Relations between $\Spec M$ and $\Spec \KK[M]$}

    The functor \eqref{functor:mset-kmmodules}, together with Lemma~\ref{lemma:prime-in-M-iff-prime-in-KM}, gives rise to an injection
    \begin{equation}\label{injection:specm-speckm}
    \begin{tikzcd}[baseline=(current  bounding  box.center),
    /tikz/column 1/.append style={anchor=base east},
    /tikz/column 2/.append style={anchor=base west}, 
    row sep = 0pt]
    i: \Spec M\rar[hook] & \Spec \KK[M] \, . \\
    \p \hspace{1em}\rar[mapsto] & \KK[\p] \, .
    \end{tikzcd}
    \end{equation}
    
    \begin{lemma}\label{lemma:i-continuous}
        $i$ is a continuous map between the two spaces equipped with the respective Zariski topologies.
    \end{lemma}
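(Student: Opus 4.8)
The plan is to verify continuity directly from the definitions of the two topologies, exploiting that closed sets on both sides are upward closed under inclusion of primes and that the assignment $\p \mapsto \KK[\p]$ is monotone. Recall from the start of Section~1 that a subset of $\Spec M$ is closed precisely when it is superset closed, i.e.\ upward closed under $\subseteq$, while on $\Spec \KK[M]$ the closed sets are exactly the vanishing loci $V(I) = \{\mathfrak{P} \mid I \subseteq \mathfrak{P}\}$ of ideals $I \subseteq \KK[M]$. Since every closed subset of $\Spec \KK[M]$ has this form, it suffices to show that $i^{-1}(V(I))$ is closed in $\Spec M$ for every ideal $I$. Here $i$ is well defined by Lemma~\ref{lemma:prime-in-M-iff-prime-in-KM}, which guarantees that $\KK[\p]$ is indeed prime.

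First I would record the key monotonicity property: if $\p \subseteq \mathfrak{q}$ are prime ideals of $M$, then $\KK[\p] \subseteq \KK[\mathfrak{q}]$. This is immediate from the description of $\KK[\p]$ as the $\KK$-span of the monomials $T^a$ with $a \in \p$, $a \neq \infty$; equivalently, it follows from the functoriality of $\KK[\,\cdot\,]$ in \eqref{functor:mset-kmmodules} applied to the inclusion of $M$-subsets $\p \hookrightarrow \mathfrak{q}$.

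Next I would compute the preimage. By the definition of $i$,
\[
i^{-1}(V(I)) = \{\p \in \Spec M \mid \KK[\p] \in V(I)\} = \{\p \in \Spec M \mid I \subseteq \KK[\p]\} \, .
\]
To see that this set is superset closed, suppose $\p$ lies in it and $\p \subseteq \mathfrak{q}$. Then by the monotonicity above, $I \subseteq \KK[\p] \subseteq \KK[\mathfrak{q}]$, so $\mathfrak{q}$ lies in it as well. Hence $i^{-1}(V(I))$ is upward closed, i.e.\ closed in $\Spec M$, which establishes continuity.

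The argument is essentially formal, and I do not expect a genuine obstacle; the only point requiring care is the bookkeeping between the two descriptions of the topologies, namely the combinatorial ``superset closed'' description on $\Spec M$ versus the algebraic ``$V(I)$'' description on $\Spec \KK[M]$, matched via the monotonicity of $\KK[\,\cdot\,]$. Should one prefer to argue with open sets instead, the identical computation shows that $i^{-1}(D(f)) = \{\p \mid f \notin \KK[\p]\}$ is subset closed, hence open, for every $f \in \KK[M]$, giving the same conclusion.
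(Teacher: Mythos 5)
Your proof is correct. Note that the paper itself gives no explicit proof of this lemma; the mechanism it implicitly relies on is the subsequent Lemma~\ref{ipreimage}, which computes $i^{-1}(D(P))=\bigcup_{\alpha_\mu\neq 0}D(\mu)$ for $P=\sum\alpha_\mu T^\mu$, exhibiting the preimage of each basic Zariski open set as an explicit union of combinatorial open sets. Your route is slightly different in flavor: rather than computing preimages of basic opens, you work with closed sets, using the Alexandrov-type description of the topology on $\Spec M$ (closed $=$ superset closed) together with the monotonicity of $\p\mapsto\KK[\p]$, and you correctly ground the well-definedness of $i$ in Lemma~\ref{lemma:prime-in-M-iff-prime-in-KM}. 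Both arguments are formal and sound; the paper's computation buys more (an explicit formula for $i^{-1}(D(P))$ that is reused later, e.g.\ in Lemma~\ref{pushforwardstalk}), whereas your monotonicity argument is more economical if continuity is all one wants. Your closing remark about $i^{-1}(D(f))$ being subset closed is essentially the open-set shadow of the same observation and is likewise correct.
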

    
    \begin{lemma}\label{ipreimage}
    	Let $P=\sum \alpha_\mu T^\mu \in\KK[M]$ where $T^\mu$ are the monomials corresponding to $\mu \in M$. Then $i^{-1}(D(P))=\bigcup D(\mu )$, where the union runs over all $\mu$ with $\alpha_\mu \neq 0$.      
    \end{lemma}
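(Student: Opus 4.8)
The plan is to compute the preimage pointwise, by unwinding the definition of the basic Zariski open set $D(P)$ together with the explicit description of the ideal $\KK[\p]$. First I would note that, for $\p\in\Spec M$, we have $\p\in i^{-1}(D(P))$ if and only if $i(\p)=\KK[\p]\in D(P)$, which by definition of the basic open set in $\Spec\KK[M]$ means exactly $P\notin\KK[\p]$. Thus the whole statement reduces to characterizing, for each prime $\p$, precisely when $P$ fails to lie in the ideal $\KK[\p]$.

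The key step is the monomial description of $\KK[\p]$. Since the monomials $\{T^\lambda\mid \lambda\in M^\bullet\}$ form a $\KK$-basis of $\KK[M]$, and since $\p$ is an ideal of the monoid $M$ (so $\lambda\in\p$ and $\nu\in M$ force $\lambda+\nu\in\p$), the subspace $\KK[\p]$ is exactly the $\KK$-span of $\{T^\lambda\mid \lambda\in\p\}$ and contains no monomial whose exponent lies outside $\p$. Comparing coefficients against this basis, I would conclude that $P=\sum_\mu\alpha_\mu T^\mu$ lies in $\KK[\p]$ if and only if $\alpha_\mu=0$ for every $\mu\notin\p$, that is, if and only if every exponent occurring in $P$ (the support $\supp(P)$) belongs to $\p$.

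Negating this characterization yields the claim. Indeed, $P\notin\KK[\p]$ precisely when there exists some $\mu$ with $\alpha_\mu\neq0$ and $\mu\notin\p$, and the condition $\mu\notin\p$ is exactly $\p\in D(\mu)$. Hence $\p\in i^{-1}(D(P))$ if and only if $\p\in D(\mu)$ for at least one $\mu$ with $\alpha_\mu\neq0$, which is the asserted equality $i^{-1}(D(P))=\bigcup_{\alpha_\mu\neq0}D(\mu)$. The only point requiring genuine care, and thus the main obstacle, is the monomial-basis argument guaranteeing that membership in $\KK[\p]$ can be tested coefficient by coefficient: this rests on $\KK[M]$ being free on $M^\bullet$ and on $\KK[\p]$ being spanned by honest monomials rather than arbitrary $\KK$-linear combinations, a feature that would fail were $\p$ not an ideal of the monoid.
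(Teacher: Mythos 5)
Your proof is correct: reducing the claim to the pointwise condition $P\notin\KK[\p]$, and then testing membership coefficient by coefficient against the monomial basis of $\KK[M]$ (which works precisely because $\KK[\p]$ is spanned by the monomials $T^\lambda$ with $\lambda\in\p$, $\p$ being an ideal of $M$), is exactly the intended argument. The paper itself states this lemma without proof (deferring to the first author's thesis), and your write-up supplies the natural justification, including the one point that genuinely needs care.
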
  
    
    \begin{lemma}\label{lemma:U-intersect-SpecM}
        For any non empty open subset $\emptyset \neq U\subseteq\Spec\KK[M]$, the intersection $U\cap i(\Spec M)$ is non empty.
    \end{lemma}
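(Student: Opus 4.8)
The plan is to reduce to principal open subsets and then to exploit Lemma~\ref{ipreimage} in order to translate the question into a statement about nilpotents. Since the sets $D(P)$ with $P\in\KK[M]$ form a basis for the Zariski topology on $\Spec\KK[M]$, any non-empty open $U$ contains some non-empty basic open $D(P)$; because $D(P)\subseteq U$, it suffices to show that $D(P)\cap i(\Spec M)\neq\emptyset$. I would fix such a $P$ and write $P=\sum_\mu \alpha_\mu T^\mu$.

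Next I would observe that $D(P)\cap i(\Spec M)\neq\emptyset$ is equivalent to $i^{-1}(D(P))\neq\emptyset$: a point $\p\in i^{-1}(D(P))$ yields $i(\p)\in D(P)\cap i(\Spec M)$, and conversely every point of $D(P)\cap i(\Spec M)$ is of the form $i(\p)$ for some $\p\in i^{-1}(D(P))$. By Lemma~\ref{ipreimage} we have $i^{-1}(D(P))=\bigcup D(\mu)$, the union running over all $\mu$ with $\alpha_\mu\neq 0$. Thus the task becomes to show that at least one such $\mu$ satisfies $D(\mu)\neq\emptyset$ in $\Spec M$. Since the nilradical of $M$ is the intersection of all its prime ideals, $D(\mu)=\emptyset$ is equivalent to $\mu\in\nil(M)$, i.e.\ to $\mu$ being nilpotent in $M$.

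Finally I would argue by contradiction. Suppose that every $\mu$ occurring in $P$ (with $\alpha_\mu\neq 0$) is nilpotent in $M$, say $n_\mu\mu=\infty$. Then each monomial satisfies $(T^\mu)^{n_\mu}=T^{n_\mu\mu}=T^\infty=0$, so every $T^\mu$ is nilpotent in $\KK[M]$. As $P$ is a finite $\KK$-linear combination of these nilpotent elements of the commutative ring $\KK[M]$, it is itself nilpotent, whence $D(P)=\emptyset$, contradicting the choice of $D(P)$ as non-empty. Hence some $\mu$ with $\alpha_\mu\neq 0$ is non-nilpotent in $M$, so $i^{-1}(D(P))\supseteq D(\mu)\neq\emptyset$, and the claim follows.

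The content of the statement is really that $i(\Spec M)$ is dense in $\Spec\KK[M]$, and the only genuine input is the elementary fact that a finite sum of nilpotent elements of a commutative ring is nilpotent. I therefore do not expect a serious obstacle here; the only care needed is in matching $D(\mu)=\emptyset$ with nilpotency in $M$ on one side, and $D(P)=\emptyset$ with nilpotency in $\KK[M]$ on the other, both via the characterization of the nilradical as the intersection of all prime ideals.
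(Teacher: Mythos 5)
Your argument is correct, but it follows a genuinely different route from the paper's. The paper splits into cases: if $M$ is integral it observes that the generic point $\langle 0\rangle = i(\langle\infty\rangle)$ lies in every non-empty open set; if $M$ is not integral it invokes the external fact that every minimal prime of $\KK[M]$ is combinatorial, picks a minimal prime $\mathfrak{P}=i(\p)$ whose component meets $U$, and reduces to the integral case via the correspondence with $\Spec \KK[M]/\mathfrak{P} = \Spec\KK[M/\p]$. You instead reduce to a basic open $D(P)$, apply Lemma~\ref{ipreimage} to get $i^{-1}(D(P))=\bigcup_{\alpha_\mu\neq 0} D(\mu)$, and rule out the degenerate case by matching $D(\mu)=\emptyset$ with $\mu\in\nil(M)$ and $D(P)=\emptyset$ with $P\in\nil(\KK[M])$. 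Your version is more self-contained: it avoids the citation that minimal primes of $\KK[M]$ are monomial ideals, needs no case distinction on integrality, and makes the density of $i(\Spec M)$ transparent; its only unproved ingredients are the standard characterizations of the nilradical as the intersection of all primes (equivalently, $D(\mu)=\emptyset$ iff the localization $M_\mu$ is the zero binoid iff $\mu$ is nilpotent), which you should state you are using, and the elementary closure of nilpotents under finite sums. The paper's version is shorter granted the reference to \cite[Corollary 3.3.5]{Boettger}, and it exhibits an explicit combinatorial point of $U$ (the generic point of an irreducible component meeting $U$), whereas yours exhibits a point of the possibly smaller set $D(\mu)$.
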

    \begin{proof}
        If $M$ is integral then $\langle0\rangle=i(\langle\infty\rangle)\in U$. If $M$ is non integral, consider a minimal prime ideal $\mathfrak{P}\in\Spec \KK[M]$. Then $\mathfrak{P}=i(\p)$, see \cite[Corollary 3.3.5]{Boettger}. Thanks to the correspondence between prime ideals in $\KK[M]$ that contain $\mathfrak{P}$ and prime ideals in ${\KK[M]}/{\mathfrak{P}}$, and thanks to the fact that ${\KK[M]}/{\mathfrak{P}}$ is integral, we can apply the previous case and obtain our result.
    \end{proof}
        
   Since the map in \eqref{injection:specm-speckm} is continuous, we can pushforward a sheaf from the combinatorial spectrum to the algebraic spectrum
    \begin{equation}\label{functor:sheavesM-sheavesKM}
    \begin{tikzcd}[baseline=(current  bounding  box.center),
    /tikz/column 1/.append style={anchor=base east},
    /tikz/column 2/.append style={anchor=base west}, 
    row sep = 0pt]
    \KK[\hspace{1em}]: \mathfrak{Sheaves}_{\Spec M}\rar & \mathfrak{Sheaves}_{\Spec \KK[M]} \, , \\
    \F\rar[mapsto] &  i_*\F \, .
    \end{tikzcd}
    \end{equation}

    \begin{definition}
        Let $\widehat{I}$ be an ideal in $\KK[M]$. We say that $\widehat{I}$ is \emph{combinatorial} if $\widehat{I}=\KK[I]$ for some $I$ ideal of $M$.
    \end{definition}

    \begin{definition}
        Given a prime ideal $\mathfrak{P}\in\KK[M]$, we denote by $\mathfrak{P}^{\mathrm{mon}}$ the ideal of $\KK[M]$ generated by the monomials in $\mathfrak{P}$, and by $\mathfrak{P}^{\comb}$ the ideal in $M$ such that $\mathfrak{P}^{\mathrm{mon}}=\KK[\mathfrak{P}^{\comb}]$.
    \end{definition}

    \begin{definition}
        We denote by $\Spec^\bullet \KK[M]$ the \emph{punctured spectrum of $\KK[M]$}, i.e.\
        \[ \Spec^\bullet\KK[M]:=\Spec \KK[M] \setminus \{\KK[M_+]\}  \, . \]
    \end{definition} 
    
    We will be interested in computing the cohomology of some sheaves, both on $\Spec \KK[M]$ and on its punctured version. It is a known result that $\H^1(X, \O^*_X)$ is the Picard group of $X$, i.e.\ the group of invertible $\O_X$-sheaves on $X$, for any ringed space $(X, \O^*_X)$, see for example \cite[Exercise III.4.5]{hartshorne1977algebraic}.
    
    \begin{definition}
        Let $\KK[M]$ be a binoid ring. Its \emph{local Picard group} is the Picard group of the scheme
        \[        (\Spec^\bullet \KK[M], \O_{\KK[M]}\restriction_{\Spec^\bullet \KK[M]}) \]
        and it is denoted by $\Pic^{\loc}(\KK[M])$.
    \end{definition}
    
    Note that $\KK[M]$ is not a local ring, but it is a ring with the prominent maximal ideal $\KK[M_+]$. If $M$ is graded, then this is also the (irrelevant) graded maximal ideal.
    
    \begin{definition}
        The covering $D(X_i)$ of $\Spec^\bullet \KK[M]$ is called the \emph{coordinate affine combinatorial covering}.
    \end{definition}    
    
    \subsection{The Combinatorial topology}

    We want to compute the cohomology of the sheaf of units on $\Spec \KK[M]$ (and open subsets) in the Zariski topology, using the results from the combinatorial setting. However, the splitting for the units of monoid rings from Section \ref{units} does not hold for the sheaf of units in the Zariski topology, not even for the affine line. To remedy this situation we introduce a new topology on $\Spec \KK[M]$, that is in between the topology on $\Spec M$ and the Zariski topology on $\Spec \KK[M]$. This topology will be often coarse enough to still have the splitting but also fine enough to compute the cohomology in the Zariski topology.

    \begin{proposition}
        The collection of sets $\{D(\mathfrak{A})\}$, with $\mathfrak{A}$ a combinatorial ideal, is a topology on $\Spec\KK[M]$.
    \end{proposition}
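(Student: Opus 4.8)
The plan is to verify directly the three axioms of a topology for the collection $\{D(\mathfrak{A})\}$, where $\mathfrak{A}$ ranges over combinatorial ideals $\KK[I]$ with $I$ an ideal of $M$. The key observation is that the set-theoretic operations on the $D(-)$ correspond to the usual algebraic operations on ideals, exactly as in the Zariski topology: $\bigcup_\lambda D(\mathfrak{A}_\lambda)=D\bigl(\sum_\lambda \mathfrak{A}_\lambda\bigr)$ and $D(\mathfrak{A})\cap D(\mathfrak{B})=D(\mathfrak{A}\mathfrak{B})$, the latter using that we range over prime ideals. Thus the whole statement reduces to checking that the class of combinatorial ideals is stable under arbitrary sums and finite products, which in turn amounts to two compatibility identities for the functor $\KK[-]$: it sends unions of binoid ideals to sums of ring ideals, and the binoid addition of ideals to the product of ring ideals.

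First I would dispose of the trivial cases. The empty set is $D(\KK[\{\infty\}])$ and the whole space is $D(\KK[M])$, and both $\{\infty\}$ and $M$ are ideals of $M$, so both sets are of the required form. For arbitrary unions, I would combine the ring-theoretic identity $\bigcup_\lambda D(\mathfrak{A}_\lambda)=D\bigl(\sum_\lambda \mathfrak{A}_\lambda\bigr)$ with the identity $\sum_\lambda \KK[I_\lambda]=\KK\bigl[\bigcup_\lambda I_\lambda\bigr]$, which holds because both sides are the $\KK$-span of the monomials $T^\mu$ with $\mu$ lying in some $I_\lambda$. The point special to the combinatorial setting is that the union $\bigcup_\lambda I_\lambda$ is again a binoid ideal: if $x\in I_{\lambda_0}$ then $m+x\in I_{\lambda_0}\subseteq\bigcup_\lambda I_\lambda$ for every $m\in M$. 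Hence the sum $\sum_\lambda \mathfrak{A}_\lambda$ is again combinatorial.

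For finite intersections it suffices by induction to treat two sets, so I would use $D(\mathfrak{A})\cap D(\mathfrak{B})=D(\mathfrak{A}\mathfrak{B})$. Writing $\mathfrak{A}=\KK[I]$ and $\mathfrak{B}=\KK[J]$, the product ideal is spanned by the products $T^\mu T^\nu=T^{\mu+\nu}$ with $\mu\in I$, $\nu\in J$, and therefore equals $\KK[I+J]$ for $I+J=\{\mu+\nu\mid \mu\in I,\ \nu\in J\}$. I would then check that $I+J$ is a binoid ideal, from $m+(\mu+\nu)=(m+\mu)+\nu$ together with $m+\mu\in I$, so that $\mathfrak{A}\mathfrak{B}=\KK[I+J]$ is combinatorial as well.

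There is no deep difficulty here; the only point requiring care is making the passage between ring-ideal operations and binoid-ideal operations precise, i.e.\ confirming that $\KK[-]$ intertwines union/sum and binoid-addition/product. In particular I expect the one spot needing an explicit (if routine) argument to be the verification that the relevant binoid subsets remain ideals, and the observation that monomials $T^{\mu+\nu}$ which vanish when $\mu+\nu=\infty$ cause no trouble: they simply drop out of the spanning set for $\KK[I+J]$, consistently with the fact that $\infty\in I+J$ in any case.
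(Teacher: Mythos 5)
Your proof is correct; the paper states this proposition without proof, and your argument supplies the natural one. The reductions $\bigcup_\lambda D(\mathfrak{A}_\lambda)=D\bigl(\sum_\lambda \mathfrak{A}_\lambda\bigr)$ and $D(\mathfrak{A})\cap D(\mathfrak{B})=D(\mathfrak{A}\mathfrak{B})$ are the standard Zariski facts, and you correctly isolate and verify the only binoid-specific points, namely that $\sum_\lambda \KK[I_\lambda]=\KK\bigl[\bigcup_\lambda I_\lambda\bigr]$ and $\KK[I]\cdot\KK[J]=\KK[I+J]$ with $\bigcup_\lambda I_\lambda$ and $I+J$ again ideals of $M$, so the class of combinatorial ideals is closed under the required operations.
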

    
        \begin{definition}\label{definition:combinatorial-topology}
        The topology $\{D(\mathfrak{A})\}$ is called the \emph{combinatorial topology} of $\Spec \KK[M]$ and it is denoted by $\Top_{\Spec \KK[M]}^{\comb}$.
    \end{definition}
    
    \begin{corollary}\label{corollary:basis-combinatorial-topology}
        The collection of sets $\{D(P)\}$, with $P$ a monomial in $\KK[M]$, is a basis for the combinatorial topology.
    \end{corollary}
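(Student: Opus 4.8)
The plan is to verify the two standard conditions for a collection of open sets to form a basis: each set $D(P)$ with $P$ a monomial must be open in the combinatorial topology, and every combinatorial open set $D(\mathfrak{A})$ must be expressible as a union of such sets. First I would reduce to the case $P=T^\mu$ for some $\mu\in M$: a general monomial has the form $\lambda T^\mu$ with $\lambda\in\KK^*$, and since the scalar $\lambda$ is a unit it does not alter the vanishing locus, so $D(\lambda T^\mu)=D(T^\mu)$.

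For the first condition, I would observe that the principal ideal $(T^\mu)\subseteq\KK[M]$ is combinatorial. The ideal of $M$ generated by $\mu$ is $\langle\mu\rangle=\mu+M$, and $\KK[\langle\mu\rangle]$ is the $\KK$-span of $\{T^{\mu+m}\mid m\in M\}=T^\mu\cdot\KK[M]=(T^\mu)$. Hence $D(P)=D(T^\mu)=D(\KK[\langle\mu\rangle])$ is a combinatorial open set in the sense of Definition~\ref{definition:combinatorial-topology}.

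For the second condition, let $\mathfrak{A}=\KK[I]$ be a combinatorial ideal, with $I$ an ideal of $M$. The key step is that containment of $\mathfrak{A}$ in a prime $\mathfrak{P}$ is detected on monomials: since $\KK[I]$ is $\KK$-spanned by $\{T^\mu\mid\mu\in I\}$ and every prime $\mathfrak{P}$ of $\KK[M]$ is a $\KK$-subspace (because $\KK\hookrightarrow\KK[M]$), we have $\mathfrak{A}\subseteq\mathfrak{P}$ if and only if $T^\mu\in\mathfrak{P}$ for all $\mu\in I$. Passing to complements, $\mathfrak{P}\in D(\mathfrak{A})$ if and only if $T^\mu\notin\mathfrak{P}$ for some $\mu\in I$, which yields the identity
\[ D(\mathfrak{A})=\bigcup_{\mu\in I}D(T^\mu)\, . \]
Thus every combinatorial open set is a union of monomial basic opens; since $M$ is finitely generated, $I$ is finitely generated as an ideal and this union can in fact be taken finite over a generating set of $I$.

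Combining the two points, the sets $\{D(P)\}$ lie in $\Top_{\Spec\KK[M]}^{\comb}$ and generate every set of that topology, so they form a basis. The only genuinely substantive point is the monomial-detection step in the second condition; everything else is a direct unwinding of the definitions of $\KK[I]$ and of $D(-)$.
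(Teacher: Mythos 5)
Your proof is correct; the paper states this corollary without proof (deferring to the thesis), and your argument — checking that $D(T^\mu)=D(\KK[\langle\mu\rangle])$ is combinatorial open and that $D(\KK[I])=\bigcup_{\mu\in I}D(T^\mu)$ because containment of a monomial ideal in a prime is detected on its monomial spanning set — is exactly the natural unwinding of Definition~\ref{definition:combinatorial-topology} that the corollary is meant to record. The final remark about finiteness of the union is unnecessary for the basis property, but it is harmless.
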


    \begin{remark}
        The combinatorial topology is, in general, a really coarse topology, since $\Spec\KK[M]$ equipped with it is not even $T_0$. In fact, two points $\mathfrak{P}$ and $\mathfrak{Q}$ in $\Spec \KK[M] $ have exactly the same combinatorial neighbourhoods if and only if they contain the same set of monomials, $\mathfrak{P}^{\mathrm{mon}}=\mathfrak{Q}^{\mathrm{mon}}$.
    \end{remark}
        
        \begin{remark}
        We have the commutative diagram of continuous maps
        \[
        \begin{tikzcd}
        \Spec M\rar["i"] \drar["j", swap]& \Spec \KK[M]_{\mathrm{Zar}}\dar["\lambda"]\\
        &\Spec \KK[M]_{\comb}
        \end{tikzcd}
        \]
        where $\lambda$ is the identity (as a set), $i$ is the injection that we proved to be continuous in Lemma~\ref{lemma:i-continuous} and $j$ is the embedding into the space with the combinatorial topology, that is again obviously continuous. Taking the preimages of open subsets along $j$ gives a bijection between the open subsets in the combinatorial topology and the open subsets in the topology of $\Spec M$, which is compatible with intersections and unions.
        
        Given any sheaf $\F$ on $\Spec\KK[M]_{\mathrm{Zar}}$, we get by pushforward along $\lambda$, a sheaf $\lambda_*\F=\F\restriction_{\Top_{\comb}}$ on the combinatorial topology and in particular, $\lambda_*(i_*\O^*_M)=j_*\O^*_M$.
    \end{remark}
    
    \begin{definition}
        Let $\F$ be a sheaf on $\Spec \KK[M]$ equipped with the Zariski topology. We denote by $\F^{\comb}$ the restriction of this sheaf to the combinatorial topology.
    \end{definition}
            
    \begin{lemma}
        For any sheaf of abelian groups $\F$ on $\Spec\KK[M]$ equipped with the combinatorial topology and any  prime ideal $\mathfrak{P}$ of $\KK[M]$, we have that
        \[   \F_{\mathfrak{P}}\cong \F_{\KK[\mathfrak{P}^{\comb}]} \, .    \]
    \end{lemma}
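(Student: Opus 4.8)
The plan is to compute both stalks directly as filtered colimits over a neighbourhood basis and to observe that this basis is literally the \emph{same} indexing family for the two points $\mathfrak{P}$ and $\KK[\mathfrak{P}^{\comb}]$. Concretely, by Corollary~\ref{corollary:basis-combinatorial-topology} the sets $D(P)$ with $P$ a monomial form a basis of the combinatorial topology, so the basic neighbourhoods are cofinal among all combinatorial neighbourhoods of a given point, and I may write
\[
\F_{\mathfrak{P}} \;=\; \varinjlim_{\substack{P \text{ monomial}\\ \mathfrak{P}\in D(P)}} \F(D(P)) \, ,
\]
and likewise for $\KK[\mathfrak{P}^{\comb}]$. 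Before this is meaningful I would first record that $\mathfrak{P}^{\comb}$ is genuinely a prime ideal of $M$: if $T^{a+b}=T^aT^b$ is a monomial of $\mathfrak{P}$, then primeness of $\mathfrak{P}$ forces $T^a\in\mathfrak{P}$ or $T^b\in\mathfrak{P}$, hence $a\in\mathfrak{P}^{\comb}$ or $b\in\mathfrak{P}^{\comb}$. By Lemma~\ref{lemma:prime-in-M-iff-prime-in-KM} the combinatorial ideal $\KK[\mathfrak{P}^{\comb}]=\mathfrak{P}^{\mathrm{mon}}$ is then itself a prime ideal of $\KK[M]$, so it is a legitimate point at which the stalk is defined.

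The heart of the argument is the identity of index sets, which is exactly the remark that two points have the same combinatorial neighbourhoods precisely when they contain the same monomials. For a monomial $P=T^\mu$ I would check the chain of equivalences
\[
\mathfrak{P}\in D(T^\mu) \iff T^\mu\notin\mathfrak{P} \iff \mu\notin\mathfrak{P}^{\comb} \iff T^\mu\notin\KK[\mathfrak{P}^{\comb}] \iff \KK[\mathfrak{P}^{\comb}]\in D(T^\mu) \, .
\]
The middle step is the only one needing care: a monomial $T^\mu$ lies in $\mathfrak{P}$ if and only if it lies in $\mathfrak{P}^{\mathrm{mon}}$, since $\mathfrak{P}^{\mathrm{mon}}\subseteq\mathfrak{P}$ gives one direction and any monomial of $\mathfrak{P}$ is by definition a generator of $\mathfrak{P}^{\mathrm{mon}}$ gives the other; and $T^\mu\in\mathfrak{P}^{\mathrm{mon}}=\KK[\mathfrak{P}^{\comb}]$ holds iff $\mu\in\mathfrak{P}^{\comb}$, because a monomial belongs to a combinatorial ideal $\KK[I]$ exactly when its exponent lies in $I$.

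Consequently the two colimits run over literally the same filtered system of basic open sets $D(T^\mu)$ with the same transition maps, so they are equal on the nose, giving $\F_{\mathfrak{P}}\cong\F_{\KK[\mathfrak{P}^{\comb}]}$. I expect the only genuinely substantive point to be the monomial-membership equivalence $T^\mu\in\mathfrak{P}\iff\mu\in\mathfrak{P}^{\comb}$, which pins down that $\mathfrak{P}$ and its monomial part $\mathfrak{P}^{\mathrm{mon}}$ see the same monomials; everything else is a formal manipulation of the stalk as a colimit over a basis and the cofinality of monomial localizations in the combinatorial topology.
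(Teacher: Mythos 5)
Your proof is correct and takes essentially the same route as the paper's, which simply observes that $\mathfrak{P}$ and $\KK[\mathfrak{P}^{\comb}]$ have exactly the same combinatorial neighbourhoods; your chain of equivalences on monomial membership is precisely the justification of that one-line observation, carried out over the cofinal basis $\{D(P)\}$ with $P$ monomial. The additional check that $\mathfrak{P}^{\comb}$ is a prime ideal of $M$ (so that $\KK[\mathfrak{P}^{\comb}]$ is a legitimate point of $\Spec\KK[M]$) is a detail the paper leaves implicit, and it is good that you made it explicit.
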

    \begin{proof}
        $\mathfrak{P}$ and $\KK[\mathfrak{P}^{\comb}]$ have the same combinatorial neighbourhoods, i.e.\ $\mathfrak{P}\in U $ if and only if $\KK[\mathfrak{P}^{\comb}]\in U$ for $U \in \Top^{\comb} $.
    \end{proof}
    
    On the combinatorial topologiy, we can extend the splitting results from Section \ref{units} on the sheaf level.    
    
    \begin{proposition}\label{proposition:split-sheaves-comb-top}
        Let $M$ be a reduced, torsion-free and cancellative binoid and let $\KK[M]$ be its binoid algebra. Then
        \[ (\O_{\KK[M]}^*)^{\comb}    \cong (i_*\O^*_M)^{\comb}      \oplus    (\KK^*)^{\comb}  \, , \]
        where $\KK^*$ is the constant sheaf.
    \end{proposition}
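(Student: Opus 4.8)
The plan is to establish the isomorphism on the basis of the combinatorial topology and then extend it to all open sets by the sheaf property. By Corollary~\ref{corollary:basis-combinatorial-topology} the sets $D(P)$ with $P$ a monomial form a basis, and this basis is closed under intersection since $D(T^\mu)\cap D(T^{\mu'})=D(T^{\mu+\mu'})$; hence a natural transformation on the basis that is an isomorphism on every basic open and commutes with the restriction maps automatically glues to an isomorphism of sheaves. The whole problem thus reduces to understanding the single localization at each monomial.

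On a basic open $D(T^\mu)$ the structure sheaf takes the value $\KK[M]_{T^\mu}\cong\KK[M_\mu]$, because the functor $\KK[-]$ respects localizations; thus $(\O^*_{\KK[M]})^{\comb}(D(T^\mu))=(\KK[M_\mu])^*$. Before invoking the affine splitting I would check that $M_\mu$ inherits the three hypotheses. Cancellativity and torsion-freeness pass to localizations by routine arguments. For reducedness, suppose $g=f-n\mu$ is nilpotent in $M_\mu$; then $kf=\infty$ in $M_\mu$ for some $k$, i.e.\ $kf+l\mu=\infty$ in $M$ for some $l$. By absorption $k(f+l\mu)=kf+kl\mu=\infty$, so $f+l\mu$ is nilpotent in $M$, hence equal to $\infty$ since $M$ is reduced; therefore $f=\infty$ in $M_\mu$ and $g=\infty$. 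Thus $M_\mu$ is again reduced, torsion-free and cancellative.

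Now Theorem~\ref{theorem:split-algebra} applies to $M_\mu$ and yields
\[ (\KK[M_\mu])^*\;=\;(M_\mu)^*\oplus\KK^*\,, \]
every unit being written uniquely as $\lambda\,T^{u}$ with $\lambda\in\KK^*$ and $u\in(M_\mu)^*$. I would define the comparison map on $D(T^\mu)$ by $(u,\lambda)\mapsto\lambda\,T^{u}$ and identify the two summands with the intended sheaves: by Lemma~\ref{ipreimage} one has $i^{-1}(D(T^\mu))=D(\mu)$, so $(i_*\O^*_M)^{\comb}(D(T^\mu))=\O^*_M(D(\mu))=(M_\mu)^*$; and since $D(\mu)=\Spec M_\mu$ is connected (it has the unique closed point $(M_\mu)_+$, so it cannot be split into two nonempty opens), the constant sheaf gives $(\KK^*)^{\comb}(D(T^\mu))=\KK^*$. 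Compatibility with the restriction maps $D(T^{\mu'})\subseteq D(T^\mu)$ is immediate, because the localization homomorphisms carry monomials to monomials and scalars to scalars, so the splitting of Theorem~\ref{theorem:split-algebra} is natural in $M_\mu$. This produces the required sheaf isomorphism.

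The step I expect to be the crux is the reducedness of the localizations $M_\mu$: it is exactly what allows Theorem~\ref{theorem:split-algebra} to be used pointwise on the basis rather than only globally. Were $M_\mu$ non-reduced, the extra factor $1+\n$ of Remark~\ref{remark:units-non-reduced} would appear in $(\KK[M_\mu])^*$ and destroy the clean splitting, as indeed happens in the general monomial-ideal case treated later. An entirely parallel verification can be carried out on stalks instead: using the identification of the stalk at $\mathfrak P$ with the stalk at $\KK[\mathfrak P^{\comb}]$ one reduces to combinatorial primes $i(\p)$, where the stalk of $\O^*_{\KK[M]}$ is $(\KK[M_\p])^*$; this route sidesteps any discussion of connected components, since the constant sheaf has stalk $\KK^*$ and Theorem~\ref{theorem:split-algebra} contributes exactly one copy of it.
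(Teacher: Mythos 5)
Your proof is correct and follows essentially the same route as the paper's: reduce to the basic combinatorial opens $D(P)$ with $P$ a monomial, identify the sections there with $\KK[M_P]^*$, and apply Theorem~\ref{theorem:split-algebra} to the localization. The only difference is that you explicitly verify that $M_\mu$ inherits reducedness (and identify the two summands via Lemma~\ref{ipreimage} and connectedness), details the paper simply asserts; this is a welcome addition but not a change of method.
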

    \begin{proof}
        We have a natural injective sheaf homomorphism in the Zariski topology
        \[
        \begin{tikzcd}[cramped]
          i_*\O^*_M   \oplus  \KK^*    \rar & \O_{\KK[M]}^* \, ,
        \end{tikzcd}
        \]
        because every element in $ i_*\O^*_M(U)  \oplus \KK^*(U) $ is trivially a unit in $\O^*_{\KK[M]}(U)$ for any $U$, and in particular in the combinatorial topology. We have to show that the map
        \[
        \begin{tikzcd}[cramped]
        (i_*\O^*_M)^{\comb}   \oplus   (\KK^*)^{\comb}    \rar & (\O_{\KK[M]}^*)^{\comb}
        \end{tikzcd}
        \]
        is an isomorphism. It is enough to show the isomorphism on the combinatorial affine open subsets $D(P)$, with $P$ a monomial. Indeed,
        \begin{align*}
        \Gamma\left(D(P), (\O_{\KK[M]}^*)^{\comb}\right)&=\Gamma\left(D(P), \O_{\KK[M]}^*\right)\\
        &=\Gamma\left(\Spec\KK[M_P], \O_{\KK[M]}^*\right)=\KK[M_P]^* \, .
        \end{align*}
        Since $M_P$ is again reduced, torsion-free and cancellative, we can apply Theorem \ref{theorem:split-algebra} and obtain a decomposition
        \begin{align*}
        \Gamma\left(D(P), (\O_{\KK[M]}^*)^{\comb}\right)&=\KK[M_P]^*=   \KK^*   \oplus  (M_P)^*  \\
        &=\Gamma\left(D(P), (i_*\O^*)^{\comb}  \oplus  (\KK^*)^{\comb} \right) \, . \qedhere
        \end{align*}
    \end{proof}

    \begin{notation} For any combinatorial open subset $U$ of $\Spec\KK[M]$ and Zariski sheaf $\F$ on $U$, we use the notation $\H^i_{\comb}(U, \F)$ to denote the cohomology of the sheaf $\F^{\comb}$ on $U$, i.e.\ the cohomology of $\F$ in the combinatorial topology.
    \end{notation}
    
    \begin{proposition}\label{proposition:hzar-hcomb}
        If $U=D(\mathfrak{A})=\bigcup D(P)$ is a combinatorial open subset of $\Spec \KK[M]$ and $\{D(P)\}$, with $P$ monomial, is an acyclic covering for the Zariski sheaf $\F$ on $U$, then
        \[        \H^j_{\mathrm{Zar}}(U, \F)=\H^j_{\comb}(U, \F) \, ,        \]
        for all $j\geq 0$.
    \end{proposition}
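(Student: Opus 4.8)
The plan is to compute both sides of the claimed equality by the \emph{same} Čech complex, namely the one attached to the given covering $\{D(P)\}$, $P$ a monomial, of $U$. Since $M$ is finitely generated, the combinatorial ideal $\mathfrak{A}$ is generated by finitely many monomials, so $U=\bigcup_{i=1}^r D(P_i)$ is a \emph{finite} covering and the Čech complex is well defined. The decisive elementary observation is that this covering is stable under intersection inside the combinatorial topology: for monomials $P=T^\mu$ and $P'=T^{\mu'}$ one has $D(P)\cap D(P')=D(PP')$ with $PP'=T^{\mu+\mu'}$ again a monomial, so every finite intersection $D(P_{i_0})\cap\dots\cap D(P_{i_k})$ is again a basic combinatorial open subset of the form $D(Q)$, $Q$ a monomial.

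First I would show that the two Čech complexes coincide. By the very definition of $\F^{\comb}$ as the restriction $\lambda_*\F$ of $\F$ to the combinatorial topology, we have $\Gamma(W,\F^{\comb})=\Gamma(W,\F)$ for every combinatorial open $W$, with the same restriction maps. Since, by the previous paragraph, every term $\F(D(P_{i_0})\cap\dots\cap D(P_{i_k}))$ occurring in the Čech complex is indexed by a combinatorial open subset, the two cochain complexes $\vC^\bullet(\{D(P)\},\F)$ and $\vC^\bullet(\{D(P)\},\F^{\comb})$ are \emph{identical} (same groups, same coboundaries). In particular
\[ \vH^j(\{D(P)\},\F)=\vH^j(\{D(P)\},\F^{\comb}) \quad\text{for all } j\geq 0. \]

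It remains to identify each Čech cohomology with the corresponding genuine sheaf cohomology. On the Zariski side this is immediate from the hypothesis: $\{D(P)\}$ is an acyclic covering for $\F$ (i.e.\ $\F$ has vanishing higher Zariski cohomology on every finite intersection $D(Q)$), so the comparison theorem for acyclic coverings (Leray's theorem, cf.\ \cite[Exercise~III.4.11]{hartshorne1977algebraic}) gives $\H^j_{\mathrm{Zar}}(U,\F)=\vH^j(\{D(P)\},\F)$. On the combinatorial side I must check the analogous acyclicity, namely $\H^i_{\comb}(D(Q),\F)=0$ for $i\geq 1$ and every monomial $Q$; this is the main obstacle. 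To handle it I would invoke the site isomorphism recorded in the remark following Definition~\ref{definition:combinatorial-topology}: taking preimages along $j$ identifies the lattice of combinatorial open subsets, with its unions and intersections, with the lattice of open subsets of $\Spec M$. Coverings are detected by unions, so this is an isomorphism of sites inducing an isomorphism on all sheaf cohomology; it carries the basic open $D(Q)$, $Q=T^\mu$, to $i^{-1}(D(Q))=D(\mu)=\Spec M_\mu$ (Lemma~\ref{ipreimage}), an affine binoid scheme, and $\F^{\comb}$ to a sheaf of abelian groups on it. Hence $\H^i_{\comb}(D(Q),\F)$ equals the cohomology of an abelian sheaf on the affine binoid spectrum $\Spec M_\mu$, which vanishes for $i\geq 1$ by Theorem~\ref{theorem:vanishing-combinatorial-cohomology-affine}. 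Thus $\{D(P)\}$ is acyclic in the combinatorial topology as well, giving $\H^j_{\comb}(U,\F)=\vH^j(\{D(P)\},\F^{\comb})$. Chaining the three equalities yields
\[ \H^j_{\mathrm{Zar}}(U,\F)=\vH^j(\{D(P)\},\F)=\vH^j(\{D(P)\},\F^{\comb})=\H^j_{\comb}(U,\F), \]
as desired. The only genuinely nontrivial input is the combinatorial acyclicity of the last step; everything else is a formal comparison of two literally identical Čech complexes.
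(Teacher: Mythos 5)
Your argument is correct and follows essentially the route the paper intends (the paper itself omits the proof, deferring to the thesis): both sides are computed by the single \v{C}ech complex of the monomial covering, whose terms agree literally since $\F^{\comb}$ and $\F$ have the same sections on combinatorial opens. The Zariski acyclicity is the hypothesis, and the combinatorial acyclicity of each $D(Q)$ follows, exactly as you say, from the frame isomorphism with the topology of $\Spec M$ together with Theorem~\ref{theorem:vanishing-combinatorial-cohomology-affine} applied to the affine binoid spectrum $\Spec M_\mu$.
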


    \subsection{Pushforwards to the Zariski topology}
    
    We want to compute Zariski cohomology of a sheaf $i_*\F$, where $i:\Spec M \rightarrow \Spec \KK[M]$ for a torsion free cancellative binoid $M$ on open subsets of $\Spec \KK[M]$. For \v{C}ech cohomology this is easy, but in order to show that this coincides with Zariski cohomology, we also need acyclicty results.

    \begin{lemma}\label{lemma:cohomology_pushforward}
        Let $\widetilde{U}$ be a combinatorial open subset of $\Spec \KK[M]$, with a covering $\widetilde{\U}=\{\widetilde{U_j}\}_{j\in J}$ made of combinatorial affine open subsets.
        Let $U=i^{-1}(\widetilde{U})$ be the correspondent open subset of $\Spec M$, covered by $\U=i^{-1}(\widetilde{\U})=\{i^{-1}(\widetilde{U_j})\}_{j\in J}$ and let $\F$ be a sheaf of abelian groups on $U$. Then
        \begin{equation}\label{isomorphism:cohomologyM-cohomologyKM}
        \H^j(U, \F)\cong \vH^j(\widetilde{\U}, i_*\F)
        \end{equation}
        for all $j$.
    \end{lemma}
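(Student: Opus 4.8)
The plan is to show that both sides are computed by literally the same \v{C}ech cochain complex, and that this complex in turn computes the sheaf cohomology on the left. First I would unwind the right-hand side using the definition of the direct image. For any combinatorial open $\widetilde{W}\subseteq\Spec\KK[M]$ one has $(i_*\F)(\widetilde{W})=\F(i^{-1}(\widetilde{W}))$, and since $i^{-1}$ is a set-theoretic preimage it commutes with finite intersections, so
\[ i^{-1}\bigl(\widetilde{U_{j_0}}\cap\dots\cap\widetilde{U_{j_p}}\bigr)=i^{-1}(\widetilde{U_{j_0}})\cap\dots\cap i^{-1}(\widetilde{U_{j_p}})=U_{j_0}\cap\dots\cap U_{j_p}\,. \]
Consequently the $p$-th \v{C}ech group $\vC^p(\widetilde{\U},i_*\F)=\bigoplus_{j_0<\dots<j_p}(i_*\F)(\widetilde{U_{j_0}}\cap\dots\cap\widetilde{U_{j_p}})$ equals $\bigoplus_{j_0<\dots<j_p}\F(U_{j_0}\cap\dots\cap U_{j_p})=\vC^p(\U,\F)$, and the coboundary maps agree because they are induced by the restriction maps of the respective sheaves, which correspond to one another under $i^{-1}$. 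Hence the two \v{C}ech complexes coincide termwise together with their differentials, and $\vH^j(\widetilde{\U},i_*\F)=\vH^j(\U,\F)$ for all $j$.

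Next I would prove that $\U$ is an acyclic cover of $U$ for $\F$, so that its \v{C}ech cohomology agrees with sheaf cohomology. Each member $\widetilde{U_j}$ is a combinatorial affine open, hence by Corollary~\ref{corollary:basis-combinatorial-topology} of the form $D(P_j)$ with $P_j$ a monomial; by Lemma~\ref{ipreimage} its preimage is the fundamental open $U_j=i^{-1}(D(P_j))=D(\mu_j)$, where $T^{\mu_j}=P_j$. A finite intersection is therefore again a fundamental affine open, $U_{j_0}\cap\dots\cap U_{j_p}=D(\mu_{j_0}+\dots+\mu_{j_p})\cong\Spec M_{\mu_{j_0}+\dots+\mu_{j_p}}$. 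By Theorem~\ref{theorem:vanishing-combinatorial-cohomology-affine}, the sheaf $\F$ has vanishing higher cohomology on every such affine intersection, so $\U$ is an acyclic affine cover of $U$.

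Finally, since $U$ is an open subscheme of the affine (hence separated) scheme $\Spec M$ it is separated, so the acyclic-covering statement (the corollary following Theorem~\ref{theorem:vanishing-combinatorial-cohomology-affine}) applies to the sheaf of abelian groups $\F$ on $U$ with the affine cover $\U$, giving $\vH^j(\U,\F)\cong\H^j(U,\F)$ for all $j$. Combining this with the identification $\vH^j(\widetilde{\U},i_*\F)=\vH^j(\U,\F)$ from the first step yields the desired isomorphism $\H^j(U,\F)\cong\vH^j(\widetilde{\U},i_*\F)$.

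The main obstacle, and the only place where the combinatorial hypotheses genuinely enter, is the affineness in the second step: it is essential that a combinatorial affine open pulls back along $i$ to a \emph{fundamental} (hence affine) open of $\Spec M$ and that finite intersections of fundamental opens remain affine. This is exactly what Lemma~\ref{ipreimage} guarantees for monomial $P_j$; for a non-monomial $P$ the preimage $i^{-1}(D(P))=\bigcup D(\mu)$ is a union of fundamental opens and need not be affine, which is precisely the reason for passing to the combinatorial topology and its monomial basis. The remaining verifications — that the restriction maps match under $i^{-1}$ and that the acyclic-covering corollary applies to the open subscheme $U$ — are routine.
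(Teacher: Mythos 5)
Your proposal is correct and follows essentially the same route as the paper: identify the two \v{C}ech complexes termwise via $(i_*\F)(\widetilde{W})=\F(i^{-1}(\widetilde{W}))$ and the compatibility of $i^{-1}$ with intersections, then note that the pulled-back cover is an affine (hence acyclic, by Theorem~\ref{theorem:vanishing-combinatorial-cohomology-affine}) cover of $U$, so its \v{C}ech cohomology computes $\H^j(U,\F)$. You simply spell out in more detail the affineness of the preimages and their intersections, which the paper asserts in one line.
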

    \begin{proof}
        Let $U_j=i^{-1}(\widetilde{U_j})$. $\{U_j\}$ defines an acyclic covering of $U$ for $\F$, because they are affine open subsets of $\Spec M$, so its \v{C}ech cohomology computes the cohomology on the left. Moreover, $i_*\F(\widetilde{U_j})=\F(i^{-1}(\widetilde{U_j}))=\F(U_j)$, so the \v{C}ech complexes are the same, $\vC(\widetilde{\U}, i_*\F)=\vC(\U, \F)$, and we get our result.
    \end{proof}
    
    \begin{corollary}
        Let $\F$ be a sheaf of abelian groups on $\Spec^\bullet M$ and let $\U=\{D(X_k)\}$ be the combinatorial covering of $\Spec^\bullet \KK[M]$. Then
        \begin{equation}
        \H^j(\Spec^\bullet M, \F)\cong \vH^j(\U, i_*\F)
        \end{equation}
        for all $j$.
    \end{corollary}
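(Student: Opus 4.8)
The plan is to obtain the statement as the special case $\widetilde{U}=\Spec^\bullet\KK[M]$ of Lemma~\ref{lemma:cohomology_pushforward}, so the only real work is to identify the combinatorial data on the algebra side with the standard punctured-spectrum data on the binoid side. First I would take as covering $\widetilde{\U}$ the coordinate affine combinatorial covering $\{D(X_k)\}$ of $\Spec^\bullet\KK[M]$, whose members are combinatorial (each $X_k$ is a monomial) and affine, so that $\widetilde{\U}$ is an admissible covering for the Lemma and $\widetilde{U}=\bigcup_k D(X_k)$.

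The key step is computing the preimage. Applying Lemma~\ref{ipreimage} to the monomial $P=X_k$ gives $i^{-1}(D(X_k))=D(x_k)$. Taking the union over $k$ yields $i^{-1}(\widetilde{U})=\bigcup_k D(x_k)=\Spec^\bullet M$, since the $x_k$ are exactly the generators of $M_+$ and $\{D(x_k)\}$ is the standard covering of the punctured spectrum. Hence the open subset $U=i^{-1}(\widetilde{U})$ produced by the Lemma is precisely $\Spec^\bullet M$, and the induced covering $\U=i^{-1}(\widetilde{\U})$ is the standard covering $\{D(x_k)\}$.

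With these identifications in place, Lemma~\ref{lemma:cohomology_pushforward} immediately gives
\[
\H^j(\Spec^\bullet M,\F)=\H^j(U,\F)\cong\vH^j(\widetilde{\U},i_*\F)=\vH^j(\U,i_*\F)
\]
for all $j$, which is the assertion. I do not expect any genuine obstacle here: the statement is a direct specialization, and the entire content is the preimage computation via Lemma~\ref{ipreimage} together with recognizing the standard affine covering of $\Spec^\bullet M$. In particular, the acyclicity on the binoid side that lets \v{C}ech cohomology compute sheaf cohomology is already subsumed in the Lemma, so nothing new needs to be verified.
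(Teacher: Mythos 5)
Your proposal is correct and matches the intended argument: the paper states this corollary without a separate proof precisely because it is the specialization of Lemma~\ref{lemma:cohomology_pushforward} to $\widetilde{U}=\Spec^\bullet\KK[M]$ with the coordinate affine combinatorial covering, using $i^{-1}(D(X_k))=D(x_k)$ from Lemma~\ref{ipreimage} and the fact that the $x_k$ generate $M_+$, so that $U=\Spec^\bullet M$ with its standard affine covering. Nothing further is needed.
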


    \begin{lemma}\label{lemma:trivial-cohomology-push-forward}
        Let $\F$ be a sheaf of abelian groups on $\Spec M$ and $\U$ any Zariski covering of $\Spec\KK[M]$. Then
        \begin{equation}
        \vH^j(\U, i_*\F)=0
        \end{equation}
        for all $j\geq 1$.
    \end{lemma}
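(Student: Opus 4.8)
The plan is to reduce the statement to the combinatorial spectrum $\Spec M$ and then exploit its peculiar topology. First I would observe that for any covering $\U=\{U_k\}_{k\in K}$ of $\Spec\KK[M]$, the preimage family $i^{-1}(\U)=\{i^{-1}(U_k)\}_{k\in K}$ is an open covering of $\Spec M$: the map $i$ is continuous by Lemma~\ref{lemma:i-continuous}, and every combinatorial prime $\KK[\p]=i(\p)$ lies in some $U_k$, so $\p\in i^{-1}(U_k)$. Because $i^{-1}$ commutes with intersections and $(i_*\F)(W)=\F(i^{-1}(W))$ by definition of the pushforward, the two \v{C}ech complexes coincide term by term, $\vC(\U,i_*\F)=\vC(i^{-1}(\U),\F)$, exactly as in the proof of Lemma~\ref{lemma:cohomology_pushforward}. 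Hence $\vH^j(\U,i_*\F)\cong\vH^j(i^{-1}(\U),\F)$ for all $j$, and it suffices to prove that the right-hand side vanishes for $j\geq 1$.

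The key point is that the covering $i^{-1}(\U)$ always contains the whole space $\Spec M$ as one of its members. Indeed, since $\U$ covers $\Spec\KK[M]$, some $U_{k_0}$ contains the point $\KK[M_+]$, which is prime by Lemma~\ref{lemma:prime-in-M-iff-prime-in-KM}. Then $M_+\in i^{-1}(U_{k_0})$, and since the only open subset of $\Spec M$ that contains the maximal ideal $M_+$ is $\Spec M$ itself, we conclude $i^{-1}(U_{k_0})=\Spec M$.

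Finally I would invoke the standard fact that the \v{C}ech complex of a covering, one of whose members equals the ambient space, is contractible. Writing $X=\Spec M=i^{-1}(U_{k_0})$, the map $(h\sigma)_{i_0,\dots,i_{p-1}}=\sigma_{k_0,i_0,\dots,i_{p-1}}$ is well defined, since intersecting with $U_{k_0}=X$ changes nothing, and a direct computation yields $\partial h+h\partial=\mathrm{id}$ in every positive degree. This contracting homotopy forces $\vH^j(i^{-1}(\U),\F)=0$ for all $j\geq 1$, which completes the argument.

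The only genuinely delicate step is the recognition that the peculiar topology of $\Spec M$, namely that $M_+$ has no proper open neighbourhood, guarantees that the preimage covering contains the total space; the cone/homotopy argument is then routine, although some care is needed to present it for the ordered \v{C}ech complex of Definition~\ref{definition:cech-picard-complex}, most cleanly by passing to the alternating \v{C}ech complex, which computes the same cohomology.
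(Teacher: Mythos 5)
Your proof is correct. The first half --- identifying $\vC(\U, i_*\F)$ with $\vC(i^{-1}(\U),\F)$ term by term via $(i_*\F)(W)=\F(i^{-1}(W))$ and the compatibility of $i^{-1}$ with intersections --- is exactly the paper's first step. Where you diverge is in why the latter complex is exact in positive degrees. The paper simply invokes the affineness of $\Spec M$ (i.e.\ Theorem~\ref{theorem:vanishing-combinatorial-cohomology-affine}, the vanishing of sheaf cohomology on $\Spec M$) and concludes that the \v{C}ech cohomology of the preimage covering vanishes. You instead observe that the preimage covering necessarily contains $\Spec M$ itself as a member --- some $U_{k_0}$ contains the prime $\KK[M_+]$, so $M_+\in i^{-1}(U_{k_0})$, and the only open subset containing $M_+$ is the whole space --- and then kill the complex with the standard contracting homotopy for a covering one of whose members is the total space. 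Both arguments ultimately rest on the same topological feature of $M_+$, but your version is the more self-contained of the two: passing from the vanishing of sheaf cohomology on $\Spec M$ to the vanishing of the \v{C}ech cohomology of an \emph{arbitrary} covering is not automatic in degrees $j\geq 2$, since the intersections $i^{-1}(U_{j_0})\cap\dots\cap i^{-1}(U_{j_p})$ are general open subsets of $\Spec M$ and need not be acyclic (the punctured spectrum itself can carry nontrivial $\H^1$). The observation that one member of the preimage covering is the whole space is precisely the ingredient that makes the paper's final sentence airtight, whether one phrases it as a contracting homotopy or as mutual refinement between $i^{-1}(\U)$ and the one-element covering $\{\Spec M\}$. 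Your caveat about the ordered versus alternating \v{C}ech complex is appropriate but routine.
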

    \begin{proof}
        The preimage of the covering $i^{-1}(\U)$ is a covering of $\Spec M$. In particular, since $i_*\F(U_j)=\F(i^{-1}(U_j))$ for all $U_j\in\U$, the \v{C}ech complexes are the same 
        \[        \C(\U, i_*\F)=\C(i^{-1}(\U), \F) \, .        \]
        Finally, since $\Spec M$ is affine, we know that the cohomology of degree larger than 0, of the combinatorial complex, is zero, and so, it is the one of the pushforward.
    \end{proof} 
    
    \begin{corollary}\label{corollary:H1pushforward_is_0}
        $\H^1(\Spec \KK[M], i_*\F)=0$
    \end{corollary}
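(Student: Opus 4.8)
The plan is to deduce the statement directly from Lemma~\ref{lemma:trivial-cohomology-push-forward} by passing from the \v{C}ech cohomology of individual coverings to sheaf cohomology. The bridge I would use is the classical fact that, \emph{for the first cohomology group}, \v{C}ech cohomology always computes derived-functor cohomology on an arbitrary topological space: for any sheaf of abelian groups $\G$ on any space $X$, the natural comparison map
\[ \vH^1(X, \G) \longrightarrow \H^1(X, \G) \]
is an isomorphism (see \cite[Exercise III.4.4]{hartshorne1977algebraic}). Here $\vH^1(X, \G)=\varinjlim_{\U} \vH^1(\U, \G)$ denotes \v{C}ech cohomology taken as a direct limit over the system of all open coverings $\U$ of $X$. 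The point of invoking this at the level of $\H^1$ only is that, unlike the comparison in higher degrees, it requires no acyclicity or separatedness hypothesis on the covering, which is precisely what lets us avoid having to check that the combinatorial covering $\{D(X_k)\}$ is acyclic for $i_*\F$ in the Zariski topology.

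First I would apply this to $X=\Spec\KK[M]$ and $\G=i_*\F$. By Lemma~\ref{lemma:trivial-cohomology-push-forward}, every Zariski covering $\U$ of $\Spec\KK[M]$ satisfies $\vH^1(\U, i_*\F)=0$. Since every term of the directed system vanishes, so does its direct limit, giving
\[ \vH^1(\Spec\KK[M], i_*\F)=\varinjlim_{\U}\vH^1(\U, i_*\F)=0 \, . \]
Combining this with the isomorphism $\vH^1\cong\H^1$ above yields $\H^1(\Spec\KK[M], i_*\F)=0$, as claimed.

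I do not expect a genuine obstacle here, as the corollary is essentially a repackaging of the lemma; the only step that warrants care is the justification that $\vH^1$ agrees with $\H^1$ unconditionally. If one preferred to avoid citing this as a black box, an alternative would be to feed the vanishing into the five-term exact sequence of the \v{C}ech-to-derived spectral sequence $\vH^p(\U, \mathcal{H}^q(i_*\F))\Rightarrow\H^{p+q}(\Spec\KK[M], i_*\F)$ and observe that, after passing to the limit over coverings, the obstruction term $\varinjlim_{\U}\vH^0(\U, \mathcal{H}^1(i_*\F))$ vanishes because the presheaf $U\mapsto\H^1(U, i_*\F)$ sheafifies to zero; this reproduces the same conclusion.
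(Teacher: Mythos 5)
Your argument is correct and is essentially identical to the paper's own proof: both invoke \cite[Exercise III.4.4]{hartshorne1977algebraic} to identify $\H^1(\Spec \KK[M], i_*\F)$ with $\varinjlim_{\U}\vH^1(\U, i_*\F)$ and then apply Lemma~\ref{lemma:trivial-cohomology-push-forward} to kill every term of the directed system. The only cosmetic difference is that the paper phrases the last step as a proof by contradiction (a nonzero class would be realized by some covering), whereas you observe directly that a direct limit of zero groups is zero.
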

    \begin{proof}
        From \cite[Exercise III.4.4]{hartshorne1977algebraic} we know that
        \[        \H^1(\Spec \KK[M], i_*\F)=\varinjlim_\U\vH^1(\U, i_*\F) \, ,        \]
        where the limit is taken over all the possible coverings of $X$.
        Assume that there is a non-zero cohomology class $[c]$ in $\H^1(\Spec \KK[M], i_*\F)$. Then there exists a covering that realizes it, i.e.\ $[c]\in\vH^1(\U, i_*\F)$. But this is impossible, thanks to Lemma~\ref{lemma:trivial-cohomology-push-forward}.
    \end{proof}
    
    \begin{lemma}\label{pushforwardstalk}
        $(i_*\F)_{\mathfrak{P}}\cong\F_{\mathfrak{P}^{\comb}}$ for $ \mathfrak{P} \in \Spec \KK[M]$.
    \end{lemma}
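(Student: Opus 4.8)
The plan is to compute the Zariski stalk of $i_*\F$ at $\mathfrak{P}$ directly as a direct limit and to show that the system of preimages $i^{-1}(U)$, as $U$ ranges over Zariski neighbourhoods of $\mathfrak{P}$, stabilises at the unique minimal open neighbourhood of $\mathfrak{P}^{\comb}$ in $\Spec M$. First I would record that $\mathfrak{P}^{\comb}=\{\mu\in M\mid T^\mu\in\mathfrak{P}\}$ is a prime ideal of $M$ (if $T^{\mu+\nu}\in\mathfrak{P}$ then $T^\mu\in\mathfrak{P}$ or $T^\nu\in\mathfrak{P}$ by primality), and that $i(\mathfrak{P}^{\comb})=\KK[\mathfrak{P}^{\comb}]=\mathfrak{P}^{\mathrm{mon}}\subseteq\mathfrak{P}$. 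By Proposition~\ref{proposition:minimal-open-set-prime-ideal}, writing $\mathfrak{P}^{\comb}=\langle x_1,\dots,x_k\rangle$ with no further generator of $M_+$ inside it, the set $W_0=D(f_0)$ with $f_0=x_{k+1}+\dots+x_n$ is the unique minimal open neighbourhood of $\mathfrak{P}^{\comb}$, so that $\F_{\mathfrak{P}^{\comb}}=\F(W_0)$.

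The key topological observation is that every $D(P)$, and hence every Zariski-open subset of $\Spec\KK[M]$, is closed under passing to smaller primes: if $\mathfrak{Q}\subseteq\mathfrak{P}$ and $P\notin\mathfrak{P}$ then $P\notin\mathfrak{Q}$. Combined with $i(\mathfrak{P}^{\comb})\subseteq\mathfrak{P}$, this shows that $i(\mathfrak{P}^{\comb})\in U$ whenever $\mathfrak{P}\in U$, so $\mathfrak{P}^{\comb}\in i^{-1}(U)$ and therefore $i^{-1}(U)\supseteq W_0$ for every Zariski neighbourhood $U$ of $\mathfrak{P}$. Next I would exhibit a single distinguished neighbourhood realising equality: the combinatorial open $U_0=D(T^{f_0})$ contains $\mathfrak{P}$, because $f_0\notin\mathfrak{P}^{\comb}$ (no summand $x_j$, $j>k$, lies in the prime $\mathfrak{P}^{\comb}$) forces $T^{f_0}\notin\mathfrak{P}$, and by Lemma~\ref{ipreimage} its preimage is $i^{-1}(U_0)=D(f_0)=W_0$.

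Finally I would run the squeeze. The neighbourhoods $U\subseteq U_0$ are cofinal among all Zariski neighbourhoods of $\mathfrak{P}$, and for each such $U$ monotonicity of $i^{-1}$ gives $i^{-1}(U)\subseteq i^{-1}(U_0)=W_0$, while the previous paragraph gives $i^{-1}(U)\supseteq W_0$; hence $i^{-1}(U)=W_0$ for every $U$ in this cofinal family. Consequently the direct system defining the stalk is eventually constant, and
\[
(i_*\F)_{\mathfrak{P}}=\varinjlim_{\mathfrak{P}\in U}\F\bigl(i^{-1}(U)\bigr)=\F(W_0)=\F_{\mathfrak{P}^{\comb}}.
\]
The main obstacle is not any hard estimate but the bookkeeping in the middle step: one must verify both that preimages of arbitrary Zariski neighbourhoods can only grow away from $W_0$ (using downward-closedness of opens together with $\mathfrak{P}^{\mathrm{mon}}\subseteq\mathfrak{P}$) and that a genuinely monomial neighbourhood pins the preimage down to exactly $W_0$ (using Lemma~\ref{ipreimage}); once both inclusions are in place the limit collapses formally.
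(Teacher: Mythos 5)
Your proof is correct, and it takes a genuinely different route from the paper's. The paper computes $(i_*\F)_{\mathfrak{P}}$ as $\varinjlim_{P\notin\mathfrak{P}}\F\bigl(\bigcup_j D(P_j)\bigr)$ using Lemma~\ref{ipreimage} for arbitrary $P=\sum\alpha_jP_j$, then builds a natural comparison map from $\varinjlim_{g\notin\mathfrak{P}^{\comb}}\F(D(g))$ and verifies surjectivity and injectivity by chasing elements through the two filtered systems (the key combinatorial input being that $P\notin\mathfrak{P}$ forces some monomial summand $P_j\notin\mathfrak{P}$, equivalently $P_j\notin\mathfrak{P}^{\comb}$). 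You instead exploit the finiteness of $\Spec M$: by Proposition~\ref{proposition:minimal-open-set-prime-ideal} the stalk at $\mathfrak{P}^{\comb}$ is just $\F(W_0)$ for the minimal open neighbourhood $W_0$, and your squeeze — generization-stability of Zariski opens plus $\mathfrak{P}^{\mathrm{mon}}\subseteq\mathfrak{P}$ for the lower bound, the single monomial neighbourhood $D(T^{f_0})$ with Lemma~\ref{ipreimage} for the upper bound — shows the direct system defining $(i_*\F)_{\mathfrak{P}}$ is constant on a cofinal family, so the limit collapses with no element-chasing. Your version only needs Lemma~\ref{ipreimage} for one monomial and makes transparent exactly why the answer depends only on $\mathfrak{P}^{\mathrm{mon}}$; the paper's version is closer in spirit to how one would argue without finiteness of the combinatorial spectrum. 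One small point worth writing out explicitly: that $\mathfrak{P}^{\comb}$ is a \emph{prime} ideal of $M$ (you sketch the primality of the complement; you should also note $0\notin\mathfrak{P}^{\comb}$ and that it is an ideal), since both the application of Proposition~\ref{proposition:minimal-open-set-prime-ideal} and the claim $f_0\notin\mathfrak{P}^{\comb}$ rest on it.
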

    \begin{proof}
        We begin by investigating the stalk of the pushforward
        \begin{align*}
        (i_*\F)_{\mathfrak{P}}=\varinjlim_{\mathfrak{P}\in U}\F(i^{-1}(U))=\varinjlim_{P\notin \mathfrak{P}}\F(i^{-1}(D(P)))=\varinjlim_{P\notin \mathfrak{P}}\F(\cup(D(P_j))) \, ,
        \end{align*}
        where $P=\sum \alpha_jP_j$, $\alpha_j\neq 0$, and $P_j$ are monomials (see Lemma \ref{ipreimage}). Moreover, $P\notin \mathfrak{P}$ implies that there exists $j$ such that $P_j\notin\mathfrak{P}$, and this is true if and only if $P_j\notin \mathfrak{P}^{\comb}$.
        Consider the direct limit
        \[        \varinjlim_{g\notin\mathfrak{P}^{\comb}}\F(D(g)) \, .        \]
        Since $\{g\notin\mathfrak{P}^{\comb}\}\subseteq \{g \notin\mathfrak{P}\}$, there is a natural map
        \[        \varinjlim_{g\notin\mathfrak{P}^{\comb}}\F(D(g))\longrightarrow \varinjlim_{P\notin \mathfrak{P}}\F(\cup(D(P_j))) \, .         \]
        This map is surjective because, given a section in the stalk $s\in (i_*\F)_{\mathfrak{P}}$, there exists a polynomial $P=\sum\alpha_jP_j$ such that $s\in\F(\cup(D(P_j)))$. In particular, one of these $P_j$'s is not in $\mathfrak{P}$ and so not in $\mathfrak{P}^{\comb}$. Let $P_k$ be this monomial, so $s$ comes via the restriction $\F(\cup(D(P_j)))\longrightarrow \F(D(P_k))$ also from a section in $\F(D(P_k))$. As such, it comes from the left, so the map is surjective.
        \\
        This map is also injective because, given $s$ and $t$ in $\displaystyle\varinjlim_{g\notin\mathfrak{P}^{\comb}}\F(D(g))$, if their images are the same in the limit, then, in particular, they are the same on some open subset $D(P_j)$, such that $P_j\notin\mathfrak{P}^{\comb}$. So they were already the same before. This proves that
        \[        (i_*\F)_{\mathfrak{P}}\cong \varinjlim_{g\notin\mathfrak{P}^{\comb}}\F(D(g))=\F_{\mathfrak{P}^{\comb}} \, .\qedhere
        \]
    \end{proof}
    
    \begin{theorem}\label{theorem:push-forward-exact}
        The pushforward of a sheaf of abelian groups along $i$ is exact.
    \end{theorem}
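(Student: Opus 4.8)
The plan is to verify exactness stalkwise and to use Lemma~\ref{pushforwardstalk} to rewrite each stalk of a pushforward as a stalk of the original sheaf over $\Spec M$. First I would recall that $i_*$ is left exact for any continuous map, so the only thing really at stake is right exactness. Since a sequence of sheaves of abelian groups is exact if and only if it is exact on every stalk, it suffices to fix an arbitrary short exact sequence
\[ 0 \longrightarrow \F' \longrightarrow \F \longrightarrow \F'' \longrightarrow 0 \]
of sheaves on $\Spec M$ and to show that, for each $\mathfrak{P}\in\Spec\KK[M]$, the induced stalk sequence of the pushforwards at $\mathfrak{P}$ is exact.

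Next I would check that $\mathfrak{P}^{\comb}=\{\mu\in M\mid T^\mu\in\mathfrak{P}\}$ is a prime ideal of $M$, hence a genuine point of $\Spec M$: it is an ideal because $\mathfrak{P}$ is one, it is proper because $T^0=1\notin\mathfrak{P}$, and it is prime because $T^{\mu+\nu}=T^\mu T^\nu\in\mathfrak{P}$ together with the primeness of $\mathfrak{P}$ forces $\mu\in\mathfrak{P}^{\comb}$ or $\nu\in\mathfrak{P}^{\comb}$. In particular, $\F_{\mathfrak{P}^{\comb}}$ is the honest stalk of $\F$ at this point, so Lemma~\ref{pushforwardstalk} is genuinely comparing a pushforward stalk to an ordinary stalk of $\F$ over $\Spec M$.

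Applying Lemma~\ref{pushforwardstalk} to each of $\F'$, $\F$ and $\F''$ then identifies the pushforward stalk sequence at $\mathfrak{P}$ with the stalk sequence of the original sheaves at the point $\mathfrak{P}^{\comb}$,
\[
\begin{tikzcd}[cramped]
0 \rar & (i_*\F')_{\mathfrak{P}} \rar \dar["\cong"] & (i_*\F)_{\mathfrak{P}} \rar \dar["\cong"] & (i_*\F'')_{\mathfrak{P}} \rar \dar["\cong"] & 0 \\
0 \rar & \F'_{\mathfrak{P}^{\comb}} \rar & \F_{\mathfrak{P}^{\comb}} \rar & \F''_{\mathfrak{P}^{\comb}} \rar & 0 \, .
\end{tikzcd}
\]
The bottom row is exact because taking the stalk at a point of $\Spec M$ is an exact operation. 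Consequently the top row is exact as well, and since this holds at every $\mathfrak{P}$, the pushforward sequence $0\to i_*\F'\to i_*\F\to i_*\F''\to 0$ is exact as a sequence of sheaves, which is precisely the assertion.

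The hard part will be justifying the commutativity of the squares above, that is, the naturality in the sheaf argument of the isomorphism $(i_*\mathcal{G})_{\mathfrak{P}}\cong\mathcal{G}_{\mathfrak{P}^{\comb}}$ supplied by Lemma~\ref{pushforwardstalk}. I would settle this by revisiting the proof of that Lemma: the isomorphism there is realized as the comparison map between the two filtered colimits $\varinjlim_{P\notin\mathfrak{P}}\mathcal{G}(i^{-1}(D(P)))$ and $\varinjlim_{g\notin\mathfrak{P}^{\comb}}\mathcal{G}(D(g))$, both of which, together with the comparison map between them, are manifestly functorial in $\mathcal{G}$. A morphism of sheaves therefore induces a commuting square of colimits, which yields exactly the naturality needed to make the vertical isomorphisms above compatible with the horizontal maps.
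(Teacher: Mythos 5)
Your proposal is correct and follows essentially the same route as the paper: reduce to stalks and invoke Lemma~\ref{pushforwardstalk} to identify $(i_*\F)_{\mathfrak{P}}$ with $\F_{\mathfrak{P}^{\comb}}$. The paper's proof is a three-line version of this; your additional verifications (that $\mathfrak{P}^{\comb}$ is a prime ideal of $M$ and that the stalk isomorphism is natural in the sheaf) are exactly the details the paper leaves implicit, and they are worth having spelled out.
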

    \begin{proof}
    	For an exact sequence $  0 \rightarrow \F \rightarrow \G \rightarrow   \H \rightarrow 0$ of sheaves of abelian groups on $\Spec M$, we have to show show that the sequence $  0 \rightarrow i_* \F \rightarrow i_* \G \rightarrow  i_*\H \rightarrow 0$ is exact as well. Since exactness is a local property, it is enough to prove this on the stalks. So this follows from Lemma \ref{pushforwardstalk}.
    \end{proof}

    \begin{proposition}\label{proposition:cohomology-pushforward-affine} For any sheaf of abelian groups $\F$ on $\Spec M$, the Zariski cohomology of the pushforward vanishes
        \[        \H^j(\Spec \KK[M], i_*\F)=0        \]
        for all $j\geq 1$.
    \end{proposition}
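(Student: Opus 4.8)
The plan is to reduce the computation of $\H^j(\Spec\KK[M], i_*\F)$ to the already-established vanishing of sheaf cohomology on the combinatorial spectrum $\Spec M$ (Theorem~\ref{theorem:vanishing-combinatorial-cohomology-affine}), by exploiting the exactness of the pushforward functor proven in Theorem~\ref{theorem:push-forward-exact}. The guiding principle is that $i_*$ carries an acyclic resolution on $\Spec M$ to an acyclic resolution on $\Spec\KK[M]$ having \emph{the same} complex of global sections, so the two cohomology theories must agree.

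Concretely, I would first choose a flasque resolution $0 \to \F \to \mathcal{I}^0 \to \mathcal{I}^1 \to \cdots$ of $\F$ on $\Spec M$ (for instance the Godement resolution). Applying $i_*$ and invoking Theorem~\ref{theorem:push-forward-exact}, the sequence $0 \to i_*\F \to i_*\mathcal{I}^0 \to i_*\mathcal{I}^1 \to \cdots$ stays exact, hence is a resolution of $i_*\F$ on $\Spec\KK[M]$. The next step is to observe that $i_*$ preserves flasqueness: for opens $V \subseteq V'$ of $\Spec\KK[M]$ the restriction $i_*\mathcal{I}^q(V') = \mathcal{I}^q(i^{-1}V') \to \mathcal{I}^q(i^{-1}V) = i_*\mathcal{I}^q(V)$ is surjective, since $i^{-1}V \subseteq i^{-1}V'$ and $\mathcal{I}^q$ is flasque. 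As flasque sheaves are acyclic for Zariski cohomology, the resolution $i_*\mathcal{I}^\bullet$ computes $\H^j(\Spec\KK[M], i_*\F)$ as the cohomology of the complex $\Gamma(\Spec\KK[M], i_*\mathcal{I}^\bullet)$. Finally, from $\Gamma(\Spec\KK[M], i_*\mathcal{I}^q) = \mathcal{I}^q(i^{-1}\Spec\KK[M]) = \Gamma(\Spec M, \mathcal{I}^q)$ this complex coincides on the nose with the one computing $\H^j(\Spec M, \F)$, whence $\H^j(\Spec\KK[M], i_*\F) \cong \H^j(\Spec M, \F)$, which vanishes for $j \geq 1$ by Theorem~\ref{theorem:vanishing-combinatorial-cohomology-affine}.

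The only genuine input is the exactness of $i_*$, namely Theorem~\ref{theorem:push-forward-exact}; the preservation of flasqueness and the identification of global sections are formal, so I expect no serious obstacle once exactness is in hand. Should one prefer to avoid resolutions, the same conclusion follows by dimension shifting from the already proven case $j=1$ (Corollary~\ref{corollary:H1pushforward_is_0}): embedding $\F$ in a flasque sheaf $\mathcal{I}$ with cokernel $\mathcal{Q}$, the long exact sequence attached to $0 \to i_*\F \to i_*\mathcal{I} \to i_*\mathcal{Q} \to 0$ together with the acyclicity of $i_*\mathcal{I}$ yields $\H^j(\Spec\KK[M], i_*\F) \cong \H^{j-1}(\Spec\KK[M], i_*\mathcal{Q})$ for $j \geq 2$, and an induction on $j$ closes the argument.
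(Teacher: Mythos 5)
Your argument is correct, and your primary route differs from the paper's in a worthwhile way. The paper proves the statement by induction on $j$: the base case $j=1$ is Corollary~\ref{corollary:H1pushforward_is_0} (obtained by a direct limit over \v{C}ech coverings), and the inductive step is exactly the dimension-shifting argument you sketch at the end --- embed $\F$ in a flasque $\G$, push forward the short exact sequence using Theorem~\ref{theorem:push-forward-exact}, note that $i_*\G$ is flasque, and shift. Your main argument instead pushes forward an entire flasque resolution at once: exactness of $i_*$ keeps it a resolution, $i_*$ preserves flasqueness by the same observation the paper cites from Hartshorne, and the global-section complexes on $\Spec\KK[M]$ and $\Spec M$ coincide because $i^{-1}(\Spec\KK[M])=\Spec M$. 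This yields the stronger conclusion $\H^j(\Spec\KK[M], i_*\F)\cong \H^j(\Spec M,\F)$ for all $j$ (in effect the degenerate Leray spectral sequence for the exact pushforward $i_*$), from which the vanishing follows by Theorem~\ref{theorem:vanishing-combinatorial-cohomology-affine}. What your route buys is that it bypasses the separate $j=1$ base case entirely and recovers Corollary~\ref{combinatorialpushforward}-type identifications for free; what the paper's route buys is that it stays within the elementary long-exact-sequence formalism and makes the reliance on the already-proved $\H^1$ vanishing explicit. Both hinge on the same two genuine inputs: exactness of $i_*$ and acyclicity of sheaves of abelian groups on the affine binoid spectrum.
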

    \begin{proof}
        We use induction on $j\geq 1$. For $j=1$ this is true by Corollary~\ref{corollary:H1pushforward_is_0}. Let $j\geq 1$. We can embed $\F$ in a flasque sheaf $\G$ on $\Spec M$ and use the exact sequence
        \[
        \begin{tikzcd}[cramped, row sep = 0pt]
        0\rar & \F\rar & \G\rar & \mathcal{Q}\rar & 0
        \end{tikzcd}
        \]
        on $\Spec M$, where $\mathcal{Q}={\G}/{\F}$. We pushforward this sequence along $i$ and, thanks to Theorem~\ref{theorem:push-forward-exact}, we get an exact sequence on $\Spec \KK[M]$
        \[
        \begin{tikzcd}[cramped, row sep = 0pt]
        0\rar & i_*\F\rar & i_*\G\rar & i_*\mathcal{Q}\rar & 0
        \end{tikzcd}
        \]
        that yields a long exact sequence in cohomology (we omit the topological space for ease of notation)
        \[
        \begin{tikzcd}[cramped, row sep = 2ex,
        /tikz/column 3/.append style={anchor=base west}]
        0\rar & \H^0(i_*\F)\rar & \H^0(i_*\G)\rar & \H^0(i_*\mathcal{Q})\arrow[out=-5, in=175, looseness=1.5, overlay, dll]\\
        & \H^1( i_*\F)\rar & \H^1(i_*\G)\rar & \H^1(i_*\mathcal{Q})\arrow[out=-5, in=175, looseness=1.5, overlay, dll]\\
        & \H^2( i_*\F)\rar  & \H^2( i_*\G)\rar & \dots \, . 
        \end{tikzcd}
        \]
        Thanks to \cite[Exercise II.1.16.(d)]{hartshorne1977algebraic}, we know that $i_*\G$ is again flasque, so
        \[
        \H^j(\Spec \KK[M],i_*\G)=0
        \]
        for all $j\geq 1$, and we get isomorphisms
        \[
        \H^j(i_*\mathcal{Q})\cong \H^{j+1}( i_*\F) \, .
        \]
        By the induction hypothesis, the left hand side is $0$, and so is the right hand side.
    \end{proof}

    \begin{remark}
        The cohomology of any sheaf $i_*\F$ on any open combinatorial subset $U$ of $\Spec\KK[M]$ can be computed by \v{C}ech cohomology, using the affine combinatorial covering of $U$, that is the cover given by the fundamental open subsets $\{D(P)\}$, with $P$ monomials. This is true because $D(P)\cong \Spec\KK[M_P]$ and $(i_*\F)\restriction_{D(P)}=i_*(\F\restriction_{D(P)})$ and from Proposition~\ref{proposition:cohomology-pushforward-affine} this cover is acyclic.
    \end{remark}
    
    \begin{remark}
        The Proposition is true in particular for the cohomology of the sheaf $i_*\O^*_M$, that in general is a subsheaf of $\O^*_{\KK[M]}$, so we can compute the cohomology of $i_*\O^*$ on the punctured spectrum using the acyclic covering given by the coordinates $\{D(X_i)\}$.
    \end{remark}
    
    \begin{definition}
        $i_*\O^*_M$ is called the sheaf of \emph{combinatorial units} of $\KK[M]$.
    \end{definition}
    
    \begin{corollary}\label{combinatorialpushforward}
        $\H^i(\Spec^\bullet\KK[M], i_*\O^*_M)\cong \H^i(\Spec^\bullet M, \O^*_M)$.
    \end{corollary}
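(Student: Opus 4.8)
The plan is to identify both sides with the single \v{C}ech cohomology group $\vH^i(\{D(X_i)\}, i_*\O^*_M)$, where $\{D(X_i)\}$ is the coordinate affine combinatorial covering of $\Spec^\bullet\KK[M]$ and the $X_i$ are the monomials attached to the generators $x_i$ of $M_+$. Once both cohomologies are pinned to this one complex, the isomorphism is immediate.

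For the right-hand side I would apply the Corollary following Lemma~\ref{lemma:cohomology_pushforward} with $\F=\O^*_M$ on $\Spec^\bullet M$. Concretely, taking $\widetilde{U}=\Spec^\bullet\KK[M]$ and $\widetilde{\U}=\{D(X_i)\}$ one has $i^{-1}(D(X_i))=D(x_i)$ (Lemma~\ref{ipreimage}) and $i^{-1}(\Spec^\bullet\KK[M])=\Spec^\bullet M$, and the coordinate cover $\{D(x_i)\}$ of $\Spec^\bullet M$ is acyclic for $\O^*_M$. That Corollary then gives $\H^i(\Spec^\bullet M, \O^*_M)\cong\vH^i(\{D(X_i)\}, i_*\O^*_M)$; at the level of complexes this is an equality, since $i_*\O^*_M(D(X_{i_0}\cdots X_{i_p}))=\O^*_M(D(x_{i_0}+\cdots+x_{i_p}))$ term by term.

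For the left-hand side I would show that $\{D(X_i)\}$ is an acyclic covering of $\Spec^\bullet\KK[M]$ for the pushforward $i_*\O^*_M$, so that its \v{C}ech cohomology computes the Zariski cohomology. Every finite intersection is again combinatorial affine, $D(X_{i_0})\cap\cdots\cap D(X_{i_p})=D(X_{i_0}\cdots X_{i_p})\cong\Spec\KK[M_{x_{i_0}+\cdots+x_{i_p}}]$, and the localized binoid $M_{x_{i_0}+\cdots+x_{i_p}}$ is still torsion-free and cancellative. Using $(i_*\O^*_M)\restriction_{D(P)}=i_*(\O^*_M\restriction_{D(P)})$ together with Proposition~\ref{proposition:cohomology-pushforward-affine} applied to these localizations, the higher Zariski cohomology of $i_*\O^*_M$ vanishes on each such affine piece. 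Hence the cover is acyclic and $\H^i(\Spec^\bullet\KK[M], i_*\O^*_M)\cong\vH^i(\{D(X_i)\}, i_*\O^*_M)$. Comparing the two identifications yields the claimed isomorphism.

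The one point that deserves care --- and which is the real engine of the argument --- is the acyclicity of the coordinate cover for $i_*\O^*_M$ on the \emph{punctured} spectrum. This is not Proposition~\ref{proposition:cohomology-pushforward-affine} verbatim, since that statement concerns the full affine spectrum $\Spec\KK[M]$; the resolution is to invoke it on each localization $\KK[M_{x_{i_0}+\cdots+x_{i_p}}]$, whose spectrum is precisely the affine piece $D(X_{i_0}\cdots X_{i_p})$ of the cover. Everything else is bookkeeping, and the decisive pleasant fact is that the pushforward turns the combinatorial \v{C}ech complex on $\Spec^\bullet\KK[M]$ into the very same complex that computes $\H^i(\Spec^\bullet M, \O^*_M)$, so the two cohomologies agree by construction rather than through any further comparison.
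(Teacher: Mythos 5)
Your proposal is correct and follows essentially the same route the paper takes: the right-hand side is identified with $\vH^i(\{D(X_k)\}, i_*\O^*_M)$ via the Corollary to Lemma~\ref{lemma:cohomology_pushforward}, and the left-hand side via the acyclicity of the coordinate combinatorial cover for $i_*\O^*_M$, obtained by applying Proposition~\ref{proposition:cohomology-pushforward-affine} to each affine piece $D(P)\cong\Spec\KK[M_P]$ together with $(i_*\F)\restriction_{D(P)}=i_*(\F\restriction_{D(P)})$ — which is precisely the content of the two remarks preceding the corollary in the paper. Your flagging of the punctured-versus-affine subtlety and its resolution by localization is exactly the point the paper's remark is making.
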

    
    \begin{corollary}
        If we have $\O^*_{\KK[M]}\cong i_*\O^*_M\oplus\F$ for some sheaf of abelian groups $\F$ in the combinatorial topology, then $\Pic^{\loc}(M)\neq 0$ implies $\Pic^{\loc}(\KK[M])\neq 0$.
    \end{corollary}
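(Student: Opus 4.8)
The plan is to compute $\Pic^{\loc}(\KK[M]) = \H^1_{\mathrm{Zar}}(\Spec^\bullet \KK[M], \O^*_{\KK[M]})$ by comparing it with \v{C}ech cohomology on the coordinate affine combinatorial covering $\U = \{D(X_i)\}$, and to exhibit a nonzero class arising from the combinatorial summand. The general fact I would rely on is that, for \emph{any} open covering, the edge map $\vH^1(\U, \O^*_{\KK[M]}) \to \H^1_{\mathrm{Zar}}(\Spec^\bullet \KK[M], \O^*_{\KK[M]})$ of the \v{C}ech-to-derived spectral sequence is injective. This is strictly weaker than the acyclicity needed for an equality, and it is exactly what the one-directional implication requires.

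First I would observe that every intersection $D(X_{i_0}) \cap \dots \cap D(X_{i_p}) = D(X_{i_0}\cdots X_{i_p})$ is the fundamental open subset attached to a monomial, hence a combinatorial open subset, on which the Zariski-section group of $\O^*_{\KK[M]}$ coincides with its combinatorial-sheaf value. Consequently the splitting hypothesis $(\O^*_{\KK[M]})^{\comb} \cong (i_*\O^*_M)^{\comb} \oplus \F$, being an isomorphism of sheaves in the combinatorial topology, holds on the sections over each such intersection and is compatible with the restriction maps between them. Since the \v{C}ech differentials are assembled from precisely these restriction maps, the whole \v{C}ech complex decomposes as a direct sum of complexes
\[ \vC^\bullet(\U, \O^*_{\KK[M]}) \cong \vC^\bullet(\U, i_*\O^*_M) \oplus \vC^\bullet(\U, \F)\, , \]
and therefore $\vH^1(\U, \O^*_{\KK[M]}) \cong \vH^1(\U, i_*\O^*_M) \oplus \vH^1(\U, \F)$.

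Next I would identify the first summand. By Proposition~\ref{proposition:cohomology-pushforward-affine} and the remark following it, the covering $\U$ is acyclic for the pushforward sheaf $i_*\O^*_M$, so \v{C}ech cohomology computes its Zariski cohomology, giving $\vH^1(\U, i_*\O^*_M) \cong \H^1(\Spec^\bullet \KK[M], i_*\O^*_M)$. Corollary~\ref{combinatorialpushforward} then yields $\H^1(\Spec^\bullet \KK[M], i_*\O^*_M) \cong \H^1(\Spec^\bullet M, \O^*_M)$, which by Proposition~\ref{proposition:cohomology-vector-bundles} and Definition~\ref{definition:local-picard-group} is exactly $\Pic^{\loc}(M)$. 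Under the hypothesis $\Pic^{\loc}(M) \neq 0$ this summand is nonzero, whence $\vH^1(\U, \O^*_{\KK[M]}) \neq 0$, and the injectivity $\vH^1(\U, \O^*_{\KK[M]}) \hookrightarrow \H^1_{\mathrm{Zar}}(\Spec^\bullet\KK[M], \O^*_{\KK[M]}) = \Pic^{\loc}(\KK[M])$ forces $\Pic^{\loc}(\KK[M]) \neq 0$.

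The main obstacle I anticipate is not a single hard computation but the bookkeeping that legitimizes the direct-sum decomposition of the \v{C}ech complex: one must verify that the combinatorial splitting genuinely commutes with the \v{C}ech restriction maps over the monomial intersections, so that it descends to a splitting of \emph{complexes} rather than a mere degreewise splitting of groups. I would emphasize that the genuinely deep input --- acyclicity of $\O^*_{\KK[M]}$ itself on the covering, which can fail and is established only for Stanley--Reisner rings in Theorem~\ref{Thm:vanishingspecstanleyreisner} --- is deliberately \emph{not} used; it is the asymmetry of the statement (nonzero implies nonzero) that lets the one-sided \v{C}ech-to-derived injection suffice.
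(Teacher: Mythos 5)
Your proof is correct and follows essentially the route the paper intends: split the \v{C}ech complex on the combinatorial covering $\{D(X_i)\}$ using the hypothesis, identify the summand $\vH^1(\{D(X_i)\}, i_*\O^*_M)$ with $\Pic^{\loc}(M)$ via Corollary~\ref{combinatorialpushforward}, and conclude via the standard injectivity of $\vH^1(\U,-)\to\H^1_{\mathrm{Zar}}(-)$. Your explicit remark that only this one-sided injection (and not acyclicity of $\O^*_{\KK[M]}$ on the covering) is needed is exactly the right observation and matches why the statement is only an implication rather than an isomorphism.
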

    
    \begin{example}\label{Picardnot00}
        	We give an example of an integral binoid, such that the natural map
        	of the combinatorial local Picard group to the local Picard group of the geometric realization over a field is not injective. We consider the monoid $M$ generated by $e,f,g$, subject to the relations
        	\[ 2e=e ,\, 2f=f , \, 2g = e+f \, . \]
        	This monoid is not cancellative because of the idempotent elements and it is also not torsion free, since
        	$2g =e+f = 2(e+f)$, but $g \neq e+f$. The combinatorial prime ideals are 
        	\[ {(\infty), (e,g), (f,g), (e,f,g)}  \,  ,  \]
        	so the combinatorial Krull dimension is $2$ and  the punctured spectrum is covered by $ D(e)$ and $D(f)$, with the intersection $ D(e) \cap D(f) = D(g)$. We determine the Picard-\v{C}ech-complex for the units. We have
        	\[ M_e = g \NN / (4g=2g) ,\]
        	since from $2e=e$, we get $e=0$, as $e$ becomes a unit and so, with the help of $2g = f $, we can eliminate $f$ and only the generator $g$ remains, with the given equation. Similarily, we have $M_f = g \NN /( 4g=2g )$. These binoids are positive. Moreover, we have $M_g  =g \NN /(2g=0) $, where $g$ is a nontrivial unit. Hence the Picard-\v{C}ech-complex is
        	\[ 0 \longrightarrow  0 \oplus 0 \longrightarrow \ZZ/(2)  \longrightarrow 0 \]
        	and $g$ defines a nontrivial cohomology class, showing the existence of nontrivial combinatorial line bundles over the punctured spectrum. 
        	
        	Let now $\KK$ be a field and compute the $\KK$-spectrum of $M$. The values of $e$ and $f$ are either $0$ or $1$, and the possible values of $g$ are  $-1,0,1$. Hence, there are only finitely many $\KK$-points and so $\KK[M]$ has Krull dimension $0$, there is no cohomology at all and the local Picard group is trivial.
        \end{example}

    \subsection{Cohomology of \texorpdfstring{$\O^*$}{Ostar} in the Zariski topology}
    
    Computations of cohomology in the combinatorial topology are only helpful for the computation of the cohomology of $\O^*$ in the Zariski topology if there exist combinatorial coverings which are acylic in the Zariski topology. We will use the follwing result from \cite[Lemma 5]{demeyer1992cohomological}.
    
    \begin{theorem}\label{affinetoricunit}
         If $Y $ is a normal affine toric variety, then
        \[
        \H^p_{\mathrm{Zar}}(Y, \O^*_Y) = 0,
        \] for all $p\geq 1$.       
    \end{theorem}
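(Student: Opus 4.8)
The plan is to deduce the vanishing from the standard divisor-theoretic machinery on the normal, irreducible variety $Y=\Spec\KK[M]$, treating the degree $p=1$ and the degrees $p\geq 2$ separately. For $p=1$ there is nothing genuinely cohomological to do: by Proposition~\ref{proposition:cohomology-vector-bundles} we have $\H^1(Y,\O^*_Y)\cong\Pic(Y)$, and the triviality of the Picard group of an affine toric variety is classical. Concretely, every Weil divisor on $Y$ is linearly equivalent to a torus-invariant one, and a torus-invariant Cartier divisor on an affine toric variety is cut out by a single character of the torus, hence principal; this forces $\Pic(Y)=0$ and settles $p=1$.

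For $p\geq 2$ we would use the divisor sequence of sheaves of abelian groups on $Y$,
\[ 0\longrightarrow \O^*_Y\longrightarrow \mathcal{K}^*_Y\longrightarrow \mathcal{D}\longrightarrow 0\,, \]
where $\mathcal{K}^*_Y$ is the sheaf of invertible rational functions (the constant sheaf with value $K^*$, $K$ the function field of $Y$) and $\mathcal{D}=\mathcal{K}^*_Y/\O^*_Y$ is the sheaf of Cartier divisors. Since $Y$ is irreducible, every nonempty open subset is connected and dense, so the constant sheaf $\mathcal{K}^*_Y$ is flasque and $\H^p(Y,\mathcal{K}^*_Y)=0$ for all $p\geq 1$. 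The long exact sequence then collapses to isomorphisms $\H^{p-1}(Y,\mathcal{D})\cong\H^p(Y,\O^*_Y)$ for $p\geq 2$, so the whole problem reduces to showing that the Cartier divisor sheaf $\mathcal{D}$ is acyclic, i.e.\ $\H^q(Y,\mathcal{D})=0$ for all $q\geq 1$.

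To establish this acyclicity we would compare $\mathcal{D}$ with the sheaf of Weil divisors $\mathcal{W}=\bigoplus_{Z}(i_Z)_*\ZZ$, the sum running over the codimension-one points $Z$ of $Y$. Being a direct sum of constant sheaves on irreducible closed subsets, $\mathcal{W}$ is flasque, hence acyclic. A Cartier divisor is in particular a Weil divisor, which yields an inclusion $\mathcal{D}\hookrightarrow\mathcal{W}$ whose cokernel $\mathcal{Q}$ is supported on the non-locally-factorial locus of $Y$, a proper torus-invariant closed subset. Acyclicity of $\mathcal{W}$ gives $\H^q(Y,\mathcal{D})\cong\H^{q-1}(Y,\mathcal{Q})$ in the relevant degrees, so it remains to kill the cohomology of $\mathcal{Q}$. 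Here I would exploit the finite stratification of $Y$ into torus orbits and induct on its dimension: on the smooth locus $Y$ is locally factorial and $\mathcal{Q}$ vanishes, while the non-factorial locus is itself a union of lower-dimensional toric strata, so the vanishing can be propagated inward along the orbit closures.

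The main obstacle will be exactly this last step, the acyclicity of $\mathcal{D}$ (equivalently, of the comparison sheaf $\mathcal{Q}$): in contrast to the flasqueness of $\mathcal{K}^*_Y$ and the triviality of $\Pic(Y)$, it genuinely uses the rigidity of the toric orbit stratification and is the combinatorial heart of the cited lemma of DeMeyer and Ford. An alternative, and perhaps cleaner, route would bypass $\mathcal{D}$ altogether and compute $\H^\bullet(Y,\O^*_Y)=\vH^\bullet(\cdot,\O^*_Y)$ directly on a torus-invariant affine cover indexed by the faces of the defining cone, matching the resulting combinatorial bookkeeping to a vanishing of reduced cohomology of the cone; this is closer in spirit to the simplicial arguments developed in the previous sections.
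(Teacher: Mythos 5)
First, a point of comparison that matters here: the paper does not prove this statement at all. It is imported verbatim as Lemma~5 of the cited work of DeMeyer and Ford, so there is no internal proof to measure yours against, and your proposal has to stand on its own. Judged that way, it is an honest outline of the standard divisor-theoretic route, and the first two steps are sound: $\H^1(Y,\O^*_Y)\cong\Pic(Y)=0$ because every invariant Cartier divisor on a normal affine toric variety is cut out by a character, hence principal; and the reduction of $p\geq 2$ to the acyclicity of the Cartier-divisor sheaf $\mathcal{D}=\mathcal{K}^*_Y/\O^*_Y$ via flasqueness of the constant sheaf $\mathcal{K}^*_Y$ on the irreducible space $Y$ is correct.

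The gap is in the last step, and it is larger than the one you flag. From $0\to\mathcal{D}\to\mathcal{W}\to\mathcal{Q}\to 0$ with $\mathcal{W}$ flasque you only get $\H^q(Y,\mathcal{D})\cong\H^{q-1}(Y,\mathcal{Q})$ for $q\geq 2$; for $q=1$, which is exactly the group $\H^2(Y,\O^*_Y)$ and hence the first genuinely new case, the sequence gives $\H^1(Y,\mathcal{D})\cong\operatorname{coker}\bigl(\Gamma(Y,\mathcal{W})\to\Gamma(Y,\mathcal{Q})\bigr)$, and surjectivity of that map is not a consequence of flasqueness of $\mathcal{W}$: the obstruction to lifting a global section of $\mathcal{Q}$ to a global Weil divisor lies precisely in the group $\H^1(Y,\mathcal{D})$ you are trying to kill, so the argument as written is circular at this point. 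One genuinely needs the toric structure to represent every section of $\mathcal{Q}$ by a global invariant Weil divisor. Beyond that, ``induct on the orbit stratification to kill $\H^{q}(Y,\mathcal{Q})$'' is a plan rather than an argument: $\mathcal{Q}$ is a quotient of a flasque sheaf, which carries no acyclicity by itself, its stalks are the local class groups $\Cl(\O_{Y,y})$, and making the induction run requires identifying $\mathcal{Q}$ explicitly in terms of the faces of the defining cone --- which is the combinatorial computation you defer to your ``alternative route'' (and that route has its own circularity, since identifying \v{C}ech cohomology on an invariant cover with derived-functor cohomology already presupposes acyclicity of $\O^*$ on the smaller affine toric pieces). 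So the proposal correctly locates the hard part of DeMeyer and Ford's lemma, but it does not prove it; as a replacement for the citation it is incomplete.
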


    Let $\Spec\KK[M]$ be a normal affine toric variety and let $X=\Spec^\bullet \KK[M]$. Then $D(X_j)$ is a normal affine toric variety embedded in $X$, so $\{D(X_j)\}$ defines by Theorem~\ref{affinetoricunit} an acyclic covering for $\O^*$ on $X$. In particular, we can use it to compute Zariski sheaf cohomology via \v{C}ech cohomology on $X$. Moreover, since the $D(X_i)$'s are combinatorial, we can apply Proposition~\ref{proposition:hzar-hcomb} to obtain isomorphisms
    \[    \H^p_{\mathrm{Zar}}(X, \O^*) = \H^p_{\comb}(X, \O^*) = \vH^p(\{D(X_j)\}, \O^*)  \, .  \]
    We can apply Proposition~\ref{proposition:split-sheaves-comb-top} to split the sheaf $(\O^*)^{\comb}$ and obtain
    \begin{align*}
    \H^p_{\mathrm{Zar}}(X, \O^*) &= \H^p_{\comb}(X, \O^*) = \H^p_{\comb}(X, i_*\O^*_M)  \oplus   \H^p_{\comb}(X, \KK^*)   \\
    & = \vH^p(\{D(X_j)\}, i_*\O^*_M)   \oplus  \vH^p(\{D(X_j)\}, \KK^*)  \\
    & = \H^p(\Spec^\bullet M, \O^*_M) \oplus     \vH^p(\{D(X_j)\}, \KK^*)  \\
    & = \H^p(\Spec^\bullet M, \O^*_M) \, ,
    \end{align*}
    where the last steps rests on the integrality.
    
     If we drop the hypothesis of normality, Theorem \ref{affinetoricunit} might not be true, as the following Example shows.
        
    \begin{example}
        Consider the Neil binoid $M=(x, y\mid 2x=3y)$. Its algebra is ${\KK[X, Y]}/ ( X^2-Y^3 ) $ and it defines the curve called the Neil parabola, i.e.\
        \[        C=\Spec {\KK[X, Y]}/( X^2-Y^3 ) \, ,         \]
        that is a toric variety. We already know from Theorem~\ref{theorem:vanishing-combinatorial-cohomology-affine}, that $\H^i (\Spec M, \O^*_M)=0$ for all $i \geq 1$, because it is affine. On the other hand, this variety is not normal and, indeed, $ \Pic(C)=\KK_+ \,  $, where $\KK_+$ is $\KK$ seen as the additive group, see \cite[Exercise 11.15 and 11.16]{eisenbud2013commutative}. So, in this case, we have that
        \[        \KK_+=\H^1(\Spec\KK[M], \O^*)\neq \H^1(\Spec M, \O^*_M)=0 \, .\qedhere        \]
    \end{example}

\section{Stanley-Reisner rings}\label{section:SR-rings}

In this section, we compute the local Picard group of a Stanley-Reisner ring, through the study of the cohomology of the sheaf of units $\O^*$ in the Zariski topology. While doing so, we will also look at the cohomology of higher degrees of the sheaf of units, and we will give a combinatorial description of this cohomology. In order to describe this cohomology group, we first prove that $\H^i(\KK[\triangle], \O^*)=0$ and $\H^i(\KK[\triangle][x, x^{-1}], \O^*)=0$ for $i\geq 1$. This is proved by induction on the complexity of the simplicial complex, where the starting point is Theorem \ref{affinetoricunit} for affine space. We use this to show that we can compute the cohomology of $\O^*$ on $\Spec^\bullet(\KK[\triangle])$ with the combinatorial \v{C}ech covering, where the answers is known thanks to our previous results. Lastly, we will look at the non-reduced monomial case and we will apply results from Section~\ref{section:injections} in order to get some explicit results also in this case.

\subsection{The Spectrum of a Stanley-Reisner ring}

Let $\triangle$ denote a simplicial complex on the finite set $V$ of vertices.

\begin{lemma}\label{Lemma:correspondencefacessubsetsofspec}
    There is a bijective order-preserving correspondence between the faces of the complex $\triangle$ and the linear coordinate subspaces contained in $\Spec \KK[\triangle]$. This correspondence is dimension-preserving, in the sense that the dimension of the linear coordinate subspace is the dimension of the face associated to it plus one. The irreducible components of $\Spec(\KK[\triangle])$ correspond bijectively to the facets of $\triangle$. The component to a facet $F$ is $\Spec(\KK[\mathcal{P}(F)]) \cong {\mathbb A}^{ \#   F}$.
\end{lemma}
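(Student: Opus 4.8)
The plan is to work entirely inside the Stanley-Reisner presentation $\KK[\triangle] = \KK[X_v : v \in V]/I_\triangle$, where $I_\triangle = \langle X_H : H \notin \triangle\rangle$ and $X_H = \prod_{v\in H}X_v$, viewing $\Spec\KK[\triangle]$ as the closed subscheme $V(I_\triangle)$ of $\AA^{\#V} = \Spec\KK[X_v : v\in V]$. Every linear coordinate subspace of $\AA^{\#V}$ is of the form $L_S = V(\langle X_v : v\notin S\rangle)$ for a unique $S\subseteq V$, the locus on which the coordinates outside $S$ vanish, and $L_S \cong \AA^{\#S}$. First I would determine when $L_S$ lies inside $\Spec\KK[\triangle]$: since $I_\triangle$ is radical, this is equivalent to $I_\triangle \subseteq \langle X_v : v\notin S\rangle$, and as $I_\triangle$ is generated by the squarefree monomials $X_H$ attached to non-faces $H$, the containment holds if and only if every non-face $H$ contains a vertex outside $S$, i.e.\ no non-face is contained in $S$. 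Because $\triangle$ is closed under taking subsets, this is exactly the condition $S\in\triangle$. Hence $F\mapsto L_F$ is a well-defined bijection from the faces of $\triangle$ onto the linear coordinate subspaces contained in $\Spec\KK[\triangle]$.

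Order-preservation and the dimension count are then bookkeeping. One has $L_F\subseteq L_G$ if and only if $\langle X_v : v\notin F\rangle \supseteq \langle X_v : v\notin G\rangle$ if and only if $F\subseteq G$, and $\dim L_F = \#F = (\#F-1)+1 = \dim F + 1$ by the convention that a face has dimension one less than its number of vertices. I would also point out that this is just the image under the functor $\KK[-]$ of the combinatorial face-prime correspondence already in place: the prime ideals of $M_\triangle$ are exactly the $\p_F = \langle x_v : v\notin F\rangle$ for $F\in\triangle$, related order-reversingly to the faces, and by Lemma~\ref{lemma:prime-in-M-iff-prime-in-KM} the ideal $\KK[\p_F] = \langle X_v : v\notin F\rangle$ is prime in $\KK[\triangle]$ with $V(\KK[\p_F]) = L_F$.

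For the last two assertions I would use that the irreducible components of $\Spec\KK[\triangle]$ are the vanishing loci $V(\p)$ of the minimal primes $\p$, and argue that these minimal primes are precisely the $\langle X_v : v\notin F\rangle$ with $F$ a facet. Each such component is then $L_F \cong \AA^{\#F}$, which as a scheme is $\Spec\KK[\mathcal{P}(F)]$, the Stanley-Reisner ring of the full simplex on $F$, i.e.\ the polynomial ring in the variables indexed by $F$. The only step that requires more than unwinding definitions is identifying the minimal primes of $I_\triangle$ with the facet primes, and I expect this to be the main obstacle; I would dispose of it either by invoking the standard primary decomposition $I_\triangle = \bigcap_{F\text{ facet}}\langle X_v : v\notin F\rangle$ of a squarefree monomial ideal, or combinatorially by transporting the statement ``minimal primes of $M_\triangle \leftrightarrow$ facets'' through Lemma~\ref{lemma:prime-in-M-iff-prime-in-KM}, using that $\KK[-]$ preserves inclusions of combinatorial primes and hence minimality.
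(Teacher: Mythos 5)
The paper states this lemma without proof (it is one of the results whose proofs are deferred to the first author's thesis), so there is no in-text argument to compare against. Your proof is correct and complete: the identification of the coordinate subspaces contained in $V(I_\triangle)$ with the faces via the containment criterion $I_\triangle\subseteq\langle X_v : v\notin S\rangle$, the order and dimension bookkeeping, and the identification of the irreducible components via the standard decomposition $I_\triangle=\bigcap_{F\ \text{facet}}\langle X_v : v\notin F\rangle$ (or, equivalently, via transporting the combinatorial minimal-prime/facet correspondence through Lemma~\ref{lemma:prime-in-M-iff-prime-in-KM}) are all exactly what is needed, and both routes you offer for the last step are viable within the paper's framework. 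One small imprecision: the equivalence of $L_S\subseteq V(I_\triangle)$ with $I_\triangle\subseteq\langle X_v : v\notin S\rangle$ rests on $\langle X_v : v\notin S\rangle$ being prime (hence radical), not on $I_\triangle$ being radical; in general $V(J)\subseteq V(I)$ iff $I\subseteq\sqrt{J}$, so it is the radicality of the ideal of $L_S$ that matters.
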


Here $\mathcal{P}(F)$ is the power set for $F$, considered as a full simplicial set.

\begin{lemma}\label{Lemma:decompositionofspectrum}
    Let $F$ be a facet of $\triangle$ and $\triangle':=\triangle \setminus\{F\}$. Then \[\Spec(\KK[\triangle])=\Spec(\KK[\triangle'])\cup \Spec(\KK[\mathcal{P}(F)]) \, .\]
\end{lemma}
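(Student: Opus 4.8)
The plan is to realise all three spectra as closed subsets of the ambient affine space $\AA^{\#V}=\Spec\KK[X_v\mid v\in V]$ and to decompose each of them, via Lemma~\ref{Lemma:correspondencefacessubsetsofspec}, as a union of linear coordinate subspaces indexed by faces rather than by facets. For a face $G$ write $L_G=V(X_v\mid v\notin G)\cong\AA^{\#G}$ for the corresponding coordinate subspace. Since $\Spec\KK[\triangle]$ is a Noetherian scheme, it is the union of its irreducible components, and Lemma~\ref{Lemma:correspondencefacessubsetsofspec} identifies these with the $L_G$ for $G$ a facet; hence, as a set,
\[\Spec\KK[\triangle]=\bigcup_{G\text{ facet of }\triangle}L_G=\bigcup_{G\text{ face of }\triangle}L_G,\]
the second equality holding because every face lies in a facet and $L_{G}\subseteq L_{G'}$ whenever $G\subseteq G'$.

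First I would check that $\triangle'=\triangle\setminus\{F\}$ is again a simplicial complex: since $F$ is a facet, no face of $\triangle$ other than $F$ can contain $F$, so deleting $F$ preserves closure under taking subsets. Consequently the faces of $\triangle'$ are exactly the faces of $\triangle$ different from $F$, and the same face decomposition applied to $\triangle'$ reads $\Spec\KK[\triangle']=\bigcup_{G\neq F}L_G$. Finally $\mathcal{P}(F)$ has $F$ as its unique facet, so $\Spec\KK[\mathcal{P}(F)]=L_F$. Feeding these into the decomposition yields
\[\Spec\KK[\triangle']\cup\Spec\KK[\mathcal{P}(F)]=\Bigl(\bigcup_{\substack{G\text{ face of }\triangle\\ G\neq F}}L_G\Bigr)\cup L_F=\bigcup_{G\text{ face of }\triangle}L_G=\Spec\KK[\triangle],\]
which is the asserted equality.

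If one wants the refinement at the level of (radical) ideals, namely $I_\triangle=I_{\triangle'}\cap I_{\mathcal{P}(F)}$ inside $\KK[X_v\mid v\in V]$ (from which the statement follows by applying $V(-)$ and the identity $V(\mathfrak a\cap\mathfrak b)=V(\mathfrak a)\cup V(\mathfrak b)$), I would use the prime decompositions $I_\triangle=\bigcap_{G}P_G$ over facets $G$, with $P_G=(X_v\mid v\notin G)$, together with $I_{\mathcal{P}(F)}=P_F$. Here the one genuinely combinatorial subtlety appears: passing from $\triangle$ to $\triangle'$ can create \emph{new} facets, namely the maximal proper subsets $H\subsetneq F$ that are contained in no other facet of $\triangle$, so the two facet sets do not match directly. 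The point is that any such $H\subseteq F$ satisfies $P_H\supseteq P_F$, so the factor $P_H$ is absorbed once $I_{\triangle'}$ is intersected with $P_F$, while the facets of $\triangle'$ not contained in $F$ are precisely the facets of $\triangle$ other than $F$. This is the only step requiring care, and the set-theoretic argument above sidesteps it entirely by indexing over all faces instead of over facets.
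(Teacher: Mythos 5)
Your argument is correct. The paper itself states Lemma~\ref{Lemma:decompositionofspectrum} without proof (deferring, like several lemmas in this section, to the first author's thesis), so there is no written proof to compare against; but what you give is exactly the natural argument suggested by Lemma~\ref{Lemma:correspondencefacessubsetsofspec}: decompose each spectrum into the coordinate subspaces $L_G$, pass from facets to all faces to make the index sets match, and observe that $\triangle\setminus\{F\}$ is still a simplicial complex precisely because $F$ is maximal. You also correctly identify and resolve the one genuine subtlety — that $\triangle'$ may acquire new facets $H\subsetneq F$, whose primes $P_H\supseteq P_F$ are absorbed upon intersecting with $P_F$ — which upgrades the set-theoretic equality to the ideal-theoretic identity $I_\triangle=I_{\triangle'}\cap I_{\mathcal{P}(F)}$ needed later (e.g.\ in Remark~\ref{remark:trivial-intersection-ideals-SR-ring} and Theorem~\ref{Thm:vanishingspecstanleyreisner}). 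No gaps.
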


\begin{lemma}\label{Lemma:intersectionofspecs}
    Let $F$ be a facet of $\triangle$, $\triangle'=\triangle \setminus\{F\}$ and $\triangle''=\triangle'\cap\mathcal{P}(F)$. Then \[\Spec(\KK[\triangle''])=\Spec(\KK[\triangle'])\cap\Spec(\KK[\mathcal{P}(F)]) \, .\]
\end{lemma}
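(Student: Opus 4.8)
The plan is to realize all three spectra as closed subschemes of the common ambient affine space $\AA^V=\Spec\KK[X_v:v\in V]$ and to reduce the claimed equality of spectra to an identity of monomial ideals. By Lemma~\ref{Lemma:correspondencefacessubsetsofspec} each relevant subcomplex embeds as a coordinate subscheme, so I first record the defining ideals. Writing $X_F=\prod_{v\in F}X_v$, the complex $\triangle'$ is obtained from $\triangle$ by declaring the single facet $F$ a non-face while keeping all of its proper subsets (which remain faces, since $F$ is maximal); hence its Stanley--Reisner ideal is $I_{\triangle'}=I_\triangle+(X_F)$. The full simplex $\mathcal{P}(F)$ sits inside $\AA^V$ as the coordinate subspace cut out by $J_F:=(X_v:v\notin F)$.

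Next I would identify $\triangle''$ explicitly. Because $F$ is a facet, every subset of $F$ is a face of $\triangle$, i.e.\ $\mathcal{P}(F)\subseteq\triangle$; consequently
\[
\triangle''=\triangle'\cap\mathcal{P}(F)=(\triangle\setminus\{F\})\cap\mathcal{P}(F)=\mathcal{P}(F)\setminus\{F\},
\]
the boundary of the simplex on $F$. As a subscheme of $\AA^V$ it is therefore defined by $J_F+(X_F)$.

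The main step is to compute the scheme-theoretic intersection, which corresponds to the sum of the two defining ideals:
\[
\Spec(\KK[\triangle'])\cap\Spec(\KK[\mathcal{P}(F)])=V\bigl(I_{\triangle'}+J_F\bigr)=V\bigl(I_\triangle+(X_F)+J_F\bigr).
\]
It then suffices to prove the inclusion $I_\triangle\subseteq J_F$, after which the right-hand ideal collapses to $J_F+(X_F)$, which is exactly the ideal of $\Spec(\KK[\triangle''])$. For this I argue on generators: a minimal non-face $H$ of $\triangle$ cannot be contained in $F$, since any subset of the face $F$ is itself a face; hence $H$ contains some vertex $v\notin F$, and the monomial $X_H$ is divisible by $X_v\in J_F$. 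Thus every generator of $I_\triangle$ lies in $J_F$, so $I_\triangle\subseteq J_F$ and the two ideals coincide.

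I expect the only genuine care to lie in pinning down the correct embeddings and defining ideals of the three spectra inside the common ambient space; the ideal-theoretic core, the inclusion $I_\triangle\subseteq J_F$, is immediate from the maximality of $F$. Finally, since every ideal in play is a radical monomial ideal (each $(X_F)$ being the intersection $\bigcap_{v\in F}(X_v)$ of coordinate primes), scheme-theoretic, set-theoretic and topological equality all coincide, so no attention to nilpotents is required and the displayed equality of spectra follows.
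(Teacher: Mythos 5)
Your argument is correct; the paper itself gives no proof of this lemma (it is one of the proofs deferred to the first author's thesis), and your reduction to the ideal identity $I_{\triangle'}+J_F=J_F+(X_F)$ via the inclusion $I_\triangle\subseteq J_F$ (which holds because no minimal non-face of $\triangle$ can be contained in the face $F$) is exactly the standard Stanley--Reisner computation one would expect there. The only point worth making explicit is that the intersection in the statement is taken inside $\Spec\KK[\triangle]$ rather than in $\AA^V$, which is harmless since all three defining ideals contain $I_\triangle$, and your observation that every ideal involved is a squarefree monomial ideal correctly disposes of any discrepancy between the set-theoretic and scheme-theoretic readings of the equality.
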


\begin{lemma}\label{Lemma:dimensionintersection}
    Under the hypothesis of the previous Lemma,
    \[ \dim\left(\Spec\left(\KK[\triangle'']\right)\right)\leq\min\left\{\dim\left(\Spec\left(\KK[\triangle']\right)\right), \dim\left(\Spec\left(\KK[\mathcal{P} \left(F\right)]\right)\right)\right\}
    \]
    and equality holds if and only if $F$ is the unique facet of maximal dimension.
\end{lemma}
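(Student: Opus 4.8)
The plan is to reduce the entire statement to a combinatorial dimension count by invoking Lemma~\ref{Lemma:correspondencefacessubsetsofspec}, which identifies $\dim\Spec\KK[\Gamma]$ with $\dim\Gamma+1$, equivalently with the maximal number of vertices of a facet of $\Gamma$, for any simplicial complex $\Gamma$. Once every dimension in the inequality is rewritten in these terms, the problem becomes bookkeeping about the facets of the three complexes $\triangle'$, $\mathcal{P}(F)$ and $\triangle''$.

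First I would pin down $\triangle''$ explicitly. Since $\triangle'=\triangle\setminus\{F\}$ retains every proper subset of $F$ and discards only $F$ itself (this uses that $F$ is a facet, so removing it keeps the complex downward closed), intersecting with $\mathcal{P}(F)$ leaves exactly the proper subsets of $F$. Thus $\triangle''$ is the boundary of the simplex on $F$, of dimension $\#F-2$, giving $\dim\Spec\KK[\triangle'']=\#F-1$, whereas $\dim\Spec\KK[\mathcal{P}(F)]=\#F$. This already yields the strict inequality $\dim\Spec\KK[\triangle'']<\dim\Spec\KK[\mathcal{P}(F)]$, so the factor $\mathcal{P}(F)$ can never control the minimum, and both the asserted inequality and the equality case reduce to comparing $\triangle''$ with $\triangle'$.

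Next I would compute $\dim\Spec\KK[\triangle']$. The faces of $\triangle'$ are all faces of $\triangle$ other than $F$, and each such face lies either in a facet $F'\neq F$ or strictly inside $F$, so the maximal dimension of $\triangle'$ is $\max\{\dim F-1,\ \max_{F'}\dim F'\}$, where $F'$ ranges over the facets of $\triangle$ distinct from $F$; both bounds are attained, the first by the maximal proper subsets of $F$ and the second by the remaining facets. Hence $\dim\Spec\KK[\triangle']=\max\{\#F-1,\ 1+\max_{F'}\dim F'\}\geq\#F-1=\dim\Spec\KK[\triangle'']$, which together with the previous paragraph establishes the inequality. For the equality case I would note that equality forces $\dim\Spec\KK[\triangle']=\#F-1$, i.e.\ $1+\max_{F'}\dim F'\leq\#F-1$, which says exactly that every facet other than $F$ has dimension strictly below $\dim F$; that is, $F$ is the unique facet of maximal dimension, and the converse is the same computation read backwards.

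The whole argument is a dimension count, so the only place demanding care, and where I would be most careful, is in correctly describing the facets of $\triangle'$: one must verify that deleting the single facet $F$ raises the dimension nowhere and that the maximal proper subsets of $F$ genuinely survive as faces of $\triangle'$ (they do, being proper subsets of $F$ and $\triangle$ being closed under passing to subsets). With that bookkeeping settled, both the inequality and the characterization of the equality case follow immediately.
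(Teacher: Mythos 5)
Your proof is correct. The paper itself omits the proof of this lemma (deferring it to the thesis), but your argument is the natural one the surrounding lemmas set up: using Lemma~\ref{Lemma:correspondencefacessubsetsofspec} to translate everything into face dimensions, identifying $\triangle''=\mathcal{P}(F)\setminus\{F\}$ as the boundary of the simplex on $F$ so that $\dim\Spec\KK[\triangle'']=\#F-1<\#F=\dim\Spec\KK[\mathcal{P}(F)]$, and then reducing both the inequality and the equality criterion to the computation $\dim\Spec\KK[\triangle']=\max\{\#F-1,\,1+\max_{F'\neq F}\dim F'\}$, which you justify correctly (including the point that $\triangle'$ remains a complex because $F$ is a facet).
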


\begin{corollary}
    If $\triangle$ is a simplex, then we have $\triangle''=\triangle'$ and, equivalently,
    \[
    \Spec\left(\KK[\triangle'']\right)=\Spec\left(\KK[\triangle']\right) \, .
    \]
\end{corollary}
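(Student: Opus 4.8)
The plan is to reduce everything to the single observation that a simplex has exactly one facet, namely the whole vertex set. First I would recall that, by definition, $\triangle$ being a simplex means that every subset of $V$ is a face, i.e.\ $\triangle=\mathcal{P}(V)$. Consequently the facet $F$ appearing in the setup of the preceding Lemmas must be $F=V$, and therefore $\mathcal{P}(F)=\mathcal{P}(V)=\triangle$. This is the only place where the hypothesis that $\triangle$ is a simplex is genuinely used.

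Granting this, the combinatorial identity $\triangle''=\triangle'$ is immediate. Since $F=V$ we have $\triangle''=\triangle'\cap\mathcal{P}(F)=\triangle'\cap\mathcal{P}(V)=\triangle'\cap\triangle$, and because $\triangle'=\triangle\setminus\{F\}\subseteq\triangle$ this intersection is simply $\triangle'$. There is nothing to compute here; it is a set-theoretic triviality once the identification $F=V$ has been made.

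For the equivalent statement about spectra I would invoke Lemma~\ref{Lemma:intersectionofspecs}, which yields $\Spec(\KK[\triangle''])=\Spec(\KK[\triangle'])\cap\Spec(\KK[\mathcal{P}(F)])$. In the simplex case $\mathcal{P}(F)=\triangle$, so the second factor is $\Spec(\KK[\mathcal{P}(F)])=\Spec(\KK[\triangle])$, which is all of affine space $\mathbb{A}^{\#V}$. Because passing to a subcomplex only adds monomial relations to the Stanley–Reisner ideal, the functor $\KK[-]$ turns the inclusion $\triangle'\subseteq\triangle$ into a closed immersion $\Spec(\KK[\triangle'])\hookrightarrow\Spec(\KK[\triangle])$; hence $\Spec(\KK[\triangle'])\subseteq\Spec(\KK[\mathcal{P}(F)])$ and the intersection collapses to $\Spec(\KK[\triangle'])$. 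Alternatively, one can simply apply $\KK[-]$ to the combinatorial identity $\triangle''=\triangle'$ established in the previous paragraph, which is the sense in which the two assertions are equivalent.

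I do not anticipate a genuine obstacle: the result is a direct consequence of the definitions. The only point requiring a moment's care is confirming that a simplex really has $V$ as its \emph{unique} facet, rather than several smaller maximal faces; this is precisely the feature distinguishing a simplex from a general complex, and it is exactly what forces $\mathcal{P}(F)=\triangle$ and hence makes the intersection in Lemma~\ref{Lemma:intersectionofspecs} redundant.
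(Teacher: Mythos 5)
Your proposal is correct and is exactly the intended argument: the paper states this corollary without proof precisely because it reduces to the observation that a simplex has the single facet $F=V$, whence $\mathcal{P}(F)=\triangle$ and $\triangle''=\triangle'\cap\mathcal{P}(F)=\triangle'$, the statement about spectra following by applying $\KK[-]$ and $\Spec$ (or, as you note, Lemma~\ref{Lemma:intersectionofspecs}). No gaps; your detour through the closed immersion $\Spec(\KK[\triangle'])\hookrightarrow\Spec(\KK[\triangle])$ is harmless but unnecessary.
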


These previous Lemmata will play a key role in the rest of the section.

\begin{remark}
    By intersecting $\Spec\RR[\triangle]$ with the hyperplane $\{\sum X_i=1 \}$ and considering $X_i\geq 0$, we recover a geometric realization of the abstract simplicial complex we have started with.
\end{remark}

\begin{example}
	If the simplicial complex is given by a triangle (three intervals meeting pairwise), then $\KK[\triangle]=\KK [X,Y,Z]/(XYZ)$. Its spectrum, including the intersection with the plane $X+Y+Z=1$, is shown in the following picture.
    \begin{equation*}
    \begin{tikzpicture}[opacity=0.8, scale=0.6] 
    \filldraw [grey2] (0,-4) -- (0,0) -- (4, 3) -- (4, -1) -- cycle;
    \filldraw [grey3] (0,-4) -- (0,0) -- (-3, 1.5) -- (-3,-2.5) -- cycle;
    \filldraw [grey3] (0,-4) -- (0,0) -- (3, -1.5) -- (3,-5.5) -- cycle;
    \filldraw [grey2] (0,-4) -- (0,0) -- (-4, -3) -- (-4, -7) -- cycle;
    \filldraw [grey1] (0,0) -- (-3,1.5) -- (1,4.5) -- (4, 3) -- cycle;
    \filldraw [grey2] (0,0) -- (0,4) -- (4, 7) -- (4, 3) -- cycle;
    \filldraw [grey1] (0,0) -- (3,-1.5) -- (7,1.5) -- (4, 3) -- cycle;
    \filldraw [grey3] (0,0) -- (0,4) -- (-3, 5.5) -- (-3,1.5) -- cycle;
    \filldraw [grey3] (0,0) -- (0,4) -- (3, 2.5) -- (3,-1.5) -- cycle;
    \filldraw [grey1] (0,0) -- (-3,1.5) -- (-7,-1.5) -- (-4, -3) -- cycle;
    \filldraw [grey2] (0,0) -- (0,4) -- (-4, 1) -- (-4, -3) -- cycle;
    
    \filldraw [grey1] (0,0) -- (3,-1.5) -- (-1,-4.5) -- (-4, -3) -- cycle;
    
    \draw[->, thick, >=latex] (0,0) -- (0,5);
    \draw[->, thick, >=latex] (0,0) -- (-6,-4.5);
    \draw[->, thick, >=latex] (0,0) -- (5,-2.5);
    
    \fill[white, opacity=1] (-2,-1.5) circle[radius=2mm] node[above left] {\small $(1, 0, 0)$};
    \fill[white, opacity=1] (0,2) circle[radius=2mm] node[above right] {\small $(0, 0, 1)$};
    \fill[white, opacity=1] (1.5,-0.75) circle[radius=2mm] node[shift={(-0.2, -0.4)}] {\small $(0, 1, 0)$};
    \draw[white, opacity=1, very thick] (-2,-1.5) -- (0,2) -- (1.5,-0.75) --cycle;
    \end{tikzpicture}
    \end{equation*}
\end{example}

The following statement is a version of Lemma \ref{lemma:intersection-open-subsets} for Stanley-Reisner rings.

\begin{lemma}\label{lemma:covering-algebraic-spectrum}
    Let $F$ be a subset of $V$ and let $D(F)$ denote the affine open subset $D( \prod_{v \in F} X_{v} )= \bigcap_{v \in F}   D(X_{v}) $ of $\Spec \KK[\triangle]$. Then $D(F)\neq \emptyset $ if and only if $F\in\triangle$.
\end{lemma}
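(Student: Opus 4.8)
The plan is to translate the emptiness of $D(F)$ into an algebraic statement about the monomial $\prod_{v\in F}X_v$ and then read off the answer from the structure of the Stanley-Reisner ideal. Recall that for any commutative ring $A$ and any $f\in A$ one has $D(f)=\emptyset$ if and only if $f$ is nilpotent, since $D(f)=\emptyset$ means $f$ lies in every prime ideal, i.e.\ in the nilradical. Thus $D(F)=D(\prod_{v\in F}X_v)$ is nonempty precisely when $\prod_{v\in F}X_v$ is not nilpotent in $\KK[\triangle]$.

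Next I would use that $\KK[\triangle]$ is reduced. Since $\KK[\triangle]=\KK[M_\triangle]$ and the simplicial binoid $M_\triangle$ is semifree and reduced, it is in particular reduced, torsion-free and cancellative, so Lemma~\ref{lemma:torsion-free-cancellative-reduced-algebra} applies and shows that $\KK[\triangle]$ has trivial nilradical. Consequently a monomial is nilpotent if and only if it is zero, and the criterion of the previous paragraph becomes: $D(F)\neq\emptyset$ if and only if $\prod_{v\in F}X_v\neq 0$ in $\KK[\triangle]$.

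Finally I would determine when the squarefree monomial $\prod_{v\in F}X_v$ vanishes, that is, when it lies in the Stanley-Reisner ideal $I_{\triangle}$ generated by the monomials $\prod_{v\in H}X_v$ with $H\notin\triangle$. A squarefree monomial lies in a squarefree monomial ideal exactly when it is divisible by one of the generators, so $\prod_{v\in F}X_v\in I_{\triangle}$ if and only if there is a non-face $H\subseteq F$. Because $\triangle$ is closed under taking subsets, such an $H$ exists if and only if $F$ itself is a non-face: if $F\in\triangle$ then every $H\subseteq F$ is a face, whereas if $F\notin\triangle$ then $H=F$ already works. Combining the three steps gives $D(F)\neq\emptyset$ if and only if $F\in\triangle$.

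The content here is elementary, so there is no serious obstacle; the only point requiring care is the combinatorial bookkeeping in the last step, namely the divisibility characterization of membership in $I_{\triangle}$ together with the subset-closed property of $\triangle$. Alternatively, one could bypass the explicit monomial computation entirely and deduce the statement from its combinatorial counterpart: by Lemma~\ref{ipreimage} the preimage $i^{-1}(D(\prod_{v\in F}X_v))$ is the combinatorial open set $\bigcap_{v\in F}D(x_v)$, and Lemma~\ref{lemma:U-intersect-SpecM} guarantees that every nonempty open subset of $\Spec\KK[\triangle]$ meets $i(\Spec M_\triangle)$; hence $D(F)\neq\emptyset$ is equivalent to the nonemptiness of the combinatorial intersection, which by Lemma~\ref{lemma:intersection-open-subsets} holds exactly when $F\in\triangle$.
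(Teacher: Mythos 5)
Your argument is correct, but your primary route differs from the paper's. The paper disposes of this lemma in one line by observing that the combinatorial topology on $\Spec\KK[\triangle]$ has the same intersection pattern as $\Spec M_\triangle$, so the statement is inherited from Lemma~\ref{lemma:intersection-open-subsets}; this is exactly the alternative you sketch in your final sentences (via Lemma~\ref{ipreimage} and Lemma~\ref{lemma:U-intersect-SpecM}), so that part matches the paper's intent, just with the supporting lemmas made explicit. Your main argument is instead a direct algebraic one: $D(f)=\emptyset$ iff $f$ lies in the nilradical, $\KK[\triangle]$ is reduced, and a squarefree monomial lies in $I_\triangle$ iff it is divisible by a non-face monomial, which by subset-closedness of $\triangle$ happens iff $F$ is itself a non-face. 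All three steps are sound (the appeal to Lemma~\ref{lemma:torsion-free-cancellative-reduced-algebra} for reducedness is legitimate, and the paper itself uses the fact that semifree implies torsion-free and cancellative in Theorem~\ref{theorem:SR-splits-comb-top}; alternatively one can simply note that $I_\triangle$ is an intersection of monomial primes). What your direct route buys is independence from the map $i:\Spec M\to\Spec\KK[M]$ and its properties, making the lemma self-contained at the level of elementary commutative algebra; what the paper's route buys is that the algebraic statement is visibly the same combinatorial fact as Lemma~\ref{lemma:intersection-open-subsets}, which is the point the authors want to emphasize for the subsequent identification of the two \v{C}ech complexes in Corollary~\ref{corollary:cech-complex-split-stanley-reisner}.
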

\begin{proof}
    This is clear since in the combinatorial topology of $\Spec \KK[\triangle]$, we have the same intersection pattern as in $\Spec M$ and this reflects the combinatorial structure of the simplicial complex.  
\end{proof}

\begin{corollary}\label{corollary:nerve-covering-stanley-reisner}
    The nerve of the covering $\{D(X_i)\}$ of $\Spec^\bullet\KK[\triangle]$ is the simplicial complex itself.
\end{corollary}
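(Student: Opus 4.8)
The plan is to unwind the definition of the nerve and reduce the whole statement to Lemma~\ref{lemma:covering-algebraic-spectrum}, mirroring the combinatorial argument of Corollary~\ref{corollary:cech-covering-nerve}. Recall that the nerve of a covering $\{U_i\}_{i\in I}$ is the simplicial complex on the index set $I$ whose faces are precisely the finite subsets $F\subseteq I$ with $\bigcap_{i\in F}U_i\neq\emptyset$. Here the index set is the vertex set $V$, the covering is $\{D(X_v)\}_{v\in V}$, and for $F\subseteq V$ the relevant intersection is $\bigcap_{v\in F}D(X_v)=D\!\left(\prod_{v\in F}X_v\right)=D(F)$, the notation fixed in Lemma~\ref{lemma:covering-algebraic-spectrum}.

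First I would note a compatibility point: the point removed to form $\Spec^\bullet\KK[\triangle]$ is the maximal ideal generated by all coordinates $X_v$, so it lies in none of the $D(X_v)$; consequently each $D(X_v)$, and more generally each $D(F)$ for $\emptyset\neq F\subseteq V$, already lies in $\Spec^\bullet\KK[\triangle]$. Hence the intersections computed in the punctured spectrum agree with those computed in $\Spec\KK[\triangle]$, and Lemma~\ref{lemma:covering-algebraic-spectrum} applies verbatim. By definition $F$ is then a face of $\nerve(\{D(X_v)\})$ if and only if $D(F)\neq\emptyset$, which by Lemma~\ref{lemma:covering-algebraic-spectrum} holds if and only if $F\in\triangle$. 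Chaining these equivalences shows that the faces of the nerve are exactly the faces of $\triangle$, and since both are simplicial complexes on the vertex set $V$ we conclude $\nerve(\{D(X_v)\})=\triangle$.

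There is essentially no obstacle here: all of the geometric content has already been packaged into Lemma~\ref{lemma:covering-algebraic-spectrum}, which itself rests on the fact that the intersection pattern of the $D(X_v)$ in the combinatorial topology reproduces that of $\Spec M_\triangle$ and hence the combinatorics of $\triangle$. The only thing to verify independently is the bookkeeping identity $\bigcap_{v\in F}D(X_v)=D(\prod_{v\in F}X_v)$, which is the standard compatibility of basic open subsets with products in any affine scheme and is in any case built into the $D(F)$ notation. Thus the corollary follows immediately once the lemma is in hand.
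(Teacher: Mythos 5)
Your proof is correct and follows essentially the same route the paper intends: the corollary is an immediate unwinding of the definition of the nerve via Lemma~\ref{lemma:covering-algebraic-spectrum}, which identifies the nonempty intersections $\bigcap_{v\in F}D(X_v)$ exactly with the faces $F\in\triangle$. Your extra remark that the maximal monomial ideal lies in no $D(X_v)$, so the intersections in the punctured spectrum coincide with those in $\Spec\KK[\triangle]$, is a correct and worthwhile check that the paper leaves implicit.
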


\subsection{Acyclicity of the sheaf of units}

In this section, we prove that the covering of the punctured spectrum of a Stanley-Reisner algebra given by the coordinate fundamental open subsets is an acyclic covering for the sheaf of units. In order to show this, we will use the fact that $D(X) \cong \Spec \KK[(M_\triangle)_x] \cong \Spec\KK[\triangle']\times \AA^*$ ($\triangle'$ is the link at the vertex $x$), which we proved in Theorem \ref{theorem:localization-simplicial-binoid-multiple} on the combinatorial level. In particular, we will prove that $\H^j(\Spec\KK[\triangle], \O^*)=0$ for $j\geq 1$.

\begin{lemma}\label{localringunits}
    Let $R$ be a local ring and $\mathfrak{A}$ an ideal of $R$. Then the map $ R^*\rightarrow \left( {R}/ {\mathfrak{A}}\right)^* $ is surjective.
\end{lemma}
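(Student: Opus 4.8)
The plan is to exploit the single defining feature of a local ring that is relevant here: an element of $R$ is a unit precisely when it does not lie in the maximal ideal $\m$, equivalently $1+\m\subseteq R^*$. Everything will reduce to lifting a unit, forming the right congruence, and recognising the error term as a unit via this characterisation.

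First I would dispose of the degenerate case. If $\mathfrak{A}=R$ then $R/\mathfrak{A}$ is the zero ring, its group of units is trivial, and the map is vacuously surjective; so I may assume $\mathfrak{A}\subseteq\m$ (any proper ideal is contained in the unique maximal ideal). Now take an arbitrary unit $\bar u\in(R/\mathfrak{A})^*$ and choose any preimage $u\in R$ under the quotient map, together with a preimage $v\in R$ of the inverse $\bar u^{-1}$. The relation $\bar u\,\bar v=1$ in $R/\mathfrak{A}$ translates to $uv=1+a$ for some $a\in\mathfrak{A}$.

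The key step is then to observe that $a\in\mathfrak{A}\subseteq\m$, so $1+a\in 1+\m$, and hence $1+a$ is a unit of $R$: otherwise $1+a\in\m$ would force $1=(1+a)-a\in\m$, a contradiction. Consequently $uv=1+a\in R^*$, and since $R$ is commutative this exhibits $u$ as a unit, with inverse $v(1+a)^{-1}$. As the residue of $u$ is $\bar u$ by construction, every unit of $R/\mathfrak{A}$ lifts to a unit of $R$, which is exactly the asserted surjectivity.

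There is no substantial obstacle in this argument; it is entirely routine once one has the local-ring description of units. The only point that genuinely uses the hypothesis is the passage from $a\in\m$ to $1+a\in R^*$, and this is precisely where localness (rather than an arbitrary ring) is indispensable, as the standard counterexamples with $R=\ZZ$ already show.
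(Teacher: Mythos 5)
Your argument is correct, but it reaches the conclusion by a slightly different route than the paper. The paper's proof lifts a unit $\bar u$ of $R/\mathfrak{A}$ to an arbitrary preimage $u$ and observes directly that $u$ must be a unit: since $R/\mathfrak{A}$ is again local with maximal ideal $\m/\mathfrak{A}$, the hypothesis $\bar u\notin \m/\mathfrak{A}$ forces $u\notin\m$, and in a local ring the units are exactly the complement of $\m$. You instead lift both $\bar u$ and its inverse, obtain $uv=1+a$ with $a\in\mathfrak{A}\subseteq\m$, and invoke $1+\m\subseteq R^*$. Both hinge on the same characterisation of units in a local ring, but yours never uses that the quotient is local; as a result your argument proves the more general statement that $R^*\rightarrow(R/\mathfrak{A})^*$ is surjective for \emph{any} commutative ring $R$ and any ideal $\mathfrak{A}$ contained in the Jacobson radical, whereas the paper's version is a genuine one-liner in the local case. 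For the application in Proposition~\ref{proposition:exact-sequence-units-ideals-trivial-intersection}, where $R$ is a localisation and hence local, either argument suffices.
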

\begin{proof}
    This is easily proved because in a local ring, the group of units is the complement of the maximal ideal, and quotients of local rings by ideals are again local rings.
\end{proof}

\begin{proposition}\label{proposition:exact-sequence-units-ideals-trivial-intersection}
	Let $\mathfrak{A}$ and $\mathfrak{B}$ be ideals of a commutative ring $R$ such that $\mathfrak{A}\cap \mathfrak{B}=0$. Let $X=\Spec R$, $Y=\Spec  {R}/{\mathfrak{A}}$ and $Z=\Spec {R}/{\mathfrak{B}}$. Then there exists a short exact sequence of sheaves
	\[
	\begin{tikzcd}[cramped]
	1\rar&\O^*_X\rar["\varphi"]&i_*\O^*_Y\oplus i_*\O^*_Z\rar["\psi"] & i_*\O^*_{Y\cap Z} \rar & 1,
	\end{tikzcd}
	\]
	where $i$ are the inclusion maps and $\varphi(f)=(f\restriction_Y,  f\restriction_Z)$ and $\psi(g, h)=gh^{-1}$.
\end{proposition}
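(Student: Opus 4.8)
The plan is to verify exactness of the proposed sequence stalkwise, exploiting that all three sheaves on the right are pushforwards along closed immersions and therefore have easily computed stalks. First I would record the geometric setup: since $\mathfrak{A}\cap\mathfrak{B}=0$ we have $X=V(\mathfrak{A}\cap\mathfrak{B})=V(\mathfrak{A})\cup V(\mathfrak{B})=Y\cup Z$, and the scheme-theoretic intersection is $Y\cap Z=\Spec R/(\mathfrak{A}+\mathfrak{B})$. I would then check that $\varphi$ and $\psi$ are well defined as maps of sheaves (restrictions of units are units) and that $\psi\circ\varphi=1$ is immediate, since the two components of $\varphi(f)$ have the same image in $R/(\mathfrak{A}+\mathfrak{B})$. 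Because exactness of a sequence of sheaves of abelian groups is a local property (cf.\ \cite[Exercise II.1.2]{hartshorne1977algebraic}), it suffices to check it on the stalk at every $\p\in\Spec R$. For a closed immersion $i$ the stalk $(i_*\O^*_Y)_\p$ equals $((R/\mathfrak{A})_\p)^*$ when $\p\in Y$ (that is, $\p\supseteq\mathfrak{A}$) and is the trivial group when $\p\notin Y$, since $Y$ is closed; the analogous statements hold for $Z$ and for $Y\cap Z$. Moreover $(\O^*_X)_\p=(R_\p)^*$, because taking units commutes with filtered colimits.

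Next I would split into three cases according to the membership of $\p$ in $Y$ and $Z$; by the first step these exhaust $X$. If $\p\in Y\setminus Z$, then $\mathfrak{B}\not\subseteq\p$, so $\mathfrak{B}_\p=R_\p$, and from $\mathfrak{A}_\p\cap\mathfrak{B}_\p=(\mathfrak{A}\cap\mathfrak{B})_\p=0$ I obtain $\mathfrak{A}_\p=0$, whence $(R/\mathfrak{A})_\p=R_\p$; the stalks for $Z$ and $Y\cap Z$ vanish, and the whole sequence collapses to the identity $(R_\p)^*\xrightarrow{\ \sim\ }(R_\p)^*$. The case $\p\in Z\setminus Y$ is symmetric.

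The decisive case is $\p\in Y\cap Z$, where all four stalks are nontrivial and, after localizing, the sequence becomes
\[
1\longrightarrow R_\p^*\longrightarrow (R_\p/\mathfrak{A}_\p)^*\oplus(R_\p/\mathfrak{B}_\p)^*\longrightarrow (R_\p/(\mathfrak{A}_\p+\mathfrak{B}_\p))^*\longrightarrow 1
\]
for the local ring $R_\p$ equipped with ideals $\mathfrak{A}_\p,\mathfrak{B}_\p\subseteq\p R_\p$ satisfying $\mathfrak{A}_\p\cap\mathfrak{B}_\p=0$. Here I would argue directly: injectivity of $\varphi$ holds because $f-1\in\mathfrak{A}_\p\cap\mathfrak{B}_\p=0$; exactness in the middle follows by lifting a pair $(g,h)$ with common image to elements $\tilde{g},\tilde{h}\in R_\p$, writing $\tilde{g}-\tilde{h}=a+b$ with $a\in\mathfrak{A}_\p$, $b\in\mathfrak{B}_\p$, and verifying that $f:=\tilde{g}-a=\tilde{h}+b$ restricts to $(g,h)$ and is a unit, since $f\equiv\tilde{g}$ modulo $\mathfrak{A}_\p\subseteq\p R_\p$ keeps $f$ outside the maximal ideal.

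The one genuinely nonformal point, and the main obstacle, is surjectivity of $\psi$: this is where I would invoke Lemma~\ref{localringunits}, applied to the local ring $R_\p/\mathfrak{A}_\p$ and its ideal $(\mathfrak{A}_\p+\mathfrak{B}_\p)/\mathfrak{A}_\p$, to lift any unit of $R_\p/(\mathfrak{A}_\p+\mathfrak{B}_\p)$ to a unit $g$ of $R_\p/\mathfrak{A}_\p$, so that $\psi(g,1)$ is the prescribed unit. This lifting of units along a surjection is precisely what fails for general rings, which is exactly why passing to the stalks, where the rings are local, is essential to the argument.
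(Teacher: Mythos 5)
Your proposal is correct and follows essentially the same route as the paper: reduce to stalks, prove injectivity of $\varphi$ from $\mathfrak{A}_\p\cap\mathfrak{B}_\p=0$, prove exactness in the middle by lifting a matching pair to an element of $R_\p$ and using locality of $R_\p$ to see it is a unit, and prove surjectivity of $\psi$ by lifting units along the surjection of local rings (Lemma~\ref{localringunits}). The only difference is your explicit three-case analysis according to whether $\p$ lies in $Y$, $Z$, or both, which the paper leaves implicit; this is a presentational refinement rather than a different argument.
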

\begin{proof}
	Clearly $X=Y\cup Z$. These maps exist because they are induced by taking the quotients of the involved rings, and the fact that this is a complex is clear. In order to prove the exactness of this sequence, we look at the stalks at a point $\mathfrak{P}$. The surjectivity of $\psi$ follows from Lemma \ref{localringunits}, because the stalks are local rings and $Y\cap Z$ is defined by a quotient of the ring of $Y$ (and of the ring of $Z$). In order to prove injectivity of $\varphi$, we look at it on a stalk
	\[ \O^*_{X,\mathfrak{P}} \stackrel{ \varphi_{\mathfrak{P}} }{\longrightarrow}    (i_*\O^*_Y)_{\mathfrak{P}}\oplus (i_*\O^*_Z)_{\mathfrak{P}}   \, .  \]
	Since $Y=\Spec {R}/{\mathfrak{A}}$ and $Z=\Spec {R}/{\mathfrak{B}}$, we can rewrite this sequence as
	\[ (R_\mathfrak{P})^*  \stackrel{  \varphi_{\mathfrak P} } {\longrightarrow}   \left( {R_\mathfrak{P} } / {\mathfrak{A}}\right)^*\oplus \left(  {R_\mathfrak{P}}/{\mathfrak{B}}\right)^*, \, f \longmapsto (f, f) \, , \]
		where $ {R_\mathfrak{P}}/{\mathfrak{A}}$ and $ {R_\mathfrak{P}}/{\mathfrak{B}}$ denote the quotients via the extended ideals. Consider now $f\in (R_\mathfrak{P})^*$ such that $\varphi_\mathfrak{P}(f)=(1, 1)$. Then $f-1\in\mathfrak{A}$ and $f-1\in\mathfrak{B}$, so $f-1\in\mathfrak{A}\cap\mathfrak{B}=0$, so finally, $f=1$ and this map is injective.
		In order to prove exactness in the middle, we have to show that if $\psi(g, h)=1$, then they both lie in the image of $\varphi$.
		Recall that we have an exact sequence of rings
		\[   0 \longrightarrow  R_\mathfrak{P} \longrightarrow  {R_\mathfrak{P} } / {\mathfrak{A} } \oplus  {R_\mathfrak{P} } / {\mathfrak{B} }   \stackrel{\psi}{  \longrightarrow }  {R_\mathfrak{P}} / {\mathfrak{A}+\mathfrak{B}}  \longrightarrow  0 \, .   \]    
		Let $g, h\in R$ such that $g$ is a unit on $Y$, $h$ is a unit on $Z$ and $\psi(g, h)=1$. This happens if and only if $g=h$ in $Y\cap Z$, because the map $\psi$ sends them to $gh^{-1}$.
		The same holds for $R_\mathfrak{P}$ and the quotients in the sequence above.
		So, there exists $f$ in $R_\mathfrak{P}$ such that $f=g+a=h+b$ in $R_\mathfrak{P}$, with $a\in\mathfrak{A}$ and $b\in\mathfrak{B}$ (where these are the extended ideals in $R_\mathfrak{P}$). What is left to prove is that $f$ is a unit of $R_\mathfrak{P}$.
		Clearly $ f =g $ is invertible modulo $\mathfrak{A}$. Assume that $f$ is not, so it belongs to the maximal ideal $\mathfrak{P}R_\mathfrak{P}$, and if we now go modulo $\mathfrak{A}$, it belongs to $\mathfrak{P} {R_\mathfrak{P}}/{\mathfrak{A} }$, that is again the maximal ideal, and so it would not be invertible. Hence $f$ is invertible and $g, h$ both come from the left, thus the sequence is also exact in the middle.
	\end{proof}

\begin{remark}\label{remark:trivial-intersection-ideals-SR-ring}
    Let $\triangle$ be a simplicial complex and $F$ be one of its facets. Let $X$ be $\Spec \KK[\triangle]$ and $Y$ be the linear coordinate component of $X$ that corresponds to $F$. Let $Z=\overline{X\setminus Y}$ be the union of all the other maximal linear coordinate components in $X$. Then we can apply  Proposition~\ref{proposition:exact-sequence-units-ideals-trivial-intersection} on $Y$ and $Z$.
\end{remark}

\begin{theorem}\label{Thm:vanishingspecstanleyreisner}
    $\H^j(\Spec \KK[\triangle], \O^*)=0$ for all $j\geq 1$.
\end{theorem}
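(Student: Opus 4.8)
The plan is to argue by induction on the number of faces of $\triangle$, using the decomposition of $\Spec\KK[\triangle]$ into the component of a facet and the rest, together with the Mayer--Vietoris type sequence of Proposition~\ref{proposition:exact-sequence-units-ideals-trivial-intersection}. The base case is the simplex: if $\triangle=\mathcal{P}(V)$ has a single facet, then $\KK[\triangle]$ is a polynomial ring and $\Spec\KK[\triangle]=\AA^{\#V}$ is a normal affine toric variety, so $\H^j(\Spec\KK[\triangle],\O^*)=0$ for $j\geq 1$ by Theorem~\ref{affinetoricunit}. Since the singletons are faces, having a single facet forces $F=V$, so this genuinely covers all the minimal complexes.

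For the inductive step I would assume $\triangle$ is not a simplex, so it has at least two facets, and pick one facet $F$, which is then a proper subset of $V$. Setting $X=\Spec\KK[\triangle]$, $Y=\Spec\KK[\mathcal{P}(F)]\cong\AA^{\#F}$, $\triangle'=\triangle\setminus\{F\}$ and $Z=\Spec\KK[\triangle']$, Lemma~\ref{Lemma:decompositionofspectrum} gives $X=Y\cup Z$, while Lemma~\ref{Lemma:intersectionofspecs} gives $Y\cap Z=\Spec\KK[\triangle'']$ with $\triangle''=\triangle'\cap\mathcal{P}(F)=\mathcal{P}(F)\setminus\{F\}$. The decisive bookkeeping point is that both $\triangle'$ and $\triangle''$ have strictly fewer faces than $\triangle$: indeed $\#\triangle'=\#\triangle-1$, and $\#\triangle''=2^{\#F}-1<2^{\#F}=\#\mathcal{P}(F)\leq\#\triangle$. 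Hence the induction hypothesis applies to $Z$ and to $Y\cap Z$, giving $\H^j(Z,\O^*)=\H^j(Y\cap Z,\O^*)=0$ for $j\geq1$, whereas $\H^j(Y,\O^*)=0$ for $j\geq1$ by Theorem~\ref{affinetoricunit} again. Note that the trivial-intersection hypothesis of Proposition~\ref{proposition:exact-sequence-units-ideals-trivial-intersection} holds here (Remark~\ref{remark:trivial-intersection-ideals-SR-ring}) because $\mathcal{P}(F)\cup\triangle'=\triangle$ and $\KK[\triangle]$ is reduced, so the two defining ideals meet in the nilradical $0$.

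Next I would feed the short exact sequence $1\to\O^*_X\to i_*\O^*_Y\oplus i_*\O^*_Z\to i_*\O^*_{Y\cap Z}\to1$ into its long exact cohomology sequence on $X$. Since $Y$, $Z$ and $Y\cap Z$ are closed in $X$, pushforward along their inclusions is exact and computes the same cohomology, so $\H^j(X,i_*\O^*_Y)=\H^j(Y,\O^*_Y)$ and likewise for $Z$ and $Y\cap Z$. For $j\geq 2$ the term $\H^{j}(X,\O^*_X)$ sits in the long exact sequence between $\H^{j-1}(Y\cap Z,\O^*)=0$ and $\H^{j}(Y,\O^*)\oplus\H^{j}(Z,\O^*)=0$ and therefore vanishes. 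For $j=1$ the sequence identifies $\H^1(X,\O^*_X)$ with the cokernel of the global-sections map $\H^0(Y,\O^*)\oplus\H^0(Z,\O^*)\to\H^0(Y\cap Z,\O^*)$, $(g,h)\mapsto gh^{-1}$, since $\H^1(Y,\O^*)\oplus\H^1(Z,\O^*)=0$.

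The one genuinely non-formal ingredient, and the step I expect to be the main obstacle, is the surjectivity of this last map on global sections; everything else is diagram chasing powered by the induction hypothesis. Here I would use that the global units of any Stanley--Reisner ring are exactly $\KK^*$: such a ring is $\NN$-graded, reduced, with degree-zero part $\KK$, so a unit has an invertible constant term, and after subtracting it the remaining positive-degree part maps to $0$ in every coordinate quotient $\KK[\mathcal{P}(G)]$ (whose units are constants), hence lies in the intersection of the minimal primes, i.e.\ in the nilradical, which is $0$. Consequently $\H^0(Y,\O^*)=\H^0(Z,\O^*)=\H^0(Y\cap Z,\O^*)=\KK^*$ and all restriction maps are the identity on $\KK^*$; taking $h=1$ shows the map is already surjective through the $Y$-component, so $\H^1(X,\O^*_X)=0$ and the induction closes. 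The same induction, carried out after adjoining an invertible variable, yields the analogous vanishing for the localizations $\KK[\triangle][x,x^{-1}]$.
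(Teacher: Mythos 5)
Your proof is correct and follows essentially the same route as the paper's: the same facet-versus-rest decomposition fed into the unit-sheaf sequence of Proposition~\ref{proposition:exact-sequence-units-ideals-trivial-intersection}, with the base case supplied by Theorem~\ref{affinetoricunit}, the $j\geq 2$ cases squeezed between vanishing terms, and the $j=1$ case settled by the triviality of the global units of a Stanley--Reisner ring. The only deviations are bookkeeping: you induct on the number of faces rather than of facets, and you take $\triangle'=\triangle\setminus\{F\}$ as in Lemma~\ref{Lemma:decompositionofspectrum} instead of the subcomplex generated by the remaining facets, which if anything makes the applicability of the induction hypothesis to $\triangle''$ more transparent.
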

\begin{proof}
    Let $X=\Spec\KK[\triangle]$. We prove the claim by induction on the number of facets of $\triangle$, that correspond to the number of maximal coordinate linear subspaces of $X$.
    If $\triangle$ has only one facet, then it is a simplex and $X\cong\AA^n$ for some $n$, so we get from Theorem~\ref{affinetoricunit}, that $\H^j(X, \O^*)=0$ for all $j\geq 1$.
    \\
    Let now $\triangle$ be any simplicial complex. Consider $Y\cong\AA^m$ a subset of $X$ associated to a facet $F$ of $\triangle$, so $Y$ is a maximal coordinate linear subset of $X$. Let $Z$ be the closure of the complement of $Y$ in $X$, i.e.\ $Z= \overline{X \setminus Y}$. Clearly $Z\cong\KK[\triangle']$ for some simplicial complex $\triangle'$, where $\triangle'=\overline{\left(\triangle \setminus \mathcal{P}(F)\right)}_{\subseteq}$, the subset-closure of the subset of $\triangle$ obtained by removing $F$ and all its subsets from $\triangle$. Clearly, $\triangle'$ has a facet less than $\triangle$, namely $F$.    
    In the same way, $Y\cap Z$ is again a union of coordinate linear subspaces, whose maximal components are the intersection of the maximal components of $Z$ with $Y$, so again coming from another simplicial complex $\triangle''$, which is easier (with smaller dimension and with less facets) than before.        Thanks to Remark \ref{remark:trivial-intersection-ideals-SR-ring}, we know that the radical ideal defining $Y$ and the radical ideal defining $Z$ have trivial intersection in $\KK[\triangle]$. We can then apply Proposition~\ref{proposition:exact-sequence-units-ideals-trivial-intersection} to obtain the short exact sequence of sheaves
    \[
    \begin{tikzcd}[cramped, column sep = 2em]
    1\rar&\O^*_X\rar["\varphi"]&i_*\O^*_Y\oplus i_*\O^*_Z\rar["\psi"] & i_*\O^*_{Y\cap Z}\rar&1.
    \end{tikzcd}
    \]
    When we take cohomology, we obtain the long exact sequence of cohomology on $X$ (we omit the space $X$)
    \[
    \begin{tikzcd}[row sep=2ex, column sep = 2em, cramped]
    \dots\rar&\H^j(\O^*_X)\rar&\H^j(i_*\O^*_Y)\oplus \H^j(i_*\O^*_Z)\rar & \H^j(i_*\O^*_{Y\cap Z})\arrow[dll, overlay, start anchor = east, end anchor = west, to path={ .. controls +(4, -0.8) and +(-2,.8).. (\tikztotarget)}]\\
    &\H^{j+1}(\O^*_X)\rar&\dots
    \end{tikzcd}
    \]
    where, if $j\geq1$, we have that $\H^j(i_*\O^*_Z)=\H^j(Z, \O^*_Z)$ because $Z$ is a closed subset, and this in turn is $0$ by the induction hypothesis, and $\H^j(i_*\O^*_{Y\cap Z})=\H^j(Y\cap Z, \O^*_{Y\cap Z})=0$. Since $Y$ is an affine space, we already know that $\H^j(i_*\O^*_Y)=\H^j(Y, \O^*_Y)=0$. So for $j>1$, we squeeze $\H^j(\O^*_X)$ between two zeros, and this proves that it is zero itself. For $j=1$, we have to look at
    \[
    \begin{tikzcd}[row sep=2ex, column sep = 2em, cramped]
    \H^0(\O^*_X)\rar&\H^0(i_*\O^*_Y)\oplus \H^0(i_*\O^*_Z)\rar & \H^0(i_*\O^*_{Y\cap Z})\rar&\H^1(\O^*_X)\rar & 0 \rar &  \dots \, .
    \end{tikzcd}
    \]
    Since $X$, $Y$ and $Z$ are all defined by Stanley-Reisner ideals, whose units are just the units of the field, this sequence becomes
    \[
    \begin{tikzcd}[row sep=2ex, column sep = 2em, cramped]
     1 \rar & \KK^*\rar & \KK^*\oplus \KK^*\rar["\psi"] & \KK^*\rar & \H^1(\O^*_X) \rar & 0 \rar &  \dots \, .
    \end{tikzcd}
    \]
    But $\psi(s, t)=s^{-1}t$, so it is surjective, so also $\H^1(\O^*_X)=0$.
\end{proof}

\begin{corollary}
   The Picard group of a Stanley-Reisner algebra is trivial.
\end{corollary}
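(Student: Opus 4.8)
The plan is to read off the statement directly from the vanishing result just proved, using the standard cohomological interpretation of the Picard group. The key fact I would invoke is that for any ringed space $(X,\O_X)$ one has a natural isomorphism $\Pic(X)\cong\H^1(X,\O^*_X)$; this is recorded in the excerpt both as Proposition~\ref{proposition:cohomology-vector-bundles} and in the discussion following Definition~\ref{definition:local-picard-group} (cf.\ \cite[Exercise III.4.5]{hartshorne1977algebraic}). Thus the Picard group of the Stanley-Reisner scheme $\Spec\KK[\triangle]$ is computed by the first cohomology of its sheaf of units.

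First I would set $X=\Spec\KK[\triangle]$ and rewrite the claim as the assertion $\H^1(X,\O^*_X)=0$. Then I would simply apply Theorem~\ref{Thm:vanishingspecstanleyreisner}, which gives $\H^j(\Spec\KK[\triangle],\O^*)=0$ for all $j\geq 1$, in the special case $j=1$. Combining this with the isomorphism $\Pic(X)\cong\H^1(X,\O^*_X)$ yields
\[
\Pic(\Spec\KK[\triangle])\cong\H^1(\Spec\KK[\triangle],\O^*)=0 \, ,
\]
which is exactly the statement to be proved.

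There is essentially no obstacle here: all of the genuine work has already been carried out in Theorem~\ref{Thm:vanishingspecstanleyreisner}, whose inductive argument on the number of facets (bootstrapping from the affine-space case via Theorem~\ref{affinetoricunit} and the Mayer--Vietoris-type sequence of Proposition~\ref{proposition:exact-sequence-units-ideals-trivial-intersection}) does the heavy lifting. The corollary is therefore an immediate specialization, and the only point worth stating explicitly is the translation between the geometric object $\Pic$ and the cohomological object $\H^1(-,\O^*)$.
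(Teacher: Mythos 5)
Your proof is correct and matches the paper's (implicit) argument exactly: the corollary is an immediate consequence of Theorem~\ref{Thm:vanishingspecstanleyreisner} with $j=1$ together with the identification $\Pic(X)\cong\H^1(X,\O^*_X)$. Nothing further is needed.
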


\begin{theorem}\label{Thm:vanishingspecstanleyreisnerA*}
    $\H^j(\KK[\triangle][y_1^{\pm1}, \dots, y_m^{\pm1}], \O^*)=0$ for all $j\geq 1$.
\end{theorem}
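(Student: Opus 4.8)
The plan is to mimic the proof of Theorem~\ref{Thm:vanishingspecstanleyreisner} almost verbatim, running an induction on the number of facets of $\triangle$, but now carrying along the torus $T = \Spec \KK[y_1^{\pm1}, \dots, y_m^{\pm1}]$ as a flat factor. Write $R' = \KK[\triangle][y_1^{\pm1}, \dots, y_m^{\pm1}]$ and $X' = \Spec R' \cong X \times T$, where $X = \Spec \KK[\triangle]$; more generally, for a subcomplex $\triangle_0$ the ring $\KK[\triangle_0][y^{\pm}]$ is the coordinate ring of $\Spec\KK[\triangle_0] \times T$. Since $-\times T$ is the flat base change $-\otimes_\KK \KK[y^{\pm}]$, all the decomposition statements (Lemma~\ref{Lemma:decompositionofspectrum}, Lemma~\ref{Lemma:intersectionofspecs}, Lemma~\ref{Lemma:dimensionintersection}) persist after crossing with $T$. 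For the base case, if $\triangle$ is a simplex then $X \cong \AA^n$, so $X' \cong \AA^n \times T$ is a smooth, hence normal, affine toric variety, and Theorem~\ref{affinetoricunit} gives $\H^j(X', \O^*) = 0$ for all $j \geq 1$.

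For the inductive step I would choose a facet $F$, let $Y$ be the coordinate component associated with $F$ and $Z = \overline{X \setminus Y}$ exactly as before, and set $Y' = Y \times T$, $Z' = Z \times T$. The radical ideals $\mathfrak{A}, \mathfrak{B} \subseteq \KK[\triangle]$ defining $Y$ and $Z$ satisfy $\mathfrak{A} \cap \mathfrak{B} = 0$ by Remark~\ref{remark:trivial-intersection-ideals-SR-ring}; because $\KK[y^{\pm}]$ is flat over $\KK$, their extensions to $R'$ still intersect in $0$, so Proposition~\ref{proposition:exact-sequence-units-ideals-trivial-intersection} applies and produces the short exact sequence
\[
\begin{tikzcd}[cramped, column sep = 2em]
1\rar&\O^*_{X'}\rar["\varphi"]&i_*\O^*_{Y'}\oplus i_*\O^*_{Z'}\rar["\psi"] & i_*\O^*_{(Y\cap Z)'}\rar&1,
\end{tikzcd}
\]
with $(Y\cap Z)' = (Y \cap Z)\times T = \Spec(\KK[\triangle''][y^{\pm}])$. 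Both the complex $\triangle'$ defining $Z$ and the complex $\triangle''$ defining $Y\cap Z$ have strictly fewer facets than $\triangle$, so the inductive hypothesis gives $\H^j(Z', \O^*) = \H^j((Y\cap Z)', \O^*) = 0$ for $j\geq 1$ (using $\H^j(i_*\O^*) = \H^j(\cdot,\O^*)$ for the closed immersions), while the base case gives $\H^j(Y', \O^*) = 0$. Feeding this into the long exact cohomology sequence squeezes $\H^j(X', \O^*)$ between two zeros for every $j \geq 2$.

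The one genuinely new point is $j = 1$, where I must show that $\H^0(Y', \O^*)\oplus\H^0(Z', \O^*)\xrightarrow{\psi}\H^0((Y\cap Z)', \O^*)$ is surjective. Now the global units are no longer just $\KK^*$: for any connected reduced Stanley-Reisner scheme $W$ (and each of $Y$, $Z$, $Y\cap Z$ is connected since all coordinate subspaces pass through the origin) one computes $\Gamma(W\times T, \O^*) = \KK^*\oplus\ZZ^m$, the $\ZZ^m$ recording the Laurent monomial in the $y_i$. Indeed, embedding $\KK[\triangle_0]$ into the product of its domain quotients by the minimal primes forces a global unit to equal $c_i y^{\alpha_i}$ on each component, and restricting to the common origin $\{0\}\times T$ forces all the $c_i\in\KK^*$ and all the $\alpha_i\in\ZZ^m$ to coincide. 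The restriction maps into $\Gamma((Y\cap Z)', \O^*)$ are the identity on both the $\KK^*$ and the $\ZZ^m$ factors, so $\psi(c_1 y^{\alpha_1}, c_2 y^{\alpha_2}) = (c_1 c_2^{-1})\,y^{\alpha_1 - \alpha_2}$, which is manifestly surjective onto $\KK^*\oplus\ZZ^m$. Hence $\H^1(X', \O^*) = 0$ and the induction closes.

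I expect the main obstacle to lie precisely in this $j=1$ bookkeeping: one must verify carefully that a connected reduced Stanley-Reisner ring acquires only the monomial units $\KK^*\oplus\ZZ^m$ after adjoining the Laurent variables, and that the two restriction maps into $\Gamma((Y\cap Z)', \O^*)$ are the same identification, so that $\psi$ stays surjective. Everything else is a faithful transcription of the proof of Theorem~\ref{Thm:vanishingspecstanleyreisner} with the flat factor $T$ carried through, the only structural change being that the base case is the toric variety $\AA^n\times T$ rather than $\AA^n$.
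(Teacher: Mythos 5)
Your proposal is correct and follows essentially the same route as the paper: the paper's own proof simply says the argument of Theorem~\ref{Thm:vanishingspecstanleyreisner} goes through verbatim except that the sequence of global units becomes $\ZZ^m\oplus\KK^* \to (\ZZ^m\oplus\KK^*)\oplus(\ZZ^m\oplus\KK^*)\to \ZZ^m\oplus\KK^*$, whose last map is still surjective. Your write-up supplies the details the paper leaves implicit (flatness of the base change $-\otimes_\KK \KK[y^{\pm}]$, normality of $\AA^n\times T$ for the base case, and the identification $\Gamma(W\times T,\O^*)=\KK^*\oplus\ZZ^m$, which also follows from Theorem~\ref{theorem:split-algebra}), but the structure of the argument is identical.
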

\begin{proof}
    The proof is essentially the same as in Theorem~\ref{Thm:vanishingspecstanleyreisner}, except for $\H^1$, because the sequence of global units becomes now
    \[
    \begin{tikzcd}[row sep=2ex, column sep = 2em, cramped]
     (\ZZ)^m   \oplus \KK^*  \rar &( (\ZZ)^m   \oplus \KK^* ) \bigoplus ( (\ZZ)^m   \oplus \KK^*)\rar["\pi"] &  (\ZZ)^m   \oplus \KK^* \rar&\dots  \, ,
    \end{tikzcd}
    \]
    but again it is easy to see that the last map is surjective also on the $\ZZ$'s.
\end{proof}

\subsection{The \texorpdfstring{\v{C}}{C}ech-Picard complex on the punctured spectrum}

Using what we have proved in the previous section, we know that $\{D(X_i)\}$ is an acyclic covering for $\Spec^\bullet \KK[\triangle]$ with respect to the sheaf $\O^*$ and we will describe the groups and the maps appearing in the \v{C}ech complex relative to this covering.

\begin{lemma}\label{theorem:localization-stanley-reisner-combinatorial-multiple} Let $F$ be a face of $\triangle$. Then the localization of the Stanley-Reisner ring of $\triangle$ at $X_F= \prod_{v \in F} X_{v} $ is
    \[    \KK[\triangle]_{X_F}\cong \KK[\triangle'][\ZZ^F] \,  ,     \]
    where $\triangle'=\lk_\triangle(F)$.
\end{lemma}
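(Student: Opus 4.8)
The plan is to deduce this statement directly from its combinatorial counterpart, Theorem~\ref{theorem:localization-simplicial-binoid-multiple}, by transporting the binoid isomorphism through the functor $\KK[\hspace{1em}]$ from binoids to $\KK$-algebras of \eqref{functor:binoids-rings}. Recall that $\KK[\triangle]=\KK[M_\triangle]$ by the very definition of the Stanley-Reisner ring, and that Theorem~\ref{theorem:localization-simplicial-binoid-multiple} provides a binoid isomorphism $(M_\triangle)_{x_F}\cong M_{\triangle'}\wedge(\ZZ^F)^\infty$ with $\triangle'=\lk_\triangle(F)$. Since this functor is faithful and respects localizations, applying it to that isomorphism should relate the localized ring $\KK[\triangle]_{X_F}$ to the algebra of the smash product on the right, after which only a few formal identifications remain.

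First I would use that $\KK[\hspace{1em}]$ respects localizations: the monomial in $\KK[\triangle]$ attached to the element $x_F=\sum_{v\in F}x_v$ of $M_\triangle$ is precisely $X_F=\prod_{v\in F}X_v$, so that $\KK[(M_\triangle)_{x_F}]\cong\KK[\triangle]_{X_F}$. Applying the functor to Theorem~\ref{theorem:localization-simplicial-binoid-multiple} then yields $\KK[\triangle]_{X_F}\cong\KK[M_{\triangle'}\wedge(\ZZ^F)^\infty]$. The next step is to split the algebra of the smash product: the binoid algebra functor sends a smash product of binoids to the tensor product of their algebras over $\KK$, giving $\KK[M_{\triangle'}\wedge(\ZZ^F)^\infty]\cong\KK[M_{\triangle'}]\otimes_\KK\KK[(\ZZ^F)^\infty]=\KK[\triangle']\otimes_\KK\KK[(\ZZ^F)^\infty]$. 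Finally I would identify $\KK[(\ZZ^F)^\infty]$ with the group algebra of $\ZZ^F$, namely the Laurent polynomial ring $\KK[y_v^{\pm1}\mid v\in F]$ (the added point $\infty$ maps to $0$ under the functor, leaving the group algebra of the free abelian group $\ZZ^F$); this is exactly the adjunction denoted $\KK[\triangle'][\ZZ^F]$. Chaining these canonical isomorphisms produces the claim.

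The bookkeeping of which monomial corresponds to $x_F$ and the identification of $\KK[(\ZZ^F)^\infty]$ as a Laurent polynomial ring are routine. The one point that deserves genuine care, and which I expect to be the main thing to justify, is the compatibility of $\KK[\hspace{1em}]$ with the smash product, i.e.\ the natural isomorphism $\KK[N\wedge P]\cong\KK[N]\otimes_\KK\KK[P]$. This is the algebraic shadow of the universal property of the smash product (\cite[Proposition 1.8.10]{Boettger}) that underlies the proof of Theorem~\ref{theorem:localization-simplicial-binoid-multiple}; it can either be cited from the properties of binoid algebras or checked directly, since a $\KK$-basis of $\KK[N\wedge P]$ is given by the pairs of non-infinity elements of $N$ and $P$, which is precisely a basis of $\KK[N]\otimes_\KK\KK[P]$ with matching multiplication. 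Once this is in place, the remainder of the argument is a formal composition of isomorphisms.
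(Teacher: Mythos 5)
Your proposal is correct and matches the paper's approach: the paper's proof is the single line ``This follows from Theorem~\ref{theorem:localization-simplicial-binoid-multiple},'' and what you have written is precisely the transport of that binoid isomorphism through the functor $\KK[\hspace{1em}]$ (respecting localizations, sending smash products to tensor products over $\KK$, and identifying $\KK[(\ZZ^F)^\infty]$ with the Laurent polynomial ring) that this citation implicitly relies on. You have simply made explicit the routine verifications the paper leaves to the reader.
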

\begin{proof}
This follows from Theorem \ref{theorem:localization-simplicial-binoid-multiple}.
\end{proof}

\begin{corollary}\label{corollary:covering-cohomology-stanley-reisner}
    The cohomology of $\O^*$ on the punctured spectrum $\Spec^\bullet(\KK[\triangle])$ can be computed using, as \v{C}ech covering, the one given by the fundamental combinatorial open subsets $\{D(X_i)\}$.
\end{corollary}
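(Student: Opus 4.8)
The plan is to verify that $\{D(X_i)\}$ is an \emph{acyclic} covering for $\O^*$ on $X=\Spec^\bullet(\KK[\triangle])$ and then to invoke the standard acyclic-covering (Leray) theorem, which guarantees that on such a covering \v{C}ech cohomology agrees with sheaf cohomology. First I would record that the $D(X_i)$ genuinely cover $X$: a prime $\mathfrak{P}$ lies outside every $D(X_i)$ precisely when it contains all the $X_i$, i.e.\ when $\mathfrak{P}=\KK[M_+]$, which is exactly the point removed in forming the punctured spectrum, so $\bigcup D(X_i)=\Spec^\bullet(\KK[\triangle])$.

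The heart of the argument is acyclicity on the finite intersections. For a subset $F=\{i_0,\dots,i_p\}$ we have $D(X_{i_0})\cap\dots\cap D(X_{i_p})=D(X_F)$, which by Lemma~\ref{lemma:covering-algebraic-spectrum} is empty unless $F\in\triangle$; the empty intersections contribute nothing to the \v{C}ech complex and require no acyclicity statement. When $F$ is a face, Lemma~\ref{theorem:localization-stanley-reisner-combinatorial-multiple} identifies the coordinate ring of $D(X_F)$ as $\KK[\triangle]_{X_F}\cong\KK[\triangle'][\ZZ^F]$ with $\triangle'=\lk_\triangle(F)$, so that $D(X_F)\cong\Spec\bigl(\KK[\triangle'][y_1^{\pm1},\dots,y_m^{\pm1}]\bigr)$ where $m=\#F$. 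This is exactly the class of rings treated in Theorem~\ref{Thm:vanishingspecstanleyreisnerA*}, which yields $\H^j(D(X_F),\O^*)=0$ for all $j\geq 1$. Hence $\O^*$ is acyclic on every nonempty finite intersection of the covering.

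With acyclicity established on all finite intersections, I would conclude by applying the Leray acyclic-covering theorem in the Zariski topology, giving $\vH^\bullet(\{D(X_i)\},\O^*)\cong\H^\bullet(\Spec^\bullet(\KK[\triangle]),\O^*)$. Since $\Spec^\bullet(\KK[\triangle])=D(\KK[M_+])$ is a combinatorial open subset and the $D(X_i)$ are combinatorial, one could alternatively route the same conclusion through Proposition~\ref{proposition:hzar-hcomb}, first identifying Zariski cohomology with combinatorial cohomology and then computing the latter by \v{C}ech cohomology of the combinatorial cover. I do not expect a genuine obstacle, as all the needed vanishing is already in place; the only point demanding care is the bookkeeping in the second paragraph, namely checking that each intersection $D(X_F)$ is of the precise form $\KK[\lk_\triangle(F)]$ with invertible variables adjoined---not merely a Stanley--Reisner ring of $\triangle$ itself---so that Theorem~\ref{Thm:vanishingspecstanleyreisnerA*} rather than Theorem~\ref{Thm:vanishingspecstanleyreisner} is the applicable vanishing result.
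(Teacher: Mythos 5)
Your proposal is correct and matches the paper's intended argument: the paper derives this corollary precisely from the acyclicity of the $D(X_F)$, identified via Lemma~\ref{theorem:localization-stanley-reisner-combinatorial-multiple} as spectra of $\KK[\lk_\triangle(F)][\ZZ^F]$ and hence covered by the vanishing result of Theorem~\ref{Thm:vanishingspecstanleyreisnerA*}, together with the emptiness of non-face intersections from Lemma~\ref{lemma:covering-algebraic-spectrum}. Your closing caveat --- that one needs Theorem~\ref{Thm:vanishingspecstanleyreisnerA*} rather than Theorem~\ref{Thm:vanishingspecstanleyreisner} because of the adjoined invertible variables --- is exactly the right point of care.
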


\begin{remark}
    For a Stanley-Reisner ring, it does not make a difference whether we compute the cohomology of the sheaf of units $\H^j(\Spec^\bullet \KK[\triangle], \O^*)$ on the Zariski or in the combinatorial topology, since the covering $\{D(X_i)\}$ is acyclic in both topologies, and this yields the same \v{C}ech complex. Since the combinatorial topology is simpler, we can restrict to work with it.
\end{remark}

\begin{theorem}\label{theorem:SR-splits-comb-top}
    In the combinatorial topology of $\Spec^\bullet\KK[M_\triangle]$, we have that the sheaf of units splits
    \[    \O^*_{\KK[\triangle]} =       i_*\O^*_{M_\triangle}    \oplus   \KK^* \, , \]
    where $\KK^*$ is the constant sheaf.
\end{theorem}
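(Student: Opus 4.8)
The plan is to deduce this statement as a direct specialisation of Proposition~\ref{proposition:split-sheaves-comb-top}, so that the entire task reduces to verifying that the simplicial binoid $M_\triangle$ meets the hypotheses of that Proposition, namely that it is reduced, torsion-free and cancellative.

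First I would recall the structure theorem \cite[Theorem 6.5.8]{Boettger}, which asserts that every simplicial binoid $M_\triangle$ is semifree and reduced. Reducedness is thus immediate. For the remaining two properties I would argue directly from semifreeness: if $(a_i)_{i\in I}$ is a semibasis, then every element of $M_\triangle^\bullet$ has a \emph{unique} expression $\sum_{i\in I} n_i a_i$. Uniqueness forces cancellativity, since $a+x = a+y \neq \infty$ compares two expansions of the same element and hence yields $x=y$; and it forces torsion-freeness, since $nx = ny \neq \infty$ with $n\in\NN_+$ likewise identifies the supports and coefficients of $x$ and $y$. Therefore $M_\triangle$ is reduced, torsion-free and cancellative, exactly as required.

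With the hypotheses in place, I would apply Proposition~\ref{proposition:split-sheaves-comb-top} with $M=M_\triangle$ to obtain the splitting
\[ (\O^*_{\KK[\triangle]})^{\comb} \cong (i_*\O^*_{M_\triangle})^{\comb} \oplus (\KK^*)^{\comb} \]
of sheaves in the combinatorial topology on $\Spec\KK[M_\triangle]$. Since the theorem is phrased on the punctured spectrum $\Spec^\bullet\KK[M_\triangle]$, the final step is to restrict this isomorphism along the open immersion $\Spec^\bullet\KK[M_\triangle]\hookrightarrow\Spec\KK[M_\triangle]$; restriction of sheaves to an open subspace is exact and commutes with direct sums, so the decomposition persists verbatim, which is precisely the claimed identity.

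I do not expect any genuine obstacle here: all the substantive content already lives in Proposition~\ref{proposition:split-sheaves-comb-top}, which in turn rests on the pointwise splitting of units from Theorem~\ref{theorem:split-algebra}. The only point demanding a little care is that the Proposition is stated for the full spectrum whereas the Theorem concerns the punctured one, but passing to an open subscheme is harmless and introduces no new subtlety.
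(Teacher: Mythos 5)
Your proposal is correct and follows essentially the same route as the paper, which likewise reduces the theorem to Proposition~\ref{proposition:split-sheaves-comb-top} by noting that $M_\triangle$ is reduced and semifree, hence torsion-free and cancellative. Your extra remarks on deducing cancellativity and torsion-freeness from the uniqueness of semifree expansions, and on restricting to the punctured spectrum, only make explicit what the paper leaves implicit.
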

\begin{proof}
    This is just an application of Proposition~\ref{proposition:split-sheaves-comb-top}, because $\KK[\triangle]=\KK[M_\triangle]$ is reduced and $M_\triangle$ is semifree, so torsion-free and cancellative.
\end{proof}

\begin{remark}
    Since we are able to split $\O^*_M$ into smaller subsheaves (see Theorem~\ref{theorem:semifree-decomposition-of-sheaf}), we can do the same here, and obtain in the combinatorial topology
    \[
    \O^*_{\KK[\triangle]}= i_*\left(\displaystyle\bigoplus_{v \in V}\O^*_{v}\right)   \oplus  \KK^*       =  \left(\displaystyle\bigoplus_{v \in V} i_*\O^*_{v}\right) \oplus   \KK^*        \,.
    \]
\end{remark}

\begin{corollary}\label{corollary:cech-complex-groups-algebraic-case}
    Let $\triangle$ be a simplicial complex on the vertex set $V$, let $\KK[\triangle]$ be its Stanley-Reisner algebra and $\O^*=\O^*_{\KK[\triangle]}$ the sheaf of units. Then
    \begin{equation*}
    \O^*\left(\bigcap_{v\in F}D(X_v)\right)\cong\left\{\begin{aligned}
    & \ZZ^F  \times \KK^*  , &\text{ if } F\in\triangle,\\
    &1,  & \text{ otherwise}.
    \end{aligned}
    \right.
    \end{equation*}
\end{corollary}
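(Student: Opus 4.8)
The plan is to reduce the statement to the splitting of the sheaf of units already established, evaluated on the single open set $\bigcap_{v\in F}D(X_v)=D(X_F)$, where $X_F=\prod_{v\in F}X_v$, together with a case distinction on whether $F$ is a face. First I would record that $D(X_F)$ is an affine combinatorial open subset (it is cut out by the monomial $X_F$), so that it equals $\Spec\KK[\triangle]_{X_F}$ and the sections of $\O^*$ over it are simply the global units $\big(\KK[\triangle]_{X_F}\big)^*$.

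If $F\notin\triangle$, then by Lemma~\ref{lemma:covering-algebraic-spectrum} we have $D(X_F)=\emptyset$, so the section group is trivial; this gives the second case at once.

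If $F\in\triangle$, the route I would actually write up runs through Theorem~\ref{theorem:split-algebra}. By Lemma~\ref{theorem:localization-stanley-reisner-combinatorial-multiple} we have $\KK[\triangle]_{X_F}=\KK[(M_\triangle)_{x_F}]$, and by Theorem~\ref{theorem:localization-simplicial-binoid-multiple} the binoid $(M_\triangle)_{x_F}\cong M_{\lk_\triangle(F)}\wedge(\ZZ^F)^\infty$ is again reduced, torsion-free and cancellative, being a localization of a simplicial binoid. Theorem~\ref{theorem:split-algebra} then yields
\[
\big(\KK[(M_\triangle)_{x_F}]\big)^*=\big((M_\triangle)_{x_F}\big)^*\oplus\KK^*,
\]
and $\big((M_\triangle)_{x_F}\big)^*=\O^*_{M_\triangle}\big(\bigcap_{v\in F}D(x_v)\big)\cong\ZZ^F$ by Lemma~\ref{theorem:value-sheaf}, since the simplicial binoid $M_{\lk_\triangle(F)}$ is positive. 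Combining the two factors gives $\ZZ^F\times\KK^*$, as claimed.

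I expect no serious obstacle here; the one point deserving care is the appearance of \emph{exactly one} copy of $\KK^*$. One could instead argue directly on the combinatorial splitting of Theorem~\ref{theorem:SR-splits-comb-top}, writing $\O^*(D(X_F))\cong(i_*\O^*_{M_\triangle})(D(X_F))\oplus\KK^*(D(X_F))$ and identifying $i^{-1}(D(X_F))=\bigcap_{v\in F}D(x_v)$ via Lemma~\ref{ipreimage}; but then the constant-sheaf summand $\KK^*(D(X_F))$ a priori contributes one copy of $\KK^*$ per connected component, so one would have to check that $D(X_F)$ is connected. Appealing to Theorem~\ref{theorem:split-algebra} as above sidesteps this entirely, since that theorem hands us the single $\KK^*$ factor directly, which is why I prefer the second formulation.
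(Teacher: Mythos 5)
Your proof is correct and follows essentially the same route as the paper: the paper derives this corollary from Theorem~\ref{theorem:SR-splits-comb-top}, whose underlying computation (in the proof of Proposition~\ref{proposition:split-sheaves-comb-top}) is exactly your argument of identifying $\O^*(D(X_F))$ with $\KK[\triangle]_{X_F}^*=\KK[(M_\triangle)_{x_F}]^*$ and applying Theorem~\ref{theorem:split-algebra} together with Lemma~\ref{theorem:value-sheaf}, with Lemma~\ref{lemma:covering-algebraic-spectrum} handling the non-face case. Your closing remark about avoiding the connectedness check for $D(X_F)$ is a sensible refinement, but both formulations are in substance the paper's own.
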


\begin{corollary}\label{corollary:cech-complex-split-stanley-reisner}
    The complex for computing \v{C}ech cohomology of $\O^*_{\KK[\triangle]}$ on $\Spec^\bullet\KK[\triangle]$ with respect to the combinatorial covering given by $\{D(X_i)\}$ can be split as a direct sum of the two complexes
    \[
    \vC\left(\{D(X_i)\}, \O^*_{\KK[\triangle]}\right)=\vC\left(\{D(x_i)\}, \O^*_{M_\triangle}\right)  \oplus  \vC\left(\{D(x_i)\}, \KK^*\right)             \,.
    \]
\end{corollary}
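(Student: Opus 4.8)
The plan is to deduce the splitting of the \v{C}ech complex directly from the splitting of the sheaf of units in the combinatorial topology (Theorem~\ref{theorem:SR-splits-comb-top}), together with the standard observation that a direct-sum decomposition of sheaves is inherited term by term by the \v{C}ech complex attached to any covering. First I would record that $\{D(X_i)\}$ is a \emph{combinatorial} covering: each $D(X_i)$ is a basic combinatorial open subset (with $X_i$ a monomial), and every finite intersection $D(X_{i_0})\cap\dots\cap D(X_{i_p})=D(X_{i_0}\cdots X_{i_p})$ is again combinatorial. Hence all the groups appearing in the \v{C}ech complex are sections of $\O^*_{\KK[\triangle]}$ over combinatorial open subsets, where the splitting of Theorem~\ref{theorem:SR-splits-comb-top} applies.

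Since for every combinatorial open subset $W$ of $\Spec^\bullet\KK[\triangle]$ the decomposition gives
\[
\O^*_{\KK[\triangle]}(W)=(i_*\O^*_{M_\triangle})(W)\oplus\KK^*(W),
\]
and this isomorphism is natural in $W$ (it comes from an isomorphism of sheaves, hence commutes with all restriction maps), the coboundary maps $\partial^p$ respect the decomposition. Therefore, degree by degree,
\[
\vC^p(\{D(X_i)\}, \O^*_{\KK[\triangle]})=\vC^p(\{D(X_i)\}, i_*\O^*_{M_\triangle})\oplus\vC^p(\{D(X_i)\}, \KK^*),
\]
and this is a decomposition of cochain \emph{complexes}, not merely of graded groups.

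Next I would identify each summand with its combinatorial counterpart on $\Spec^\bullet M_\triangle$. For the first summand, the pushforward identity $(i_*\F)(W)=\F(i^{-1}(W))$ together with $i^{-1}(D(X_i))=D(x_i)$ (Lemma~\ref{ipreimage}, applied to the monomial $X_i$) shows, exactly as in Lemma~\ref{lemma:cohomology_pushforward}, that $\vC(\{D(X_i)\}, i_*\O^*_{M_\triangle})$ coincides with $\vC(\{D(x_i)\}, \O^*_{M_\triangle})$: both the cochain groups and the differentials agree under the bijection between combinatorial open subsets of $\Spec\KK[\triangle]$ and open subsets of $\Spec M_\triangle$. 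For the second summand, the same bijection, being compatible with intersections and with connected components, identifies the constant sheaf $\KK^*$ in the combinatorial topology with the constant sheaf $\KK^*$ on $\Spec^\bullet M_\triangle$, giving $\vC(\{D(X_i)\}, \KK^*)=\vC(\{D(x_i)\}, \KK^*)$. Combining the two identifications with the splitting above yields the claimed decomposition.

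The only point requiring genuine care is the naturality of the sheaf splitting: one must use that the isomorphism of Theorem~\ref{theorem:SR-splits-comb-top} is an isomorphism of sheaves rather than a mere section-wise decomposition, so that it commutes with the restriction maps used to build the coboundary and the direct sum passes to the level of complexes. This is precisely what Proposition~\ref{proposition:split-sheaves-comb-top} supplies, so no new difficulty arises, and the remainder is the bookkeeping identification of the pushforward and constant-sheaf \v{C}ech complexes with their combinatorial models.
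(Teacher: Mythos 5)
Your proposal is correct and follows essentially the same route as the paper: the splitting of the sheaf from Theorem~\ref{theorem:SR-splits-comb-top} applied over the combinatorial intersections, the identification of the pushforward summand via $i^{-1}(D(X_i))=D(x_i)$ as in Lemma~\ref{lemma:cohomology_pushforward}, and the matching of the constant-sheaf summand through the identical intersection patterns of $\{D(X_i)\}$ and $\{D(x_i)\}$ (Lemmas~\ref{lemma:intersection-open-subsets} and~\ref{lemma:covering-algebraic-spectrum}). The paper's own proof only records the last of these points explicitly, so your write-up is simply a fuller account of the same argument.
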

\begin{proof}
    The only thing that we have to notice is that 
    \[
    \vC\left(\{D(X_i)\}, \KK^*\right)=\vC\left(\{D(x_i)\}, \KK^*\right) \, ,   \]
    because $\{D(X_i)\}$ and $\{D(x_i)\}$ have the same intersection patterns, thanks to Lemma~\ref{lemma:intersection-open-subsets} and Lemma~\ref{lemma:covering-algebraic-spectrum}.
\end{proof}

\subsection{Cohomology}

We are now ready to sum up our results and give the explicit formulas for computing cohomology of the sheaf of units on the punctured spectrum of a Stanley-Reisner ring.

\begin{lemma}\label{simplicialcomplexconstanttsheaf}
    For any constant sheaf of abelian groups $G$ on $\Spec^\bullet\KK[M_\triangle]$, we have that
    \[
    \vH^j\left(\left\{D(X_i)\right\}, G\right)=\H^j(\triangle, G).
    \]
\end{lemma}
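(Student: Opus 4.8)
The plan is to run the argument of Theorem~\ref{thm:simplicial-cohomology} again, now on the algebraic spectrum, and to reduce the statement to the combinatorial \v{C}ech complex already analysed there. I would first identify the groups of the complex $\vC^\bullet(\{D(X_i)\}, G)$. By Lemma~\ref{lemma:covering-algebraic-spectrum} the intersection $D(F)=\bigcap_{v\in F}D(X_v)$ is nonempty precisely when $F\in\triangle$, which is exactly the intersection pattern of the combinatorial covering $\{D(x_i)\}$ recorded in Lemma~\ref{lemma:intersection-open-subsets}; equivalently, both coverings have nerve $\triangle$ by Corollary~\ref{corollary:nerve-covering-stanley-reisner}. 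For a constant sheaf one has $\Gamma(U, G)=G^{\#\{\text{components of }U\}}$, so the only remaining point is that each nonempty $D(F)$ is connected, contributing a single copy of $G$ in degree $j=|F|-1$.

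The connectedness is the step I expect to carry the actual content. Here I would invoke Lemma~\ref{theorem:localization-stanley-reisner-combinatorial-multiple}: for a face $F$ we have $D(F)\cong\Spec\KK[\lk_\triangle(F)][\ZZ^F]$. The ring $\KK[\lk_\triangle(F)]$ is itself a Stanley-Reisner ring, hence a connected graded $\KK$-algebra with no nontrivial idempotents (all of its coordinate components pass through the origin), and adjoining the Laurent variables indexed by $\ZZ^F$, a split torus which is geometrically connected, creates no new idempotents. Thus $D(F)$ is connected and $\Gamma(D(F), G)=G$. This is precisely the fact already used for $\KK^*$ in the proof of Corollary~\ref{corollary:cech-complex-split-stanley-reisner}; the rest of the proof is bookkeeping.

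Assembling the two steps, in each degree $j$ one gets $\vC^j(\{D(X_i)\}, G)=\bigoplus_{F\in\triangle_j}G=G^{\triangle_j}$, with coboundary maps given by the usual alternating-sum \v{C}ech formula of Definition~\ref{definition:cech-picard-complex}. These are literally the groups and maps of the combinatorial complex $\vC^\bullet(\{D(x_i)\}, G)$ described in Remark~\ref{remarkcomponent}, since the two coverings have identical intersection patterns and connected nonempty intersections. Hence $\vC^\bullet(\{D(X_i)\}, G)=\vC^\bullet(\{D(x_i)\}, G)$ as cochain complexes. By Theorem~\ref{thm:simplicial-cohomology} the latter coincides with the simplicial cochain complex $\C^\bullet(\triangle, G)$, whose cohomology is $\H^j(\triangle, G)$, which yields $\vH^j(\{D(X_i)\}, G)=\H^j(\triangle, G)$ for all $j\geq 0$.

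I do not anticipate any genuine difficulty beyond the connectedness verification: everything else is a transcription of the combinatorial computation, made legitimate by the matching of intersection patterns between the algebraic covering $\{D(X_i)\}$ and the binoid covering $\{D(x_i)\}$.
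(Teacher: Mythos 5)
Your proof is correct and follows essentially the route the paper intends (the paper states this Lemma without proof, relying implicitly on Lemma~\ref{lemma:covering-algebraic-spectrum} matching the intersection pattern of Lemma~\ref{lemma:intersection-open-subsets} and then on Theorem~\ref{thm:simplicial-cohomology}). Your explicit verification that each nonempty $D(F)\cong\Spec\KK[\lk_\triangle(F)][\ZZ^F]$ is connected is the one detail the paper leaves implicit, and your argument for it is sound.
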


\begin{theorem}\label{theorem:cohomology-stanley-reisner}
    Let $\KK[\triangle]$ be the Stanley-Reisner ring of a simplicial complex $\triangle$ on a finite vertex set $V$. We have the following explicit formula for the cohomology groups of the sheaf of units $\O^*_{\KK[\triangle]}$, restricted to the punctured spectrum $\Spec^\bullet\KK[\triangle]$.
    \begin{equation}\label{formula:cohomology-SR}
    \H^j(\Spec^\bullet(\KK[\triangle]), \O^*_{\KK[\triangle]}) =  \bigoplus_{v\in V} \widetilde{\H}^{j-1}(\lk_\triangle(v), \ZZ) \oplus  \H^j(\triangle, \KK^*)         ,
    \end{equation}
    where $\H^j(\triangle, \KK^*)$ is the $j$-th simplicial cohomology group with coefficients in $\KK^*$.
\end{theorem}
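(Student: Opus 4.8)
The plan is to express the left-hand side as \v{C}ech cohomology of the combinatorial coordinate covering $\{D(X_i)\}$ and then to exploit the splitting of $\O^*_{\KK[\triangle]}$ in the combinatorial topology, which breaks the computation into exactly the two summands appearing on the right. All of the genuine work lives in the results already established, so the argument is essentially an assembly.

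First I would invoke Corollary~\ref{corollary:covering-cohomology-stanley-reisner} — which rests on the vanishing Theorem~\ref{Thm:vanishingspecstanleyreisner} together with its localized variant Theorem~\ref{Thm:vanishingspecstanleyreisnerA*} — to replace the Zariski sheaf cohomology $\H^j(\Spec^\bullet \KK[\triangle], \O^*_{\KK[\triangle]})$ by the \v{C}ech cohomology $\vH^j(\{D(X_i)\}, \O^*_{\KK[\triangle]})$. The content here is that $\{D(X_i)\}$ is an acyclic covering for $\O^*$ in the Zariski topology, so that \v{C}ech and derived-functor cohomology agree. Next I would apply the splitting of the \v{C}ech complex from Corollary~\ref{corollary:cech-complex-split-stanley-reisner},
\[
\vC(\{D(X_i)\}, \O^*_{\KK[\triangle]}) = \vC(\{D(x_i)\}, \O^*_{M_\triangle}) \oplus \vC(\{D(x_i)\}, \KK^*),
\]
which descends from the sheaf-level decomposition $\O^*_{\KK[\triangle]} \cong i_*\O^*_{M_\triangle} \oplus \KK^*$ of Theorem~\ref{theorem:SR-splits-comb-top}. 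Since the cohomology of a direct sum of cochain complexes is the direct sum of their cohomologies, this reduces the problem to evaluating each summand separately.

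Finally I would identify the two pieces with the known answers. For the combinatorial summand, the acyclicity of $\{D(x_i)\}$ on $\Spec^\bullet M_\triangle$ gives $\vH^j(\{D(x_i)\}, \O^*_{M_\triangle}) = \H^j(\Spec^\bullet M_\triangle, \O^*_{M_\triangle})$ (compatibly via Corollary~\ref{combinatorialpushforward}), and Theorem~\ref{theorem:cohomology-simplicial-complex} rewrites this as $\bigoplus_{v\in V}\widetilde{\H}^{j-1}(\lk_\triangle(v),\ZZ)$. For the field summand, Lemma~\ref{simplicialcomplexconstanttsheaf} identifies $\vH^j(\{D(x_i)\}, \KK^*)$ with the simplicial cohomology $\H^j(\triangle, \KK^*)$. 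Assembling the two contributions yields formula~\eqref{formula:cohomology-SR}.

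The main obstacle is not in this proof but in the compatibility it silently relies on: one needs acyclicity of $\{D(X_i)\}$ in the \emph{Zariski} topology in order to pass to \v{C}ech cohomology at all, while simultaneously needing the \emph{combinatorial}-topology splitting in order to decouple the combinatorial and field contributions. That both hold on the very same covering is precisely what was secured upstream (Theorems~\ref{Thm:vanishingspecstanleyreisner} and~\ref{theorem:SR-splits-comb-top}), and it is this coincidence that makes the clean direct-sum formula possible; once it is in hand, the remaining step is bookkeeping.
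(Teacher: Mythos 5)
Your argument is correct and follows essentially the same route as the paper, which proves the theorem by combining Corollary~\ref{corollary:covering-cohomology-stanley-reisner}, Theorem~\ref{theorem:SR-splits-comb-top}, Corollary~\ref{combinatorialpushforward}, Theorem~\ref{theorem:cohomology-simplicial-complex} and Lemma~\ref{simplicialcomplexconstanttsheaf} in exactly the order you assemble them. Your closing remark about the interplay between Zariski acyclicity of the covering and the combinatorial-topology splitting correctly identifies the key point that makes the assembly work.
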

\begin{proof}
	This follows from Corollary \ref{corollary:covering-cohomology-stanley-reisner}, Theorem \ref{theorem:SR-splits-comb-top}, Corollary \ref{combinatorialpushforward}, Theorem \ref{theorem:cohomology-simplicial-complex} and Lemma \ref{simplicialcomplexconstanttsheaf}.
\end{proof}

\begin{example}
	Consider
	\[	\KK[M] =  {\KK[X, Y, Z]} / {(XYZ)} \, . \]
	Its punctured spectrum is covered by $D(X)$, $D(Y)$ and $D(Z)$, and the \v{C}ech complex with respect to this acyclic covering for $\O^*_{\KK[M]}$ is
	\[  \C:  ( \ZZ \oplus \KK^* )\bigoplus (\ZZ \oplus \KK^*)\bigoplus(\ZZ \oplus \KK^*)   \longrightarrow (\ZZ^2 \oplus \KK^*)\bigoplus (\ZZ^2 \oplus \KK^*)\bigoplus(\ZZ^2 \oplus \KK^*) \longrightarrow   1  \, , \]
	and we have the components
	\begin{equation*}
	\begin{tikzcd}[row sep=2ex, 
	/tikz/column 1/.append style={anchor=base east},
	/tikz/column 2/.append style={anchor=base east},
	/tikz/column 3/.append style={anchor=base west}]
	\displaystyle \C(\KK^*): \KK^*\oplus\KK^*\oplus\KK^*\rar["\partial^0_{\KK^*}"] &\KK^*\oplus \KK^*\oplus\KK^* \rar & 1 \, ,  \\
	(\alpha, \beta, \gamma)\rar[mapsto] &\left(\dfrac{\beta}{\alpha}, \dfrac{\gamma}{\alpha}, \dfrac{\gamma}{\beta}\right) \, ,   \\
	\\
	\displaystyle \C(\O^*_M): \ZZ\oplus\ZZ\oplus\ZZ\rar["\partial^0_M"]&\ZZ^2\oplus\ZZ^2\oplus\ZZ^2\, , \rar["\partial^1_M"] & 0 \, , \\
	(a, b, c)\rar[mapsto]& (-a, b, -a, c, -b, c) \, ,
	\end{tikzcd}
	\end{equation*}
	that give us the decomposition. The image of the first map is $(u,v,w)$ with $uv^{-1}w=1$. A nontrivial element is given by $(1,1,-1)$, which gives in the real case the Möbius strip on the triangle. The second part can be split further.
\end{example}

\subsection{The general monomial case}

In this last Section, we show how the general non-reduced monomial case can be handled with our methods. If $I$ is any monomial ideal in $S=\KK[X_1, \dots, X_n]$, then its radical ideal is a squarefree monomial ideal, so $\sqrt{I}=I_\triangle$ for a simplicial complex and the reduction of $R={S}/{I}$  
is the Stanley-Reisner ring $R_{\red}=\KK[M_\triangle]= S/{I_\triangle}$. 
In Theorems~\ref{Thm:vanishingspecstanleyreisner} and \ref{Thm:vanishingspecstanleyreisnerA*}, we proved that the covering of $\Spec^\bullet {S}/{I}$ generated by $\{D(X_i)\}$ is acyclic for $\O^*_{R_{\red}}$ with respect to the Zariski topology.

\begin{theorem}\label{theorem:split-non-reduced-monomial}
	Let $N$ be a binoid whose reduction is a simplicial binoid $M_\triangle$, so that $R=\KK[N]$ is defined by a monomial ideal with reduction $R_{\red} = \KK[\triangle]$. Then we can compute the cohomology of $\O^*_X$ on $X= \Spec^\bullet R$ with the Zariski topology as
    \[   \H^j(X,\O^*_X)  =    \bigoplus_{v\in V}\widetilde{\H}^{j-1}(\lk_\triangle(v), \ZZ)     \oplus      \H^j (\triangle, \KK^* )   \oplus  \H^j(\{D(X_i)\}, 1+\N) \, ,  \]
    where $\N$ is the coherent ideal sheaf of nilpotent elements.
\end{theorem}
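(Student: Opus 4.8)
The plan is to compute $\H^j(X,\O^*_X)$ as the \v{C}ech cohomology of the combinatorial covering $\{D(X_i)\}$ of $X=\Spec^\bullet R$ and then to split the resulting complex into three pieces, following the three-way decomposition of units in Remark~\ref{remark:units-non-reduced}. The reduction morphism $R\to R_{\red}=\KK[\triangle]$ induces a short exact sequence of sheaves of abelian groups on $\Spec R$,
\[ 1\longrightarrow (1+\N)\longrightarrow \O^*_R\longrightarrow \O^*_{R_{\red}}\longrightarrow 1\, , \]
where $1+\N$ is the subsheaf $U\mapsto 1+\N(U)$ of unipotent units. Surjectivity on stalks follows from Lemma~\ref{localringunits}, since the stalks are local rings and $R_{\red}$ is a quotient of $R$, while the kernel consists of units congruent to $1$ modulo the nilradical. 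On each affine piece $D(X_F)=\Spec R_{x_F}$ with $F\in\triangle$, Remark~\ref{remark:units-non-reduced} refines this into an honest direct-sum decomposition $\O^*_R(D(X_F))=(N_{x_F})^*\oplus \KK^*\oplus(1+\n_{x_F})$, where $\n_{x_F}$ is the nilradical of $R_{x_F}$. By Lemma~\ref{lemma:M*=M*red}, together with the fact that reduction commutes with localization, one has $(N_{x_F})^*\cong((M_\triangle)_{x_F})^*$, and the first two summands reassemble into $\O^*_{R_{\red}}(D(X_F))$ via Theorem~\ref{theorem:split-algebra}.

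The crucial step is that $\{D(X_i)\}$ is acyclic for $\O^*_R$ in the \emph{Zariski} topology. Restricting the exact sequence above to an intersection $D(X_F)$, I would show that both outer sheaves are acyclic there. For $\O^*_{R_{\red}}$ this is Theorems~\ref{Thm:vanishingspecstanleyreisner} and \ref{Thm:vanishingspecstanleyreisnerA*} applied to the localization $\KK[\triangle]_{X_F}\cong\KK[\lk_\triangle(F)][\ZZ^F]$ of Lemma~\ref{theorem:localization-stanley-reisner-combinatorial-multiple}. For $1+\N$, I would use the filtration $\n\supseteq\n^2\supseteq\cdots\supseteq\n^k=0$ by powers of the nilpotent nilradical: for $s\geq 1$ the maps $1+a\mapsto a$ give isomorphisms $(1+\n^s)/(1+\n^{s+1})\cong\n^s/\n^{s+1}$ of abelian groups, identifying the successive quotients with quasi-coherent sheaves, which are Serre-acyclic on the affine scheme $D(X_F)$; dévissage along the filtration then yields $\H^j(D(X_F),1+\N)=0$ for $j\geq 1$. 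The long exact cohomology sequence forces $\H^j(D(X_F),\O^*_R)=0$ for $j\geq1$, so the covering is acyclic and its \v{C}ech cohomology computes $\H^j(X,\O^*_X)$, as already recorded for the reduced case in Corollary~\ref{corollary:covering-cohomology-stanley-reisner}.

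With acyclicity in hand, I would observe that the three-way decomposition is natural with respect to the restriction maps attached to $F\subseteq G$: monomials restrict to monomials, scalars to scalars, and $1+\n_{x_F}$ into $1+\n_{x_G}$. Hence the \v{C}ech complex splits as a direct sum
\[ \vC(\{D(X_i)\},\O^*_R)=\vC(\{D(x_i)\},\O^*_{M_\triangle})\oplus\vC(\{D(X_i)\},\KK^*)\oplus\vC(\{D(X_i)\},1+\N)\, , \]
exactly as in Corollary~\ref{corollary:cech-complex-split-stanley-reisner} but with the extra unipotent summand, using the pushforward identification of the first factor with the combinatorial \v{C}ech complex. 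The cohomology of the first factor is $\bigoplus_{v\in V}\widetilde{\H}^{j-1}(\lk_\triangle(v),\ZZ)$ by Theorem~\ref{theorem:cohomology-simplicial-complex}; that of the second is $\H^j(\triangle,\KK^*)$ by Lemma~\ref{simplicialcomplexconstanttsheaf}; and the third is the term $\vH^j(\{D(X_i)\},1+\N)$ appearing in the statement. Summing gives the claimed formula.

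The main obstacle is the acyclicity of the non-reduced summand $1+\N$ on the affine pieces: unlike the reduced part, it cannot be imported from a toric vanishing theorem and genuinely requires the dévissage through the powers of $\n$, resting on the fact that the associated graded pieces are quasi-coherent. A secondary point needing care is the naturality of the splitting, namely that the factorisation of a unit into a monomial, a scalar, and a unipotent factor is preserved by the localization maps of the \v{C}ech complex, so that the three subcomplexes are honest direct summands rather than merely the stages of a filtration. In contrast to the combinatorial and constant-sheaf parts, the unipotent contribution does not reduce to simplicial data and is therefore left as the genuinely ring-theoretic residue $\vH^j(\{D(X_i)\},1+\N)$.
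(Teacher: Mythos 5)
Your proposal is correct and follows essentially the route the paper intends (the paper defers the written proof to the thesis, but Remark~\ref{remark:units-non-reduced}, Remark~\ref{remark:rewrite-non-reduced} and the surrounding machinery make clear that the argument is exactly this splitting into combinatorial, constant and unipotent parts on the acyclic covering $\{D(X_i)\}$). The one ingredient the paper leaves implicit --- acyclicity of $1+\N$ on the affine pieces via the d\'evissage $(1+\N^{s})/(1+\N^{s+1})\cong \N^{s}/\N^{s+1}$ through quasi-coherent quotients --- you supply correctly.
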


\begin{remark}\label{remark:rewrite-non-reduced}
    Since $\O^*_M=\O^*_{M_{\red}}$, we can rewrite the result above as
    \begin{align*}
    \H^j_{\mathrm{Zar}}(X, \O^*_X)&=\H^j(X_{\red}, \O^*_{X_{\red}})\oplus \H^j(\{D(X_i)\}, 1+\N)\\
    &=\H^j(\Spec^\bullet M, \O^*_M)  \oplus \H^j(\triangle, \KK^*) \oplus \H^j(\{D(X_i)\}, 1+\N) \,.
    \end{align*}
\end{remark}

\bibliographystyle{alpha}

\bibliography{bibliography}

\end{document}